
\documentclass[11pt, sort&compress]{elsarticle} 

\usepackage{fullpage}
\usepackage{amsfonts} 
\usepackage{graphics}

\usepackage{tikz}
\usetikzlibrary{arrows}
\usetikzlibrary{shapes}
\usetikzlibrary{positioning}





\usepackage{xr} 


\externaldocument[a-]{AB2a-v17-arxiv}


\usepackage{refcheck} 
\norefnames	
\nocitenames 	
\ignoreunlbld 
\setoffmsgs 


\usepackage{mathtools} 


\usepackage{arydshln} 

\usepackage{url}      

\usepackage{graphicx}
\usepackage{amsmath, amsthm, amssymb}
\usepackage{amssymb,amsmath,graphics,color}

\usepackage{epsfig}
\usepackage{epstopdf}

\usepackage{multirow}

\usepackage{relsize} 

\usepackage{tocloft}
\renewcommand{\cftsecfont}{}

\usepackage{here} 




\allowdisplaybreaks[4]

\usepackage{xcolor}
\def\red #1{\textcolor{red}{#1}}
\def\blue #1{\textcolor{blue}{#1}}

\def\brown #1{\textcolor{brown}{#1}}

\def\black #1{\textcolor{black}{#1}}

\definecolor{darkbrown}{rgb}{.5,.1,.1} 
\definecolor{darkgreen}{rgb}{0,.6,0}

\def\darkgreen #1{\textcolor{darkgreen}{#1}}

\def\coltab{\black}            
\def\ptt #1{{\small\tt #1}}    

\def\EME #1{}

\def\EMEvquatorze #1{}

\def\EMEvder #1{}

\def\futur #1{}


\usepackage{tikz}


\def\boxit [#1]#2{\vbox{\hrule\hbox{\vrule
     \vbox spread #1{\vss\hbox spread#1{\hss #2\hss}\vss}%
        \vrule}\hrule}}

\newbox\algo

\font\algofont=cmtt10 scaled 1100


\newenvironment {algorithme}{\smallskip \smallskip
\bgroup\algofont\parindent= 0mm}{\egroup
\smallskip\smallskip}



\def\itemlabel#1#2{%
\global\expandafter\edef\csname #1\endcsname{#2}%
}

\def\itemref#1{%
\expandafter\csname #1\endcsname%
}


\newtheorem{thm}{Theorem}[section]
\newtheorem{cor}[thm]{Corollary} 
\newtheorem{prop}[thm]{Proposition} 
\newtheorem{lemma}[thm]{Lemma}
\newtheorem{remark}[thm]{Remark}

\newtheorem{observation}[thm]{Observation}
\newtheorem{definition}[thm]{Definition}

\def\newline{\hfill\break}

\def\newpage{\vfill\break}

\def\square{\hbox{\vrule\vbox{\hrule\phantom{o}\hrule}\vrule}}
\def\square{\hbox{\vrule\vbox{\hrule\phantom{O}\hrule}\vrule}}

\definecolor{shadecolor}{RGB}{0,0,150}


\def\Int{\mathrm{Int}}

\def\Ext{\mathrm{Ext}}

\def\min{\mathrm{min}}
\def\Min{\mathrm{min}}

\def\max{\mathrm{max}}

\def\ass{\mathrm{Part}}


\def\F{{\cal F}}
\def\X{\bullet}


\def\ep{\varepsilon}
\def\io{\iota}

\def\ov{\overrightarrow}
\def\s{\setminus}
\def\bk{\backslash}
\def\ovl{\overline}


\def\bs{\bigskip}
\def\ms{\medskip}
\def\ss{\smallskip}


\def\finsi{}


\def\plus{\uplus} 




\def\gbullet{{\times}}

\def\littlefullsquare{\tikz\draw[blue,fill=blue] (0,0) rectangle (0.13,0.13);}

\def\taille{1.15}

\def\bgbullet{\blue{\littlefullsquare}}

\def\plussquare{
	\tikz{
	\draw[blue, fill=blue!15] (0,0) rectangle (0.30,0.30);
	\draw[black, line width=0.5mm] (0.05,0.15) -- (0.25,0.15);
	\draw[black, line width=0.5mm] (0.15,0.05) -- (0.15,0.25);
	}
	}
\def\minussquare{\tikz{\draw[blue, fill=blue!15] (0,0) rectangle (0.30,0.30);\draw[black, line width=0.5mm] (0.05,0.15) -- (0.25,0.15);}}
\def\pbbullet{{{\plussquare}}}
\def\mbbullet{{{\minussquare}}}

\def\plussquarevide{
	\tikz{
	\draw[blue] (0,0) rectangle (0.30,0.30);
	\draw[black, line width=0.5mm] (0.05,0.15) -- (0.25,0.15);
	\draw[black, line width=0.5mm] (0.15,0.05) -- (0.15,0.25);
	}
	}
\def\minussquarevide{\tikz{\draw[blue] (0,0) rectangle (0.30,0.30);\draw[black, line width=0.5mm] (0.05,0.15) -- (0.25,0.15);}}
\def\pbvbullet{{{\plussquarevide}}}
\def\mbvbullet{{{\minussquarevide}}}

\def\plussquarenoir{
	\tikz{
	\draw[black, fill=black!15] (0,0) rectangle (0.30,0.30);
	\draw[black, line width=0.5mm] (0.05,0.15) -- (0.25,0.15);
	\draw[black, line width=0.5mm] (0.15,0.05) -- (0.15,0.25);
	}
	}
\def\minussquarenoir{\tikz{\draw[black, fill=black!15] (0,0) rectangle (0.30,0.30);\draw[black, line width=0.5mm] (0.05,0.15) -- (0.25,0.15);}}
\def\pbbulletn{{{\plussquarenoir}}}
\def\mbbulletn{{{\minussquarenoir}}}

\def\plussquarevidenoir{
	\tikz{
	\draw[black] (0,0) rectangle (0.30,0.30);
	\draw[black, line width=0.5mm] (0.05,0.15) -- (0.25,0.15);
	\draw[black, line width=0.5mm] (0.15,0.05) -- (0.15,0.25);
	}
	}
\def\minussquarevidenoir{\tikz{\draw[black] (0,0) rectangle (0.30,0.30);\draw[black, line width=0.5mm] (0.05,0.15) -- (0.25,0.15);}}
\def\pbvbulletn{{{\plussquarevidenoir}}}
\def\mbvbulletn{{{\minussquarevidenoir}}}

\def\pluscircle{
	\tikz{
	\draw[red,fill=red!15] (0.15,0.15) circle (0.17);
	\draw[black, line width=0.5mm] (0.05,0.15) -- (0.25,0.15);
	\draw[black, line width=0.5mm] (0.15,0.05) -- (0.15,0.25);
	}
	}
\def\minuscircle{
	\tikz{
	\draw[red,fill=red!15] (0.15,0.15) circle (0.17);
	\draw[black, line width=0.5mm] (0.05,0.15) -- (0.25,0.15);
	}
	}
\def\prbullet{{{\pluscircle}}}
\def\mrbullet{{{\minuscircle}}}

\def\pluscirclevide{
	\tikz{
	\draw[red] (0.15,0.15) circle (0.17);
	\draw[black, line width=0.5mm] (0.05,0.15) -- (0.25,0.15);
	\draw[black, line width=0.5mm] (0.15,0.05) -- (0.15,0.25);
	}
	}
\def\minuscirclevide{
	\tikz{
	\draw[red] (0.15,0.15) circle (0.17);
	\draw[black, line width=0.5mm] (0.05,0.15) -- (0.25,0.15);
	}
	}
\def\prvbullet{{{\pluscirclevide}}}
\def\mrvbullet{{{\minuscirclevide}}}


\font\ptirm=cmr10 scaled 900

\def\bul{$\bullet$ }

\def\G{{\ov G}}
\def\M{M}

\def\bysame{\rule{1cm}{0.1mm}}

\newcommand{\overbar}[1]{\mkern 1.5mu\overline{\mkern-1.5mu#1\mkern-1.5mu}\mkern 1.5mu}
\def\bar{\overbar}    



\def\eme#1{}
\def\emeref#1{}
\def\emenew#1{}
\def\emevder#1{}


\title{The Active Bijection \\ 
 2.b - Decomposition of activities for oriented matroids,\\ and general definitions of the active bijection}

\author[lirmm]{Emeric Gioan\corref{cor1}}
\ead{emeric.gioan@lirmm.fr}

\author[paris]{Michel Las Vergnas\corref{cor2}}

\cortext[cor1]{Email: emeric.gioan@lirmm.fr}
\cortext[cor2]{R.I.P.}
\address[lirmm]{CNRS, LIRMM, Universit\'e de Montpellier, France}
\address[paris]{CNRS, Paris, France}

\date{}


\begin{document}

\begin{abstract} 
\noindent
The active bijection for oriented matroids 
(and real hyperplane arrangements, and graphs, as particular cases)
is introduced and investigated  by the authors in a series of papers.
Given any oriented matroid defined on a linearly ordered ground set, we exhibit one particular of its bases, which we call its
 active basis,
 with remarkable properties. It preserves activities (for oriented matroids in the sense of Las Vergnas, for matroid bases in the sense of Tutte), as well as some active partitions of the ground set associated with oriented matroids and matroid bases. 
It yields a canonical bijection between classes of reorientations and bases 
(this bijection depends only on the reorientation class of the oriented matroid, that is on the non-signed pseudosphere arrangement in terms of a topological representation). It also yields 
a refined bijection between all 
reorientations and subsets of the ground set.
Those bijections are related to various Tutte polynomial expressions (in terms of usual and refined  activities for bases/subsets or reorientations, in terms of beta invariants of minors). They contain various noticeable bijections
involving orientations/signatures/reorientations and spanning trees/simplices/bases of a graph/real hyperplane arrangement/oriented matroid.
For instance, we obtain 
an activity preserving  bijection
between acyclic reorientations and no-broken-circuit subsets.
\EMEvder{laisser cette dernier phrase ?}%


In previous papers of this series, we defined the active bijection between bounded regions and uniactive internal bases by means of fully optimal bases (No. 1), and we defined a decomposition of activities for matroid bases by means of  active filtrations  (or active partitions) yielding particular sequences of minors (companion paper, No. 2.a).
The present paper is central in the series. 
First,
we define a decomposition of activities for oriented matroids, using the same sequences of minors, yielding a decomposition of an oriented matroid into bounded regions of minors.
Second, we use the previous results together to 
provide
the canonical and refined 
active bijections
alluded to above. 
%
%
We also give an overview and examples of the various results of independent interest involved in the construction. They arise as soon as the ground set of an oriented matroid is linearly ordered.



\EMEvder{raccourcir un peu ? sur site JCTB resume \max 17 lignes}




\end{abstract}

\maketitle   


\newpage

\hrule 
\vspace{-3mm}
\setcounter{tocdepth}{1}  
    \renewcommand*\contentsname{\normalsize Contents\vspace{-3mm}}
  \renewcommand\cftsecfont{\small\rm}
 \renewcommand{\cftsecpagefont}{\small\normalfont}
 \pagenumbering{arabic}
    \setlength{\cftbeforesecskip}{0cm}
    \setlength{\cftparskip}{0cm}

\tableofcontents

\setcounter{page}{2}

\ms
\hrule


\futur{faire corrections d'anglais, comme dans chapter, voir fichier anglais.tex  passage "SUR CHAPTER"}%

\section{Introduction and overview}
\label{sec:intro}



\EMEvder{comparer struture et intro de cet article a ABG2 \cite{ABG2} ou les three-levels sont beauocup plus acentues, ainsi que la flexibilite de la cosntuction}


\EMEvder{??? REMPLACER LE PLUS POSSIBLE on a ienarly ordered edge set PAR ordered, MEME DANS INTRO}%





\EMEvder{enlever italique de observation ? non car resultats cit?s}


\eme{a mettre ? Erratum : phd, FPSAC 03 + mention : supersolv, LP}%

\eme{REMPLACER LE PLUS POSSIBLE on a ienarly ordered element set PAR ordered, MEME DANS INTRO}%




The active bijection for oriented matroids 
(and real hyperplane arrangements, and graphs, as particular cases)
%
 is the subject of several papers by the present authors 
\cite{GiLV04, GiLV05, GiLV06, GiLV07, GiLV09, AB1, AB2-a, AB3, AB4, ABG2, ABG2LP}.
%
The general setting of this set of papers is to relate 
orientations/signatures/reorientations and spanning trees/simplices/bases of graphs/real hyperplane arrangements/oriented matroids,
and, more generally, to study oriented matroids as soon as they are defined on a linearly ordered set, in terms of structural, constructive, enumerative or bijective canonical properties.
%

In general, 
\EMEvder{Our main...}%
we map any oriented matroid on a linearly ordered set onto one particular of its  bases, which we call its \emph{active basis}. 
This allows us to define a canonical activity preserving correspondence between reorientations and bases of an oriented matroid,
with various related bijections, constructions and characterizations. 
%
%
%
The original motivation was to provide a bijective interpretation
and a structural understanding
of the equality of two classical expressions of the Tutte polynomial (detailed in Section \ref{sec:prelim}). The first is in terms of basis activities by Tutte \cite{Tu54} (extended to matroids by Crapo \cite{Cr69}):

\centerline{$\displaystyle t(M;x,y)=\sum_{\io,\ep}b_{\io,\ep}x^\io y^\ep$}
\noindent where $b_{\io,\ep}$ is the number of bases of $M$ 
with internal activity $\io$ and external activity $\ep$.
The second is in terms of reorientation activities by Las Vergnas \cite{LV84a}:
 
 \centerline{$\displaystyle t(M;x,y)=\sum_{\io,\ep}o_{\io,\ep}\ \Bigl({x\over 2}\Bigr)^\io \ \Bigl({y\over 2}\Bigr)^\ep$}
\noindent where $o_{\io,\ep}$ is the number of reorientations of $M$ with  dual-activity $\io$ and activity $\ep$.
This second expression contains various famous enumerative results from the literature, such as counting regions or acyclic reorientations, 
mainly by Winder \cite{Wi66}, Stanley \cite{St73}, Zaslavsky \cite{Za75}, and Las Vergnas \cite{LV77}.
%
\EMEvder{autres refs vite fait ?}%
Roughly, one can think of activities as situating bases and reorientations with respect to the minimal and maximal basis.
Much more details, either on related results or on specific references to the literature, are given in the introduction of each section of the paper, as they deal with separate aspects of the construction.
The rest of the introduction aims at presenting a practical and global 
 overview of these various features.
%

\bs

Let us first situate the authors' works 
on the subject.
The 
main papers are \cite{AB1, AB2-a, AB3, AB4} along with the present central one.
They deal with general oriented matroids and form a consistent series (details will be given  further on): \cite{AB1} (No. 1) deals with 
the bounded/uniactive case (that is, the case where $\io=1$ and $\ep=0$, or $\io=0$ and $\ep=1$); 
\cite{AB2-a} (No. 2.a) is a companion paper of the present one and deals with a decomposition of matroid bases
into uniactive bases of minors; 
the present paper (No. 2.b) deals with a decomposition of oriented matroids 
into bounded minors
and is central in the series as it uses the previous papers to define the active bijection in general; to be continued with \cite{AB3} (No. 3) that deals with elaborations on linear programming yielding an inverse to the construction of \cite{AB1} ; and with \cite{AB4} (No. 4) that deals with  deletion/contraction constructions and broader characterizations.\EMEvder{broader charcterizations???}
These papers are written in oriented matroid terms and 
mainly 
illustrated in pseudosphere/hyperplane arrangements from a topological/geometrical viewpoint (the reader may read the 
preliminary section 
of \cite{AB1} for a summary on this viewpoint).
\EMEvder{() a mettre dans prelim plutot? non bien ici pour lecteur}

These main papers are completed with papers dealing with particular cases: \cite{GiLV04} yields an easy case introduction to the subject by studying uniform and rank 3 oriented matroids; \cite{GiLV06} studies the case of  supersolvable real hyperplane arrangements and of particular Coxeter arrangements; and \cite{GiLV05, ABG2, ABG2LP} deal with graphs. The reader specifically interested in graphs is encouraged to read \cite{ABG2} which summarises 
the main results of the whole series formulated in this structure.
These papers are completed with short conference notes:
\cite{GiLV07} presents briefly the whole constructions and main results, and \cite{GiLV09} presents briefly the elaborations on linear programming in the real case.
A summary about the active bijection and related notions can also be found in \cite{GiChapterOriented}.
\EMEvder{FPSAC03???}


Let us mention that the question of relating basis and orientation activities came from Las Vergnas  in \cite{LV84a}, following on from which, in \cite{LV84b}, a definition for a correspondence between spanning trees and orientations of graphs was proposed. 
 It was based on an algorithm, given with no proof%
   \footnote{
   \label{footnote:LV84}
 Besides the fact that no proof exist, 
 the 
 authors 
 suspect that 
  this algorithm would not yield a proper correspondence anyway if its formulation was extended beyond regular matroids. Notably, its technicalities and its non-natural behaviour with respect to duality, in contrast with the active bijection, made 
  the authors 
  abandon this algorithm.}, 
  %
 which 
 inspired the general decompositions of activities developed for the active bijection, but which does not yield the correspondence given by the active bijection (not for general activities, nor for the restriction to $(1,0)$ activities, and nor with respect to duality).
\EMEvquatorze{, and may most likely not be generalized beyond regular matroids.}%
%
%
%
Also, let us 
mention that a different notion of activities for graph orientations had been introduced even earlier by  Berman in \cite{Be77}, 
along with incorrect constructions according to \cite{LV84a}\footnote{
The construction in \cite{Be77} consisted in defining some active directed cycles/cocycles in a complex way, instead of active edges, and in enumerating those cycles/cocycles. It claimed to yield a Tutte polynomial formula which was formally similar to that of Las Vergnas \cite{LV84a} using those different activities,
and  a correspondence between orientations and spanning trees.
According to \cite[footnote page 370]{LV84a}, those constructions were not correct.
}%
.
Finally, the active bijection for oriented matroids, 
as addressed in this series of papers, has been introduced and developed in 
the
Ph.D. thesis \cite{Gi02}, where most of the results from the series
were given, at least in a preliminary form.
\EMEvder{reorganiser paragraphes ? self referecnes ensemble ?}

\bs



Let us now get into the substance.
First, without requiring any preliminary knowledge, let us give to the reader one of the shortest  definitions of the active basis
(condensed yet complete, combining Definitions \ref{def:acyc-alpha} and 
\ref{def:om-alpha-var},
among various equivalent definitions addressed in this paper).
%
For any 
oriented matroid $M$ on a linearly ordered set $E$, 
\emph{the active basis} $\alpha(M)$ of $M$ is determined~by:%
\vspace{-2mm}
\begin{itemize}
\itemsep=-0.5mm

\item 
\emph{Fully optimal basis of a bounded region.} If $M$ is 
acyclic and every positive cocircuit of $M$ contains $\min(E)$, then $\alpha(M)$ is the unique 
basis $B$ of $M$ such~that:

\vspace{-2mm}
\begin{itemize}
\itemsep=-1mm
\partopsep=0mm 
\topsep=0mm 
\parsep=0mm
\vspace{-2mm}
\item 
 for all $b\in B\setminus p$, the signs of $b$ and $\min(C^*(B;b))$ are opposite in $C^*(B;b)$;

\item 
for all $e\in E\s B$, the signs of $e$ and $\min(C(B;e))$ are opposite in $C(B;e)$.
\end{itemize}
\vspace{-1mm}

\item 
\emph{Duality.} 
$\alpha(M)=E\s \alpha(M^*).$

\item 
\emph{Decomposition.} 
$\alpha(M)=\alpha(M/ F)\ \uplus\ \alpha(M(F))$
where $F$ is the union of all positive circuits of $M$ whose smallest element is the greatest possible smallest element of a positive circuit of~$M$.
\end{itemize}

%
Though the central concept can be  shortly defined  as above, the paper deals 
with separate topics and constructions of independent interest, and unifies them in a consistent framework.
What we call \emph{the active bijection} is actually a three-level  construction, built from the mapping $M\mapsto \alpha(M)$ applied to reorientations of $M$. 
It is summarized in the diagram of Figure \ref{fig:diagram} below.
In general, we get structural and bijective interpretations of several Tutte polynomial expressions, and we get various bijections involving reorientations and bases of an oriented matroid on a linearly ordered set (which apply in particular to graphs and real hyperplane arrangements). 
See Table \ref{table:intro} below for a list. 
Now, let us present more precisely some features and relate them to the paper sections.
\EMEvder{dessous en commentaire breve presenttion des 3 niveaux, bien mais enleve de cette intro}
\EMEvder{DESSOUS ANCIENNE PRESENTATION DES SECTIONS- inutile}%

\def\Mref{{\M_{\hbox{\small ref}}}}
\def\Mref{{\M}}

\def\hdistance{10.5cm}
\def\vdistance{6.5cm}

\def\hsdistance{3.1cm}
\def\hsldistance{4.2cm}
\def\hsmdistance{4cm}
\def\vsdistance{1.5cm}

\def\diagrdistance{2.6cm}
\def\diagldistance{2.5cm}

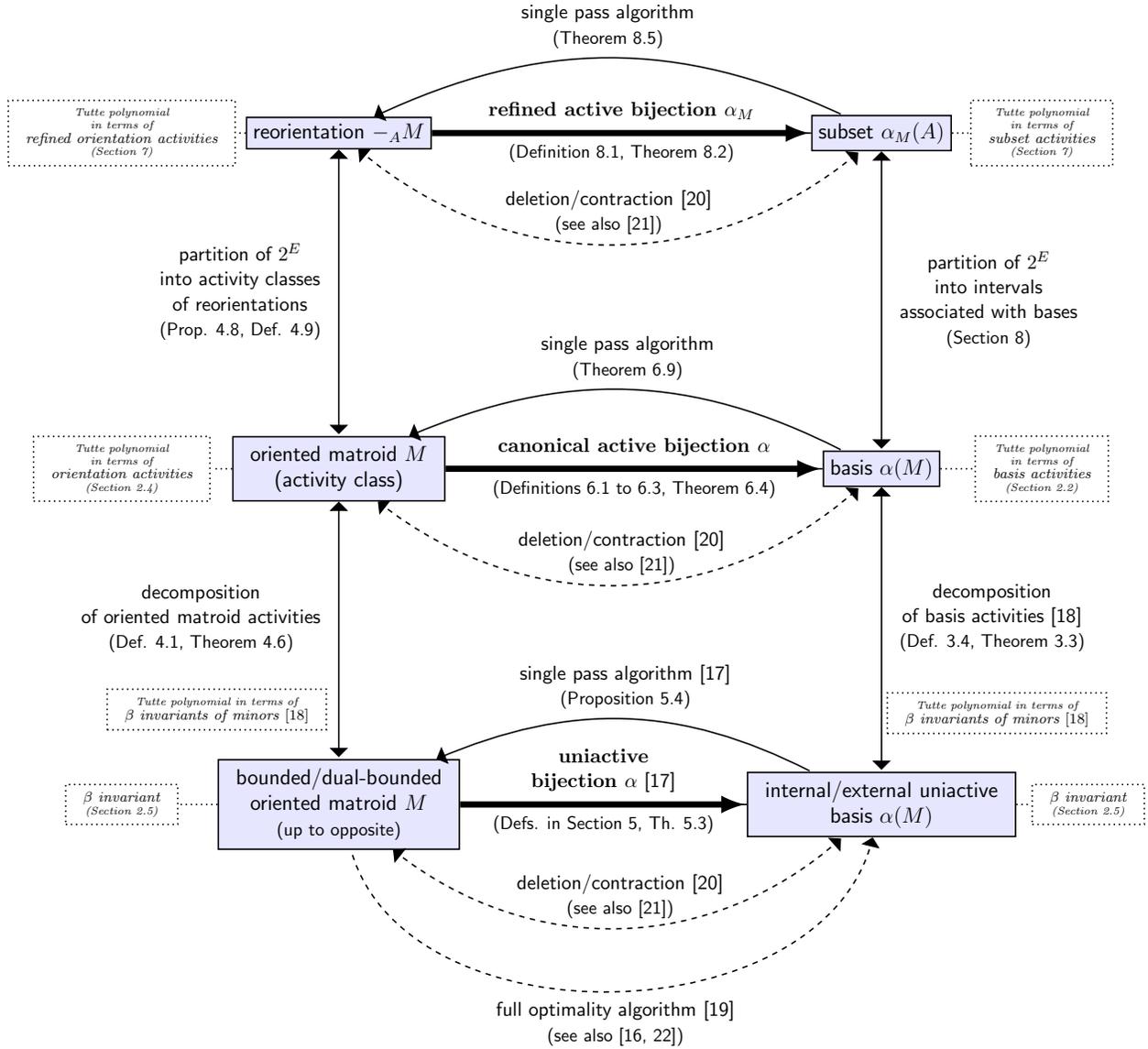
\begin{figure}[]
\centering

\scalebox{0.75}{
\begin{tikzpicture}[->,>=triangle 90, thick,shorten >=1pt,auto, node distance=\hdistance,  thick,  
main node/.style={rectangle,fill=blue!10,draw,font=\sffamily\large},
   TP node/.style={rectangle,dotted,draw,font=\sffamily\it\tiny}]

  \node[main node] (1a) {reorientation $-_A\Mref$};
  \node[main node] (1b) [right of=1a] {subset $\alpha_\Mref(A)$};
   \node[main node] (2a) [below of=1a, node distance=\vdistance] {\begin{tabular}{c}oriented matroid $M$\\ (activity class)\end{tabular}};
  \node[main node] (2b) [right of=2a] {basis $\alpha(M)$};
  \node[main node] (3a) [below of=2a, node distance=\vdistance] {\begin{tabular}{c} bounded/dual-bounded \\ oriented matroid $M$ \\ {\small(up to opposite)}\end{tabular}};
  \node[main node] (3b) [right of=3a] {\begin{tabular}{c}internal/external uniactive \\ basis $\alpha(M)$\end{tabular}};
  
  \node[TP node] (TP1a) [left of=1a, node distance=\hsldistance] {\begin{tabular}{c} Tutte polynomial\\ in terms of \\ \scriptsize refined orientation activities\\ \tiny (Section \ref{sec:partitions})\end{tabular}};
  \node[TP node] (TP1b) [right of=1b, node distance=\hsdistance] {\begin{tabular}{c} Tutte polynomial\\ in terms of \\ \scriptsize subset activities \\ \tiny (Section \ref{sec:partitions})\end{tabular}};
  \node[TP node] (TP2a) [left of=2a, node distance=\hsldistance] {\begin{tabular}{c} Tutte polynomial\\ in terms of \\ \scriptsize orientation activities\\ \tiny (Section \ref{subsec:prelim-orient-activities})\end{tabular}};
  \node[TP node] (TP2b) [right of=2b, node distance=\hsdistance] {\begin{tabular}{c} Tutte polynomial\\ in terms of \\ \scriptsize basis activities\\ \tiny (Section \ref{subsec:prelim-basis-activities})\end{tabular}};
  \node[TP node] (TP3a) [left of=3a, node distance=\hsldistance] {\begin{tabular}{c} \scriptsize $\beta$ invariant\\ \tiny (Section \ref{subsec:prelim-beta})\end{tabular}};
  \node[TP node] (TP3b) [right of=3b, node distance=\hsmdistance] {\begin{tabular}{c} \scriptsize $\beta$ invariant\\ \tiny (Section \ref{subsec:prelim-beta})\end{tabular}};
  \node[TP node] (TP4a) [above right of=TP3a, node distance=\diagrdistance]  {\begin{tabular}{c} Tutte polynomial in terms of \\ \scriptsize  $\beta$ invariants of minors \rm \cite{AB2-a}\end{tabular}};
  \node[TP node] (TP4b) [above left of=TP3b, node distance=\diagldistance] {\begin{tabular}{c} Tutte polynomial in terms of \\ \scriptsize  $\beta$ invariants of minors \rm \cite{AB2-a}\end{tabular}};

\path[every node/.style={font=\sffamily}]
	
	(1a) edge [->, >=latex, line width=1.2mm] node [bend right] 
		{\begin{tabular}{c}
			\bf refined active bijection $\alpha_\Mref$
		\end{tabular}} 
	(1b)
	
	(1b) edge [-, >=latex] node [bend right] 
		{\begin{tabular}{c}
			\small (Definition \ref{def:act-bij-ext}, Theorem \ref{th:ext-act-bij})
		\end{tabular}} 
	(1a)
		
	(2a) edge [->, >=latex, line width=1.2mm] node [bend right] 
		{\begin{tabular}{c}
			\bf canonical active bijection $\alpha$
		\end{tabular}}  
	(2b)
	
	(2b) edge [-, >=latex] node [bend right] 
		{\begin{tabular}{c}
			\small (Definitions \ref{def:om-alpha} to \ref{def:alpha-seq-decomp}, Theorem \ref{th:alpha})
		\end{tabular}}  
	(2a)
		
	(3a) edge [->, >=latex, line width=1.2mm] node [bend right] 
		{\begin{tabular}{c}
		{\bf uniactive	 }\\
		{\bf bijection $\alpha$} \cite{AB1}
		\end{tabular}}   
	(3b)

	(3b) edge [-, >=latex] node [bend left] 
		{\begin{tabular}{c}
			\small (Defs. in Section \ref{sec:bij-10}, Th. \ref{thm:bij-10})
		\end{tabular}}   
	(3a)

	(1a) edge [<-, bend left=25] node [bend left] 
		{\begin{tabular}{c}
			single pass algorithm\\
			\small (Theorem \ref{th:basori-refined})
		\end{tabular}}    
	(1b)
	
	(2a) edge [<-, bend left=25] node [bend left] 
		{\begin{tabular}{c}
			single pass algorithm\\
			\small (Theorem \ref{th:basori})
		\end{tabular}} 
	(2b)	
	
	(3a) edge [<-, bend left=25] node [bend left] 
		{\begin{tabular}{c}
			single pass algorithm \cite{AB1}\\
			\small (Proposition \ref{prop:alpha-10-inverse}) 
		\end{tabular}}  
	(3b)

	(1a)	 edge [<->,bend right=40, dashed]  node [bend right] 
		{\begin{tabular}{c} 
			deletion/contraction \cite{AB4}\\ 
			\small (see also \cite{ABG2})
		\end{tabular} } 
	(1b)
	
	(2a)	 edge [<->,bend right=40,dashed]  node [bend right] 
		{\begin{tabular}{c} 
			deletion/contraction \cite{AB4}\\ 
			\small (see also \cite{ABG2})
		\end{tabular} } 
	(2b)
	
	(3a)	 edge [<->,bend right=40, dashed]  node [bend right] 
		{\begin{tabular}{c} 
			deletion/contraction \cite{AB4}\\ 
			\small (see also \cite{ABG2})
		\end{tabular} } 
	(3b)

	
	(3b)	 edge [<-,bend left=75, dashed]  node [bend right] 
		{\begin{tabular}{c} 
			full optimality algorithm \cite{AB3}\\ 
			\small (see also \cite{GiLV09, ABG2LP})
		\end{tabular} }
	 (3a)
	 
	(1a) edge [<->] node [left] 	
		{\begin{tabular}{c} 
			partition of $2^E$ \\ 
			into activity classes\\
			of reorientations\\
			\small (Prop. \ref{prop:act-classes}, Def. \ref{def:act-class})
		\end{tabular}} 
	(2a)

	(1b) edge [<->] node [right] 	
		{\begin{tabular}{c} 
			partition of $2^E$ \\ 
			into  intervals\\
 			associated with bases\\
			\small (Section \ref{sec:refined})
		\end{tabular}} 
	(2b)
	
	(2a) edge [<->] node [left] 	
		{\begin{tabular}{c} 
			decomposition  \\ 
			of oriented matroid activities\\
			\small (Def. \ref{EG:def:ori-act-part}, Theorem \ref{th:dec-ori})\\ 
			\null
		\end{tabular}} 
	(3a)

	(2b) edge [<->] node [right] 	
		{\begin{tabular}{c} 
			decomposition \\ 
			of basis activities \cite{AB2-a}\\
			\small (Def. \ref{def:act-seq-dec-base}, Theorem \ref{th:dec_base})\\
			\null			
		\end{tabular}} 
	(3b)

	
	(1a) edge [-, dotted]  
	(TP1a)
	
	(1b) edge [-, dotted]  
	(TP1b)
	
	(2a) edge [-, dotted]  
	(TP2a)

	(2b) edge [-, dotted]  
	(TP2b)		
	
	(3a) edge [-, dotted]  
	(TP3a)

	(3b) edge [-, dotted]  
	(TP3b)
			
	;		
\end{tikzpicture}
}
\label{fig:diagram}
\caption{Diagram of results and constructions for the active bijection.  
Horizontal arrows indicate in which ways the constructions or definitions apply. 
Vertical arrows indicate how objects are related. 
Dotted rectangles indicate how the Tutte polynomial is involved or transforms through the constructions. Dashed arrows concern results detailed in forthcoming papers.
}
\end{figure}

\EME{dessous avant separation en deux tables}

\begin{table}
\centering

{\ptirm
\renewcommand{\arraystretch}{1}
\noindent
\begin{tabular}{|l|l|l |}

\hline
\multicolumn{1}{|c|}{\rm REORIENTATIONS} & \multicolumn{1}{|c|}{\rm BASES/SUBSETS} & \multicolumn{1}{|c|}{} \\
\hline
\multicolumn{3}{|c|}{{\it \small canonical active bijection for ordered oriented matroids}}\\
\hline
 activity classes of reorientations  \ 		&  bases 			&  $t(M;1,1)$ \\ 

 activity classes of acyclic reorientations	& internal bases		& $t(M;1,0)$ \\ 
 activity classes of totally cyclic reorientations   & external  bases			& $t(M;0,1)$ \\ 
 	 			 bounded reorientations {\scriptsize (up to opposite)}	&   uniactive internal bases	  		&  
 	 			$\beta(M)\hphantom{^*}=t_{1,0}$   \\ 
 	 			 dual bounded reorientations {\scriptsize (up to opposite)}	&   uniactive external bases	  		&  $\beta(M^*)=t_{0,1}$   \\ 



 \hline
 \multicolumn{3}{|c|}{{\it \small refined active bijection w.r.t. a given reference reorientation}}\\
 \hline
 \eme{dessous en commentaire act classes <-> sp. tree intervals}%
  				 reorientations	& subsets of the ground set							& $t(M;2,2)$ \\ 
%
%

 
 \begin{tabular}{@{}l@{}}
 reorientations with fixed orientation  \\
 \hskip 1cm  for active elements
\end{tabular}
 & 	independents	&  $t(M;2,1)$  \\ 

 \begin{tabular}{@{}l@{}}
 reorientations with fixed orientation \\
\hskip 1cm  for dual-active elements
\end{tabular} 
& spanning subsets			&  $t(M;1,2)$\\ 

 				 acyclic reorientations	& no-broken-circuit subsets			& $t(M;2,0)$ \\ 
 				 totally cyclic reorientations & supersets of external bases & $t(M;0,2)$\\


 \begin{tabular}{@{}l@{}}
 reorientations with fixed orientation  \\
\hskip 1cm for active and dual-active elements
\end{tabular}  
&  bases 			&   $t(M;1,1)$\\ 

 \begin{tabular}{@{}l@{}}
acyclic  reorientations with fixed orientation  \\
\hskip 1cm for dual-active elements
\end{tabular} 
& internal bases			&  $t(M;1,0)$ \\ 

 \begin{tabular}{@{}l@{}}
 totally cyclic  reorientations with fixed orientation  \\
\hskip 1cm for active elements
\end{tabular} 
  				 & external  bases			& $t(M;0,1)$  \\

 \hline
 \multicolumn{3}{|c|}{{\it \small translation for the case of real hyperplane arrangements}}\\
 \hline
 
 reorientations $\sim$ signatures & bases $\sim$ simplices 		  		&   \\ 
 acyclic reorientations  $\sim$ regions		& 					&  \\ 
 totally cyclic reorientations  $\sim$ dual regions		& 					&  \\ 
  bounded reorientations  $\sim$ bounded regions		& 					&  \\ 
 \hline
 \multicolumn{3}{|c|}{{\it \small translation for the case of (connected) graphs}}\\
 \hline
 
reorientations  $\sim$ orientations & bases  $\sim$ spanning trees			  		& \cite{GiLV05, ABG2, ABG2LP}  \\ 
totally cyclic  $\sim$ strongly connected 	& independents $\sim$ forests & \\ 
  	 	 bounded  $\sim$ (acyclic) bipolar 	& spanning subsets & \\ 
 	  	 dual-bounded  $\sim$ cyclic-bipolar 	& \hspace{0.5cm} $\sim$ connected spanning subgraphs & \\

  \hline
 \multicolumn{3}{|c|}{{\it \small even more particular cases}}\\
 \hline
\begin{tabular}{@{}l@{}}
{\small\it (in uniform case / general position arrangements)}
\\
 bounded regions
\end{tabular}
& 
\begin{tabular}{@{}l@{}}
pseudo/real linear programming 
\\
  \hspace{2.9cm}  optimal vertices 
\end{tabular}
&	\cite{GiLV04}	 \\


\begin{tabular}{@{}l@{}}
  {\small \it (in graphs, for suitable orderings)}	\\
   unique sink acyclic orientations 
\end{tabular}
  & internal  bases			&  \cite{GiLV05}  \\

\begin{tabular}{@{}l@{}}
{\it\small (in the braid arrangement, or the complete graph)}\\
permutations 
\end{tabular}
 & increasing trees							&\cite{GiLV06}\\ 


\begin{tabular}{@{}l@{}}
{\it \small (in the hyperoctahedral arrangement)}\\
signed permutations
\end{tabular}
 & signed increasing trees					& \cite{GiLV06}\\

\hline
\end{tabular}


}
\caption{
The two first blocks of lines list the canonical and refined active bijections  along with their notable restrictions
(Theorems \ref{th:alpha} and \ref{th:ext-act-bij}).  
The third column gives the number of involved objects.
As defined in Section~\ref{sec:prelim}: internal, resp. external, bases are those with external, resp. internal, activity equal to zero;  uniactive bases are those with only one externally or internally active element; active, resp. dual-active, elements are smallest elements of a positive circuit, resp. cocircuit.
As detailed in Definition \ref{def:act-class}: activity classes are obtained by reorienting arbitrarily unions of positive circuits or cocircuits  with the same fixed smallest element.
In the next blocks of lines, the character $\sim$ stands for a translation. 
The last column gives references where these particular cases are specifically studied. 
}
\label{table:intro}
\end{table}


\EME{dessous apres separation en deux tables}%

\ss

We introducce some notions of \emph{filtrations} for ordered (oriented) matroids, which are particular sequences of nested subsets (intuitively: the subsets involved in the decomposition of the above recursive definition). 
In a matroid, this notion yields (independently of the active bijection itself) an expression of the Tutte polynomial 
in terms of beta invariants of minors induced by these sequences, and a canonical decomposition of bases into uniactive internal/external bases of minors, both detailed in \cite{AB2-a} (No. 2.a) and recalled in Section \ref{sec:dec-seq}. On the other hand, the same sequences yield
a canonical decomposition of oriented matroids 
 into bounded regions of minors of the primal/dual (acyclic/cyclic bipolar directed graph minors in the graph case), as detailed in Section \ref{sec:dec-mo}. We also naturally obtain a partition of the set of reorientations into some \emph{activity (equivalence) classes}.


Independently, the active bijection provides a canonical \emph{uniactive bijection} between bounded regions and uniactive internal bases (their  \emph{fully optimal bases}, as in the above definition). This is a deep and difficult combinatorial result from 
\cite{AB1} (No. 1), that can be seen from different manners, notably as an elaboration of linear programming optimality, as detailed in \cite{AB3} (No. 3). See more details in the introduction of Section \ref{sec:bij-10}. This section surveys, recalls and reformulates these results in an appropriate way for this paper (see also Figure \ref{fig:bounded-duality-diagram} for a diagram on involved duality properties).
%

Putting together the two previous decompositions and the previous bijection, 
we obtain in Section~\ref{sec:can-act-bij}
the  \emph{canonical active bijection} of an ordered oriented matroid (a recursive definiton of which is given above, it can be also thought of as a sign criterion that fundamental circuits and cocircuits of one and only one basis satisfy in a given oriented matroid).
It yields a canonical activity preserving (and active filtration preserving) bijection between reorientation activity classes and bases of any ordered oriented matroid, giving a bijective passage between the two Tutte polynomial expressions mentioned at the beginning of the introduction.
An important feature is that the canonical active bijection depends only on the reorientation class of the oriented matroid, that is on the non-signed pseudosphere/hyperplane arrangement in terms of a topological representation (or the underlying undirected graph in the graph case). 

Furthermore, each one of these two aforementioned Tutte polynomial expressions can be refined into an expansion involving 
\emph{four subset activity variables}, one from bases again, the other from reorientations again, independently of each other.
The result in terms of bases is known  from the literature, and the result in terms of reorientation can be deduced from the above partition into acivity classes. These results are recalled and synthesized in Section~\ref{sec:partitions}.
In our setting, the underlying structural  construction is 
to partition  the power set of the ground set, one into classical boolean intervals associated with bases, the other into reorientation activity classes, respectively. 





Building on what precedes, in Section \ref{sec:refined}, the \emph{refined active bijection} is defined  with respect to any given reference reorientation (or signature of the pseudosphere/hyperplane arrangement, or orientation in the graph case).
It maps reorientation activity classes onto intervals associated with bases, consistently with the canonical active bijection.
 In each  class/interval couple, the reference reorientation is used to naturally fix a boolean lattice isomorphism. By this manner, the global bijection involves all reorientations/subsets, preserves the four refined activity parameters, 
 and allows us to derive various bijections.
 
 The paper ends with developed examples in Section \ref{sec:example}, completing the running example and the illustrations given along the paper.
We suggest that the reader could already  have a glimpse at these final examples,
in particular Figure \ref{fig:ex-arrgt} and its caption are intended to give a first geometrical intuition of various aspects of the construction on a simple but meaningful example.
Let us also mention, as a remarkable example, that the active bijection 
can be seen as a far reaching generalization of the well-known bijection between permutations and increasing trees (a particular case obtained from  complete graphs 
or from the Coxeter arrangement $A_n$ as detailed in~\cite[Section~5]{GiLV06}).

 \ms
 
Finally, let us end this overview by pointing out further features of the active bijection (the uniactive, the canonical, and the refined, as well).
First, from the computational complexity viewpoint,
in general, from bases to reorientations, it can be simply built by a single pass  algorithm over the ground set. From reorientations to bases, the construction is more complicated, noticeably as its restriction to bounded regions of a pseudosphere/real hyperplane arrangement contains the pseudo/real linear programming problem (see \cite{AB1, AB3, ABG2LP}).
Thus, it can be thought of, in general, as a sort of ``one way function''.
Second, the active bijection can also be built by deletion/contraction of the greatest element. Notably, in the bounded case, this construction can be seen as a refinement of the linear programming solving by variable/constraint deletion. More generally, one can describe a deletion/contraction framework for activity preserving correspondences among which the active bijection is uniquely and canonically determined. These constructions are detailed in \cite{AB4} (No. 4) (see also \cite{ABG2} for a condensed version in graphs). Third, the constructions used at each of the three levels of the active bijection are independent of each other to a certain extent. One can thus use the decomposition of activities addressed in the paper to define a decomposition framework for activity preserving correspondences among which the active bijection is uniquely and canonically determined (see Remarks \ref{rk:preserv-act-bij-class} and \ref{rk:refined-bij}, see also \cite{AB4, ABG2}).
\EMEvder{ce similarly A METTRE OU PAS ? a chagner, a detailler en sous section comme dans ABG2?}%
\EMEvder{auter brouillon la dessus, amettre peut eter ailleurs : the first part provides a bijection for $1,0$, th second part extends it, but those two constructions are independent, one could use other bijections for $1,0$ and extend them the same way. But here we obtain something canoncial at each level of the cosntuction (notably intrinsic to the given orietned matrod, indeednet of a reference orietnation).}
%
Fourth, at every level of its construction, the active bijection 
behaves nicely with respect to duality, 
and involves important duality properties,
as witness various remarks and results in this paper and others of the series.%
\EMEvder{lister resultats sur dualite ?}



\EMEvder{choix de donner recursion enterems de circuits car plus courte}%

\def\titretrois{Filtrations of an ordered matroid and decomposition of matroid bases}
\def\titrequatre{Decomposition of an oriented matroid into bounded primal/dual regions of minors (and definition of reorientation activity classes)}
\def\titrecinq{The active bijection between bounded/dual-bounded reorientations and fully optimal uniactive internal/external bases}
\def\titresix{The canonical active mapping of ordered oriented matroids, and the canonical active bijection between reorientation activity classes and matroid bases}
\def\titresept{Partitions of $2^E$ into basis intervals and reorientation activity classes, and Tutte polynomial in terms of four activity parameters for subsets and for reorientations}
\def\titrehuit{The refined active bijection between reorientations and subsets}
\def\titreneuf{Examples}

\section{Preliminaries}
\label{sec:prelim}

\EMEvder{ordered matroid et remplacer le plus possible ?}


\EME{dire pour graphes et arrangements, et dire bipolar OK A ETE VITE FAIT en une phrase au debut de section}


\subsection{Generalities}

\EME{section deja reecrites adaptees ent erms de m.o.}

\EME{AU FINAL : comparer cette section avec AB2-a}

\eme{ordered partout ? raccorucirait !}%

In the paper, $\subset$ denotes the strict inclusion, and $\uplus$, or $+$, denotes the disjoint union.
If $\mathcal F$ is a set of subsets of $E$, then $\cup \mathcal F$ denotes the subset of $E$ obtained by taking the union of all elements~of~$\mathcal F$.

In the paper $M$ denotes a matroid or an oriented matroid on a finite set $E$. 
See \cite{Ox92} and \cite{OM99}  for a complete background on matroid theory and oriented matroid theory, respectively. See in particular \cite[Sections 1.1 and 1.2]{OM99} for the relation with graphs and hyperplane arrangements 
(see also Table \ref{table:intro} of the present paper for some translations in these two settings).
See \cite[Section 2]{AB1} for a summary of oriented matroid combinatorial and geometrical aspects that we specifically use, and see \cite[Section 2]{ABG2} for preliminaries similar to those below specifically using graph terminology.

A matroid or an oriented matroid $M$ on $E$ is called \emph{ordered} when the set $E$ is linearly ordered. Then, the dual $M^*$ of $M$ is ordered by the same ordering on $E$. Any minor of $M$ is ordered the natural way, its ground set ordering being induced by that of $E$.
A minor $M/\{e\}$, resp. $M\bk\{e\}$, for $e\in E$, can be denoted for short $M/e$, resp. $M\bk e$.
A matroid might be called \emph{loop}, or \emph{isthmus}, if it has a unique element and this unique element is a loop ($M=U_{1,0}$), or an isthmus ($M=U_{1,1}$), respectively.
An isthmus is also called a coloop in the literature.
\EMEvder{changer partout isthmus en coloop ?}

Let us recall some classical properties of minors in matroids.
For $F\subseteq E$, it is known that:
circuits of $M(F)$ are circuits of $M$ contained in $F$; cocircuits of $M(F)$ are non-empty inclusion-minimal intersections of $F$ and cocircuits of $M$; circuits of $M/F$ are non-empty inclusion-minimal intersections of $E\s F$ and circuits of $M$ (that is inclusion-minimal subsets obtained by removing $F$ from circuits of $M$); cocircuits of $M/F$ are cocircuits of $M$ contained in $E\s F$.

Let us also recall some usual matroid notions.
A \emph{flat} $F$ of $M$ is a subset of $E$ such that $E\setminus F$ is a union of cocircuits; equivalently: if $C\setminus \{e\}\subseteq F$ for some circuit $C$ and element $e$, then $e\in F$; and equivalently: $M/F$ has no loop.
We call \emph{dual-flat} $F$ of $M$ a subset of $E$ which is a union of circuits; equivalently: its complement is a flat of the dual matroid $M^*$;  equivalently: if $D\setminus \{e\}\subseteq E\s F$ for some cocircuit $D$ and element $e$, then $e\in E\s F$; and equivalently: $M(F)$ has no isthmus.
A \emph{cyclic-flat} $F$ of $M$ is both a flat and a dual-flat of $M$;
 equivalently: $F$ is a flat and $M(F)$ has no isthmus; or equivalently: $M/F$ has no loop and $M(F)$ has no isthmus.
 \ss


\EME{****???????  prendre cyclic-bounded ou dual-bounded ????****}

As far as oriented matroids are concerned, 
given an oriented matroid $M$, the underlying matroid is denoted $\underline M$ when a distinction is important, but it may be denoted also by the same manner $M$.
Similarly, we will often make the abuse of using the same notation $C$ either for the \emph{signed element subset} $(C^+,C^-)$ (oriented matroid circuit) or for its \emph{support} $\underline C=C^+\uplus C^-$ (matroid circuit).
Also, we will use some typical oriented matroid technique, notably orthogonality and compositions of circuits and cocircuits, see \cite[Section 2]{AB1} or \cite{OM99}.

\EME{PEUT ETRE MIEUX DE DEFINIR REORIENTATIONS $-_AM$ EN DSITINGUANT DEUX OPPOSEES... plutot que de s'emboruiller avc des sous-ensembles....}

\EMEvder{def de om obtained by reorientation enlevee ci-dessous}
%
The set of all $-_AM$ for $A\subseteq E$ is called the set of \emph{reorientations of $M$}.
It is very important to point out that we consider this set as isomorphic to $2^E$ (as a reorientation of $M$, $-_AM$ is identified by the subset $A$). By this way, we distinguish for instance between $-_AM$ and $-_{E\setminus A}M$ as reorientations of $M$, even if 
the two resulting 
oriented matroids are equal. 
%
%
This is consistent with signed (real central) hyperplane arrangements and with the topological representation of oriented matroids as signed pseudosphere arrangements: the $2^E$ signatures of the underlying non-signed arrangement are obtained by reorienting any subset of $E$ from a given signature.
And this is consistent with graphs: given a directed graph $G=(V,E)$, the $2^E$ orientations of the underlying undirected graph are obtained by reorienting any subset of edges from $G$.
\EMEvder{attention a usage de acyclique etc, pas pour subset mais pour reorientation... attention car c'etait annonce differement dans texte mis en commentare ci-dessous}%
 %
%
 Given a reorientation  $-_AM$ of $M$, we call  $-_{E\s A}M$ its \emph{opposite} reorientation.
%
%
We say that $e\in E$ has a \emph{fixed orientation (with respect to $M$)} in a set of reorientations of $M$ if, for every reorientation  $-_AM$ in this set, we have $e\not\in A$.
%

%
%


\subsection{Matroid basis activities}
\label{subsec:prelim-basis-activities}

\EMEvder{AU FINAL comparer avec AB2a et recupererer modifs de AB2a}%
Let $M$ be an ordered matroid on $E$, and let $B$ be  a basis of $M$.
For $b\in B$, the \emph{fundamental cocircuit} of $b$ with respect to $B$, denoted $C_M^*(B;b)$, or $C^*(B;b)$ for short,  is the  unique cocircuit contained in $(E\s B)\cup\{b\}$.
For $e\not\in B$, the \emph{fundamental circuit} of $e$ with respect to $B$, denoted $C_M(B;e)$, or $C(B;e)$ for short,
 is the unique circuit contained in $B\cup\{e\}$.
When the matroid $M$ is oriented, then, by convention, 
 $b$ is positive in the fundamental cocircuit $C^*(B;b)$, and $e$ is positive in the fundamental circuit $C(B;e)$.

Let $$\Int(B)=\Bigl\{\ b\in B \mid b=\ \min \ \bigl(\ C^*(B;b)\ \bigr)\ \ \Bigr\},$$
$$\Ext(B)=\Bigl\{\ e\in E\setminus B \mid e=\ \min \ \bigl(\ C(B;e)\ \bigr)\ \ \Bigr\}.$$
We might add a subscript as $\Int_M(B)$ or $\Ext_M(B)$ when necessary.
The elements of $\Int(B)$, resp. $\Ext(B)$, are called \emph{internally active}, resp. \emph{externally active}, with respect to $B$. The cardinality of $\Int(B)$, resp. $\Ext(B)$ is called \emph{internal activity}, resp. \emph{external activity}, of $B$. 
We might write that a basis is \emph{$(i,j)$-active} when its internal and external activities equal $i$ and $j$, respectively.

Observe that $\Int(B)\cap \Ext(B)=\emptyset$ and that, for $p=\min (E)$,  we have $p\in \Int(B)\cup \Ext(B)$. If $\Int(B)=\emptyset$, resp.
$\Ext(B)=\emptyset$, then $B$ is called \emph{external}, resp. \emph {internal}.
If 
$\Int(B)\cup \Ext(B)=\{p\}$
then $B$ is called \emph{uniactive}.
Hence, a base with internal activity $1$ and external activity $0$, or $(1,0)$-active for short, is called uniactive internal, and a base with internal activity $0$ and external activity $1$, or $(0,1)$-active for short, is called uniactive external.
Let us mention that internal uniactive bases can be characterized by several ways, see \cite{GiLV05, AB1, AB2-a}.
\EMEvder{attention meme phrase et meme commentaires persos plus loin dans section 10}%
\EMEvder{redonner caracteristions de internal uniactive (ne pas priviliegeier que les roeitnations) donner en ref dans AB2a ?}%
\EMEvder{autre caracterisation de internal uniactive: GiVL05 Prop 2, bien a remettre quelque aprt}%
Let us mention that exchanging the two smallest elements of $E$ yields a canonical bijection between uniactive internal and uniactive external bases, see \cite[Proposition 5.1]{AB1} up to a typing error%
\footnote{\label{footnote:typo}Let us correct here an unfortunate typing error in \cite[Proposition 5.1 and Theorem 5.3]{AB1}. The statement has been given under the wrong hypothesis 
$B_{\min}=\{p<p'<\dots\}$ 
instead of the correct one $E=\{p<p'<\dots\}$. Proofs are unchanged
(independent typo: in line 10 of the proof of \cite[Proposition 5.1]{AB1}, instead of $B'-f$, read $(E\setminus  B')\setminus\{f\}$).
In \cite[Section 4]{GiLV05}, the statement of the same properties in graphs is correct.
}%
, see also \cite[Section 4]{GiLV05} in graphs.
\eme{remplacer partout ?}%
Also, let $B_{\min}$ be the smallest (lexicographic) base of $M$. Then, as well-known and easy to prove, we have $\Int(B_{\min})=B_{\min}$, $\Ext(B_{\min})=\emptyset$ and $\Int(B)\subseteq B_{\min}$ for every base $B$.
Also, let $B_{\max}$ be the greatest (lexicographic) base of $M$. Then $\Int(B_{\max})=\emptyset$, $\Ext(B_{\max})=E\s B_{\max}$, and $\Ext(B)\subseteq E\s B_{\max}$ for every base $B$.
Thus, roughly, internal/external activities can be thought of as situating a basis with respect to $B_{\min}$ and $B_{\max}$.
Finally, we recall that internal and external activities are dual notions:
$$\Int_M(B)=\Ext_{M^*}(E\s B) \ \ \text{ and }\ \  \Ext_M(B)=\Int_{M^*}(E\s B).$$

By \cite{Tu54, Cr69}, the Tutte polynomial of $M$ is
\begin{equation*}
\tag{``enumeration of basis activities''}
\label{eq:basis-activities}
t(M;x,y)=\sum_{\io,\ep}b_{\io,\ep}x^\io y^\ep
\end{equation*}
where $b_{\io,\ep}$ is the number of bases of $M$ 
with internal activity $\io$ and external activity $\ep$.


\begin{figure}[htbh]
\centering
\includegraphics[scale=1.2]{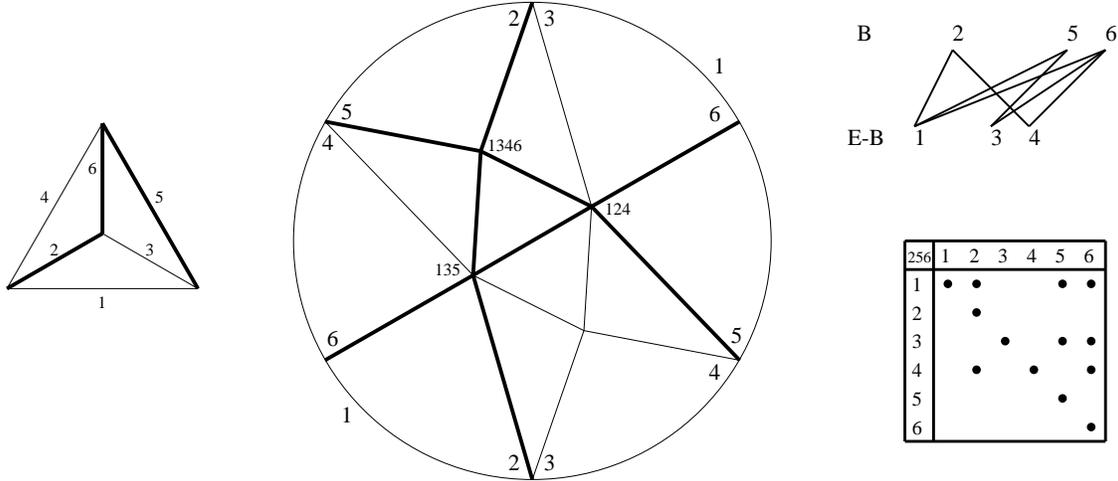}
\caption{This example of $K_4$ with elements $1<\dots<6$ will serve as a running example throughout the paper.  On the left: a graph representation. In the middle: a hyperplane arrangement representation (we always represent $\min(E)$ as a hyperplane at infinity, and we only represent one half of the arrangement, on a given side of $\min(E)$, see \cite[Section 2]{AB1} for more details on such representations). For the basis $256$, we have $\Int(256)=\emptyset$ and $\Ext(256)=\{1,3\}$.
 Fundamental cocircuits of the basis are written in the arrangement next to the vertices of the coresponding simplex. On the upper right and the bottom right, respectively: the fundamental bipartite graph and the fundamental tableau of the basis (see Section \ref{subsec:prelim-fund}).
\EMEvder{verif avec AB2a, voir ausi modif  beta dans AB2a, etc.........}
}
\label{fig:K4exbase256}
\end{figure}


\subsection{Fundamental graph/tableau of a basis}
\label{subsec:prelim-fund}

Observe that the above definitions for a basis $B$ of an ordered matroid $M$ only rely upon the fundamental circuits/cocircuits of the basis, not on the whole structure $M$. In fact all algorithms from bases to reorientations developed in the paper only rely on this local data. 
In particular, in \cite{AB2-a}, we give combinatorial constructions that also only depend on this local data, and thus that can be naturally expressed in terms of general bipartite graphs on a linearly ordered set of vertices. 
In this paper, we do not need this setting, but, for the sake of illustrations, we  introduce the following definitions.

Given a basis $B$ of a matroid $M$ on $E$, the \emph{fundamental graph} of $B$ in $M$, denoted $\F_M(B)$ is the usual graph 
with set of vertices $E$, bipartite w.r.t. the couple of subsets $(B,E\s B)$ forming a bipartition $E=B\uplus E\s B$, and with edges such that for every $b\in B$, $b$ is adjacent to elements of $C^*(B;b)\s \{b\}$, and  for every $e\in E\s B$, $e$ is adjacent to elements of $C(B;e)\s \{e\}$.
Recall that $$e\in C^*(B;b)\text{ if and only if }b\in C(B;e).$$

We call \emph{fundamental tableau} $\F_M(B)$ the matrix whose rows and columns are indexed by $E$, with entries in $\{\X,0\}$, and such that each diagonal element indexed by $(e,e)$, $e\in E$, is non-zero and is the only non-zero entry of its row if $e\in B$, and the only non-zero entry of its column if $e\in E\s B$.
We use the same notation for the fundamental graph and fundamental tableau since,
obviously, they are equivalent structures: each non-diagonal entry of the tableau represents an edge of the corresponding bipartite graph.
We choose to define both because graphs are the underlying compact combinatorial structure, whereas tableaux are better for visualization, notably for signs of the fundamental circuits/cocircuits in the oriented matroid case, and tableaux are consistent with the matrix representation used in the linear programming setting of the active bijection developed in \cite{AB1,AB3}. In what follows, we illustrate examples on both representations.

By the convention stated above, in an oriented matroid $M$, 
given a base $B$, for an element $b\in B$, 
 $b$ is positive in the fundamental cocircuit $C^*(B;b)$, and for an element $e\not\in B$,  $e$ is positive in the fundamental circuit $C(B;e)$.
%
Then, when we represent the tableau of a basis of an oriented matroid, we give signs to the entries in order to represent the fundamental cocircuits as columns and the opposites of fundamental circuits as rows (consistently with the above convention and with circuit/cocircuit orthogonality). 
\EMEvquatorze{A MTRE A JOUR, a deplacer ? OUI DEPLADCER METTRE AU NIVEAU DE LA FIGURE CONCERNEE}

An example of a matroid basis, its activities, its fundamental graph and its fundamental tableau is given here in Figure \ref{fig:K4exbase256}.
An example of a signed fundamental tableau is given later in Figure \ref{fig:ex-fob}.




\subsection{Oriented matroid activities}
\label{subsec:prelim-orient-activities}

Let $M$ be an oriented matroid on $E$.
A \emph{positive circuit}, resp. \emph{positive cocircuit}, of $\M$ is a circuit, resp. cocircuit, of $M$ such that all signs of its elements are positive.
The oriented matroid $\M$ is \emph{acyclic} if it has no positive circuit, or, equivalently, if every element belongs to a positive cocircuit.
The oriented matroid $\M$ is \emph{totally cyclic}, if every element belongs to a positive circuit, or, equivalently, if it has no positive cocircuit.

Let $\M$ be an ordered oriented matroid on a linearly ordered set $E$.
Let $$O^*(\M)=\Bigl\{\ a\in E \mid a=\ \min \ \bigl(\ D\ \bigr)\ \hbox{for a positive cocircuit } D\ \Bigr\},$$
$$O(\M)=\Bigl\{\ a\in E \mid a=\ \min \ \bigl(\ C\ \bigr)\ \hbox{for a positive circuit } C\ \Bigr\}.$$
The elements of $O^*(\M)$, resp. $O(\M)$, are called \emph{dual-active}, resp. \emph{active}, with respect to $\M$. The cardinality of $O^*(\M)$, resp. $O(\M)$, is called \emph{dual-activity}, resp. \emph{activity}, of $\M$.
We might write that an ordered oriented matroid is \emph{$(i,j)$-active} when its dual-activity and its activity equal $i$ and $j$, respectively.
Observe that $O^*(\M)\cap O(\M)=\emptyset$ and that, for $p=\min (E)$,  we have $p\in O^*(\M)\cup O(\M)$. Observe also that we have $O^*(\M)=\emptyset$, resp.
$O(\M)=\emptyset$, if and only if  $\M$ is totally cyclic, resp. acyclic.
Finally, observe that those two activities are dual notions:
$$O(M^*)=O^*(M) \ \ \text{ and }\ \  O^*(M^*)=O(M).$$
An illustration is shown in Figure \ref{fig:K4act}, along with a geometrical interpretation.
\EMEvquatorze{Detailler l'itnerpretation geomtrique sous forme d'observation/easy lemme ? }

\begin{figure}[h]
\centering
\includegraphics[scale=0.8]{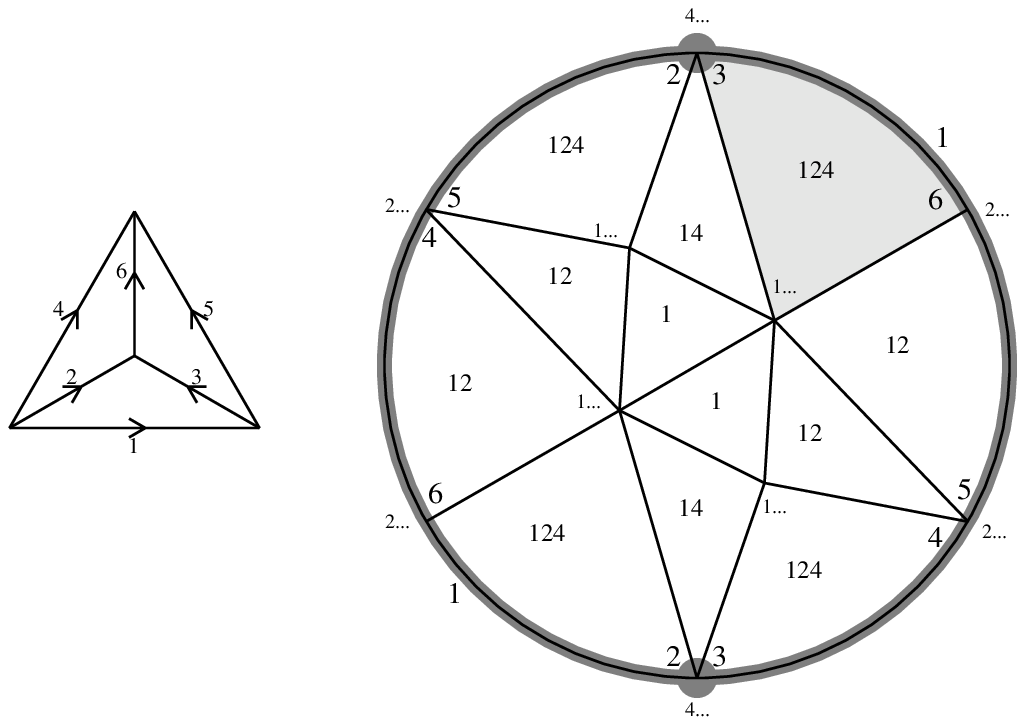}
\hfil
\includegraphics[scale=0.5]{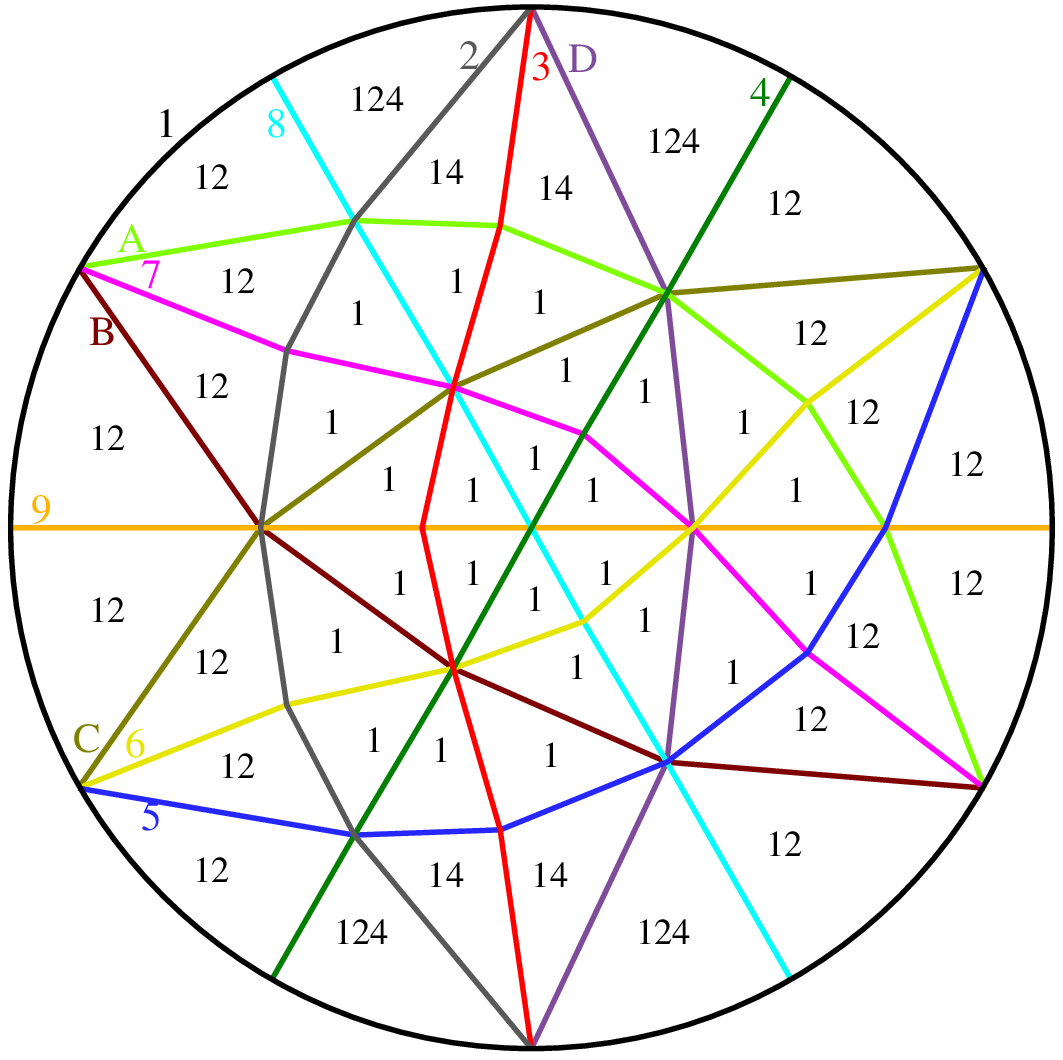}
\caption[]{Let us detail the left part: dual-activities for acyclic reorientations (regions) of $K_4$, continuing the running example from Figure \ref{fig:K4exbase256}.
The smallest element of each cocircuit is written at each corresponding vertex of the arrangement (as $1...$, $2...$, or~$4...$). The dual-activity of each region is written in the region (it is given by the elements written at the vertices of its border).
We get that
$t(K_4;x,0)= 8.({x\over 2})^3 + 12.({x\over 2})^2 + 4.({x\over 2})$. In particular
$t(K_4;2,0)=24$ counts the regions of $K_4$.
Observe, in general, that dual-activities indicate the positions of the regions w.r.t. the sequence of nested faces (here $1\cap 2   \subset 1$, depicted in bold) induced by the minimal basis (here $124$). 
Observe that the dual-activity of a region depends only on the minimal basis and on the unsigned underlying arrangement.
\EMEvder{insister sur le fait que region grise et graphe oriente inutiles sauf pour etablr lien avec graphe ? detailler cette region ?}%
The grey region corresponds to the directed graph shown on the left. Its dual-active elements are $\{1,2,4\}$, given by the positive cocircuits $124$, $2345$ and $456$  (directed cocycles of the directed graph).
\EMEvder{seconde figure ajoutee apres soumission, avant arxiv --- reprendre dimensions}%
The right part shows dual active elements of regions of
 an arrangement on $13$ elements, with minimal basis $1<2<4$ (the rest of the ordering is not used).
This second figure is exhaustively completed at the very end of the paper in Figure \ref{fig:D13refined}.
}%

\label{fig:K4act}
\end{figure}

By  \cite{LV84a}, we have the following theorem enumerating  reorientation activities: 
\begin{equation*}
\tag{``enumeration of reorientation activities''}
\label{eq:reorientation-activities}
t(M;x,y)=\sum_{\io,\ep}o_{\io,\ep}\ \Bigl({x\over 2}\Bigr)^\io \ \Bigl({y\over 2}\Bigr)^\ep
\end{equation*}
where $o_{\io,\ep}$ is the number of reorientations of $M$ with  dual-activity $\io$ and activity $\ep$.
This last formula generalizes various classical results from the literature, such as counting regions or acyclic (re)orientations \cite{Wi66, St73, Za75,LV77}
(see \cite{GiChapterOriented} for a survey on this result and further references).
%
%
%
\EMEvder{ai enleve dessous en commentaire les applications de ce resultat}%
%
%
%

Comparing the above two expressions for $t(M;x,y)$ we get, for all $\io,\ep$:
$$o_{\io,\ep}=2^{\io+\ep}b_{\io,\ep}.$$

\subsection{Beta invariant}
\label{subsec:prelim-beta}


\EMEvquatorze{reordonner 1,0 tout apres ?}

In particular, by the above formula, we have that  $$b_{1,0}={o_{1,0}\over 2}$$
counts the number of uniactive internal bases, and counts half the number of reorientations with orientation activity $1$ and dual orientation activity $0$, or $(1,0)$-active reorientations for short. 
This number  does not depend on the linear ordering of the element set $E$. 
This value $$\beta(M)=b_{1,0}$$ is known as the \emph{beta invariant} of $M$, introduced by Crapo \cite{Cr67}. Assuming $\mid E\mid >1$, it is known that $\beta(M)\not=0$ if and only if $M$ is connected. Let us recall that, 
for a loopless graph $G$ with at least three vertices, the associated matroid $M(G)$ is connected if and only if $G$ is 2-connected.
Also, we have $\beta(M)=b_{1,0}=b_{0,1}=\beta(M^*)$ as soon as   $\mid E\mid >1$.
Note that, if $\mid E\mid=1$, we have $\beta(M)=1$ if the single element is an isthmus of $M$,
and $\beta(M)=0$ if the single element is a loop of $M$.


Finally, for our constructions, we need to define the following dual slight variation $\beta^*$ by:
$$
 \beta^*(M)=\beta(M^*)=b_{0,1}={o_{0,1}\over 2}= \
\Biggr\{
\begin{array}{ll}
       \beta(M) &\text{ if }|E|>1 \\
       0 &\text{ if $M$ is an isthmus} \\
       1 &\text{ if $M$ is a loop.} 
\end{array}
$$

\subsection{Bounded reorientations (or bounded regions) in oriented matroids}
\label{subsec:prelim-bounded}

Let us characterize $(1,0)$-active reorientations of an ordered oriented matroid.
We say that an oriented matroid $\M$  on  $E$ is {\it bounded with respect to $p\in E$} if  $\M$ is acyclic and every positive cocircuit contains $p$.
In particular, if $\M$ consists in a single element $p$ which is an isthmus, then $\M$ is bounded with respect to $p$.
In terms of a topological representation or in terms of an affine real hyperplane arrangement, $M$ is bounded w.r.t. $p$ if and only if it corresponds to a region of the arrangement that does not touch $p$. Since $p=\min(E)$ is considered as an element ``at infinity'', such a region is therefore a ``bounded'' region in the usual sense, see details in \cite[Section 2]{AB1}.
In terms of a directed graph $G=(V,E)$, the associated oriented matroid $M(G)$ is bounded w.r.t. $p$ if and only if $G$ is \emph{bipolar} w.r.t. $p$, meaning that it is acyclic with a unique source and a unique sink which are the extremities of $p$, see details in \cite{GiLV05} or \cite[Section 2]{ABG2}.
Bounded regions with respect to a given element are counted by twice the $\beta$-invariant, as initially shown in \cite{Za75, LV77}.

We say that $\M$ is {\it dual-bounded with respect to $p\in E$} if $M$ is totally cyclic and every positive circuit contains $p$.
In terms of a directed graph $G=(V,E)$, dual-bounded is called \emph{cyclic-bipolar} in \cite[Section 2]{ABG2}.
In particular, if $\M$ consists in a single element $p$ which is a loop, then $M$ is dual-bounded with respect to $p$.
Equivalently, for an oriented matroid $M$ with at least two elements, $M$ is dual-bounded w.r.t. $p$ if and only if $-_pM$ is bounded w.r.t. $p$.

Therefore, for matroids with at least two elements, reorienting $p$ provides a canonical bijection between bounded reorientations with respect to $p$ and dual-bounded reorientations with respect to $p$, see \cite[Proposition 5.2]{AB1} (or also \cite[Proposition 5]{GiLV05} in graphs).  
Observe also that $M$ is bounded w.r.t. $p$ if and only if $M^*$ is dual-bounded w.r.t. $p$. Therefore bounded reorientations of $\M$ with respect to $p$ correspond to dual-bounded reorientations of $M^*$ with respect to~$p$.

Assuming $M$ is ordered, we get by definitions that:  
$\M$ is bounded with respect to $p=\min(E)$ if and only if 
$O(\M)=\emptyset$ (i.e. $\M$ is acyclic, i.e. $\M$ has an activity equal to zero) and $O^*(\M)=\{p\}$ (i.e. it has exactly one dual-active element, i.e. $\M$ has a dual-activity equal to one).
Similarly, $\M$ is dual-bounded if and only if 
$O^*(\M)=\emptyset$ (i.e. $\M$ is totally cyclic, i.e. $\M$ has a dual-activity equal to zero) and $O(\M)=\{p\}$  (i.e. it has exactly one active element, i.e. $\M$ has an activity equal to one).

\section{Filtrations of an ordered matroid and decomposition of matroid bases}

\label{sec:dec-seq}




\EMEvder{verifier avec AB2-a si nouvelles modifs}%


In this section, we recall definitions and the main result from the companion paper \cite{AB2-a}, No. 2.a of the main series.
Briefly, filtrations of an ordered matroid are particular sequences of nested sets (equivalent to particular partitions of the ground set).
A given basis can be decomposed by its active filtration (or active partition) into a uniquely defined sequence of bases of minors, such that these bases are $(1,0)$ or $(0,1)$-active
(in the sense of Tutte polynomial internal/external activities). 
And all this yields a decomposition theorem for the set of all bases using all possible filtrations (Theorem \ref{th:dec_base}). 
 Let us recall that those bases with internal/external activities equal to $1/0$ are counted by the beta invariant, as called after Crapo \cite{Cr67}, which is equal to the coefficient of $x$ of the Tutte polynomial $t(M;x,y)$ of the matroid.
 Let us mention that the bijection provided by this decomposition theorem
yields, numerically, an expression of the Tutte polynomial of a matroid in terms of beta invariants of minors \cite[Theorem \ref{a-th:tutte}]{AB2-a}, 
that refines at the same time the classical expressions in terms of basis activities and orientation activities (if the matroid is oriented), and the convolution formula for the Tutte polynomial. See \cite{AB2-a} for details and references.

\begin{definition}[{\cite[Definition \ref{a-def:general-filtration}]{AB2-a}}]
\label{def:general-filtration}
Let $E$ be a linearly ordered finite set.
Let $M$ be a matroid on $E$.
  We call \emph{filtration of $M$} (or $E$) 
  a 
sequence $(F'_\ep, \ldots, F'_0, F_c , F_0, \ldots, F_\io)$ of subsets of $E$
 such~that:%
\begin{itemize}
\itemsep=0mm
\partopsep=0mm 
\topsep=0mm 
\parsep=0mm
\item $\emptyset= F'_\ep\subset...\subset F'_0=F_c=F_0\subset...\subset F_\io= E$;
\item the sequence $\min(F_k\setminus F_{k-1})$, $1\leq k\leq\io$  is increasing with $k$;
\item the sequence  $\min(F'_{k-1}\setminus F'_k)$, $1\leq k\leq\ep$, is increasing with $k$.
\end{itemize}
The sequence is a \emph{connected filtration of $M$} if, in addition:
\emevder{OU : A filtration of $M$ is called \emph{connected (w.r.t. $M$)} if, in addition:}%
\begin{itemize}
\vspace{-1mm}
\itemsep=0mm
\partopsep=0mm 
\topsep=0mm 
\parsep=0mm
\item for $1\leq k\leq\io$, the minor $M(F_{k})/F_{k-1}$ is connected and is not a loop;
\item for $1\leq k\leq\ep$, the minor $M(F'_{k-1})/F'_k$ is connected and is not an isthmus.
\end{itemize}
\end{definition}

Equivalently, 
a filtration  of $M$ is connected if and only if
$$\Bigl(\prod_{1\leq k\leq \io}
\beta \bigl( M(F_k)/F_{k-1}\bigr)\Bigr)
 \ \Bigl(\prod_{1\leq k\leq \ep}\beta^* \bigl( M(F'_{k-1})/F'_{k}\bigr)\Bigr)\ \not=\ 0.$$%

In what follows, we can equally use the notations $(F'_\ep, \ldots, F'_0, F_c , F_0, \ldots, F_\io)$
or $\emptyset= F'_\ep\subset...\subset F'_0=F_c=F_0\subset...\subset F_\io= E$ to denote a filtration of $M$.
The $\io+\ep$ minors involved in Definition \ref{def:general-filtration} are said to be \emph{associated with} or \emph{induced by} the filtration.
%

Observe that filtrations of $M$ are equivalent to pairs of partitions of $M$ formed by a bipartition obtained from the subset $F_c$ (with possibly one empty part, which is a slight language abuse) and a refinement of this bipartition:
$$E=F_c\uplus E\s F_c,$$
$$E= (F'_{\ep-1}\s F'_\ep)\ \uplus\ \dots \ \uplus\ (F'_{0}\s F'_1)\ \uplus\ (F_1\s F_0)\ \uplus\ \dots \ \uplus\ (F_{\io}\s F_{\io-1}).$$
Indeed, one can retrieve the sequence of nested subsets from the pair of partitions since  the subsets in the sequence are unions of parts given by the ordering of the smallest elements of the parts.

\eme{
To motivate this definition, let us recall that, for a matroid with at least two elements, $\beta(M)\not= 0$ if and only if the matroid of $M$ is connected, that is if and only if $M$ is loopless and 2-connected (see also forthcoming Lemma \ref{lem:dec-seq-equiv}). 
}


%
%
%
%
Let us note that, by \cite[Lemma \ref{a-lem:des-seq-flats}]{AB2-a}, for a connected filtration of $M$: for $0\leq k\leq\io$, the subset $F_k$ is a flat of $M$; for $0\leq k\leq\ep$, the subset $F'_k$ is a dual-flat of $M$; and the subset $F_c$ is a cyclic-flat of $M$, which we call \emph{cyclic flat of the (connected) filtration}.
\EMEvder{ceci est-il utile?}%
%
%

\begin{observation}[{\cite[Observation \ref{a-lem:dec-seq-observation}]{AB2-a}}]
\label{obs:dec-seq-duality}
Let $\emptyset= F'_\ep\subset...\subset F'_0=F_c=F_0\subset...\subset F_\io= E$ be a connected filtration of an ordered matroid $M$. We have:
\begin{itemize}
\itemsep=0mm
\item $\emptyset= E\s F_\io\subset...\subset E\s F_0=E\s F_c=E\s F'_0\subset...\subset E\s F'_\ep= E$ is a connected filtration of $M^*$, for the cyclic-flat $E\s F_c$ of $M^*$;
\item the minors associated with the above filtration of $M^*$ are the duals of the minors associated with the above filtration of $M$, indeed:
for every $1\leq k\leq\io$, 

\centerline{$\bigl( M(F_{k})/F_{k-1}\bigr)^*=M^*(E\s F_{k-1})/(E\s F_k),$}
and for every $1\leq k\leq\ep$, 

\centerline{$\bigl( M(F'_{k-1})/F'_k\bigr)^*=M^*(E\s F'_{k})/(E\s F'_{k-1}).$}
\end{itemize}
\end{observation}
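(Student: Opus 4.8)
The plan is to prove the two bullets in reverse logical order: first establish the minor-duality identities of the second bullet by a direct matroid-minor computation, and then deduce the first bullet (that the complemented-and-reversed sequence is a connected filtration of $M^*$ with cyclic flat $E\setminus F_c$) by combining those identities with the elementary fact that the defining minima-conditions of a filtration transpose under complementation. The only ingredients needed are the standard dual-minor rules $(N/X)^*=N^*\setminus X$ and $(N\setminus X)^*=N^*/X$, the commutation of deletion and contraction on disjoint subsets, the self-duality of connectedness recorded in Section~\ref{subsec:prelim-beta} (namely $\beta(N)\neq 0\iff\beta(N^*)\neq 0$ for $|E(N)|>1$), the fact that duality exchanges loops and isthmuses, and the complementation duality between flats and dual-flats stated in the Generalities.

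For the identities, I would write each associated minor as a deletion followed by a contraction and dualize. Since $F_{k-1}\subseteq F_k$, we have $M(F_k)/F_{k-1}=\bigl(M\setminus(E\setminus F_k)\bigr)/F_{k-1}$; dualizing swaps the two operations to give $M^*/(E\setminus F_k)\setminus F_{k-1}$, and because $E\setminus F_k$ and $F_{k-1}$ are disjoint these commute to $\bigl(M^*\setminus F_{k-1}\bigr)/(E\setminus F_k)=M^*(E\setminus F_{k-1})/(E\setminus F_k)$, which is the claimed equality on the common ground set $F_k\setminus F_{k-1}$. The primed identity is the same computation with the inclusion $F'_k\subseteq F'_{k-1}$ in place of $F_{k-1}\subseteq F_k$, yielding $\bigl(M(F'_{k-1})/F'_k\bigr)^*=M^*(E\setminus F'_k)/(E\setminus F'_{k-1})$.

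For the filtration structure, complementation reverses all inclusions, so the chain $\emptyset=F'_\ep\subset\cdots\subset F_c\subset\cdots\subset F_\io=E$ becomes, after reversing the listing order, the asserted increasing chain $\emptyset=E\setminus F_\io\subset\cdots\subset E\setminus F_c\subset\cdots\subset E\setminus F'_\ep=E$, with middle set $E\setminus F_c$. The two halves of the $M^*$-filtration are $G_k=E\setminus F'_k$ for $0\le k\le\ep$ and $G'_k=E\setminus F_k$ for $0\le k\le\io$, so that the indices $\io,\ep$ swap roles. The minima-conditions of Definition~\ref{def:general-filtration} then transpose exactly: since $G_k\setminus G_{k-1}=F'_{k-1}\setminus F'_k$ and $G'_{k-1}\setminus G'_k=F_k\setminus F_{k-1}$, the increasingness of $\min(F'_{k-1}\setminus F'_k)$ and of $\min(F_k\setminus F_{k-1})$ furnishes precisely the two required minima-conditions for $M^*$. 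Finally $E\setminus F_c$ is a cyclic-flat of $M^*$ because $F_c$ is a cyclic-flat of $M$ and complementation exchanges flats and dual-flats between $M$ and $M^*$.

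It remains to check the connectedness clauses, and here the identities do the work: every minor of the $M^*$-sequence equals the dual of a minor associated with the filtration of $M$. Since connectedness is self-dual and duality sends loops to isthmuses, the clause ``connected and not a loop'' for $M(F_k)/F_{k-1}$ dualizes to ``connected and not an isthmus'' for $M^*(E\setminus F_{k-1})/(E\setminus F_k)$, and symmetrically the primed minors transfer in the opposite direction; these are exactly the two connectedness requirements for a connected filtration of $M^*$. The argument is essentially routine, and I expect the only genuine care to lie in the index-and-complementation bookkeeping — correctly matching the two halves of the $M^*$-filtration to the complemented $F'$- and $F$-chains, and keeping the disjointness hypotheses explicit so that the deletion/contraction operations legitimately commute in the dual computation.
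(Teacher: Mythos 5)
Your proof is correct and complete: the minor identities via $(N\setminus X)^*=N^*/X$, $(N/X)^*=N^*\setminus X$ and disjoint commutation, the index bookkeeping for the two minima-conditions, and the transfer of ``connected and not a loop/isthmus'' under duality are all sound. The paper itself gives no proof of this statement --- it is recalled as an observation from the companion paper \cite{AB2-a} --- and your routine verification is precisely the argument that status as an ``observation'' leaves implicit, so there is nothing substantive to compare beyond noting agreement.
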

\ss


\begin{thm}[{\cite[Theorem \ref{a-th:dec_base}]{AB2-a}}] 
\label{th:dec_base}
Let $M$ be a matroid on a linearly ordered set $E$.
%
$$
\Bigl\{\ \text{bases of }M\ \Bigr\}\ 
=\biguplus_
{\substack{
\emptyset=F'_\ep\subset...\subset F'_0=F_c\\
F_c=F_0\subset...\subset F_\iota=E\\
\hbox{connected filtration of $M$}
}}
\Bigl\{B'_1\plus...\plus B'_\ep\plus B_1\plus...\plus B_\iota
\mid$$
$$\text{for all }1\leq k\leq \ep, \
B'_k \hbox{ base of }M(F'_{k-1})/F'_{k}\text{ with $\io(B'_k)=0$ and $\ep(B'_k)=1$,}$$
$$\text{for all }1\leq k\leq \io, \
B_k \hbox{ base of }M(F_k)/F_{k-1}\text{ with $\io(B_k)=1$ and $\ep(B_k)=0$}\Bigr\}$$
With the above notations and $B=B'_1\plus...\plus B'_\ep\plus B_1\plus...\plus B_\iota$,
we then have:
$$\Int(B)=\cup_{1\leq k\leq \iota} \min(F_k\s F_{k-1})=\cup_{1\leq k\leq \iota} \Int(B_k),$$
$$\Ext(B)=\cup_{1\leq k\leq \ep} \min(F'_{k-1}\s F'_{k})=\cup_{1\leq k\leq \ep} \Ext(B'_k).$$
%
\end{thm}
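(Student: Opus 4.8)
The plan is to exhibit the displayed identity as a bijection: to each basis $B$ of $M$ I would assign a connected filtration together with a sequence of uniactive bases of its associated minors, and conversely; the activity formulas will then fall out of the construction. The device driving everything is the \emph{active filtration} of $B$, which is the preimage of $B$ under this correspondence.

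\emph{From a basis to a filtration.} Given a basis $B$, write $\Int(B)=\{a_1<\dots<a_\io\}$ and $\Ext(B)=\{a'_1<\dots<a'_\ep\}$. I would build the flat chain $F_0\subset F_1\subset\dots\subset F_\io=E$ so that the block $F_k\setminus F_{k-1}$ is exactly the set of elements whose activity ``depends on'' $a_k$, as read off from the fundamental cocircuits $C^*(B;b)$ and their minimal elements; dually I would build the dual-flat chain $\emptyset=F'_\ep\subset\dots\subset F'_0$ from the fundamental circuits $C(B;e)$, and set $F_c=F_0=F'_0$. The first verifications are that each $F_k$ is a flat, each $F'_k$ a dual-flat, and that $\min(F_k\setminus F_{k-1})=a_k$ and $\min(F'_{k-1}\setminus F'_k)=a'_k$; the monotonicity required by Definition \ref{def:general-filtration} is then exactly the fact that the $a_k$ (resp.\ $a'_k$) are listed in increasing order.

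\emph{Localisation of activity.} The technical heart is to show that $B_k:=B\cap(F_k\setminus F_{k-1})$ is a basis of $M(F_k)/F_{k-1}$ with $\io(B_k)=1$ and $\ep(B_k)=0$, and dually that $B'_k:=B\cap(F'_{k-1}\setminus F'_k)$ is a $(0,1)$-active basis of $M(F'_{k-1})/F'_k$. For this I would use the descriptions of circuits and cocircuits of minors recalled in the Preliminaries: a fundamental cocircuit of $B_k$ in $M(F_k)/F_{k-1}$ is an inclusion-minimal trace on $F_k\setminus F_{k-1}$ of a fundamental cocircuit of $B$, and symmetrically for circuits. The crucial point is that, because the blocks respect the linear order, the minimal element of such a trace agrees with the minimal element of the ambient fundamental cocircuit, so activity is computed blockwise and each block carries exactly one active element, namely $a_k$ (resp.\ $a'_k$). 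Granting this, $B=\biguplus_{k}B'_k\ \uplus\ \biguplus_{k}B_k$, and the asserted formulas $\Int(B)=\bigcup_k\min(F_k\setminus F_{k-1})=\bigcup_k\Int(B_k)$ and $\Ext(B)=\bigcup_k\min(F'_{k-1}\setminus F'_k)=\bigcup_k\Ext(B'_k)$ follow immediately. Connectivity of the minors, hence that the filtration is connected, then comes from the characterisation recalled after Definition \ref{def:general-filtration}: a $(1,0)$- or $(0,1)$-active basis of a minor forces $\beta\neq0$ there, while the ``not a loop / not an isthmus'' clauses are precisely the content of $\io=1$ (resp.\ $\ep=1$) on a single-element minor.

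\emph{From a filtration back to a basis, and conclusion.} Given a connected filtration and uniactive bases $B_k$, $B'_k$ of its associated minors, I would set $B=\biguplus_k B'_k\ \uplus\ \biguplus_k B_k$ and show it is a basis of $M$, using the standard fact that, along a chain of (dual-)flats, a basis of $M(F_k)/F_{k-1}$ glued to a basis of $M(F_{k-1})$ is a basis of $M(F_k)$, with the ranks telescoping correctly. Recomputing the activities of this $B$ by the same minor circuit/cocircuit analysis shows that its active filtration is the one we started from, so the two maps are mutually inverse; disjointness of the union over filtrations in the statement is then automatic, each basis determining its filtration uniquely. The main obstacle is the localisation-of-activity step, since the minimal element governing activity is taken in the global order on $E$ whereas fundamental circuits and cocircuits change under deletion and contraction; the increasing-minima conditions in the definition of a filtration are exactly what reconcile the two, and controlling this simultaneously on the flat (internal) side and the dual-flat (external) side across the common cyclic flat $F_c$ is where the real work lies.
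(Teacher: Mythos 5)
This paper does not itself prove Theorem \ref{th:dec_base}: the statement is imported verbatim from the companion paper \cite{AB2-a}, where the active filtration of a basis is constructed explicitly (via active closures of the active elements, or equivalently by a single-pass algorithm over $E$) and the decomposition is proved there. Your outline follows the same overall architecture as that proof --- basis $\to$ active filtration $\to$ uniactive bases of the induced minors, and back, with uniqueness giving disjointness --- so the plan is the expected one in spirit.

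As a proof, however, it has a genuine gap at its core, and it is not merely a matter of omitted routine details. First, the blocks $F_k\setminus F_{k-1}$ are never actually defined: ``the set of elements whose activity depends on $a_k$'' is not a construction, and everything downstream hinges on it (this is precisely what the active-closure operator of \cite{AB2-a} supplies). Second, and more seriously, the mechanism you invoke for the localisation step is false as stated. You argue that ``because the blocks respect the linear order, the minimal element of such a trace agrees with the minimal element of the ambient fundamental cocircuit.'' But a filtration only orders the \emph{minima} $a_1<\dots<a_\io$ of the blocks; the blocks themselves interleave arbitrarily in the linear order on $E$. Hence for $b\in B_k$ with $b\neq\min C^*(B;b)$, the witnessing element $\min C^*(B;b)$ may lie in a different block, in which case the trace $C^*(B;b)\cap(F_k\setminus F_{k-1})$ --- which, by the restriction property for fundamental cocircuits along a flag of flats, is indeed the fundamental cocircuit in the minor --- can acquire $b$ as its new minimum, creating spurious internal activity (and symmetrically for circuits, spurious external activity). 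Ruling out exactly this phenomenon is the entire content of defining the blocks by active closure: one must prove that every non-minimal element of a block has a witness of non-activity \emph{inside its own block}. That lemma is the theorem's technical heart; your proposal derives it from a premise that does not hold, so the argument as written does not go through. Your remaining steps --- connectivity of the minors from $\beta\neq 0$, gluing bases along the flag with telescoping ranks, and recomputation of activities to get mutual inverses --- are sound, but only conditional on that missing lemma.
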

\ss

\begin{definition}
\label{def:act-seq-dec-base}
Let $M$ be a matroid on a linearly ordered set $E$.
Let $B$ be a basis of $M$. 
The \emph{active filtration of $B$} is the unique (connected) filtration of $M$ associated to $B$ by Theorem \ref{th:dec_base}.
\ss

For what follows in the paper, we only need the above definition. 
%
Two other equivalent definitions are available, they are constructive and based on the fundamental graph of the basis only. The active filtration of $B$ can be defined:
\vspace{-2mm}
\begin{itemize}
\partopsep=0mm \topsep=0mm \parsep=0mm \itemsep=0mm
\item by applying successively the active closure to the internal/external active elements, see \cite[Definition \ref{a-def:act-seq-dec-fund-graph}]{AB2-a};
\item by a single pass algorithm over $E$, see \cite[Proposition \ref{a-prop:basori-partact}]{AB2-a}, which is used (and formally contained) in next Theorem \ref{th:basori}.\end{itemize}

The \emph{active partition of $B$} is the partition of $E$ induced by successive differences of subsets in the active filtration of $B$, given with the cyclic flat $F_c$ of the filtration
(so that the active filtration of $B$ is determined, as observed above).
The \emph{active minors of $B$} are the minors induced by the active filtration of $B$.
\end{definition}

Let us eventually recall the following observation, which is a direct consequence of  Theorem \ref{th:dec_base}, and which will yield later a remarkable constructive property.

\begin{observation}[{\cite[Observation \ref{a-lem:dec-seq-bas-observation-suite}]{AB2-a}}]
\label{obs:induced-dec-seq-bas}
Let $\emptyset= F'_\ep\subset...\subset F'_0=F_c=F_0\subset...\subset F_\io= E$ be the active filtration of a basis $B$ of $M$.
Let $F$ and $G$ be two subsets in this sequence such that $F\subseteq G$. Then, 
$B\cap G\s F$ is a basis  of $M(G)/F$, and its active filtration is obtained from the subsequence with extremities $F$ and $G$ (i.e. $F\subset \dots \subset G$) of the active filtration of $B$ by subtracting  $F$ from each subset of the subsequence (with $F_c\s F$ as cyclic flat).
In particular, the subsequence ending with $F$ is the active filtration of $B\cap F$ in $M(F)$, and the subsequence beginning with $F$ yields the active filtration of $B\s F$ in $M/F$ by subtracting  $F$ from each subset.
\end{observation}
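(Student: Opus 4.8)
The plan is to deduce everything from the uniqueness built into Theorem~\ref{th:dec_base}: since the set of bases is expressed as a \emph{disjoint} union over connected filtrations, a basis of any (minor) matroid is associated with one and only one connected filtration, which by Definition~\ref{def:act-seq-dec-base} is its active filtration. So it suffices, for each minor in question, to exhibit a connected filtration together with a valid decomposition of the candidate basis; uniqueness then forces this filtration to be the active one, and as a by-product shows the candidate is indeed a basis.

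First I would treat the two ``in particular'' cases, from which the general statement follows by composition. For the restriction case, fix a subset $H$ of the chain and consider the truncated chain of all members contained in $H$. The key point is that every minor associated with this truncated chain, being of the form $M(F_k)/F_{k-1}$ or $M(F'_{k-1})/F'_k$ with the relevant subsets $\subseteq H$, coincides with the same minor computed inside $M(H)$, since restriction to $H$ does not affect restrictions to subsets of $H$. Hence connectivity and the $(1,0)$/$(0,1)$ activity types carry over verbatim, so the truncated chain is a connected filtration of $M(H)$. Moreover, since $H$ is a union of blocks $F_k\setminus F_{k-1}$ and $F'_{k-1}\setminus F'_k$ of the active partition, intersecting the decomposition $B=B'_1\plus\dots\plus B'_\ep\plus B_1\plus\dots\plus B_\io$ with $H$ selects exactly the blocks lying inside $H$, producing a valid decomposition of $B\cap H$ for the truncated chain. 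Theorem~\ref{th:dec_base} applied to $M(H)$ then gives that $B\cap H$ is a basis of $M(H)$, and uniqueness identifies the truncated chain as its active filtration.

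The contraction case is entirely parallel. For $H$ in the chain I would form the chain of all members containing $H$, each with $H$ subtracted, and use the minor identity $(M/H)(F_k\setminus H)/(F_{k-1}\setminus H)=M(F_k)/F_{k-1}$, valid because $F_{k-1}\supseteq H$. Again the associated minors, their connectivity, and their activity types are unchanged, so this is a connected filtration of $M/H$; and $B\setminus H$ is precisely the union of the blocks disjoint from $H$, giving a valid decomposition. Theorem~\ref{th:dec_base} applied to $M/H$ together with uniqueness identifies this as the active filtration of $B\setminus H$. The general statement for $M(G)/F$ is then obtained by first restricting to $G$ (so that the subsequence ending at $G$ is the active filtration of $B\cap G$ in $M(G)$) and then contracting $F$ inside $M(G)$ (so that the subsequence from $F$ to $G$, minus $F$, is the active filtration of $(B\cap G)\setminus F=B\cap G\setminus F$). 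Alternatively, the contraction case could be derived from the restriction case through the duality of Observation~\ref{obs:dec-seq-duality}, via $M/H=(M^*(E\setminus H))^*$.

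The only real bookkeeping, and the point I would be most careful about, is tracking the cyclic flat and the filtration axioms through these operations. The new cyclic flat is $(F_c\cap G)\setminus F$, equal to $F_c\setminus F$ in the generic situation $F\subseteq F_c\subseteq G$ but degenerating to the whole ground set when $G\subseteq F_c$ and to the empty set when $F_c\subseteq F$; I would note explicitly that in these degenerate cases the minor is ``totally external'' ($\io=0$) or ``totally internal'' ($\ep=0$), consistently. The increasing-smallest-element conditions of Definition~\ref{def:general-filtration} are inherited because a subchain keeps a subsequence of the blocks, hence a subsequence of the already increasing sequences of their minima. No genuinely hard step arises, which is why the result is stated as an observation directly consequent to Theorem~\ref{th:dec_base}.
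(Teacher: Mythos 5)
Your proof is correct and takes essentially the same route as the paper: the observation is presented there precisely as ``a direct consequence of Theorem \ref{th:dec_base}'', and your argument --- showing that the truncated (resp.\ shifted) chain is a connected filtration of the minor because its associated minors coincide with the original ones, restricting the decomposition of $B$ block by block, and then invoking the disjointness/uniqueness in Theorem \ref{th:dec_base} to identify it as the active filtration --- is exactly that deduction. Your explicit bookkeeping of the cyclic flat in the degenerate cases ($G\subseteq F_c$ or $F_c\subseteq F$) is a careful touch consistent with how the statement must be read, and does not alter the substance.
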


\EMEvquatorze{voir si cette observation vaut le coup ou alorudit ou a presetner differemment...? idem dans AB2a : peut eter juste donner grosse observation coe ca a la fin plutot que plusieurs bservation quis e suivent du meme type}

\EMEvquatorze{donner meme observation pour oriented dec --- OK FAIT}

\EMEvquatorze{donne convolution formaul for the active bijection}

\section{The active filtration/partition of an ordered oriented matroid, decomposition into bounded primal/dual regions of minors, and activity classes of reorientations}

\label{sec:dec-mo}

\EMEvder{attention deans preuve des lemmes devenus props qu'ojn emploie pa le terme lemme}

\EMEvder{sous-sections comme dans ABG2??? active partition - activity class - decomp,}%

\EMEvder{preuves plus smples avec compositions et trucs purement m.o. ?}%


%
The decomposition addressed in this section 
refines the classical decomposition of an oriented matroid $\M$ into an acyclic oriented matroid $\M/F$ and a totally cyclic oriented matroid $M(F)$, where $F$ is the union of positive circuits of $M$ and $E\s F$ is the union of positive cocircuits of $M$.
We use the same filtrations as in Section \ref{sec:dec-seq} but in an oriented matroid setting. 
\EMEvder{donner references ?}%
This decomposition technique, whose idea dates back in a certain extent to \cite{LV84b}, was developed in \cite{Gi02} along with the related results addressed here, and it was shortly presented in \cite{GiLV04, GiLV05, GiLV06, GiLV07}.

We show how to canonically decompose an oriented matroid on a linearly ordered set into a sequence of minors, that are  induced by a connected filtration called \emph{the active filtration} (Theorem \ref{thm:unique-dec-seq}). These minors are either bounded or dual-bounded with respect to their smallest element (see Section \ref{subsec:prelim-bounded}).
The partition of the ground set induced by these minors is called \emph{the active partition}.
%
These minors have either $(1,0)$ or $(0,1)$ orientation activities.

This decomposition yields a decomposition theorem for the set of all reorientations of an oriented matroid (Theorem \ref{th:dec-ori}).
%
%
Numerically, this decomposition theorem can be seen as the same  expression of the Tutte polynomial in terms of products of beta invariants as the one from \cite[Theorem \ref{a-th:tutte}]{AB2-a} alluded to in Section \ref{sec:dec-seq}, but this time restricted to orientable matroids.

Moreover,
we define a partition of the set of reorientations into activity classes,
obtained by reorienting independently all the parts of the active filtration/partition, that is, all the ground sets of the above minors (which preserves the active filtration/partition). 
%
Geometrically, active partitions and activity classes of reorientations describe the positions of regions of a peudosphere/hyperplane arrangement with respect to the minimal flag (see Figure \ref{fig:rank4-act-part}).
We show in Section \ref{sec:partitions} how this partition 
yields a simple expression of the Tutte polynomial using four reorientation activity parameters (Theorem~\ref{th:expansion-reorientations}).

A simple example is provided by Figure \ref{fig:K4-dec}, continuing the running example of $K_4$ (note also that all active partitions of orientations on $K_4$ are shown in  Section \ref{subsec:ex-K4}).
A more involved example is provided by Figure \ref{fig:gros-ex-decomp}, showing on a graph the parts associated with positive circuits and the parts associated with positive cocircuits.
Another more involved example is provided by Figure \ref{fig:rank4-act-part},
showing the geometrical interpretation of active partitions on regions of a rank-4 hyperplane arrangement.

\ms

\EME{ajouter desin de fibre supersolv + ptet dessin de K4 petit, d'apres celui de prelim ?}


\begin{definition}
\label{EG:def:ori-act-part}
Let $M$ be an ordered oriented matroid on $E$
with $\io$ dual-orientation-active elements $a_1<\ldots<a_\io$ and $\ep$ orientation-active elements $a'_1<\ldots<a'_\ep$.
The \emph{active filtration of $M$}
\index{active filtration/partition of an oriented matroid (or digraph)|textbf}%
 is the sequence of subsets 
\ss

\centerline{$\displaystyle \emptyset= F'_\ep\ \subset\cdots\subset\ F'_0=F_c=F_0\ \subset\cdots\subset\ F_\io= E$}
\ss

\noindent
defined as follows. 
%
First, 
\ss

\centerline{$\displaystyle F_c\ =\ \bigcup_{C \hbox{\small\ positive circuit}}C
\ =\ E\ \setminus\ \bigcup_{C \hbox{\small\ positive cocircuit}}C.$}
\ss
%
%

\noindent For every $0\leq k\leq\ep-1$,%
\ss

\centerline{$\displaystyle F'_{k}=\bigcup_{\substack{ C \hbox{\small\ positive circuit}\\{{{\small  \Min}}(C)\; \geq\; a'_{k+1}}}}C.$}
\ss

\noindent 
Moreover, $F'_\ep=\emptyset$, $F_\io=E$, and, dually,  for every $0\leq k\leq\io-1$, 
\ss

\centerline{$\displaystyle F_k=E\ \setminus\ \bigcup_{\substack{ C \hbox{\small\ positive cocircuit}\\{{{\small  \Min}}(C)\; \geq\; a_{k+1}}}}C.$}
\index{positive circuit/cocircuit}%
\ss

%
\noindent
The \emph{active partition of $M$} is the partition 
induced by successive differences:
\vspace{-1mm}
$$E= (F'_{\ep-1}\s F'_\ep)\ \uplus\ \cdots \ \uplus\ (F'_{0}\s F'_1)\ \uplus\ (F_1\s F_0)\ \uplus\ \cdots \ \uplus\ (F_{\io}\s F_{\io-1}),$$
%
with $\min(F'_{k-1}\s F'_k)=a'_k$ for $1\leq k\leq \ep,$
and 
$\min(F_{k}\s F_{k-1})=a_k$ for $1\leq k\leq \io$.

\vspace{1mm}
%
%

\noindent
The \emph{active minors of $M$} 
are the $\io+\ep$ minors
\index{active minors|textbf}%
\vspace{-1.5mm}
$$M'_k=(M|F'_{k-1})/F'_{k} \hbox{ for }  1\leq k\leq \ep, \text{ and } M_k=(M|F_k)/F_{k-1} \hbox{ for } 1\leq k\leq\io.$$
\end{definition}

\EMEvquatorze{EN TERMES DE COMPOSITIONS ????}

We assume that the active partition is always given with the set $F_c$ (i.e. it can be thought of as a pair of partitions, one for $F_c$, the other for $E\s F_c$).
%
%
By this way, knowing the  active partition, allows us to retrieve the active filtration of $\M$. Indeed, the sequence $\min(F_k\setminus F_{k-1})$, $1\leq k\leq\io$,  is increasing with $k$, and the sequence  $\min(F'_{k-1}\setminus F'_k)$, $1\leq k\leq\ep$, is increasing with $k$, so the position of each part of the active partition with respect to the active filtration is identified.
%

Moreover, we have, for $1\leq k\leq\io$, 
$$F_k\setminus F_{k-1}\ =\ 
\bigcup_{\substack{ D \hbox{\small\ positive cocircuit}\\ {{\small  \Min}}(D)\; =\; a_k}}D
\ \ \setminus\ 
\bigcup_{\substack{ D \hbox{\small\ positive cocircuit}\\ {{\small \Min}}(D)\; >\; a_k}}D,$$
and, for $1\leq k\leq \ep$,  
$$F'_{k-1}\setminus F'_{k}\ =\ 
\bigcup_{\substack{ D \hbox{\small\ positive circuit}\\ {{\small  \Min}}(D)\; =\; a'_k}}D 
\ \ \setminus\ 
\bigcup_{\substack{ D \hbox{\small\ positive circuit}\\ {{\small \Min}}(D)\; >\; a'_k}}D
.$$

\EMEvder{figure modifiee dans chapitre avec plus gros caracteres}%

\begin{figure}[]
\centering
{\scalebox{1.2}{\includegraphics[width=12cm]{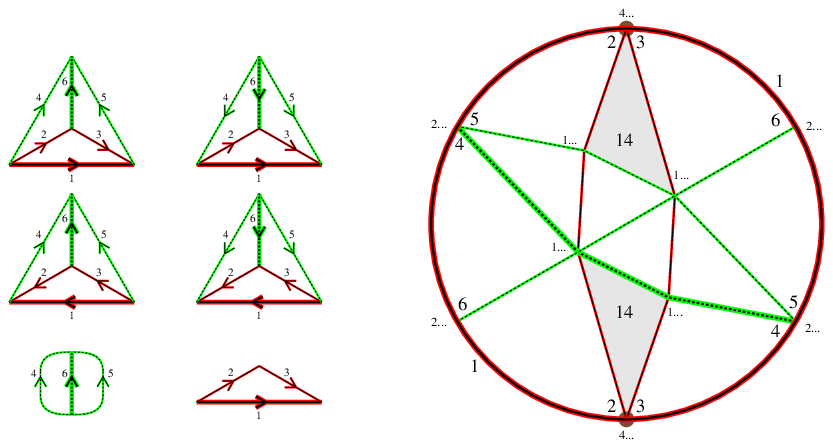}}}
\vspace{5mm}

\begin{tabular}{lccccc}
Dual-active elements: && \red{1} &and&\darkgreen{4}&\\
Active partition:& &\red{$123$}& {$\uplus$}& \darkgreen{$456$}&\\
Active filtration:& $\ \emptyset$\ &\red{$\subset$}\ &$123$\ & \darkgreen{$\subset$}\ & $123456$\\
Active  
minors  :&& \red{$M(123)$}  &and &\darkgreen{$M/123$}  &\\
Activity class:& \multicolumn{5}{c}{ $\{\  $ $M$,  $\ -_{\red{123}}M$,  $\ -_{\darkgreen{456}}M$, $\ -_{\red{123}\darkgreen{456}}M$ $\ \}$}
\end{tabular}
\caption[]{
Active decomposition of an acyclic orientation of $K_4$. 
Consider $M$ as any of the four depicted orientations of the graph $K_4$, or any of the two grey regions of the arrangement $K_4$.
The activities of $M$ are $O(M)=\emptyset$ and $O^*(M)=\{1,4\}$.
By Definition \ref{EG:def:ori-act-part}, the active filtration is $\emptyset=F_c\subset 123\subset 123456$, and the active partition is $123+456$, with cyclic flat $F_c=\emptyset$. The active  minors  are $\M(123456)/123$, which is bounded w.r.t. $4$, and $\M(123)$, which is bounded w.r.t.~$1$ (Theorem \ref{thm:unique-dec-seq}). Those minors are depicted as bipolar digraphs. The activity class is formed by the four depicted graphs, or by the two grey regions and their opposite (Definition \ref{def:act-class}).
\EME{ajouter version geometrique}%
\EMEvder{enlever face ? attention dans chaptre caracteres augmentes, mais pseudroites arrondies}%
}
\label{fig:K4-dec}
\end{figure}

%

\begin{figure}[]
\centering
\includegraphics[scale=1.2]{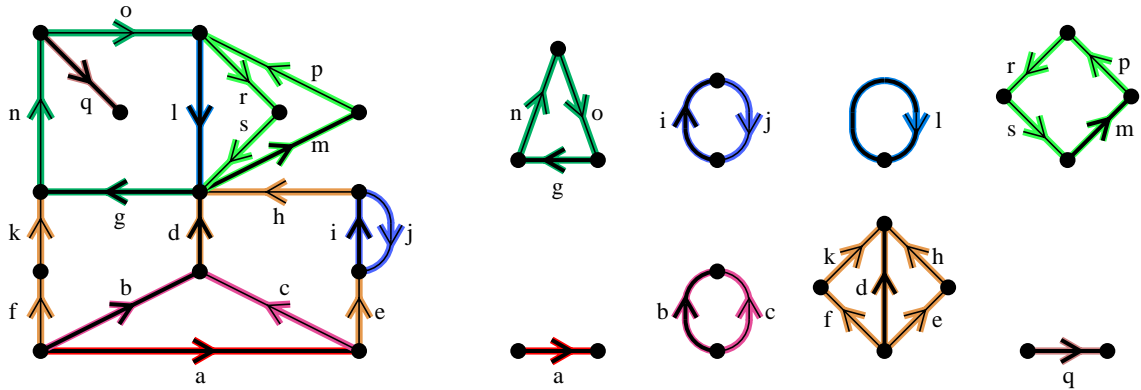}
\caption[]{Active decomposition of an ordered digraph $\G$.  The digraph $\G$ is shown on the left. 
The ordering of the edge set $E$ is given by: $a<b<c<\dots<q<r<s$. The active edges are $O(\G)=\{g,i,l,m\}$ and the dual-active edges are $O^*(\G)=\{a,b,d,q\}$ (bold edges). The active partition is given by: $F_c=gno+ij+l+mprs$ and $E\s F_c=a+bc+defhk+q$ (Definition \ref{EG:def:ori-act-part}).  
The corresponding active minors, which are bounded (or bipolar), resp. dual-bounded (or cyclic-bipolar),  w.r.t. their smallest edges (Theorem \ref{thm:unique-dec-seq}), and whose edge sets are given by the active partition, are shown in the bottom right line, resp. the upper right line. 
}
\label{fig:gros-ex-decomp}
\end{figure}

\null

\begin{figure}[h]
\centering
{\scalebox{0.95}
{\includegraphics[width=12cm]{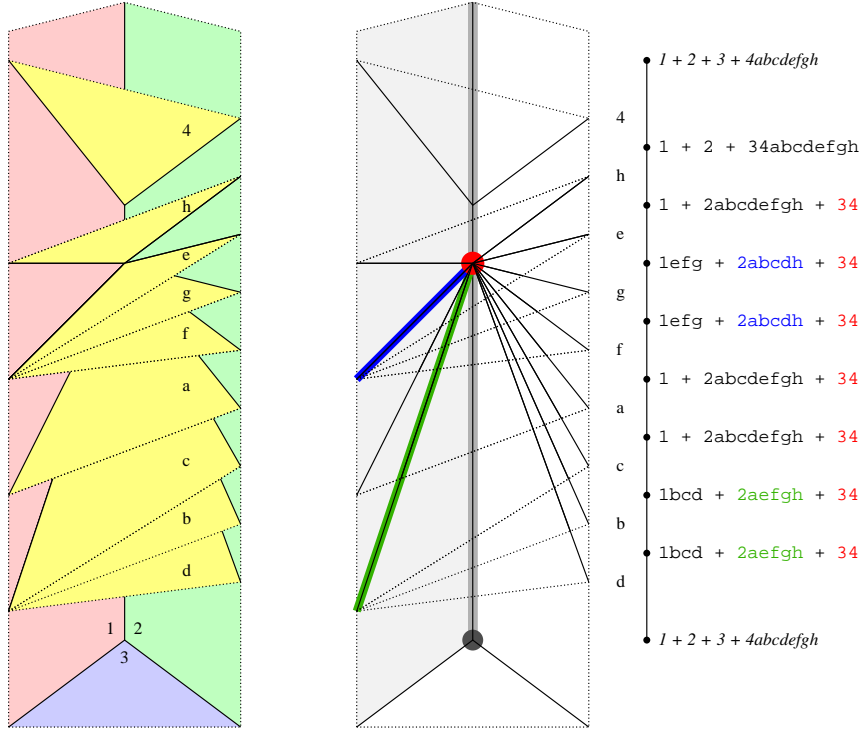}}}
\caption{Active partitions for some rank 4 regions (this picture is part of a more involved picture used in \cite{GiLV06}).
The ordering is given by $1<2<3<4<a<\dots<h$.
On the left and the middle: two views of the region of the space delimited by the hyperplanes $1$, $2$, and $3$. On the right: the active partitions of the regions of the arrangement contained (and forming a path) in this region (Definition \ref{EG:def:ori-act-part}).
The minimal basis is $1234$, it induces the flag of faces $1\cap 2\cap 3\subset 1\cap 2\subset 1$ (in shades of grey in the picture). Geometrically, dual-activities and active partitions situate regions w.r.t. the  flag of faces induced by the minimal basis. Precisely: intersections of regions with this flag of faces correspond to the covectors induced by the subsets of the active filtrations (see the covectors whose supports are $34$, $2abcdh$, and $2aefgh$, drawn in bold). 
\EMEvder{(but one may observe that only the four first elements of the ordering are relevant for the construction in this case). XXX VRAI OU PAS ???? XXX RK FIGURE RENTRE AVEC LEA PRECENDETEN SI DEUX LIGNES EN MOINS}
}
\label{fig:rank4-act-part}
\end{figure}

Remark that each $F_k$, $0\leq k\leq\io-1$, is the complement of the support of a positive covector of $M$, hence it is a flat of $M$, and that each $F'_k$, $0\leq k\leq\ep-1$, is the  support of a positive vector of $M$, hence it is a dual-flat of $M$.  In particular, $F_c$ is a cyclic-flat of $M$.
For convenience, we can refer to $F_c$, or to the parts forming $F_c$, as the \emph{cyclic part} of $M$, and to $E\s F_c$, or to the parts forming $E\s F_c$, as the \emph{acyclic part} of $M$.

Finally, let us point out that, in the definitions that precede and the results that follow, the particular case of acyclic oriented matroids is addressed as the case where $F_c=\emptyset$, and the totally cyclic case is addressed as the case where $F_c=E$.
Those cases are dual to each other. Let us deepen this with the next observation (which can be compared with 
Observation \ref{obs:dec-seq-duality}
for filtrations in general).

\begin{observation}
\label{lem:dec-seq-mo-observation}
Let $\emptyset= F'_\ep\subset...\subset F'_0=F_c=F_0\subset...\subset F_\io= E$ be the active filtration of an ordered oriented matroid $M$. We have:
\begin{enumerate}
\item $\emptyset= E\s F_\io\subset...\subset E\s F_0=E\s F_c=E\s F'_0\subset...\subset E\s F'_\ep= E$ is the active filtration of $M^*$, for the cyclic-flat $E\s F_c$ of $M^*$;
\label{item:sec-dec-act-dual}
\item $\emptyset= F'_\ep\subset...\subset F'_0=F_c=F_c$ is the active filtration of the totally cyclic oriented matroid $M(F_c)$, for the cyclic-flat $F_c$ of $M(F_c)$;
\item $\emptyset=\emptyset= F_0\s F_c\subset...\subset F_\io\s F_c=E\s F_c$ is the active filtration of the acyclic oriented matroid $M/F_c$, for the cyclic-flat $\emptyset$ of $M/F_c$;
\end{enumerate}
\end{observation}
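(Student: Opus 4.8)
The plan is to prove each of the three statements by applying Definition \ref{EG:def:ori-act-part} directly to the relevant minor and checking that the sets produced by its defining unions coincide with the claimed ones. Three elementary facts make this work: (i) duality interchanges positive circuits and positive cocircuits, so that $O(M^*)=O^*(M)$ and $O^*(M^*)=O(M)$; (ii) the signed circuits of $M(F_c)$ are exactly the signed circuits of $M$ supported in $F_c$, and every positive circuit of $M$ is supported in $F_c$ by the very definition of $F_c$; and (iii) the signed cocircuits of $M/F_c$ are exactly the signed cocircuits of $M$ supported in $E\s F_c$, and every positive cocircuit of $M$ is supported in $E\s F_c$. In each case the relevant positive circuits/cocircuits of the minor are inherited from $M$ with unchanged signs, so positivity and smallest elements are preserved.

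For the first statement (item \ref{item:sec-dec-act-dual}), I first note that the positive circuits, resp.\ cocircuits, of $M^*$ are the positive cocircuits, resp.\ circuits, of $M$; hence the dual-active elements of $M^*$ are $a'_1<\dots<a'_\ep$ and its active elements are $a_1<\dots<a_\io$. Feeding this into Definition \ref{EG:def:ori-act-part} applied to $M^*$, the cyclic flat of $M^*$ is the union of its positive circuits, namely $\bigcup_{D\text{ pos.\ cocircuit of }M}D=E\s F_c$; the circuit-side sets of $M^*$, indexed by the $a_{k+1}$, are $\bigcup_{D\text{ pos.\ cocirc.},\,\Min(D)\geq a_{k+1}}D=E\s F_k$ for $0\le k\le \io$; and the cocircuit-side sets of $M^*$, indexed by the $a'_{k+1}$, are $E\s\bigl(\bigcup_{C\text{ pos.\ circ.},\,\Min(C)\geq a'_{k+1}}C\bigr)=E\s F'_k$ for $0\le k\le\ep$. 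This is exactly the reversed-complemented sequence claimed.

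For the second statement, $M(F_c)$ is totally cyclic: every element of $F_c$ lies in a positive circuit of $M$ by definition of $F_c$, and that circuit remains a positive circuit of $M(F_c)$ by fact (ii), so $M(F_c)$ has no positive cocircuit and its dual-activity is $0$. Since its positive circuits are precisely those of $M$, its active elements are again $a'_1<\dots<a'_\ep$, and the circuit-side sets produced by Definition \ref{EG:def:ori-act-part} are the $F'_k$. As the cocircuit side is empty and the cyclic flat equals the whole ground set $F_c$, this yields exactly $\emptyset=F'_\ep\subset\dots\subset F'_0=F_c$.

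I would then obtain the third statement as the dual of the second: using $(M/F_c)^*=M^*(E\s F_c)$ together with item \ref{item:sec-dec-act-dual}, the active filtration of $M/F_c$ is the reversed complement (inside $E\s F_c$) of the active filtration of the totally cyclic oriented matroid $M^*(E\s F_c)$, which is $\emptyset=E\s F_\io\subset\dots\subset E\s F_0=E\s F_c$; complementing each $E\s F_k$ in $E\s F_c$ gives $F_k\s F_c$ and reversing the order gives $\emptyset=F_0\s F_c\subset\dots\subset F_\io\s F_c=E\s F_c$. Alternatively this can be checked directly: $M/F_c$ is acyclic because each positive cocircuit of $M$ survives in $M/F_c$ by fact (iii) and these cover $E\s F_c$, its dual-active elements are $a_1<\dots<a_\io$, and the cocircuit-side sets from Definition \ref{EG:def:ori-act-part} evaluate to $(E\s F_c)\s(E\s F_k)=F_k\s F_c$. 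The only point requiring genuine care throughout is the bookkeeping of indices together with the verification that restriction and contraction transport the relevant positive circuits/cocircuits with their signs intact, so that no spurious positive circuit or cocircuit is created and the ``trivial half'' of each minor's filtration really is empty.
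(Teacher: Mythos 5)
Your proof is correct and follows essentially the same route as the paper's own (very terse) argument: item 1 by unwinding Definition \ref{EG:def:ori-act-part} for $M^*$ using circuit/cocircuit duality, item 2 from the fact that the positive circuits of $M(F_c)$ are exactly the positive circuits of $M$, and item 3 by dualizing item 2. You simply spell out the bookkeeping that the paper declares ``obvious'' or ``direct''.
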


\begin{proof}
The first observation is obvious by the definition and by properties of oriented matroid duality. The second one is direct since positive circuits of $M(F_c)$ are exactly positive circuits of $M$. The third one is dual to the second one.
\end{proof}


\EME{SEE ABG2 for a more involved graph example (ou bien le remettre ???? non a priori pas trop de repettition...)}





In the following proofs, we can arbitrarily focus either on circuits or on cocircuits. We usually focus on cocircuits, because of their natural geometrical interpretation, and we deduce the same results for circuits by duality. Some proofs are written in terms of circuits, when they imply shorter notations, and can be deduced for cocircuits by duality. 

\EMEvquatorze{NB: on doit avoir aussi que $F$ is the unqiue subset such that... avec activites... comme base mais avec ordre en plus... a mettre ? a voir sans ordre en plus ?}

\EME{The next lemma notably means that one can compute the active filtration of $M$ in successive minors as well. BOF}

\begin{lemma}
\label{lem:induction-dec-seq}
Let $\M$ be oriented matroid on a linearly ordered set $E$ with $\io\geq 0$ dual-active elements $a_1<...<a_\io$, with $\ep\geq 0$ active elements $a'_1<...<a'_\ep$, and with active filtration \break $\emptyset=F'_\ep\subset F'_{\ep-1}\subset\ldots\subset F'_0= F_c = F_0\subset \ldots\subset F_{\io-1}\subset F_\io=E$. 

\noindent If $\io>0$ then, denoting $F=F_{\io-1}$, we have:
\begin{itemize}
\item $\M /F$ is 
bounded with respect to $a_\io$,
\item the active filtration of $\M(F)$ is 
 $(F'_\ep, \ldots, F'_0, F_c , F_0, \ldots, F_{\io-1})$.
\end{itemize}
 If $\ep>0$ then, denoting $F'=F'_{\ep-1}$, we have:
 \begin{itemize}
\item $\M(F')$ is 
dual-bounded with respect to $a_\ep$,
\item the active filtration of $\M / F'$ is 
 $(F'_{\ep-1}\setminus F', \ldots, F'_0\setminus F', F_c\setminus F' , F_0\setminus F', \ldots, F_{\io}\setminus F')$.
\end{itemize}
\end{lemma}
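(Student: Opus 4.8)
The plan is to prove the two bullets of the case $\io>0$ directly and then obtain the case $\ep>0$ for free by dualizing. Indeed, by Observation \ref{lem:dec-seq-mo-observation}\eqref{item:sec-dec-act-dual} the active filtration of $M^*$ is the reversed complement of that of $M$, with the roles of active and dual-active elements exchanged; so $M^*$ has $\ep$ dual-active elements, its greatest being $a'_\ep$, and the subset playing the role of ``$F_{\io-1}$'' for $M^*$ is exactly $E\setminus F'_{\ep-1}=E\setminus F'$. Applying the $\io>0$ statement to $M^*$ gives that $M^*/(E\setminus F')$ is bounded with respect to $a'_\ep=\min(F')$, and that the active filtration of $M^*(E\setminus F')$ is the corresponding truncation. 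Using the minor dualities $(M|F')^*=M^*/(E\setminus F')$ and $(M/F')^*=M^*(E\setminus F')$, together with the equivalence ``$N$ bounded w.r.t. $p$ $\iff$ $N^*$ dual-bounded w.r.t. $p$'' from Section \ref{subsec:prelim-bounded}, this translates into $M(F')$ being dual-bounded with respect to $\min(F')=a'_\ep$ and the asserted active filtration of $M/F'$ (the complemented, reversed truncation is exactly $(F'_{\ep-1}\setminus F',\ldots,F_\io\setminus F')$). Hence it suffices to treat the case $\io>0$, which I do now with $F=F_{\io-1}$.

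\textbf{First bullet: $M/F$ is bounded with respect to $a_\io$.} By Definition \ref{EG:def:ori-act-part}, since $a_\io$ is the greatest dual-active element, $E\setminus F=E\setminus F_{\io-1}=\bigcup_{\Min(D)=a_\io}D$, the union being over positive cocircuits $D$ of $M$ with minimum $a_\io$. Let $W$ be the composition of all these positive cocircuits: it is a positive covector of $M$ whose support is exactly $E\setminus F$. Its restriction to $E\setminus F$ is the all-positive covector of $M/F$, so $M/F$ is acyclic. Moreover every element of each such $D$ is $\geq a_\io$, so $\min(E\setminus F)=a_\io$. Now a positive cocircuit of $M/F$ is a positive cocircuit $D$ of $M$ contained in $E\setminus F$ (cocircuits of $M/F$ are cocircuits of $M$ contained in $E\setminus F$, with the same signs); hence $\min(D)\geq a_\io$, while $\min(D)$, being dual-active in $M$, lies in $\{a_1,\ldots,a_\io\}$. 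This forces $\min(D)=a_\io\in D$. Thus $M/F$ is acyclic and every positive cocircuit contains its smallest element $a_\io$, i.e. $M/F$ is bounded with respect to $a_\io$.

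\textbf{Second bullet: active filtration of $M(F)$.} First, the circuit side is unchanged. Since $F\supseteq F_c$ and $F_c$ is the union of all positive circuits of $M$, the positive circuits of $M(F)$ (circuits of $M$ contained in $F$) are exactly the positive circuits of $M$; hence $O(M(F))=O(M)=\{a'_1,\ldots,a'_\ep\}$, the cyclic flat is again $F_c$, and the subsets $F'_\ep,\ldots,F'_0$ are identical for $M$ and $M(F)$. It remains to show $O^*(M(F))=\{a_1,\ldots,a_{\io-1}\}$ and that the cocircuit-side subsets of $M(F)$ are $F_0,\ldots,F_{\io-1}$. Writing $V_j:=\bigcup_{\Min(D)\geq a_j}D=E\setminus F_{j-1}$ (union over positive cocircuits of $M$) and $U:=E\setminus F$, the whole claim reduces, via Definition \ref{EG:def:ori-act-part} applied to $M(F)$, to the set equality
$$\bigcup_{\substack{D'\ \text{pos. cocircuit of }M(F)\\ \Min(D')\geq a_j}} D'\ =\ V_j\setminus U\qquad(1\leq j\leq \io).$$
I would prove this by the standard composition/conformality technique for covectors (see \cite[Section 2]{AB1}). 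For ``$\subseteq$'': a positive cocircuit $D'$ of $M(F)$ is the restriction $D\cap F$ of some cocircuit $D$ of $M$; composing $W$ with $D$ yields a positive covector $Z=W\circ D$ of $M$ with support $D'\cup U$, and each positive cocircuit of $M$ conformal to $Z$ lies in $D'\cup U$, hence has all elements $\geq a_j$ (elements of $D'$ are $\geq\min(D')\geq a_j$, elements of $U$ are $\geq a_\io\geq a_j$), so $D'\subseteq V_j$; as $D'\subseteq F$ this gives $D'\subseteq V_j\setminus U$. For ``$\supseteq$'': an element $e\in V_j\setminus U$ lies in a positive cocircuit $D$ of $M$ with $\Min(D)\geq a_j$ and $e\in F$, and some positive cocircuit $D'$ of $M(F)$ conformal to $D\cap F$ contains $e$, with $\min(D')\geq\min(D)\geq a_j$. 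Granting this equality, $F_{j-1}^{M(F)}=F\setminus(V_j\setminus U)=E\setminus V_j=F_{j-1}$ for all $1\leq j\leq\io$ (using $U\subseteq V_j$), the case $j=\io$ giving the top subset $F=F_{\io-1}$ and showing no positive cocircuit of $M(F)$ has minimum $\geq a_\io$; so $O^*(M(F))=\{a_1,\ldots,a_{\io-1}\}$ and the active filtration of $M(F)$ is $(F'_\ep,\ldots,F'_0,F_c,F_0,\ldots,F_{\io-1})$, as claimed. The main obstacle is precisely this last step: controlling the positive cocircuits of the \emph{restriction} $M(F)$, which are truncations of cocircuits of $M$, and this is what forces the use of the positive covector $W$ of support $E\setminus F$ and of conformal decomposition rather than a naive support argument.
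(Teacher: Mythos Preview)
Your proof is correct and follows the same route as the paper: the duality reduction via Observation~\ref{lem:dec-seq-mo-observation}\eqref{item:sec-dec-act-dual}, the direct argument that $M/F$ is bounded, and the control of the positive cocircuits of $M(F)$ by composing with the positive covector $W$ supported on $E\setminus F$ and using conformal decomposition. The only cosmetic differences are that the paper works with strict thresholds $\min(D')>a_k$ rather than your $\geq a_j$, and places the duality step at the end instead of front-loading it.

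One step deserves one extra sentence. Your inference ``so $O^*(M(F))=\{a_1,\dots,a_{\io-1}\}$'' from the displayed equality is not quite immediate: that equality only samples the decreasing set-valued function $t\mapsto\bigcup_{\min D'\geq t}D'$ at the points $t=a_j$, so it does not by itself rule out an extra dual-active element of $M(F)$ strictly between two consecutive $a_j$'s. The fix uses the very argument you already wrote for the ``$\subseteq$'' inclusion: for an arbitrary positive cocircuit $D'$ of $M(F)$ with $b=\min(D')$, your positive covector $Z=W\circ D$ has support $D'\cup U$, and the positive cocircuit $C$ of $M$ through $b$ in its conformal decomposition satisfies $\min(C)\leq b<a_\io\leq\min(U)$, forcing $\min(C)\in D'$ and hence $\min(C)=b$; thus $b$ is dual-active in $M$, i.e.\ $b\in\{a_1,\dots,a_{\io-1}\}$. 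The paper's write-up is equally terse at this same point.
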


\eme{Enlever notation $F$ et $F'$ de enonce ? garder juste pour preuve ?}%



\begin{proof}
\emenew{DESSOUS EN COMMENTAIRE PREUVE EN TERMES DE CIRCUITS}
Let us assume first that $\io>0$, denoting $F=F_{\io-1}$.
The cocircuits of $\M/F$ are the cocircuits of $\M$ contained in $E\s F$, 
 where $E\s F$ is the union of all 
 positive cocircuits $D$ of $\M$ with smallest element $a_\io$.
 Hence every element of $\M/F$ belongs to a positive cocircuit, hence $\M/F$ is acyclic.
 And $a_\io$ belongs to a positive cocircuit of $\M/F$, hence $a_\io$ is dual-active in $\M/F$.
If another element was dual-active in $\M/F$, then it would also be the smallest element of a positive cocircuit in $\M$ and dual-active in $\M$, a contradiction with $a_\io$ being the greatest dual-active element of $\M$.
So we have $O^*(\M/F)=\{a_\io\}$ and $O(\M/F)=\emptyset$, that is $\M/F$ is bounded with respect to $a_\io$. 

As $E\s F$ is a union of positive cocircuits of $\M$, the positive circuits of $\M(F)$ are the positive circuits of $\M$. Hence, $\M$ and $\M(F)$ have the same active elements, and the cyclic part 
$(F'_\ep, \ldots, F'_0, F_c)$ of their active filtration is the same. 

The cocircuits of $\M(F)$ 
are exactly the non-empty inclusion-minimal intersections of 
 intersections of $F$ and cocircuits of $\M$.
 More precisely, the signed subsets of the form $C\cap F$, where $C$ is a cocircuit of $\M$, are unions of cocircuits of $\M(F)$.
Since every element of $E\s F$ is greatest than $a_\io$ by definition of $a_\io$, we have that $a_k\in F$ for every $1\leq k<\io$. A positive cocircuit $D$ of $\M$ with smallest element $a_k$, for $1\leq k<\io$, induces a positive cocircuit contained in $D\cap F$ of $M(F)$ with smallest element $a_k$, hence $a_1,\dots, a_{\io-1}$ are dual-active in $\M(F)$. 
Let $H_k=F\s \cup\{D\mid D\hbox{ positive cocircuit of }\M(F), \ \min(D)>a_k\}$.
Independently, by definition of $F_k$, we have $F_k=F\cap F_k=F\s \cup\{F\cap D\mid D\hbox{ positive cocircuit of }\M, \ \min(D)>a_k\}$.
For every positive cocircuit $D$ of $\M$, $D\cap F$ is a union of positive cocircuits of $M(F)$, 
so we  have  $F\s F_k\subseteq F\s H_k$, that is $H_k\subseteq F_k$.

Now, conversely,  let $e$ be an element of $F\s H_k$, for some $1\leq k<\io$. It belongs to be a positive cocircuit  $D$ of $\M(F)$ with smallest element $a>a_k$.
We want to prove that $e$ belongs to $F\cap D'$ for some positive cocircuit $D'$ of $\M$ contained in $D\cup (E\s F)$.
%
The cocircuit $D$ is contained in a cocircuit $D_M$ of $\M$ with $D_M\cap F=D$. 
Let $D'_M$ be the composition of all positive cocircuits of $\M$ with smallest element $a_\io$, whose support is $E\s F$ and whose signs are all positive (since given by positive cocircuits).
Then $D'_M\circ D_M$ is positive, since it is positive on $E\s F$ as $D'_M$, and positive on $D_M\cap F=D$ as $D$.
And $D'_M\circ D_M$ has smallest element $a$, since $a<a_\io$. 
By the conformal composition property of covectors in oriented matroid theory, there exists a positive cocircuit $D'$ of $\M$ containing $e$ and contained in $D_M\cup (E\s F)$.
Since every element of $E\s F$ is greater than $a_\io$ and $a_\io\geq a >a_k$, the smallest element of $D'$ is greater than $a$, hence strictly greater than $a_k$.
Since $e$ belongs to $F\cap D'$, we get that $e\in F\s F_k$.
We have proved  $F\s H_k\subseteq F\s F_k$, that is finally $F_k=H_k$, which provides the active filtration of $\M(F)$.

The second property involving $F'$ is dual to the first one involving $F$, hence its dual proof is direct, by Observation \ref{lem:dec-seq-mo-observation} Item \ref{item:sec-dec-act-dual}.
\end{proof}

%


\begin{thm}
\label{thm:unique-dec-seq}
Let $\M$ be oriented matroid on a linearly ordered set $E$.
The active filtration of $\M$ is the unique (connected) filtration $(F'_\ep, \ldots, F'_0, F_c , F_0, \ldots, F_\io)$ of $M$ (or $E$) such that
the $\io$ minors $$\M(F_k)/F_{k-1},\hbox{ for } 1\leq k\leq\io,$$ 
are 
bounded with respect to 
$a_k=\min(F_k\setminus F_{k-1})$, and the $\ep$ minors $$\M(F'_{k-1})/F'_{k}, \hbox{ for }1\leq k\leq \ep,$$ are 
dual-bounded with respect to 
$a'_k=\min(F'_{k-1}\setminus F'_{k})$.
\end{thm}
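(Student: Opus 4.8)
The plan is to establish existence and uniqueness separately, both leaning on Lemma \ref{lem:induction-dec-seq}. For existence — that the active filtration of $M$ is a connected filtration whose primal minors $M(F_k)/F_{k-1}$ are bounded, and whose dual minors $M(F'_{k-1})/F'_k$ are dual-bounded, with respect to the prescribed smallest elements — I would induct on $|E|$. If $\io>0$, Lemma \ref{lem:induction-dec-seq} gives directly that the top minor $M/F_{\io-1}$ is bounded w.r.t. $a_\io$, and that the active filtration of $M(F_{\io-1})$ is the truncation $(F'_\ep,\ldots,F_c,\ldots,F_{\io-1})$. Since $F_k\subseteq F_{\io-1}$ for $k\leq\io-1$ and $F'_{k-1}\subseteq F_c\subseteq F_{\io-1}$, the minor-of-restriction identities show that $M(F_k)/F_{k-1}$ and $M(F'_{k-1})/F'_k$ are unchanged when computed inside $M(F_{\io-1})$, so the induction hypothesis applies to them; if $\io=0$ one uses the dual half of the lemma. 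To see the filtration is connected, note that a bounded minor is itself a $(1,0)$-active reorientation, so $o_{1,0}\geq 1$ and $\beta=o_{1,0}/2\neq 0$ when $|E|>1$ (hence connected and not a loop), while for $|E|=1$ it reduces to a single isthmus; dually each dual-bounded minor is connected and not an isthmus.

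For uniqueness, let $(\tilde F'_{\tilde\ep},\ldots,\tilde F_c,\ldots,\tilde F_{\tilde\io})$ be any connected filtration with the stated boundedness properties. The first step is to show $\tilde F_c=F_c$. Restricting and contracting the filtration exhibits $M(\tilde F_c)$ as decomposed into dual-bounded (hence totally cyclic) minors and $M/\tilde F_c$ into bounded (hence acyclic) minors. A short reverse induction, composing a positive circuit of a minor $M(\tilde F'_{k-1})/\tilde F'_k$ with positive circuits already found inside $\tilde F'_k$ (conformal composition of vectors), shows every element of $\tilde F_c$ lies in a positive circuit of $M$ contained in $\tilde F_c$, so $M(\tilde F_c)$ is totally cyclic; dually $M/\tilde F_c$ is acyclic. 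Thus $\tilde F_c$ is a union of positive circuits whose complement is a union of positive cocircuits, and since positive circuits and positive cocircuits are disjoint by orthogonality, $\tilde F_c\subseteq F_c$ and $F_c\cap(E\setminus\tilde F_c)=\emptyset$; hence $\tilde F_c=F_c$, the canonical cyclic part.

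Having matched $\tilde F_c=F_c$, by duality (Observation \ref{lem:dec-seq-mo-observation}) it suffices to treat the acyclic case $F_c=\emptyset$, where the filtration is purely primal $\emptyset=\tilde F_0\subset\cdots\subset\tilde F_{\tilde\io}=E$. Here I would match the top and induct on $|E|$. Set $a=\min(E\setminus\tilde F_{\tilde\io-1})$. Boundedness of $M/\tilde F_{\tilde\io-1}$ gives a positive cocircuit of $M$ with smallest element $a$, so $a\in O^*(M)$. Next, $a=\max O^*(M)$: if some positive cocircuit $D$ had $\min(D)>a$, then $a\notin D$, yet any nonempty restriction of the positive covector $D$ to $E\setminus\tilde F_{\tilde\io-1}$ would decompose into positive cocircuits of $M/\tilde F_{\tilde\io-1}$, each containing $a$ by boundedness — impossible, so $D\subseteq\tilde F_{\tilde\io-1}$; then $D$ contains a positive cocircuit of $M(\tilde F_{\tilde\io-1})$ with smallest element $>a$, contradicting the induction hypothesis applied to the truncated (valid) filtration of $M(\tilde F_{\tilde\io-1})$, whose greatest dual-active element is $\min(\tilde F_{\tilde\io-1}\setminus\tilde F_{\tilde\io-2})<a$. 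The same covector-decomposition argument shows $E\setminus\tilde F_{\tilde\io-1}$ is exactly the union of the positive cocircuits of $M$ with smallest element $a$. These two facts are precisely the defining recipe of the active filtration, so $\tilde F_{\tilde\io-1}=F_{\io-1}$ and $a=a_\io$; applying Lemma \ref{lem:induction-dec-seq} and the induction hypothesis to $M(\tilde F_{\tilde\io-1})=M(F_{\io-1})$ identifies the remaining sets, yielding $\tilde F=F$.

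The main obstacle is the uniqueness argument, and within it the two extremal-matching claims in the acyclic case: that $a$ is forced to be the greatest dual-active element of $M$, and that the top block is forced to be the union of the top positive cocircuits. Both hinge on the conformal decomposition of a positive covector into positive cocircuits together with the induction hypothesis, while the reduction $\tilde F_c=F_c$ rests on the conformal composition of positive circuits; keeping the minor-of-minor identities and the duality bookkeeping straight is the delicate part of the write-up.
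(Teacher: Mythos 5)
Your existence half (the recursive use of Lemma~\ref{lem:induction-dec-seq}) and your identification $\tilde{F}_c=F_c$ (conformal composition plus circuit/cocircuit orthogonality) are sound and essentially match the paper, the latter being a valid bottom-up variant of the paper's top-down contradiction argument. The gap is in the acyclic-case uniqueness, at the very step you flag as the crux. You assert that if a positive cocircuit $D$ of $M$ with $\min(D)>a$ meets $E\setminus\tilde{F}_{\tilde{\io}-1}$, then its restriction to $E\setminus\tilde{F}_{\tilde{\io}-1}$ decomposes into positive cocircuits of the \emph{contraction} $M/\tilde{F}_{\tilde{\io}-1}$. That inference is false: the restriction of a covector of $M$ to a subset is a covector of the corresponding \emph{restriction} (deletion) minor, whereas the covectors of $M/A$ are the restrictions of those covectors of $M$ that vanish on $A$ --- and whether $D$ vanishes on $\tilde{F}_{\tilde{\io}-1}$ is precisely what is in question here (the hypothesis $\min(D)>a$ gives no containment, since the parts of a filtration interleave in the ordering). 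A concrete failure inside the theorem's own setting: take the acyclic digraph on vertices $v_0,\dots,v_4$ with edges $f_1=(v_0,v_1)$, $f_3=(v_1,v_2)$, $f_4=(v_2,v_3)$, $f_5=(v_3,v_4)$ and $f_2=(v_0,v_4)$, ordered $f_1<f_2<\dots<f_5$. The connected filtration $\emptyset\subset\{f_1\}\subset E$ satisfies all the hypotheses ($M(\{f_1\})$ is an isthmus, and $M/f_1$ is bounded w.r.t.\ $f_2$), and the star of $v_0$, namely $D=\{f_1^+,f_2^+\}$, is a positive cocircuit of $M$ meeting both parts whose restriction $\{f_2^+\}$ to $E\setminus\{f_1\}$ is \emph{not} a union of positive cocircuits of $M/f_1$ (these are $\{f_2^+,f_3^+\}$, $\{f_2^+,f_4^+\}$, $\{f_2^+,f_5^+\}$); it is only a positive cocircuit of the deletion $M\setminus f_1$, for which boundedness fails. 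Of course no counterexample can exist with $\min(D)>a$, since the theorem is true; the point is that the inference by which you reach your contradiction is unsound, so the contradiction establishes nothing. And since your identification of $E\setminus\tilde{F}_{\tilde{\io}-1}$ with the union of the positive cocircuits of smallest element $a$ appeals to ``the same argument,'' both extremal claims --- and with them your induction --- are unproven.

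The repair is the paper's device, which works at the right intermediate level rather than only at the top: given any positive cocircuit $D$ of $M$ (disjoint from $F_c$), let $k$ be maximal with $D\subseteq E\setminus\tilde{F}_k$; it exists since $\tilde{F}_0=\emptyset$ in the acyclic case. Then $D\cap\tilde{F}_{k+1}\neq\emptyset$ is a positive covector of the restriction $M(\tilde{F}_{k+1})$, and since $D\subseteq E\setminus\tilde{F}_k$ it vanishes on $\tilde{F}_k$, hence it is a positive covector of the minor $M(\tilde{F}_{k+1})/\tilde{F}_k$, hence a union of positive cocircuits of that minor, each containing $a_{k+1}=\min(\tilde{F}_{k+1}\setminus\tilde{F}_k)$ by the boundedness hypothesis. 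Combined with $a_{k+1}=\min(E\setminus\tilde{F}_k)\leq\min(D)$ (the increasing-minima property of filtrations), this forces $\min(D)=a_{k+1}$. This single claim rules out $\min(D)>a$ and shows that $\min(D)=a$ forces $k=\tilde{\io}-1$, i.e.\ $D\subseteq E\setminus\tilde{F}_{\tilde{\io}-1}$ --- exactly your two extremal claims, obtained with no case split and no decomposition inside a contraction. With this substitution, your Case 2 and the concluding induction on $M(\tilde{F}_{\tilde{\io}-1})$ go through as you wrote them.
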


\eme{$a_k=\min(F_k)$ ? ou seulement $a_k=\min(F_k\setminus F_{k-1}$ ?}


\begin{proof}
\EMEvquatorze{NB. passages de cette preuve pourraient certainement etre simplifie en utilisant composition, ptet essayer, voir these....}
\EMEvquatorze{ATTENTION il y a aussi Observation \ref{obs:dec-seq-mo-observation} !!! ????BOF???}
Observe that the statement of the result is ``self-dual''. Precisely, by Observation \ref{obs:dec-seq-duality}:
\vspace{-1mm}
\begin{itemize}[-] 
\itemsep=0mm
\parsep=0mm
\partopsep=0mm 
\topsep=0mm
\item a (connected) filtration $(F'_\ep, \ldots, F'_0, F_c , F_0, \ldots, F_\io)$ of $M$ corresponds to a (connected) filtration $(E\s F_\io, \ldots, E\s F_0, E\s F_c , E\s F'_0, \ldots, E\s F'_\ep)$ of $M^*$;
\item the minors $\M(F_k)/F_{k-1}$, $1\leq k\leq\io$, of $M$, are bounded w.r.t. $a_k=\min(F_k\s F_{k-1})$, if and only if the corresponding minors  $\M^*(E\s F_{k-1})/(E\s F_{k})$ of $M^*$ are dual-bounded w.r.t. $a_k=\min((E\s F_{k-1})\s (E\s F_{k}))$;
\item the minors $\M(F'_{k-1})/F'_{k}$, $1\leq k\leq\ep$, of $M$, are dual-bounded w.r.t. $a'_k=\min(F'_{k-1}\s F'_{k})$, if and only if the corresponding minors  $\M^*(E\s F'_{k})/(E\s F'_{k-1})$ of $M^*$ are bounded w.r.t. $a'_k=\min((E\s F'_{k})\s (E\s F'_{k-1}))$.
\end{itemize}
\vspace{-1mm}
Hence, in the following proof, we will be allowed to deduce various results by duality, applying the same reasonings to $M^*$.

\EME{OK FAIT - attetnion : il faut avoir verifie avant que suite emboitee, et d'ailleurs c'est utilise dans observation precedente, et c'est necessaire pour definir partition active}
First, we check that the active filtration is a filtration. 
By construction, we have 
$\emptyset= F'_\ep\subset...\subset F'_0=F_c=F_0\subset...\subset F_\io= E$.
Assume $\M$ has  $\io$ dual-active elements $a_1<...<a_\io$, and $\ep$ active elements $a'_1<...<a'_\ep$.
By definition of $a_k$, for $1\leq k\leq \io$, there exists a positive cocircuit of $\M$ whose smallest element is $a_k$, hence $a_k\in F_k\setminus F_{k-1}$ according to the definition of $F_k\setminus F_{k-1}$ given above. So we have $a_k=\min(F_k\setminus F_{k-1})$,  $1\leq k\leq\io$, which is increasing with $k$ by definition of $a_k$.
Similarly, for $1\leq k\leq \ep$, there exists a positive circuit of $\M$ whose smallest element is $a'_k$,
so we get $a'_k=\min(F'_{k-1}\setminus F'_k)$, 
which is increasing with $k$.
Hence the result.



Second, we apply recursively Lemma \ref{lem:induction-dec-seq}.
We directly get that 
the $\io$ minors $\M_k=\M(F_k)/F_{k-1}$, $1\leq k\leq\io$
are 
bounded with respect to $a_k$;
and that the $\ep$ minors $\M'_k=\M(F'_{k-1})/F'_{k}$, $1\leq k\leq \ep$, are 
dual-bounded with respect to $a'_k$. This proves the property stated in the result. This also proves that those minors are connected as soon as they have more than one element, which achieves the proof that the active filtration of $\M$ is a connected filtration of $M$.
%


Now, it remains to prove the uniqueness property.
Assume $(F'_\ep, \ldots, F'_0, F_c , F_0, \ldots, F_\io)$ is a filtration of $E$ satisfying the properties given in the result.
Then it is obviously  a connected filtration of $M$, by the definitions, since being bounded, resp. dual-bounded, implies being either connected  or reduced to an isthmus, resp. a loop.

First, we prove that $F_c$ is the union of all positive circuits of $\M$.
Assume $C$ is a positive circuit of $\M$, not contained in $F_c$.
Let $k$ be the smallest integer such that $C\subseteq F_k$, $1\leq k\leq \io$.
Then $C\s F_{k-1}\not=\emptyset$ (otherwise $k$ would not be minimal), so $C\s F_{k-1}$ contains a positive circuit of $\M/F_{k-1}$.
Moreover $C\s F_{k-1}\subseteq F_k\s F_{k-1}$ by definition of $k$, so  $C\s F_{k-1}$ contains a positive circuit of $\M_k=\M(F_k)/F_{k-1}$, a contradiction with $\M_k$ being acyclic.
Hence the union of positive circuits of $\M$ is contained in $F_c$.
With exactly the same reasoning from the dual viewpoint,
we get that the union of positive cocircuits of $\M$ is contained in $E\s F_c$.
\emenew{DESSOUS EN COMMENTAIRE TERMES DE COCIRCUITS}%
Finally, $F_c$ contains the union of positive circuits of $\M$ and has an empty intersection with the union of all positive cocircuits of $\M$, so $F_c$ is exactly the union of all positive circuits of $\M$.

\emenew{DESSOUS EN COMMENTAIRE PREUVE POUR CIRCUITS}%


Second, we prove the following claim: for every positive cocircuit $D$ of $\M$, the smallest element of $D$ equals 
$a_{k+1}$, where $k$ is the greatest possible such that $D\subseteq E\s F_k$, $0\leq k\leq \io-1$.
%
%
Indeed, for such $D$ and $k$, we have
$D\cap F_{k+1}\not=\emptyset$ (otherwise $k$ would not be maximal), so $D\cap F_{k+1}$ is a union of positive cocircuits of $\M(F_{k+1})$.
Moreover, $D\cap F_{k+1}\subseteq F_{k+1}\s F_{k}$ by definition of $k$, so $D\cap F_{k+1}$ is a union of positive cocircuits of $\M_{k+1}=\M(F_{k+1})/F_{k}$. 
By assumption that $\M_{k+1}$ is bounded with respect to $a_{k+1}$, we have that $a_{k+1}$ belongs to every positive cocircuit of  $\M_{k+1}$,
so $a_{k+1}$ 
is the smallest element of $D\cap F_{k+1}$.
By definition of a filtration, $a_{k+1}$ is the smallest element in $E\s F_k$ (it is the smallest in $F_k\s F_{k-1}$ and the sequence $\min(F_i\s F_{i-1})$ is increasing with $i$), hence we have $\min(C)=a_{k+1}$. 
In particular, we have proved that the dual-active elements of $\M$ are of type $a_k$, $1\leq k\leq \io$.

Dually, 
we get the following claim:
for every positive circuit $C$ of $\M$, the smallest element of $C$ equals 
$a'_{k+1}$, where $k$ is the greatest possible such that $C\subseteq F'_k$, $0\leq k\leq \ep-1$.
And in particular, we get that the active elements of $\M$ are of type $a'_k$, $1\leq k\leq \ep$.

Third, we prove that the parts of the considered filtration are indeed the parts of the active filtration.
\emenew{DESOUS EN COMMENTAIRE PREUVE POUR CIRCUITS}%
%
Let us denote $F=F_{\io-1}$ and so $a_\io=\min (E\s F)$.
We want to prove that $F=E\s \cup\{D\mid D\hbox{ positive cocircuit of }\M, \ \min(D)=a_\io\}$.
By assumption, $M_\io=\M/F$ is bounded.
So, every element of $\M/F$ belongs to a positive cocircuit of $\M/F$ with smallest element $a_\io$. 
The cocircuits of $\M/F$ are the cocircuits of $\M$ contained in $E\s F$. Hence, every element of $\M$ belonging to $E\s F$ belongs to a positive cocircuit of $\M$ with smallest element $a_\io$,
which proves that
$E\s F\subseteq \cup\{D\mid D\hbox{ positive cocircuit of }\M, \ \min(D)=a_\io\}$.
Conversely, let $D$ be a positive cocircuit of $\M$ with smallest element $a_\io$.
By the above claim, we have that $\io-1$ is the greatest possible such that $D\subseteq E\s F_{\io-1}$, that is $D\subseteq E\s F$, hence the result.

Dually, let us denote $F=F_{\ep-1}$ and so $a'_\ep=\min (F)$. By the same reasoning applied to $M^*$, we get that 
$F=\cup\{C\mid C\hbox{ positive circuit of }\M, \ \min(C)=a'_\ep\}$.

Now, we can conclude by induction, assuming the result is true for minors of $\M$.
Assume $\io>0$ and denote again $F=F_{\io-1}$, we have proved above that $F$ is indeed the largest part different from $E$ in the active filtration of $\M$.
It is direct to check that the sequence of subsets
$(F'_\ep, \ldots, F'_0, F_c , F_0, \ldots, F_\io-1)$  is a connected filtration of $M(F)$.
Moreover this filtration obviously satisfies the properties of the result for the oriented matroid $\M(F)$, as the involved minors are unchanged.
Hence, this filtration is the active filtration of $\M(F)$, by induction assumption.
Hence, by Lemma \ref{lem:induction-dec-seq}, we have that the subsets $F'_\ep, \ldots, F'_0, F_c , F_0, \ldots, F_\io-1$ are indeed the same subsets as in the active filtration of $\M$.
\emenew{DESSOUS EN COMMENTAIRE PREUVE POUR CIRCUITS}%
%
%
Dually, assume that $\ep>0$ and denote again $F'=F_{\ep-1}$, we get that the subsets $F'_{\ep-1}, \ldots, F'_0, F_c , F_0, \ldots, F_\io$ are indeed the same subsets as in the active filtration of $\M$.
So, finally, we have proved that the result is true for $M$.
\end{proof}


The next observation 
is the counterpart of Observation \ref{obs:induced-dec-seq-bas} for oriented matroid activities. 


\begin{observation}
\label{obs:induced-dec-seq-ori}
Let us continue and refine Observation  \ref{lem:dec-seq-mo-observation}.
Let $\emptyset= F'_\ep\subset...\subset F'_0=F_c=F_0\subset...\subset F_\io= E$ be the active filtration of the ordered oriented matroid $M$.
Let $F$ and $G$ be two subsets in this sequence such that $F\subseteq G$.
\EMEvder{faire enonce precis genre corollaire ???}%
Then, by Theorem \ref{thm:unique-dec-seq},
the active filtration  of $M(G)/F$ is obtained from the subsequence with extremities $F$ and $G$ (i.e. $F\subset \dots \subset G$) of the active filtration of $M$ by   subtracting $F$ from each subset of the subsequence (with $F_c\s F$ as cyclic flat).
In particular, the subsequence ending with $F$ (i.e. $\emptyset\subset \dots \subset F$) is the active filtration of $M(F)$, and the subsequence beginning with $F$  (i.e. $F\subset \dots \subset E$) yields the active filtration of  $M/F$ by subtracting  $F$ from each subset.
\end{observation}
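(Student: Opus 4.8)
The plan is to reduce everything to the uniqueness part of Theorem \ref{thm:unique-dec-seq}. Write the subsequence of the active filtration of $M$ with extremities $F$ and $G$ as $F=H_0\subset H_1\subset\cdots\subset H_m=G$, where each $H_i$ is one of the $F'_k$ or $F_k$, and let $\mathcal S$ be the sequence $\emptyset=H_0\s F\subset H_1\s F\subset\cdots\subset H_m\s F=G\s F$ obtained by subtracting $F$ from each term. The goal is to show that $\mathcal S$ is a connected filtration of $M(G)/F$ meeting the characterizing hypotheses of Theorem \ref{thm:unique-dec-seq}; uniqueness then forces $\mathcal S$ to be the active filtration of $M(G)/F$.

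First I would check that $\mathcal S$ is a filtration of $M(G)/F$. Since $F\subseteq H_{i-1}$ for every $i$, we have $(H_i\s F)\s(H_{i-1}\s F)=H_i\s H_{i-1}$, so the successive difference sets of $\mathcal S$ coincide with those of the original subsequence; in particular their smallest elements are unchanged, and the two monotonicity conditions of Definition \ref{def:general-filtration} are inherited verbatim from the active filtration of $M$. The term playing the role of the cyclic flat is the image of $F_c$ when $F\subseteq F_c\subseteq G$, and otherwise the extreme term $\emptyset$ or $G\s F$; in all cases it equals $(F_c\cap G)\s F$, abbreviated $F_c\s F$ in the statement.

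Next I would identify the induced minors. By transitivity of restriction and contraction through nested sets, for $F\subseteq H_{i-1}\subseteq H_i\subseteq G$ one has
$$\bigl(M(G)/F\bigr)(H_i\s F)\,/\,(H_{i-1}\s F)=M(H_i)/H_{i-1}.$$
Hence the minors induced by $\mathcal S$ in $M(G)/F$ are exactly the minors $M(F_k)/F_{k-1}$ and $M(F'_{k-1})/F'_k$ induced by the subsequence in $M$. By Theorem \ref{thm:unique-dec-seq} applied to $M$, these are bounded, resp. dual-bounded, with respect to $a_k=\min(F_k\s F_{k-1})$, resp. $a'_k=\min(F'_{k-1}\s F'_k)$; by the previous paragraph these are also the smallest elements of the corresponding difference sets of $\mathcal S$. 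Thus $\mathcal S$ is a filtration of $M(G)/F$ all of whose minors are bounded or dual-bounded with respect to the smallest element of their difference part (connectedness being automatic, since bounded, resp. dual-bounded, forces connected or reduced to an isthmus, resp. a loop). By the uniqueness in Theorem \ref{thm:unique-dec-seq}, $\mathcal S$ is therefore the active filtration of $M(G)/F$, with cyclic flat $F_c\s F$. The two ``in particular'' claims are simply the special cases $(F,G)=(\emptyset,F)$ and $(F,G)=(F,E)$.

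The routine content is the minor identity above together with the monotonicity bookkeeping; the only genuinely delicate point is the cyclic-flat identification in the degenerate situations where the subsequence lies entirely in the cyclic part ($G\subseteq F_c$) or entirely in the acyclic part ($F_c\subseteq F$). In these cases $M(G)/F$ is totally cyclic, resp. acyclic, and one must check against Observation \ref{lem:dec-seq-mo-observation} that $(F_c\cap G)\s F$, namely $G\s F$, resp. $\emptyset$, is indeed the union of positive circuits of $M(G)/F$, so that the shorthand $F_c\s F$ is read correctly as its trace on the ground set.
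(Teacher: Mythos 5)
Your proof is correct and follows essentially the same route as the paper: the paper states this observation as a direct consequence of the uniqueness in Theorem \ref{thm:unique-dec-seq}, and your argument (subtract $F$, note the difference sets and hence the induced minors $M(H_i)/H_{i-1}$ are unchanged, invoke boundedness/dual-boundedness of these minors and then uniqueness) is exactly the filling-in of that claim. The minor identity $\bigl(M(G)/F\bigr)(H_i\s F)/(H_{i-1}\s F)=M(H_i)/H_{i-1}$ and the cyclic-flat bookkeeping are handled correctly, including the degenerate cases.
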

\ss

\begin{thm}
\label{th:dec-ori}
Let $M$ be an oriented matroid on a linearly ordered set $E$. We have

\vspace{2.5mm}
\centerline{$\Bigl\{\hbox{reorientations $-_AM$ of $M$ for $A\subseteq E$}\Bigr\}\ $}
\vspace{-5.5mm}
\begin{align*}
 = \ \biguplus\ & \Biggl\{\ \ -_AM \ \ \mid\ \    
-_A\M(F_k)/F_{k-1},\ \ 1\leq k\leq\io, \hbox{ bounded with respect to } \min (F_k\setminus F_{k-1}),\\
&
\hbox{ and }
-_AM(F'_{k-1})/F'_{k}, \ \ 1\leq k\leq \ep, \hbox{ dual-bounded with respect to } \min (F'_{k-1}\setminus F'_{k})\ \ \Biggr\}
\end{align*}
\noindent where the disjoint union is over all connected filtrations $(F'_\ep, \ldots, F'_0, F_c , F_0, \ldots, F_\io)$ of $M$. The connected filtration of $M$ associated  to a reorientation $-_AM$ in the right-hand side of the equality is the active filtration of $-_A\M$.
\EMEvder{over all filtrations ? (when the filtration is not connected then it yields an empty subset) a prouver}%
\end{thm}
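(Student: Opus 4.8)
The plan is to derive this statement directly from Theorem \ref{thm:unique-dec-seq} applied to each individual reorientation $-_A\M$, the whole content being a reorganization of that uniqueness theorem over the family $\{-_A\M \mid A\subseteq E\}$. Two elementary facts make the translation work. First, reorientation does not change the underlying matroid, so $\underline{-_A\M}=\underline{\M}$ and a connected filtration of $-_A\M$ is exactly a connected filtration of $M$. Second, reorientation commutes with restriction and contraction: for subsets $F_{k-1}\subseteq F_k$ of a filtration, the minor $(-_A\M)(F_k)/F_{k-1}$ is the reorientation $-_{A\cap(F_k\s F_{k-1})}\bigl(\M(F_k)/F_{k-1}\bigr)$ on the ground set $F_k\s F_{k-1}$, so the boundedness and dual-boundedness conditions written on the right-hand side are exactly conditions on the minors of $-_A\M$ that fit the hypotheses of Theorem \ref{thm:unique-dec-seq}.

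To show that the right-hand side covers every reorientation, I would fix $A\subseteq E$ and apply Theorem \ref{thm:unique-dec-seq} to the ordered oriented matroid $-_A\M$. This produces its active filtration $(F'_\ep,\ldots,F'_0,F_c,F_0,\ldots,F_\io)$, which is a connected filtration of $\underline{-_A\M}=M$, and for which the minors $(-_A\M)(F_k)/F_{k-1}$ are bounded with respect to $\min(F_k\s F_{k-1})$ for $1\leq k\leq\io$ and the minors $(-_A\M)(F'_{k-1})/F'_{k}$ are dual-bounded with respect to $\min(F'_{k-1}\s F'_{k})$ for $1\leq k\leq\ep$. Hence $-_A\M$ belongs to the term of the union indexed by its own active filtration, so the union contains all reorientations.

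For disjointness and for the final sentence of the statement, suppose $-_A\M$ belongs to the term indexed by a connected filtration $\mathcal F=(F'_\ep,\ldots,F_\io)$. By definition of the right-hand side this means precisely that the minors of $-_A\M$ along $\mathcal F$ satisfy the stated boundedness and dual-boundedness conditions. The uniqueness clause of Theorem \ref{thm:unique-dec-seq}, applied to $-_A\M$, then forces $\mathcal F$ to equal the active filtration of $-_A\M$. Since that filtration is uniquely determined, $-_A\M$ lies in exactly one term of the union; this establishes that $\biguplus$ is indeed a disjoint union and identifies the filtration associated with $-_A\M$ as the active filtration of $-_A\M$.

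The only point requiring a little care — and the closest thing to an obstacle — is that the union ranges over connected filtrations only, whereas a priori a non-connected filtration might conceivably satisfy the boundedness conditions. This causes no difficulty: as already noted in the proof of Theorem \ref{thm:unique-dec-seq}, a minor that is bounded, resp. dual-bounded, with respect to its smallest element is connected or reduced to an isthmus, resp. a loop, so any filtration fulfilling the right-hand side conditions is automatically connected. Consequently the same equality would hold verbatim if the union were taken over all filtrations, the non-connected ones simply contributing empty terms, which also settles the marginal query about ranging over all filtrations.
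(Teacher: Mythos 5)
Your proposal is correct and follows essentially the same route as the paper: both arguments rest entirely on Theorem \ref{thm:unique-dec-seq}, using its existence part to show every reorientation $-_A\M$ lies in the term indexed by its own active filtration, and its uniqueness part to get disjointness and to identify the associated filtration. The paper additionally phrases the converse direction as assembling bounded/dual-bounded reorientations of the minors into a reorientation of $M$ (emphasizing the bijective, product-like interpretation), but this adds nothing beyond what your coverage-plus-uniqueness argument already establishes, and your closing observation that the boundedness conditions force connectedness of the filtration matches the remark in the paper's proof of Theorem \ref{thm:unique-dec-seq}.
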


\begin{proof}
This result consists in a bijection between all reorientations $-_AM$ of $M$ and sequences of reorientations of the minors involved in decomposition sequences of $M$. It is given directly by  Theorem \ref{thm:unique-dec-seq}.
From the first set to the second set, 
the active filtration of $\M$ provides the required decomposition.
Conversely, from the second set to the first set,
first choose a connected filtration of $M$.
Then, for each minor of $M$ defined by this sequence, choose a bounded/dual-bounded reorientation for this minor as written in the second set statement.
This defines a reorientation $-_AM$ of $M$ (since every element of $M$ appears in one and only one of these minors).
Now, for this reorientation  $-_AM$, the chosen filtration satisfies the property of Theorem \ref{thm:unique-dec-seq},
hence this filtration is the active filtration of the reorientation $-_A\M$ of $M$. 
%
Finally, the uniqueness in Theorem \ref{thm:unique-dec-seq} ensures that the union in the second set is disjoint.
%
\end{proof}

\eme{dessous en commentaire enonce de lemme pour equivalence des dec sews}

As mentioned above, Theorem \ref{th:dec-ori} applies in particular to a decomposition of the set of acyclic, resp. totally cyclic, reorientations of an oriented matroid $M$, involving only bounded, resp. dual-bounded, minors (by restriction to connected filtrations with cyclic flat $F_c=\emptyset$, resp. $F_c=E$).


\begin{remark}
\label{rk:tutte}
\rm
Theorem \ref{th:dec-ori}, along with the \ref{eq:reorientation-activities}, directly yields a proof, in orientable matroids, of the Tutte polynomial expression in terms of beta invariants of minors stated in the companion paper No. 2.a \cite[Theorem \ref{a-th:tutte}]{AB2-a}. The proof given in \cite{AB2-a} is available for general matroids (it uses Theorem \ref{th:dec_base} and the \ref{eq:basis-activities} in a similar way). Notice that, in graphs, both proofs can be used, as all graphs yield orientable matroids (the proof of this result given in \cite{ABG2} for graphs uses the orientation decomposition).
\end{remark}

\eme{DESSOUS preuve pour tutte par orientations}

\EMEvquatorze{part act de B se deduit de contractio et suppression de parties... ceci est a relier a la propriete de alpha inductive... faire une proposition ? --- EDIT : en fait semble deja dans observation de AB2a, et rk ajoutee dans alpha}

\EMEvquatorze{***ATTENTION a def  de activity class of $M$, en tant qu'ensembles, bien definir et bien verifier (PAS ENCORE FAIT)****}

\begin{prop}
\label{prop:act-classes}
Let $\M$ be an oriented matroid on a linearly ordered set $E$, 
with $\io$ dual-active elements and $\ep$ active elements.
The $2^{\io+\ep}$ reorientations of $M$ obtained by reorienting any union of parts of the active partition (or filtration) of $\M$ have the same active partition (or filtration) as $\M$, and hence the same active and dual-active elements.
\end{prop}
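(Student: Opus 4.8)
The plan is to invoke the uniqueness part of Theorem \ref{thm:unique-dec-seq}: the active filtration of an ordered oriented matroid is the \emph{unique} connected filtration whose induced minors are bounded, resp. dual-bounded, with respect to the smallest element of the corresponding part. So, letting $A$ be any union of parts of the active partition of $\M$, I would show that the active filtration $(F'_\ep, \ldots, F'_0, F_c, F_0, \ldots, F_\io)$ of $\M$, regarded now as a filtration of $-_A\M$, still enjoys this bounded/dual-bounded property; the conclusion that it coincides with the active filtration of $-_A\M$ then follows at once by uniqueness, and one reads off that the active/dual-active elements are unchanged.

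First I would note that reorientation leaves the underlying matroid untouched, so the sequence is still a connected filtration of $\underline{-_A\M}=\underline{\M}$, with minors taken on the same ground sets. Then I would use that reorientation commutes with minors: for each $k$, since $F_{k-1}\subseteq F_k$, one has $(-_A\M)(F_k)/F_{k-1}=-_{A\cap(F_k\s F_{k-1})}\bigl(\M(F_k)/F_{k-1}\bigr)$, and similarly for the primed minors. The key combinatorial point is that the ground set $F_k\s F_{k-1}$ of the active minor $\M_k$ is exactly one part of the active partition; since $A$ is by hypothesis a union of parts, the set $A\cap(F_k\s F_{k-1})$ is either empty or all of $F_k\s F_{k-1}$. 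Hence each minor induced by this filtration in $-_A\M$ equals either the corresponding active minor $\M_k$ (resp.\ $\M'_k$) of $\M$ or its opposite $-_{F_k\s F_{k-1}}\M_k$ (resp.\ the opposite of $\M'_k$).

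The hard, although short, part is then to check that being bounded, resp.\ dual-bounded, with respect to a fixed element is preserved when the entire ground set is reoriented. This rests on the symmetry of the circuit/cocircuit sets under negation: the opposite of a positive circuit (resp.\ cocircuit) is an all-negative circuit (resp.\ cocircuit) with the same support, and conversely. Consequently $\M_k$ is acyclic if and only if its opposite is (an all-positive circuit exists iff an all-negative one does), and $a_k$ lies in every positive cocircuit of $\M_k$ if and only if it lies in every positive cocircuit of the opposite, since the supports are unchanged. Thus the opposite of a matroid bounded w.r.t.\ $a_k$ is again bounded w.r.t.\ $a_k$, and dually for dual-bounded minors. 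Combining this with the previous paragraph, every minor induced by the given filtration in $-_A\M$ is bounded, resp.\ dual-bounded, with respect to the required $\min(F_k\s F_{k-1})=a_k$, resp.\ $\min(F'_{k-1}\s F'_k)=a'_k$.

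Finally I would conclude by uniqueness in Theorem \ref{thm:unique-dec-seq}: this filtration is precisely the active filtration of $-_A\M$, so $-_A\M$ and $\M$ share the same active filtration and the same active partition, hence the same active elements $a'_k$ and dual-active elements $a_k$ (the smallest elements of the respective parts). Since the $\io+\ep$ parts are non-empty and pairwise disjoint, the distinct unions of parts yield the asserted $2^{\io+\ep}$ reorientations, completing the argument. I expect the only genuinely content-bearing step to be the opposite-preservation of boundedness; everything else is bookkeeping with the commutation of reorientation and minors together with the all-or-nothing intersection of $A$ with each part.
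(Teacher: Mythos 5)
Your proof is correct and coincides with the paper's own second argument: the paper, after giving a direct proof from Definition \ref{EG:def:ori-act-part} via conformal composition of positive circuits, notes that the result is alternatively ``a direct corollary of Theorem \ref{thm:unique-dec-seq}'', since reorienting a union of parts reorients completely some active minors, and uniqueness then forces the filtration to be unchanged --- exactly your route, with the bookkeeping (commutation of reorientation with minors, the all-or-nothing intersection of $A$ with each part) made explicit. The one step you call ``hard'' is in fact immediate: reorienting the \emph{entire} ground set of an oriented matroid yields the same oriented matroid, because signed circuit sets are closed under negation, so boundedness/dual-boundedness w.r.t.\ a fixed element is trivially preserved.
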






\begin{proof}

%
\EMEvder{preuve condensee, mais a l'air ok pour cet article}%
The result can be proved directly from Definition \ref{EG:def:ori-act-part}. Let us give such a proof in a condensed way (see \cite{Gi18} for a more detailed and more general proof). Consider  the union $A$ of all positive circuits  of $\M$ whose smallest element is greater than a given element $a$ (the same reasoning holds for cocircuits by duality). We prove the following claim: any union of all positive circuits  whose smallest element is greater than a given element $e$ is the same in $\M$ and $-_A\M$. 
Let $C$ be a  positive circuit of $\M$, with $\min(C)\geq e$. If $e\geq a$, then $C\subseteq A$, then $C$ is a positive circuit of $-_A\M$.
Assume now $e<a$. The set $A$ can be considered as a positive vector of both $M$ and $-_AM$ (it is a conformal composition of positive circuits). Since $C$ is positive on $C\s A$ in $-_AM$, then $A\circ C$ is also a positive vector of $-_AM$. By the generation of vectors by conformal composition, for every element $f$ of $C$, there exists a positive circuit $D$ of $-_AM$ containing $f$ and contained in $C\cup A$. Also, we have $\min(D)\geq e$. So $C$ is contained in the union of positive circuits of $-_AM$ with smallest element greater than $e$. So the claim is proved. The rest of the proof is straightforward: apply the claim to the subsets of the cyclic part of active filtration, and apply the same claim dually and independently to subsets built from
the acyclic part of active partition.

Alternatively, the result can also be seen as a direct corollary of 
Theorem \ref{thm:unique-dec-seq}. Indeed, reorienting a union of parts of the active partition of $\M$ implies reorienting completely some of the active minors of $\M$.
Then, by Theorem \ref{thm:unique-dec-seq}, the resulting reorientation is obtained from the same minors, that is, from the same filtration of $M$, which is the same active filtration as that of $\M$.
\end{proof}


\begin{definition}
\label{def:act-class}
Let $\M$ be an oriented matroid on a linearly ordered set, 
with $\io$ dual-active elements and $\ep$ active elements.
We call \emph{activity class of }$\M$ the set of $2^{\io+\ep}$ reorientations of $M$ obtained by reorienting any union of parts of the active partition of $\M$. 

By Proposition \ref{prop:act-classes}, activity classes of reorientations of $M$ can be seen as equivalence classes, which partition the set of reorientations of $M$.
\end{definition}


\EMEvder{topology and geometrical intepretation of activity classes (balls...)}%

We will continue to study activity classes in Section \ref{sec:partitions}: from their boolean lattice structure, one can derive four refined activity parameters and a Tutte polynomial expansion in these terms.
The notions of active filtration/partition and of activity classes are generalized to oriented matroid perspectives in \cite{Gi18}. 
See Figure \ref{fig:K4-dec} for an example of activity class (see also  Figure \ref{fig:K4-iso}).


\EMEvquatorze{attention def reorietnation activity classES of M pour M seul ou pour totues reorietnations ???? EDIT : pas vraiment important de le preciser rigouresuement....}

For an ordered oriented matroid $M$,
a reorientation $-_AM$ is said to be \emph{active-fixed}, resp. \emph{dual-active fixed}, with respect to $M$ if no active, resp. no dual-active, element has been reoriented with respect to $M$, that is, if $O(-_AM)\cap A=\emptyset$, resp. $O^*(-_AM)\cap A=\emptyset.$


\begin{cor}
\label{cor:enum-classes}
Let $\M$ be an ordered oriented matroid.
The number of activity classes of reorientations of $M$ with activity $i$ and dual activity $j$ equals $t_{i,j}$.

Each activity class of reorientations of $M$ contains exactly one reorientation which is
active-fixed and dual-active-fixed w.r.t. $M$.
The number of such reorientations of $M$ with activity $i$ and dual activity $j$ thus equals $t_{i,j}$.

Furthermore, each activity class of reorientations of $M$ with activity $i$ and dual activity $j$
contains $2^j$ active-fixed reorientations and $2^i$ dual-active-fixed reorientations.
Finally, we have the enumerations and representatives of activity classes given by Table~\ref{table:enum-classes}.
\end{cor}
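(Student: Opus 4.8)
The plan is to deduce everything from two facts established earlier: that the activity classes partition the set of reorientations into blocks of size $2^{\io+\ep}$ on which the active partition, and hence the sets $O(\M)$ and $O^*(\M)$, are constant (Proposition \ref{prop:act-classes} and Definition \ref{def:act-class}), and that the number of reorientations with prescribed activities is governed by the Tutte polynomial. For the first assertion I would argue that the number of reorientations of $M$ with activity $i$ and dual-activity $j$ equals $2^{i+j}t_{i,j}$, by the \ref{eq:reorientation-activities} together with the comparison $o_{\io,\ep}=2^{\io+\ep}b_{\io,\ep}$ recalled in Section \ref{subsec:prelim-orient-activities}. Since the activity classes of such reorientations all have exactly $2^{i+j}$ elements and partition this set (Definition \ref{def:act-class}), dividing yields exactly $t_{i,j}$ classes.

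The combinatorial key is then a labelling of a single fixed class $\mathcal C$. By Definition \ref{EG:def:ori-act-part}, the minimum of each active part is the associated active element and the minimum of each dual-active part is the associated dual-active element; since $O(\M)\cap O^*(\M)=\emptyset$ and there are exactly $\io+\ep$ parts and $\io+\ep$ such distinguished elements, the map sending a part to its minimum is a bijection onto $O(\M)\cup O^*(\M)$, so each part contains exactly one element of $O(\M)\cup O^*(\M)$, namely its minimum. Label the parts $P_1,\dots,P_{\io+\ep}$ with distinguished elements $s_1,\dots,s_{\io+\ep}$, and associate to each member $-_AM$ of $\mathcal C$ the pattern $\sigma(-_AM)\in\{0,1\}^{\io+\ep}$ recording, for each $k$, whether $s_k\in A$. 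Because reorienting one part $P_k$ toggles the membership of $s_k$ in the reorientation subset while leaving every $s_{k'}$ with $k'\neq k$ untouched (as $s_{k'}\notin P_k$), and because $\mathcal C$ consists of all $2^{\io+\ep}$ reorientations obtained by reorienting unions of parts of any of its members, the map $\sigma$ is a bijection from $\mathcal C$ onto $\{0,1\}^{\io+\ep}$.

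The remaining assertions then read off $\sigma$. By Proposition \ref{prop:act-classes} one has $O(-_AM)=O(\M)$ and $O^*(-_AM)=O^*(\M)$ throughout $\mathcal C$, so $-_AM$ is active-fixed w.r.t.\ $M$ iff $A$ meets no active element, i.e.\ iff $\sigma(-_AM)$ vanishes on the $\ep$ active coordinates; the $\io$ dual-active coordinates being then free, there are $2^{\io}=2^j$ active-fixed members, and dually $2^{\ep}=2^i$ dual-active-fixed members, proving the third assertion. A member is simultaneously active-fixed and dual-active-fixed iff $\sigma(-_AM)=0$, which by bijectivity of $\sigma$ singles out a unique element of $\mathcal C$; this gives the existence and uniqueness in the second assertion, and counting one representative per class while invoking the first assertion yields the stated count $t_{i,j}$. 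Finally, the entries of Table \ref{table:enum-classes} follow by specializing these counts to the relevant Tutte evaluations (e.g.\ $t(M;1,1)$ for all classes, and the restrictions $F_c=\emptyset$, resp.\ $F_c=E$, of Theorem \ref{th:dec-ori} for the acyclic, resp.\ totally cyclic, subcases), reading off the representatives from the $\sigma=0$ characterization.

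The only genuinely load-bearing step is the invariance of $O(\M)$ and $O^*(\M)$ across an activity class (Proposition \ref{prop:act-classes}): it is precisely what allows the fixed-ness conditions to be tested against the constant sets $O(\M),O^*(\M)$ and thereby be read off from $\sigma$. Once this invariance and the ``one distinguished element per part'' fact are in hand, the bijectivity of $\sigma$ reduces both the uniqueness of the canonical representative and the internal counts $2^i,2^j$ to counting $0/1$ patterns, and the global counts to the already-known Tutte evaluation, so no further estimation is required.
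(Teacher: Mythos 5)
Your proposal is correct and takes essentially the same route as the paper, whose proof simply declares the first claims ``obvious by the above construction and Proposition~\ref{prop:act-classes}'' and derives the table from those claims together with the expression~\ref{eq:reorientation-activities}; your argument uses exactly these ingredients (invariance of the active partition across a class, the partition of all reorientations into classes of size $2^{\io+\ep}$, and division against the Tutte enumeration). The explicit bijection $\sigma$ onto $\{0,1\}^{\io+\ep}$ only spells out the boolean lattice structure of an activity class that the paper treats as immediate from Definition~\ref{def:act-class}.
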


\begin{table}[H]
\def\interligne{&\\[-11pt]}%
\vspace{-5mm}
\begin{center}
\begin{tabular}{|l|c|}
\hline
\interligne
{\bf reorientations of $M$} /
{\bf activity classes of reorientations of $M$}   
& \multicolumn{1}{|c|}{\bf number}\\
\hline
\interligne
 active-fixed and   dual-active-fixed / all
&  $t(M;1,1)$\\
 \interligne
  acyclic and dual-active-fixed / acyclic
& $t(M;1,0)$\\
 \interligne
 active-fixed and totally cyclic / totally cyclic
& $t(M;0,1)$\\
\interligne
 active-fixed (/ non-applicable) & $t(M;2,1)$\\
\interligne
  dual-active-fixed (/ non-applicable) & $t(M;1,2)$\\
\hline
\end{tabular}  
\end{center}
\vspace{-5mm}
\caption{Enumeration of certain reorientations based on  representatives of activity classes (Corollary  \ref{cor:enum-classes}).
 }
 \label{table:enum-classes}%
\end{table}

\begin{proof}
The first claims are obvious by the above construction and Proposition \ref{prop:act-classes}.
Then, the enumerations of the table are obvious by the first claims and by the Tutte polynomial expression \ref{eq:reorientation-activities}.
\EMEvder{mettre claimsur nb d'ative fixed $= 2^j$ dans enonce du coroallire?}%
\EMEvder{n'a t'on pas besoin de mineurs ? de formule de convolution? 
These two enumerations can also be easily derived by addressing separately the cyclic and acyclic parts of reorientations and using the Tutte polynomial convolution formula --- donner ref?}%
These enumerations are also implied by the forthcoming Tutte polynomial expression in terms of four refined orientation activities (Theorem \ref{th:expansion-reorientations}).
\end{proof}


%
%
%
%





%
%

\EMEvder{simplifier le paragraphe remarque ci-dessous ?}
\begin{remark}
\rm
Let us mention properties which are specific to graphs, studied in \cite{GiLV05}, and how they generalize. 
As observed above, activity classes can be represented by reorientations that are active fixed and dual-active fixed (see also Section \ref{sec:refined}). An application is that, for suitable orderings of the edge set of a graph (roughly when all branches of the smallest spanning tree are increasing), there is one and only one acyclic reorientation with a unique sink in each activity class of acyclic reorientations, see \cite[Section 6]{GiLV05}. 
Moreover, as shown in \cite[Section 7]{GiLV05}, the notion of active partition for a directed graph on a linearly ordered set of edges generalizes the notion of components of acyclic reorientations with a unique sink.
This last notion relies on certain linear orderings of the vertex set. It was studied  by Lass in \cite{La01} in relation with the chromatic polynomial, by Viennot in \cite{Vi86} in terms of heaps of pieces, and by Cartier and Foata in \cite{CaFo69} in terms of non-commutative monoids (see also Gessel \cite{Ge01}).  
For every such vertex ordering, there exists a consistent edge ordering such that active partitions exactly match these acyclic orientation components. 
With respect to this construction, our generalization by means of active partitions allows us to consider any orientation, any ordering of the edge set, 
along with a generalization to any oriented matroid.
\end{remark}

\section{The uniactive bijection between bounded/dual-bounded reorientations and their fully optimal uniactive internal/external bases}
\label{sec:bij-10}

\EMEvder{regrouper bounded et dual boudned ? permet de factoriser theoreme et proposition, et plus joli... je le laisse pour l'instant pas le temps !}%






This section mainly recalls (and also reformulates, reorganizes, or completes) definitions and important results from \cite{AB1} (No. 1 of the same series of papers).
Moreover, we first informally explain, as an 
overview, 
how 
these
can be considered under different perspectives and related to other papers of the series.%
\ss

We consider an ordered oriented matroid $M$ on $E$ which is bounded with respect to $p=\min(E)$ (or, in geometrical terms, 
a bounded region w.r.t. the element $p$ considered as a hyperplane at infinity, or, in terms of graphs, an acyclic bipolar directed graph whose unique source and unique sink are the extremities of $p$).
The main result of \cite{AB1} is that such an oriented matroid has a unique fully optimal basis, satisfying a simple combinatorial criterion. This directly yields a bijection between (pairs of opposite) bounded reorientations of $M$ w.r.t. $p$ and bases of $M$ with internal/external activity equal to $1/0$. 

\EMEvquatorze{dessous dans source extrait de AB1 sur geoemtrical itnerprattion}

The existence and uniqueness of the fully optimal basis of a bounded region is a  difficult fundamental result, that can be seen by different manners, and has important connections with duality. 
First, its proof essentially relies upon a topological obstruction in oriented matroids, and a tricky use of this obstruction, see \cite[The Crescent Lemma 4.4 and Proposition 4.3]{AB1}.

Second, it witnesses a curious duality geometric property.
The fully optimal basis of a bounded region induces a flag of faces adjacent to this region (``Dual-adjacency'': the successive compositions of fundamental cocircuits of the basis yield positive covectors). Dually, the complementary basis of the dual induces a flag of faces adjacent to the corresponding bounded region of the dual obtained by reorienting $p$ (``Adjacency'': the successive compositions of fundamental circuits of the inital basis yield positive vectors, up to the sign of $p$). The fully optimal basis then appears naturally, as it is the unique basis satisfying at the same time these two properties ``Adjacency'' and ``Dual-adjacency''.
See Definition \ref{def:acyc-alpha2} below, and 
see the geometrical interpretation given after \cite[Proposition 3.3]{AB1} for more details.
See also the flag representations in Figures \ref{fig:ex-arrgt} and \ref{fig:D13refined} in Section \ref{sec:example}.
\EMEvder{*** ajouter ceci ? it is observed in GiLV04, AB3, AB4 unicity property + bijection  avec adjacence when real but not when non ecludiean}

Third, it can be seen as a refinement of (pseudo-)linear programming. 
See \cite[Chapter 10]{OM99} for information on pseudo-linear programming in oriented matroids.
In real hyperplane arrangements in general position or in uniform oriented matroids, building the optimal cocircuit of a real/pseudo linear program is equivalent to build the fully optimal basis \cite{GiLV04}. 
%
In general real hyperplane arrangements or oriented matroids,
we optimize a sequence of nested faces (the successive covectors obtained by composition of the fundamental cocircuits of the basis), each with respect to a sequence of objective functions (provided by the linearly ordered minimal basis of the matroid), yielding finally a unique fully optimal basis. 
This refines standard linear programming where just one vertex is optimized with respect to just one objective function,
but this can be computed inductively using standard linear programming.
Linear programming duality is then strengthened by the \ref{eq:act_mapping_strong_duality}  property recalled in 
Theorem \ref{th:active-duality} below,
and proved in \cite[Section 5]{AB1}%
\footnote{See footnote \ref{footnote:typo} in Section \ref{sec:prelim} for a correction in the statements of \cite[Proposition 5.1 and Theorem 5.3]{AB1}.
}%
.
This relation with linear programming is addressed in \cite{AB1}, and the construction of the fully optimal basis by this manner is detailed in \cite{AB3} (see also \cite{GiLV09} for a description in terms of real hyperplane arrangements, see also \cite{ABG2LP} for a reformulation  and a simplification  in the graph case). 

Fourth, it is noticeable that in one direction, from bases to reorientations, the bijection is given by a simple single pass algorithm  (Proposition \ref{prop:alpha-10-inverse}) whereas in the other direction, from reorientations to bases, it is more difficult than real/pseudo linear programming. Hence, this bijection can be thought of as a sort of ``one-way function''.

Fifth, a construction of this bijection can be made by deletion/contraction of the greatest element. It is detailed in \cite{AB4} (see also \cite[Section 6.1]{ABG2} or \cite{ABG2LP}\EMEvder{verifier ref} in the graph case).
This construction can be seen as an elaborated translation of the usual linear programming solving by variable/constraint deletion.
It is more direct and more simple from the formal structural viewpoint than the full optimality algorithm alluded to above, but not from the computational complexity viewpoint, as it involves an exponential number of minors,
whereas the previous construction involves a linear number of minors.
This deletion/contraction construction can be  equally made from the primal or the dual viewpoint, and this fact is non-trivial, as it is equivalent to the existence and uniqueness property of the fully optimal basis (see \cite{AB4} for more details).

Finally, let us mention that the two constructions mentioned above (full optimality algorithm, primal/dual deletion/contraction construction) do not give a proof of this existence and uniqueness result. On the contrary, one uses this fundamental combinatorial result to prove that the algorithms are well defined and yield the required result.

\label{subsec:alpha-bounded}

%



\eme{PEUT ETRE AJOUTER DEF 6 DANS GRAPHE AVEC CETTE RELAXATION?}%


\begin{definition}[{\cite[Definition 3.1]{AB1}}]
\label{def:acyc-alpha}
Let $M$ be an oriented matroid on a linearly ordered set $E$, which is bounded 
with respect to the minimal element $p$ of $E$. The \emph{fully optimal basis} $\alpha(\M)$ of $\M$ is the unique basis $B$ of $M$ such~that:
\smallskip

\bul for all $b\in B\setminus p$, the signs of $b$ and $\min(C^*(B;b))$ are opposite in $C^*(B;b)$;

\bul for all $e\in E\s B$, the signs of $e$ and $\min(C(B;e))$ are opposite in $C(B;e)$.

\hfill \emph{(Full Optimality Criterion)}
\end{definition}

\begin{definition}[equivalent to Definition \ref{def:acyc-alpha} by {\cite[Proposition 3.3]{AB1}}]
\label{def:acyc-alpha2}
Let $M$ be an oriented matroid on a linearly ordered set $E$, which is bounded 
with respect to the minimal element $p$ of $E$. The \emph{fully optimal basis} $\alpha(\M)$ of $\M$ is the unique basis $B$ of $M$ such~that, denoting $B=b_1<\dots<b_r$ and $E\s B=c_1<\dots<c_{n-r}$:
\smallskip

\bul the maximal covector $C^*(B;b_1)\circ\dots\circ C^*(B;b_r)$ is positive;
\hfill \emph{(Adjacency)}

\bul the maximal vector $C(B;c_1)\circ\dots\circ C(B;c_{n-r})$ is positive on $E\s \{p\}$ and negative on $p$.

\hfill \emph{(Dual-Adjacency)}
\end{definition}

The existence and uniqueness of  
a basis 
satisfying the 
criteria of
Definitions \ref{def:acyc-alpha} or \ref{def:acyc-alpha2}
is the main result of \cite{AB1}, namely \cite[Theorem 4.5]{AB1}. 
It yields the next theorem.
%
Notice that a bounded oriented matroid and its opposite have the same fully optimal basis.
An example of  fully optimal basis of a bounded region is given in Figure \ref{fig:ex-fob}.

\begin{thm}[Key theorem {\cite[Theorem 4.5]{AB1}}]
\label{thm:bij-10}
Let $M$ be a matroid on a linearly ordered set $E$ with $\min(E)=p$.
The mapping $M\mapsto \alpha(M)$ yields a bijection between all bounded reorientations of $M$ w.r.t. $p$, with fixed orientation for $p$, and all uniactive internal bases of $M$.
Equally, it yields a bijection between all pairs of opposite bounded reorientations of $M$ w.r.t. $p$, and all uniactive internal bases of $M$.
\end{thm}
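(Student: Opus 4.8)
The plan is to treat the existence and uniqueness of the fully optimal basis (\cite[Theorem 4.5]{AB1}) as a black box, so that $\M\mapsto\alpha(\M)$ is a well-defined function on bounded reorientations, and then to establish in turn that its image consists of uniactive internal bases, that it is injective on $p$-fixed bounded reorientations, and that it is therefore bijective by a cardinality count. For the first point, fix a bounded reorientation $\M=-_AM$ with $p\notin A$ and put $B=\alpha(\M)$, recalling that $\Int$ and $\Ext$ depend only on the underlying matroid $M$. By the convention that $b$ is positive in $C^*(B;b)$ and $e$ is positive in $C(B;e)$, the Full Optimality Criterion of Definition \ref{def:acyc-alpha} forces $\min(C^*(B;b))$ to be negative, hence $\min(C^*(B;b))\neq b$, for every $b\in B\setminus p$, and likewise $\min(C(B;e))\neq e$ for every $e\in E\setminus B$. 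Thus no $b\in B\setminus p$ lies in $\Int(B)$ and no $e\in E\setminus B$ lies in $\Ext(B)$, so $\Ext(B)=\emptyset$ and $\Int(B)\subseteq\{p\}$. Since $p\in\Int(B)\cup\Ext(B)$ always holds (Section \ref{subsec:prelim-basis-activities}), I conclude $p\in B$, $\Int(B)=\{p\}$, $\Ext(B)=\emptyset$; that is, $B$ is a uniactive internal basis of $M$.

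For injectivity, I would suppose $\alpha(-_AM)=\alpha(-_{A'}M)=B$ with $p\notin A,A'$, and set $S=A\,\triangle\,A'$, so that $p\notin S$ and $-_{A'}M=-_S(-_AM)$. Reorienting by $S$ flips the signs on $S$ in each fundamental cocircuit and circuit of $B$; comparing the Full Optimality Criterion for $-_AM$ and for $-_{A'}M$, after renormalising so the exchanged element stays positive, yields the equivalences $b\in S\iff\min(C^*(B;b))\in S$ for all $b\in B\setminus p$, and $e\in S\iff\min(C(B;e))\in S$ for all $e\in E\setminus B$. Because $B$ is uniactive internal, every element $x\neq p$ has a strictly smaller ``partner'', namely $\min(C^*(B;x))\in E\setminus B$ when $x\in B$ and $\min(C(B;x))\in B$ when $x\notin B$; iterating produces a strictly decreasing chain that can terminate only at $p$. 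If $S$ were nonempty, the equivalences above would force this entire chain, and in particular $p$, to lie in $S$, contradicting $p\notin S$. Hence $S=\emptyset$, so $A=A'$ and $\alpha$ is injective.

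For bijectivity, I note that the $p$-fixed bounded reorientations are exactly one representative per pair of opposite reorientations among the $(1,0)$-active ones (Section \ref{subsec:prelim-bounded}); by the enumeration \ref{eq:reorientation-activities}, together with $o_{1,0}=2\,b_{1,0}$ and $b_{1,0}=\beta(M)$, there are precisely $\beta(M)$ of them, which is also the number of uniactive internal bases. An injective map between finite sets of equal cardinality is a bijection, which gives the first assertion. For the second assertion, recall that a bounded reorientation and its opposite coincide as oriented matroids and therefore share the same fully optimal basis, so $\alpha$ factors through pairs of opposite bounded reorientations; the bijection above then descends to a bijection between such pairs and uniactive internal bases.

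The hard part is really that the entire construction rests on \cite[Theorem 4.5]{AB1}, the deep existence-and-uniqueness statement for the fully optimal basis, which is assumed here rather than reproved. Beyond that input, the only delicate step is the sign bookkeeping in the injectivity argument that establishes the equivalences $x\in S\iff\mathrm{partner}(x)\in S$; once these are in place, the minimality/descending-chain argument and the activity computation are routine. As an alternative to the cardinality count, surjectivity could instead be obtained constructively from the single-pass inverse algorithm (Proposition \ref{prop:alpha-10-inverse}), which exhibits, for each uniactive internal basis, a bounded reorientation mapping to it.
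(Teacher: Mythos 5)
Your proposal is correct, but it is worth stating clearly how it relates to the paper: the paper itself gives no proof of Theorem \ref{thm:bij-10} at all. The statement is the ``Key theorem'' imported from \cite[Theorem 4.5]{AB1}, and the only bridge offered is the remark that existence and uniqueness of the fully optimal basis ``yields the next theorem''. What you have written is precisely that missing bridge: you black-box the existence-uniqueness statement underlying Definition \ref{def:acyc-alpha} (which is indeed the deep part, proved in \cite{AB1} via a topological obstruction argument) and derive the bijection from it. Your three steps are sound. For the image: the convention that $b$ is positive in $C^*(B;b)$ and $e$ is positive in $C(B;e)$, combined with the Full Optimality Criterion, does force $\min(C^*(B;b))\neq b$ for $b\in B\setminus p$ and $\min(C(B;e))\neq e$ for $e\in E\setminus B$, hence $\Ext(B)=\emptyset$ and $\Int(B)=\{p\}$. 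For injectivity: the four-case sign comparison after renormalisation correctly gives $x\in S\iff$ (partner of $x$) $\in S$, and taking the minimum of $S$ (equivalently, your descending chain) forces $S=\emptyset$; note that because this argument works directly on the sign data of the two reorientations, it also disposes of the a priori worry that two distinct $p$-fixed subsets $A\neq A'$ could produce the same oriented matroid. For surjectivity: the identity $o_{1,0}=2b_{1,0}$, obtained by comparing the basis-activity and orientation-activity expansions recalled in Section \ref{sec:prelim}, is non-circular (both expansions are classical results of Tutte--Crapo and Las Vergnas \cite{LV84a}, independent of the active bijection), and together with the pairing of each bounded reorientation with its opposite it closes the cardinality count. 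The trade-off relative to the series' own treatment is that your surjectivity is purely enumerative and gives no way to exhibit a preimage, whereas \cite{AB1} obtains the inverse explicitly by the single-pass algorithm recalled here as Proposition \ref{prop:alpha-10-inverse}; and, as the paper warns at the end of the introduction of Section \ref{sec:bij-10}, the correctness of that algorithm is itself deduced from the existence-uniqueness theorem rather than the other way around, so your decision to invoke it only as an alternative route, and not as an independent proof of surjectivity, is exactly right.
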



The above mapping $M\mapsto \alpha(M)$  is called \emph{the uniactive bijection of $M$} (bounded case).
%
A direct computation of $\alpha(M)$ for bounded oriented matroids is given in \cite{AB3} by means of elaborations on linear programming (see also \cite{GiLV09} in real hyperplane arrangements, and  \cite{ABG2LP} in graphs). 
Moreover,  $\alpha(M)$ can be built by deletion/contraction, as shown in \cite{AB3} (see also \cite{ABG2} in graphs).
See more details in the introduction of the section.

The mapping $M\mapsto \alpha(M)$ was built in \cite{AB1} by its inverse, from uniactive internal bases to bounded reorientations, provided by a single pass algorithm over the base (see \cite[Figure 5]{AB1} for an example), or equally (dually) over its complement, or equally over the ground set,
so that the criterion for element signs from Definition \ref{def:acyc-alpha} is satisfied . We recall one of these algorithms below in Proposition \ref{prop:alpha-10-inverse}, that we will use later in Theorem \ref{th:basori}. 
%
%
Let us mention that internal uniactive bases can be characterized by several ways, see \cite{GiLV05, AB1, AB2-a}.
\EMEvder{redonner caracteristions de internal uniactive (ne pas priviliegeier que les roeitnations) donner en ref dans AB2a ?}%
\EMEvder{autre caracterisation de internal uniactive: GiVL05 Prop 2, bien a remettre quelque aprt}%

%


\begin{prop}[{\cite[Proposition 4.2, Algorithm 3]{AB1}}]
\label{prop:alpha-10-inverse}
Let $M$ be an oriented matroid on  a linearly ordered set of elements $E=\{e_1,\dots,e_n\}_<$. 
For a basis $B$ with internal activity $1$ and external activity $0$,
the two opposite reorientations of $M$ in $\alpha^{-1}(B)$ are computed by the following algorithm.


\begin{algorithme}
Reorient $e_1$ or not, arbitrarily.\par
For $k$ from $2$ to $n$ do\par
\hskip 10 mm if $e_k\in B$ then \par
\hskip 20 mm let $a=\min (C^*(B;e_k))$\par
\hskip 20 mm \vbox{\hsize=14cm reorient $e_k$ if necessary in order to have $a$ and $e_k$ with opposite signs in $C^*(B;e_k)$}\par
\hskip 10 mm if $e_k\not\in B$ then \par
\hskip 20 mm let $a=\min (C(B;e_k))$\par
\hskip 20 mm \vbox{\hsize=14cm reorient $e_k$ if necessary in order to have $a$ and $e_k$ with opposite signs in $C(B;e_k)$}\par
\end{algorithme}

\end{prop}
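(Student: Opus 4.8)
The plan is to show three things: that the algorithm is a well-defined single pass over $E$, that its output satisfies the Full Optimality Criterion of Definition~\ref{def:acyc-alpha} for $B$, and that the two outputs arising from the two choices for $e_1$ are exactly the opposite pair $\alpha^{-1}(B)$. First I would check well-definedness. Since $B$ is $(1,0)$-active, we have $\Int(B)=\{p\}$ and $\Ext(B)=\emptyset$, where $p=e_1=\min(E)\in B$. Hence for every $k\geq 2$ the element $e_k$ is neither internally nor externally active, so $e_k\neq\min(C^*(B;e_k))$ when $e_k\in B$ and $e_k\neq\min(C(B;e_k))$ when $e_k\notin B$; as $e_k$ lies in its own fundamental cocircuit/circuit, the element $a$ computed at step $k$ satisfies $a<e_k$ and has therefore already been assigned an orientation during a previous step. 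Reorienting the single element $e_k$ flips the sign of $e_k$ in its fundamental cocircuit/circuit while leaving the sign of $a$ unchanged, so it toggles the relative sign of $e_k$ and $a$; thus exactly one of the two choices yields opposite signs, and the step is well-defined.

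Next I would verify that the resulting reorientation $-_AM$ makes $B$ satisfy the criterion. The condition attached to an element $x\neq p$ (that $x$ and the minimum of its fundamental cocircuit/circuit have opposite signs) involves only the orientations of $x$ and of that minimum, which is $<x$. Processing $e_k$ therefore enforces the condition for $e_k$; moreover, since reorienting $e_k$ changes no relative sign between two elements both distinct from $e_k$, it leaves untouched every condition already imposed on some $e_m$ with $m<k$ (whose condition only involves $e_m$ and an element strictly below $e_m<e_k$). By induction all conditions for $e_2,\dots,e_n$ hold simultaneously at the end of the pass. As the index sets $b\in B\setminus p$ and $e\in E\setminus B$ together range over exactly $E\setminus\{p\}$ (using $p\in B$), these are precisely the conditions of Definition~\ref{def:acyc-alpha}, so the criterion holds for $B$ in $-_AM$.

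Then I would pin down the whole set of reorientations satisfying the criterion for $B$. Writing for each $y\in E$ a reorientation bit recording whether $y$ has been reoriented, each of the conditions above is an affine equation over $\mathbb{F}_2$ linking the bit of $x$ to the bit of its associated minimum $a_x<x$, the constant being determined by $M$. Because $a_x<x$ for every $x\neq p$, this is a triangular system: once the bit of $p=e_1$ is fixed, all remaining bits are forced, so exactly two reorientations satisfy the criterion. Flipping every sign preserves all relative signs, hence sends one solution to the other; therefore the two solutions form an opposite pair, and they coincide with the two algorithm outputs corresponding to the two choices for $e_1$.

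Finally I would conclude using Theorem~\ref{thm:bij-10}. By Definition~\ref{def:acyc-alpha}, every reorientation in $\alpha^{-1}(B)$ is bounded w.r.t.\ $p$ and admits $B$ as its fully optimal basis, hence satisfies the criterion; and by Theorem~\ref{thm:bij-10} the preimage $\alpha^{-1}(B)$ is exactly one pair of opposite bounded reorientations. Since there are exactly two reorientations satisfying the criterion and both members of $\alpha^{-1}(B)$ are among them, the two opposite pairs coincide, so $\alpha^{-1}(B)$ is precisely the pair produced by the algorithm. The main obstacle is this last identification: rather than re-establishing by hand that the algorithm's output is bounded, I rely on the hard existence/uniqueness result underlying Theorem~\ref{thm:bij-10} to match the combinatorially constructed opposite pair with $\alpha^{-1}(B)$. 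The other delicate point is the bookkeeping of the previous paragraph, ensuring that fixing the orientation of $e_k$ never destroys a previously satisfied condition, which is exactly what guarantees the correctness of the single pass.
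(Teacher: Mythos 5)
Your proof is correct. There is, however, no in-paper proof to compare against: Proposition \ref{prop:alpha-10-inverse} is recalled, statement and algorithm, from \cite{AB1} (Proposition 4.2, Algorithm 3 there), and the present paper only remarks that the single pass reorients elements so that the sign criterion of Definition \ref{def:acyc-alpha} is satisfied. Your reconstruction supplies the missing justification in exactly the intended way: $(1,0)$-activity of $B$ forces $a<e_k$ at every step $k\geq 2$ (so the pass is well defined and all choices after $e_1$ are forced), reorienting $e_k$ cannot disturb conditions already enforced at smaller indices, the Full Optimality Criterion amounts to a triangular affine system over $\mathbb{F}_2$ whose solutions form exactly one opposite pair, and Theorem \ref{thm:bij-10} provides a pair of opposite bounded reorientations in $\alpha^{-1}(B)$, each satisfying the criterion, so the two pairs must coincide. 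Your decision to lean on Theorem \ref{thm:bij-10} rather than verify directly that the algorithm's output is bounded is also consistent with how the series treats these algorithms: the paper states explicitly that the algorithmic constructions do not reprove the existence/uniqueness result of \cite{AB1}, but instead use it to show that the algorithms are well defined and yield the required result.
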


\begin{figure}[h]

\def\boxedplus{\fbox{+}}
\def\boxedminus{\fbox{--}}

 \begin{minipage}[c]{.2\linewidth}
   \centering
   	{\scalebox{0.2}{\includegraphics[width=10cm]{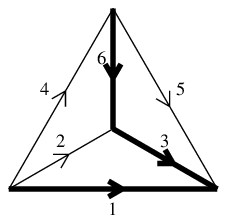}}}
   \end{minipage}
    \begin{minipage}[c]{.3\linewidth}
   \centering
{\scalebox{0.55}{\includegraphics[width=10cm]{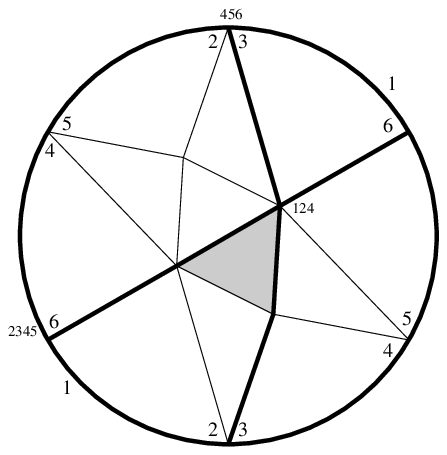}}}
   \end{minipage}
   %
%
     \begin{minipage}[c]{.5\linewidth}
   \centering
     	\scalebox{1}{      \includegraphics{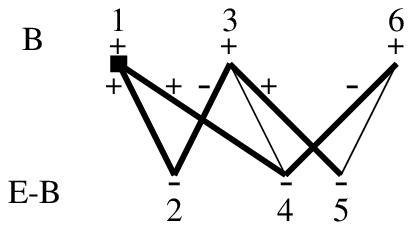}      }     	
\vspace{3mm}
     	{
\renewcommand{\arraystretch}{1.2}
\small
		\begin{tabular}{|c|c|c|c|c|c|c|}
		\hline
		 & \coltab{$C^*_1$} & \ptt{2} & \coltab{$C^*_3$}&\ptt{4}&\ptt{5}&\coltab{$C^*_6$} \\
		\hline
		\ptt{1}& $\pbvbulletn$ && &&&\\
		\coltab{$C_2$}& $\pbbulletn$ &$\mbvbulletn$ & $\mbbulletn$ &&&\\
		\ptt{3}& && $\pbvbulletn$ &&&\\
		\coltab{$C_4$}& $\pbbulletn$ &&$-$ &$\mbvbulletn$ &&$\mbbulletn$ \\
		\coltab{$C_5$}& &&$\pbbulletn$&&$\mbvbulletn$& $+$ \\
		\ptt{6}&&& &&&$\pbvbulletn$  \\
		\hline
		\end{tabular}				
}
   \end{minipage}

\caption{%
The basis $136$ is the fully optimal basis of the grey region (the arrangement is the same as in Figures \ref{fig:K4exbase256} and \ref{fig:K4act}).\break
We add signs to the fundamental graph and fundamental tableau of the basis in order to illustrate the full optimality criterion in a practical and visual way. This will be used again in Section \ref{sec:can-act-bij} to illustrate decompositions involving fully optimal bases of minors and to build on examples given in \cite{AB2-a}.
Let us now detail each part of the figure.
In the middle: the acyclic reorientation is depicted as the grey region, the basis is depicted with bold lines, its fundamental cocircuits are written at the vertices defined by the basis.
On the left: the corresponding orientation of the graph $K_4$, with its fully optimal spanning tree $136$ in bold.
On the upper right: the fundamental graph of the basis $136$, where signs are added accordingly with signs of the fundamental circuits and cocircuits of the basis w.r.t. this reorientation. 
Precisely: by convention, elements of $B$, resp. $E\s B$, are provided with a $+$, resp. $-$, sign; and then edges are provided with a $+$ or $-$ sign depending on the reorientation, so that one can read signs of elements in $C^*(B;b)$ for $b\in B$ and in $-C(B;e)$ for $e\in E\s B$ (as well, by orthogonality).
\EMEvder{***PTET A FAIRE : reprendre mieux phrase precedente***}%
Light edges are not given signs, they are not useful for the criterion that the basis is fully optimal.
On the bottom right: the fundamental tableau of the basis $136$, where signs are added accordingly with signs of the fundamental circuits and cocircuits of the basis w.r.t. this reorientation.
Signs of elements in $C^*(B;b)$ for $b\in B$, resp. in $-C(B;e)$ for $e\in E\s B$, appear in columns, resp. rows, of the tableau (by orthogonality). 
Precisely: by convention, diagonal elements corresponding to elements of $B$, resp. $E\s B$, are provided with a $\protect\pbvbulletn$ , resp. $\protect\mbvbulletn$, sign;  other non-zero elements of the tableau are given signs depending on the reorientation. They are either provided with a $\protect\pbbulletn$ or $\protect\mbbulletn$ sign, when they correspond to a minimal element of a fundamental circuit or cocircuit, or they are provided with a $+$ or $-$ sign, when they are not used in the full optimality criterion.
Finally, the reader is invited to check on the fundamental graph or tableau that the signs for the basis $136$ w.r.t. the given reorientation satisfy the full optimality criteria given by Definitions \ref{def:acyc-alpha} and \ref{def:acyc-alpha2}. On the tableau: smallest non-zero (non-diagonal) entries of rows, resp. columns. are all $\protect\pbbulletn$, resp.  $\protect\mbbulletn$. %
\EMEvder{ceci mis au nivau des autrs figures, mais pourrait etre ici... Conversely, given a basis, building a reorientation for which this basis is fully optimal consists in reorieting elements so that the signed fundamental graph or tableau has this shape, see Proposition \ref{prop:alpha-01-inverse}.}
\EMEvder{ajotuer : sign pattern provided by the boxed signs ?--- eviter repetitions avec figures suivatnes ?}
}
\label{fig:ex-fob}
\end{figure}


Now, let us extend by duality the above definitions and result to dual-bounded reorientations (this was not made explicitly in \cite{AB1}).
One can observe that Definition \ref{def:cyc-alpha0} below is contained in the following general Definition \ref{def:om-alpha}.

\begin{definition}
\label{def:cyc-alpha0}
Let $\M$ be an oriented matroid on a linearly ordered set $E$,
dual-bounded with respect to the minimal element $p$ of $E$. 
Then $M^*$ is bounded w.r.t. $p$ and we define 
\begin{equation}
\tag{Duality}
\label{eq:act_mapping_duality}
\alpha(M)=E\s \alpha(M^*).
\end{equation}
\end{definition}

\begin{definition}[equivalent to Definition \ref{def:cyc-alpha0} by Definitions \ref{def:acyc-alpha} and \ref{def:acyc-alpha2}]
\label{def:cyc-alpha1}
Let $\M$ be an oriented matroid on a linearly ordered set of elements,
dual-bounded with respect to the minimal element $p$ of $E$. 
%
Then,  $\alpha(\M)$ is the unique basis $B$ of $M$ such~that:
\smallskip

\bul for all $b\in B$,  the signs  of $b$ and $\min(C^*(B;b))$ are opposite in $C^*(B;b)$;
\smallskip

\bul for all $e\in (E\s B)\s\{p\}$,
the signs of $e$ and $\min(C(B;e))$ are opposite in $C(B;e)$.
\ss

\noindent Equivalently, $\alpha(\M)$  is the unique basis $B$ of $M$ such~that, denoting $B=b_1<\dots<b_r$ and $E\s B=c_1<\dots<c_{n-r}$:
\smallskip

\bul the maximal covector $C^*(B;b_1)\circ\dots\circ C^*(B;b_r)$ is positive on $E\s \{p\}$ and negative on $p$;

\bul the maximal vector $C(B;c_1)\circ\dots\circ C(B;c_{n-r})$ is positive.
\end{definition}


\begin{thm}[dual of Theorem \ref{thm:bij-10}]
\label{thm:bij-01}
Let $M$ be a matroid on a linearly ordered set $E$ with $\min(E)=p$.
The mapping $M\mapsto \alpha(M)$ yields a bijection between all dual-bounded reorientations of $M$ w.r.t. $p$, with fixed orientation for $p$, and all uniactive external bases of $M$.
Equally, it yields a bijection between all pairs of opposite dual-bounded reorientations of $M$ w.r.t. $p$, and all uniactive external bases of $M$.
\end{thm}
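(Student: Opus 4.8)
The plan is to reduce everything to Theorem~\ref{thm:bij-10} applied to the dual matroid $M^*$, exploiting that $\alpha$ was \emph{defined} on dual-bounded oriented matroids precisely by $\alpha(M)=E\setminus\alpha(M^*)$ in Definition~\ref{def:cyc-alpha0}. The three ingredients I would use are: that reorientation commutes with dualization, namely $(-_AM)^*=-_A(M^*)$; that $-_AM$ is dual-bounded with respect to $p$ if and only if $-_AM^*$ is bounded with respect to $p$ (Section~\ref{subsec:prelim-bounded}); and that complementation $B\mapsto E\setminus B$ interchanges bases of $M$ and of $M^*$ while swapping activities, so that $\Int_M(B)=\Ext_{M^*}(E\setminus B)$ and $\Ext_M(B)=\Int_{M^*}(E\setminus B)$. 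The last identity shows that $B$ is a uniactive external basis of $M$ exactly when $E\setminus B$ is a uniactive internal basis of $M^*$.

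First I would exhibit the map $M\mapsto\alpha(M)$ on dual-bounded reorientations as a composite of three bijections. Dualization sends a dual-bounded reorientation $-_AM$ of $M$ with $p\notin A$ to the bounded reorientation $-_AM^*$ of $M^*$ with $p\notin A$, and this is a bijection between the two sets of reorientations (its inverse is again dualization, and the condition $p\notin A$ is manifestly preserved). Theorem~\ref{thm:bij-10} applied to $M^*$ then gives a bijection from these bounded reorientations of $M^*$ onto the uniactive internal bases of $M^*$. Finally, complementation $B'\mapsto E\setminus B'$ is a bijection from the uniactive internal bases of $M^*$ onto the uniactive external bases of $M$, by the activity duality recalled above. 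The composite of these three maps sends $-_AM$ to $E\setminus\alpha((-_AM)^*)=E\setminus\alpha(-_AM^*)=\alpha(-_AM)$, where the last equality is Definition~\ref{def:cyc-alpha0}; hence $M\mapsto\alpha(M)$ is a bijection between dual-bounded reorientations of $M$ with fixed orientation for $p$ and uniactive external bases of $M$, which is the first assertion.

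For the second assertion I would pass to opposite pairs. Since $M^*$ is bounded if and only if $M$ is dual-bounded, and a bounded reorientation and its opposite are both bounded with the same fully optimal basis (the remark following Definition~\ref{def:acyc-alpha2}), dualization carries the opposite pair $\{-_AM,\,-_{E\setminus A}M\}$ of dual-bounded reorientations to the opposite pair $\{-_AM^*,\,-_{E\setminus A}M^*\}$ of bounded reorientations of $M^*$. These share a fully optimal basis, so $\alpha(-_AM)=E\setminus\alpha(-_AM^*)=E\setminus\alpha(-_{E\setminus A}M^*)=\alpha(-_{E\setminus A}M)$; thus each opposite pair contains exactly one representative with $p$ fixed, and both members are sent to the same uniactive external basis. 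The pairs statement therefore follows from the fixed-$p$ statement.

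The argument is essentially bookkeeping, so I do not expect a genuine obstacle; the points demanding care are purely formal. I would double-check that the condition \emph{fixed orientation for $p$} (i.e.\ $p\notin A$) and the notion of \emph{opposite} are both preserved under dualization, that the activity-duality identities send \emph{uniactive external} to \emph{uniactive internal} in the correct direction, and that the degenerate one-element cases behave well (a single loop $p$ of $M$ is dual-bounded and dualizes to a single isthmus of $M^*$ that is bounded, with $\alpha(M)=\emptyset$ and $\alpha(M^*)=\{p\}$), so that no case is lost.
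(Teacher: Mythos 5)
Your proposal is correct and matches the paper's own (implicit) argument: the paper states Theorem \ref{thm:bij-01} simply as the dual of Theorem \ref{thm:bij-10}, i.e.\ it is obtained by applying that theorem to $M^*$, using that reorientation commutes with duality, that dual-bounded reorientations of $M$ w.r.t.\ $p$ correspond to bounded reorientations of $M^*$ w.r.t.\ $p$, that complementation exchanges uniactive internal bases of $M^*$ with uniactive external bases of $M$, and that $\alpha(M)=E\setminus\alpha(M^*)$ by Definition \ref{def:cyc-alpha0}. Your write-up just makes this bookkeeping explicit (including the preservation of opposite pairs and the one-element loop/isthmus case), which is exactly what the paper leaves to the reader.
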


The above mapping $M\mapsto \alpha(M)$ is the dual-bounded case of the \emph{the uniactive bijection of $M$}.

\begin{prop}
\label{prop:alpha-01-inverse}
Let $M$ be an oriented matroid on  a linearly ordered set. 
For a basis $B$ with internal activity $0$ and external activity $1$,
the two opposite reorientations of $M$ in $\alpha^{-1}(B)$ are computed by exactly the same algorithm as in Proposition \ref{prop:alpha-10-inverse}.
\hfill\square
\end{prop}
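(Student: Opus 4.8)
The plan is to deduce the statement from Proposition~\ref{prop:alpha-10-inverse} by duality, via Definition~\ref{def:cyc-alpha0}. First I would record that if $B$ has internal activity $0$ and external activity $1$ in $M$, then by the duality of activities ($\Int_M(B)=\Ext_{M^*}(E\s B)$ and $\Ext_M(B)=\Int_{M^*}(E\s B)$) the complementary set $B^*=E\s B$ is a basis of $M^*$ with internal activity $1$ and external activity $0$. In particular $p=\min(E)\in\Ext_M(B)$, so $p\notin B$ and $p\in B^*$, which is the internally active element of $M^*$. Thus Proposition~\ref{prop:alpha-10-inverse} applies verbatim to $M^*$ and its uniactive internal basis $B^*$, computing the two opposite bounded reorientations $-_AM^*$ with $\alpha(-_AM^*)=B^*$.

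Next I would check that this algorithm, written for $(M^*,B^*)$, coincides step by step with the algorithm of Proposition~\ref{prop:alpha-10-inverse} written for $(M,B)$. This rests on two facts. First, reorientation commutes with duality, $(-_AM)^*=-_A(M^*)$, so the reorientations performed on the ground set are the same whether carried out in $M$ or in $M^*$. Second, the fundamental cocircuits of $M^*$ with respect to $B^*$ are the fundamental circuits of $M$ with respect to $B$, and the fundamental circuits of $M^*$ with respect to $B^*$ are the fundamental cocircuits of $M$ with respect to $B$: indeed $\underline{C^*_{M^*}(B^*;e)}=\underline{C_M(B;e)}$ for $e\in B^*$ and $\underline{C_{M^*}(B^*;b)}=\underline{C^*_M(B;b)}$ for $b\in B$, and as signed sets these agree once each is normalized so that the exchanged element is positive, since signed circuits of $M$ are precisely signed cocircuits of $M^*$. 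Consequently the branch ``$e_k\in B^*$'' of the $M^*$-algorithm (using $C^*_{M^*}(B^*;e_k)$) is the branch ``$e_k\notin B$'' of the $M$-algorithm (using $C_M(B;e_k)$), and symmetrically; the two algorithms are literally identical as operations on $E$.

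Finally I would transport the output back through Definition~\ref{def:cyc-alpha0}. A reorientation $-_AM$ is dual-bounded with respect to $p$ if and only if $-_AM^*$ is bounded with respect to $p$, and in that case $\alpha(-_AM)=E\s\alpha(-_AM^*)$; hence $\alpha(-_AM)=B$ if and only if $\alpha(-_AM^*)=B^*$. Therefore $\alpha^{-1}(B)$ for $M$ and $\alpha^{-1}(B^*)$ for $M^*$ are the same pair of opposite reorientations, which is exactly what the stated algorithm produces. The only point requiring care, and the sole content beyond bookkeeping, is the sign-normalization step: one must verify that ``$a$ and $e_k$ have opposite signs'' is intrinsic, i.e. unchanged when $C_M(B;e_k)$ is reread as $C^*_{M^*}(B^*;e_k)$ (and symmetrically for cocircuits), which follows from orthogonality together with the convention that the exchanged element is taken positive in both its fundamental circuit and its fundamental cocircuit. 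A direct verification mirroring the proof of Proposition~\ref{prop:alpha-10-inverse} is also available: the algorithm leaves $e_1=p$ free and enforces, for every $e_k$ with $k\geq 2$, precisely the sign opposition of Definition~\ref{def:cyc-alpha1}; since here $p\notin B$, these are exactly the conditions ``for all $b\in B$'' and ``for all $e\in(E\s B)\s\{p\}$'', and each, once established at step $k$, persists because later reorientations affect neither $e_k$ nor the strictly smaller element $\min(\cdot)$.
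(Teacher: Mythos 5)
Your proof is correct and takes essentially the same approach as the paper: the paper states this proposition with no written proof precisely because it regards it as the immediate duality transport you spell out, namely that Definition \ref{def:cyc-alpha0} reduces the dual-bounded case to the bounded one, while complementation exchanges fundamental circuits and cocircuits (signed circuits of $M$ being signed cocircuits of $M^*$) so that the algorithm of Proposition \ref{prop:alpha-10-inverse} applied to $(M^*,E\setminus B)$ is step-by-step the same algorithm applied to $(M,B)$. Your closing direct verification against the sign criterion of Definition \ref{def:cyc-alpha1} is likewise exactly the intended justification, so nothing is missing.
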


Finally, let us recall a different and more involved duality   property of the active bijection, called \emph{active duality}, that can be seen as a strengthening of linear programming duality
(see \cite[Section 5]{AB1}%
\footnote{\label{footonote:typobis}See footnote \ref{footnote:typo}  in Section \ref{sec:prelim}  for a correction in the statements of \cite[Proposition 5.1 and Theorem~5.3]{AB1}.
}%
).
%
%
This important  property shows that the active bijection is compatible with the two canonical bijections provided by the following properties  (\cite[Propositions 5.1 and~5.2]{AB1}%
\textsuperscript{\ref{footonote:typobis}}%
).
For a linearly ordered set $E$ 
with at least two elements:
\begin{itemize}
\item an oriented matroid $M$ on  $E$ is bounded w.r.t. $p=\min(E)$ if and only if $-_pM$ is dual-bounded w.r.t. $p$ (if and only if $-_pM^*$ is bounded w.r.t. $p$);
\item a basis $B$ of a matroid $M$ on  $E$, with $p=\min(E)$ and $p'=\min(E\s \{p\})$, is uniactive internal if and only if $B\s \{p\}\cup \{p'\}$ is a uniactive external basis.
\end{itemize}


Combining these properties with usual duality yields the commutative diagram of Figure \ref{fig:bounded-duality-diagram}.

\begin{thm}%
[{\cite[Theorem 5.3]{AB1}}%
\textsuperscript{\ref{footonote:typobis}}%
]%
\label{th:active-duality}
Let $E$ be a linearly ordered set with $|E|>1$.
Let $\M$ be an oriented matroid on  $E$,
bounded with respect to $p=min(E)$.
 Let $p'=\min(E\s\{p\})$. We have:
\begin{equation}
\tag{Active Duality}
\label{eq:act_mapping_strong_duality}
\alpha(\M)=\Bigl(E\s \alpha(-_p\M^*)\Bigr)\setminus \{p'\}\cup \{p\}.
\end{equation}
\end{thm}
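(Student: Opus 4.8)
The plan is to first simplify the right-hand side so as to eliminate the dual. Reorientation commutes with duality, so $(-_pM)^*=-_pM^*$; and since $M$ is bounded with respect to $p$ and $|E|>1$, the oriented matroid $-_pM$ is dual-bounded with respect to $p$ (Section~\ref{subsec:prelim-bounded}). Applying the \eqref{eq:act_mapping_duality} formula of Definition~\ref{def:cyc-alpha0} to the dual-bounded oriented matroid $-_pM$ gives $\alpha(-_pM)=E\setminus\alpha\bigl((-_pM)^*\bigr)=E\setminus\alpha(-_pM^*)$. Substituting this into the statement, and using that $\alpha(M)$ is uniactive internal (so $p\in\alpha(M)$, $p'\notin\alpha(M)$) while $\alpha(-_pM)$ is uniactive external (so $p'\in\alpha(-_pM)$, $p\notin\alpha(-_pM)$), the asserted identity becomes equivalent to the single clean statement $\alpha(-_pM)=\alpha(M)\setminus\{p\}\cup\{p'\}$. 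In words: reorienting the least element exchanges the fully optimal basis with its two-smallest-element swap. This is what I would prove.

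Set $B=\alpha(M)$ and $C=B\setminus\{p\}\cup\{p'\}$. Since $B$ is uniactive internal one has $p\in B$, $p'\notin B$, and, as $\mathrm{Ext}(B)=\emptyset$, the only element of $C_M(B;p')$ smaller than $p'$ is $p$, so $p\in C_M(B;p')$ and $\{p,p'\}$ is a legitimate basis exchange; by the basis bijection recalled just before the statement, $C$ is uniactive external. Because $-_pM$ is dual-bounded, Theorem~\ref{thm:bij-01} gives a \emph{unique} fully optimal (uniactive external) basis, so it suffices to check that $C$ satisfies the dual-bounded Full Optimality Criterion of Definition~\ref{def:cyc-alpha1} in $-_pM$: namely that $b$ and $\min C^*_{-_pM}(C;b)$ have opposite signs in $C^*_{-_pM}(C;b)$ for every $b\in C$, and that $e$ and $\min C_{-_pM}(C;e)$ have opposite signs in $C_{-_pM}(C;e)$ for every $e\in(E\setminus C)\setminus\{p\}=(E\setminus B)\setminus\{p'\}$.

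The fundamental data of $C$ is obtained from that of $B$ by the single pivot on the pair $\{p,p'\}$. On supports, $C^*_M(C;p')=C^*_M(B;p)$ and $C_M(C;p)=C_M(B;p')$; for $b\in B\setminus\{p\}$ one has $C^*_M(C;b)=C^*_M(B;b)$ when $p'\notin C^*_M(B;b)$, and otherwise $C^*_M(C;b)$ is the cocircuit obtained by eliminating $p'$ between $C^*_M(B;b)$ and $C^*_M(B;p)$; dually $C_M(C;e)$ is $C_M(B;e)$ when $p\notin C_M(B;e)$ and is otherwise gotten by eliminating $p$ between $C_M(B;e)$ and $C_M(B;p')$. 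Two remarks organize the signs. First, each circuit occurring in the circuit condition lies in $C\cup\{e\}$, which omits $p$, so reorienting $p$ leaves it unchanged and only the elimination has to be accounted for. Second, since $p=\min(E)$, whenever a cocircuit $C^*_{-_pM}(C;b)$ contains $p$ its minimum is $p$, and passing from $M$ to $-_pM$ flips exactly the sign of $p$. The case $b=p'$ is the model computation: $C^*_M(C;p')=C^*_M(B;p)$ has minimum $p$, and orthogonality applied to $C_M(B;p')$ and $C^*_M(B;p)$---which meet exactly in $\{p,p'\}$---combined with the criterion for $B$ at $e=p'$ forces $p'$ to be positive in $C^*_M(B;p)$; the flip of $p$ then delivers the required opposite signs in $-_pM$.

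The main obstacle is the signed elimination step: for those $b\in B\setminus\{p\}$ with $p'\in C^*_M(B;b)$ (and dually for the circuits with $p\in C_M(B;e)$) one must show that the pivoted cocircuit/circuit still meets the min-versus-sign requirement in $-_pM$, using only the criterion known for $B$. This amounts to controlling, after elimination, both which element becomes the new minimum and the sign acquired by $p$, which I would extract from orthogonality between the pivoting cocircuit $C^*_M(B;p)$ (resp. circuit $C_M(B;p')$) and the relevant fundamental circuits, exactly along the lines of the $b=p'$ computation. Once all these signs are pinned down, Definition~\ref{def:cyc-alpha1} is verified for $C$, the uniqueness in Theorem~\ref{thm:bij-01} yields $C=\alpha(-_pM)$, and the reduction of the first paragraph finishes the proof of \eqref{eq:act_mapping_strong_duality}.
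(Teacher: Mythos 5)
Your proposal cannot be compared against an in-paper argument, because this paper does not prove Theorem~\ref{th:active-duality} at all: it is imported from \cite[Theorem 5.3]{AB1} (with a footnote correcting a typo), and the actual proof lives in Section~5 of that earlier paper. Judged as a standalone proof, your reduction is correct and, importantly, non-circular: you use Definition~\ref{def:cyc-alpha0} and the characterization of Definition~\ref{def:cyc-alpha1} (whose equivalence rests only on Definitions~\ref{def:acyc-alpha} and~\ref{def:acyc-alpha2}), not Definition~\ref{def:cyc-alpha2}, which the paper declares equivalent \emph{by} the very theorem you are proving. The restatement as $\alpha(-_p\M)=\alpha(\M)\setminus\{p\}\cup\{p'\}$, the verification that $C=B\setminus\{p\}\cup\{p'\}$ is a uniactive external basis, the appeal to uniqueness in Theorem~\ref{thm:bij-01}/Definition~\ref{def:cyc-alpha1}, and the model computation for $b=p'$ are all sound.

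The genuine gap is that the decisive cases are only promised, not proved: namely every $b\in B\setminus\{p\}$ with $p'\in C^*_M(B;b)$ and every $e\in(E\setminus B)\setminus\{p'\}$ with $p\in C_M(B;e)$ --- exactly the cases in which the pivot actually changes the fundamental cocircuits/circuits. You write that these ``would'' follow from orthogonality ``exactly along the lines of the $b=p'$ computation,'' but that case is degenerate (the supports of $C^*_M(C;p')$ and $C^*_M(B;p)$ coincide, so no elimination occurs), whereas the remaining cases need genuinely more: (i) the unstated key observation that full optimality of $B$ pins the sign of $p'$ in \emph{every} fundamental cocircuit containing it, because $p\notin C^*_M(B;b)$ for $b\neq p$ forces $\min\bigl(C^*_M(B;b)\bigr)=p'$ (and dually the sign of $p$ in every fundamental circuit containing it); (ii) an argument that $p$ really lies in the pivoted cocircuit $C^*_M(C;b)$ (resp.\ $p'$ in $C_M(C;e)$) --- a small uniqueness-of-fundamental-cocircuit argument you never make; and (iii) sign control after the pivot, either via the strong elimination axiom with its sign inclusions, or via orthogonality through the pivoted objects $C_M(C;p)=-C_M(B;p')$ and $C^*_M(C;p')=C^*_M(B;p)$, the latter requiring positivity statements you have not established (e.g.\ that $C_M(B;p')$ is positive off $p$, which follows from item (i)). These steps are the mathematical heart of Active Duality --- the part that requires real work in \cite{AB1} --- so as written the proposal is a sketch whose central verification is missing, even though the strategy itself is viable and the gap is fillable.
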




\begin{definition}%
[equivalent to Definition \ref{def:cyc-alpha0} by Theorem \ref{th:active-duality}]
\label{def:cyc-alpha2}
Let $E$ be a linearly ordered set with $|E|>1$.
Let $p=min(E)$ and $p'=\min(E\s\{p\})$.
Let $\M$ be an oriented matroid on  $E$,
dual-bounded with respect to $p$.
Then $-_p\M$ is bounded w.r.t. $p$, and 
we can define $\alpha(\M)$ by:
\begin{equation*}
\alpha(\M)=\alpha(-_p\M)\setminus \{p\}\cup \{p'\}.
\end{equation*}
\end{definition}
\smallskip

\def\hadistance{3cm}
\def\vadistance{0cm}
\def\hbdistance{6cm}
\def\vbdistance{0cm}
\def\vdistance{4cm}

\vspace{-3mm}

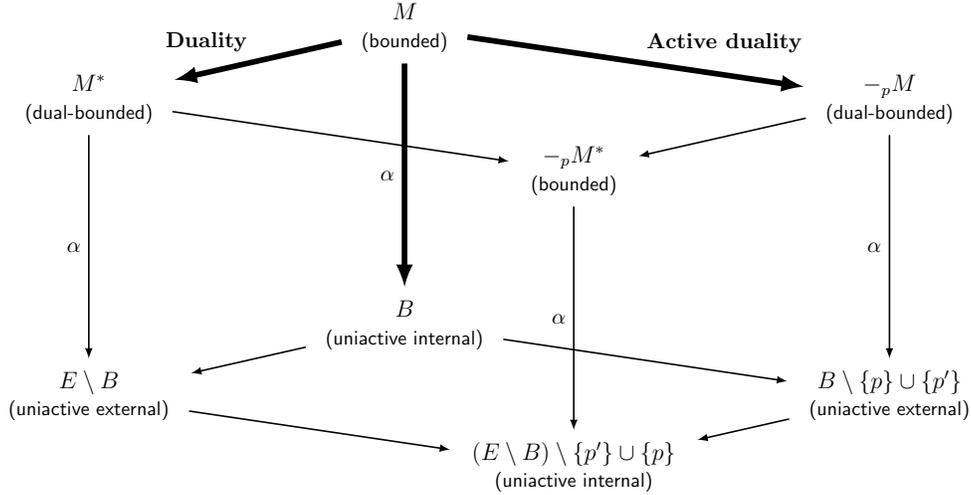
\begin{figure}[h]
\centering

\scalebox{0.75}{
\begin{tikzpicture}[->,>=triangle 90, thick,shorten >=1pt,auto, node distance=\hdistance,  thick,  
main node/.style={rectangle,font=\sffamily\large}
]

  \node[main node] (1) {\begin{tabular}{c}$M$\\ \small (bounded)\end{tabular}};
  \node[main node] (1a)[below left= \vadistance and \hadistance of 1] {\begin{tabular}{c}$M^*$\\ \small (dual-bounded)\end{tabular}};
  \node[main node] (1b)[below right= \vbdistance and \hbdistance of 1] {\begin{tabular}{c}$-_pM$\\ \small (dual-bounded)\end{tabular}};
   \node[main node] (1ab) [below left= \vadistance and \hadistance of 1b] {\begin{tabular}{c}$-_pM^*$\\ \small (bounded)\end{tabular}};

  \node[main node] (2) [below =\vdistance of 1] {\begin{tabular}{c}$B$\\ \small (uniactive internal)\end{tabular}};
  \node[main node] (2a) [below =\vdistance of 1a] {\begin{tabular}{c}$E\s B$\\  \small (uniactive external)\end{tabular}};
  \node[main node] (2b) [below =\vdistance of 1b] {\begin{tabular}{c} $B\s\{p\}\cup\{p'\}$\\ \small (uniactive external)\end{tabular}};
   \node[main node] (2ab)  [below =\vdistance of 1ab] {\begin{tabular}{c}  $(E\s B)\s\{p'\}\cup\{p\}$\\ \small (uniactive internal)\end{tabular}};

\path[every node/.style={font=\sffamily}]
	
	(1) edge [->, >=latex, line width=1mm] node [above left] 
	{\bf Duality}
	(1a)
	
	(1) edge [->, >=latex, line width=1mm] node [above right] 
	{\bf Active duality}
	(1b)
		
	(1a) edge [->, >=latex]
	(1ab)

	(1b) edge [->, >=latex] 
	(1ab)
		
	(2) edge [->, >=latex] 
	(2a)
	
	(2) edge [->, >=latex] 
	(2b)
		
	(2a) edge [->, >=latex]
	(2ab)

	(2b) edge [->, >=latex] 
	(2ab)
				
	(1) edge [->, >=latex, line width=1mm] node [left] 
	{$\alpha$}
	(2)
	
	(1a) edge [->, >=latex] node [left] 
	{$\alpha$}
	(2a)
	
	(1b) edge [->, >=latex] node [left] 
	{$\alpha$}
	(2b)
	
	(1ab) edge [->, >=latex] node [left] 
	{$\alpha$}
	(2ab)
;
\end{tikzpicture}
}
\label{fig:bounded-duality-diagram}
\caption{Commutative diagram of duality properties of the uniactive bijection. It involves the usual oriented matroid duality and the active duality (Theorem \ref{th:active-duality}).}
\end{figure}


\vspace{-5mm}
\section{The active basis of an ordered oriented matroid, and the canonical active bijection between reorientation activity classes and matroid bases}
\label{sec:can-act-bij}

\EMEvder{pour refined on peut ajouter aussi def par union over active fitlration....}




In this section,
we define the canonical active bijection
by means of the two decompositions into the case of $(1,0)$ or $(0,1)$ activities from the previous Sections \ref{sec:dec-seq} (for bases) and  \ref{sec:dec-mo} (for reorientations), along with the bijection for $(1,0)$ activities from Section \ref{sec:bij-10}.
One can compute this decomposition and glue together the fully optimal bases associated with the active minors in order to get the base associated with the initial oriented matroid. 
This canonical bijection between activity classes of reorientations and bases not only preserves activities and active elements, but also active partitions (Theorem \ref{th:alpha}).
Conversely, a single pass algorithm can be used to compute the inverse bijection, from bases to reorientations of a given oriented matroid.
It uses only fundamental circuits and cocircuits, 
by some propagation from the smallest to the greatest element (Theorem \ref{th:basori}).
The bijection and its inverse can be also built by deletion/contraction of the greatest element \cite{AB4}.

We call it the \emph{canonical} active bijection
since it is based on three canonical constructions, so that it finally depends only on the reorientation class of the oriented matroid, that is on the non-signed pseudosphere/hyperplane arrangement in terms of a topological representation (or the underlying undirected graph in the graph case).
See Observation \ref{obs:canonical}.
Let us also point out that, at every step of the construction, important duality properties are satisfied.

Formally, for an ordered oriented matroid $M$, we will get   \emph{the active basis of $M$}, which is denoted $\alpha(M)$, and \emph{the canonical active bijection of $M$}, which is the bijection between preimages (activity classes of reorientations of $M$) and images (bases of $M$) of the mapping $M\mapsto \alpha(M)$ applied to all reorientations of $M$.
%
%
First, we give several definitions for the 
active basis.
Then,  in Theorem \ref{th:alpha} below, we state that these definitions are well-defined and equivalent.

\begin{definition}
\label{def:om-alpha}
For an
oriented matroid $M$ on a linearly ordered set $E$, the \emph{active basis} $\alpha(M)$ of $M$ 
 satisfies the three following properties:

\begin{itemize}
\itemsep=2mm

\itemlabel{it1-def-alpha}{(F.o.b.)}
\item[\rm \itemref{it1-def-alpha}]
If $M$ is bounded with respect to $p=\min(E)$ then $\alpha(M)$ is the fully optimal basis of $M$.

\itemlabel{it2-def-alpha}{(Duality)}
\item[\rm \itemref{it2-def-alpha}]

$\alpha(M^*)=E\s \alpha(M).$

\itemlabel{it3-def-alpha}{(Induction)}
\item[\rm \itemref{it3-def-alpha}]
$\alpha(M)=\alpha(M/ F)\ \uplus\ \alpha(M(F))$
where $F$ is the complement of the union of all positive cocircuits of $M$ whose smallest element $a$ is the greatest possible smallest element of a positive cocircuit of~$M$ (i.e. $a$ is the greatest dual-active element of $M$).

\end{itemize}
\end{definition}

\begin{definition}[equivalent variants of Definition \ref{def:om-alpha}]
\label{def:om-alpha-var}
In Definition \ref{def:om-alpha}, the first property \itemref{it1-def-alpha} can be replaced with Definition \ref{def:cyc-alpha1}, assuming $M$ is dual-bounded w.r.t. $p=\min(E)$.
%
%
In Definition \ref{def:om-alpha}, the third property \itemref{it3-def-alpha} can be replaced with any of the following ones.

%
%

\begin{itemize} 
\itemlabel{it1-alpha-var}{(Ind.$^*$)}
\item[\rm \itemref{it1-alpha-var}]
 $\alpha(M)=\alpha(M/ F)\ \uplus\ \alpha(M(F))$
where $F$ is the union of all positive circuits of $M$ whose smallest element is the greatest possible smallest element of a positive circuit of $M$.

\itemlabel{it2-alpha-var}{(Ind.$_+$)}
\item[\rm \itemref{it2-alpha-var}]
$\alpha(M)=\alpha(M/ F))\ \uplus\ \alpha(M(F))$
where $F$ is  the complement of the union of all positive cocircuits of $M$ whose smallest element is greater than any given element of $E$.

\itemlabel{it3-alpha-var}{(Ind.$_+^*$)}
\item[\rm \itemref{it3-alpha-var}]
$\alpha(M)=\alpha(M/ F))\ \uplus\ \alpha(M(F))$
where $F$ is the union of all positive circuits of $M$ whose smallest element is greater than any given element of $E$.
\end{itemize}

\end{definition}

\begin{definition}[equivalent to Definition \ref{def:om-alpha}]
\label{def:alpha-seq-decomp}
Let $\M$ be an oriented matroid on a linearly ordered set of elements, with active filtration
$(F'_\ep, \ldots, F'_0, F_c , F_0, \ldots, F_\io)$.
If $\io+\ep=1$ then $\alpha(M)$ is defined by any of the equivalent definitions given in Section \ref{sec:bij-10} (bounded case if $\io=1$, dual-bounded case if $\ep=1$).
Otherwise,  $\alpha(M)$ is defined by
$$\alpha(\M)=\ \biguplus_{1\leq k\leq \io} \alpha\bigl( \M(F_k)/F_{k-1}\bigr)\ \uplus\ \biguplus_{1\leq k\leq \ep}
\alpha\bigl( \M(F'_{k-1})/F'_{k}\bigr).$$
\end{definition}

\begin{thm} 
\label{th:alpha}
Let $M$ be an oriented matroid on a linearly ordered set $E$. 
\begin{enumerate}
\item The image $\alpha(M)$ is well defined. Definitions \ref{def:om-alpha}, \ref{def:om-alpha-var}  and \ref{def:alpha-seq-decomp} are equivalent.
\item The image $\alpha(M)$ is a basis of $M$, and this basis has the same active filtration/partition as $M$, 
which implies in particular 
\vspace{-3mm}
\begin{eqnarray*}
\Int\bigl(\alpha(M)\bigr)&=&O^*(M),\\
\Ext\bigl(\alpha(M)\bigr)&=&O(M).
\end{eqnarray*}
\item The $2^{\io+\ep}$ reorientations of $M$ in the activity class of $M$, which has dual-activity $\io$ and activity $\ep$, and  are mapped onto the same  basis $\alpha(M)$.
\item The mapping $M\mapsto \alpha(M)$, applied to the set of reorientations of $M$, provides a surjection onto the set of bases, and a bijection between all activity classes of reorientations of $M$ and all  bases of $M$. 
\item In particular, we obtain the bijections listed in Table \ref{table:activity-classes}.
\end{enumerate}
\end{thm}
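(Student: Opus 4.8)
The plan is to take Definition~\ref{def:alpha-seq-decomp} as the primary, non-recursive definition and establish well-definedness from it, then derive the recursive formulations, and finally deduce Parts 2--5 by threading together the two decomposition theorems (\ref{th:dec_base} and \ref{th:dec-ori}) with the uniactive bijection (\ref{thm:bij-10} and its dual \ref{thm:bij-01}). For Part 1, I would first note that the active filtration of $M$ exists and is unique by Theorem~\ref{thm:unique-dec-seq}, and that its active minors $M(F_k)/F_{k-1}$ and $M(F'_{k-1})/F'_{k}$ are respectively bounded and dual-bounded with respect to their smallest elements; hence each carries a well-defined fully optimal basis by the results of Section~\ref{sec:bij-10}. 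Since the ground sets of these minors form the active partition of $E$, the disjoint union in Definition~\ref{def:alpha-seq-decomp} is well-defined. To recover the recursive Definition~\ref{def:om-alpha} and the variants of Definition~\ref{def:om-alpha-var}, I would observe that each of them peels off a prefix or a suffix of the active filtration: for the property \emph{(Induction)} one has $F=F_{\io-1}$, so $M/F$ is the single bounded minor $M_\io$ while, by Observation~\ref{obs:induced-dec-seq-ori}, $M(F)$ has active filtration the truncated sequence; applying Definition~\ref{def:alpha-seq-decomp} on both sides and using associativity of $\uplus$ yields the identity, and the remaining variants follow identically (the dual ones via the dual peeling). The property \emph{(F.o.b.)} is the case $\io=1,\ \ep=0$, and the property \emph{(Duality)} follows from Observation~\ref{lem:dec-seq-mo-observation}(1) together with Observation~\ref{obs:dec-seq-duality}, which identify the active minors of $M^*$ as the duals of those of $M$, combined with the minorwise duality relation $\alpha(N)=E\s\alpha(N^*)$ of Definition~\ref{def:cyc-alpha0}.

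For Part 2, by Theorem~\ref{thm:bij-10} (resp. its dual~\ref{thm:bij-01}) the fully optimal basis of each bounded minor $M_k$ is uniactive internal and that of each dual-bounded minor $M'_k$ is uniactive external. Thus $\alpha(M)$ is exactly a basis of the shape appearing in the decomposition Theorem~\ref{th:dec_base}, with the active filtration of $M$ playing the role of the connected filtration and the pieces being $(1,0)$- or $(0,1)$-active. Because Theorem~\ref{th:dec_base} writes the set of bases as a \emph{disjoint} union over connected filtrations, the filtration attached to the basis $\alpha(M)$ in the sense of Definition~\ref{def:act-seq-dec-base} must coincide with the active filtration of the oriented matroid $M$. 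The activity identities then read off from the $\Int/\Ext$ formulas of Theorem~\ref{th:dec_base}: $\Int(\alpha(M))=\cup_k \min(F_k\s F_{k-1})=\{a_1,\dots,a_\io\}=O^*(M)$, and dually $\Ext(\alpha(M))=O(M)$.

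For Parts 3 and 4, a reorientation in the activity class of $M$ reorients a union of parts of the active partition (Definition~\ref{def:act-class}); by Proposition~\ref{prop:act-classes} it has the same active filtration, and reorienting a whole part replaces the corresponding minor by its opposite, which has the same fully optimal basis (as noted after Theorem~\ref{thm:bij-10}). Hence $\alpha$ is constant on each activity class, giving Part 3. For surjectivity, given a basis $B$ I would decompose it by Theorem~\ref{th:dec_base} along its active filtration into uniactive internal/external pieces, realize each piece as the fully optimal basis of a bounded/dual-bounded reorientation of the corresponding minor via Theorem~\ref{thm:bij-10}/\ref{thm:bij-01}, and glue these reorientations by Theorem~\ref{th:dec-ori} into a reorientation $-_AM$ with $\alpha(-_AM)=B$. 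For injectivity, any $-_AM$ with $\alpha(-_AM)=B$ has active filtration equal to that of $B$ by Part 2, hence the same for all such reorientations; restricting to each active minor, the uniactive bijection determines that minor's reorientation up to its opposite, so the full preimage of $B$ is obtained by independently reorienting each part, i.e.\ it is exactly one activity class. This yields the claimed bijection between activity classes and bases.

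Finally, Part 5 follows from Part 2: a basis is internal, resp. external, uniactive internal, uniactive external, precisely when $O(M)=\emptyset$, resp. $O^*(M)=\emptyset$, $M$ bounded, $M$ dual-bounded, i.e.\ precisely for the acyclic, resp. totally cyclic, bounded, dual-bounded activity classes; the cardinalities are the corresponding Tutte evaluations already recorded in Corollary~\ref{cor:enum-classes} and in the expansion~\ref{eq:basis-activities}. The main obstacle here is not any single computation but the careful bookkeeping that fuses the three decomposition/bijection results into a consistent whole: one must check that every recursive clause of Definitions~\ref{def:om-alpha} and~\ref{def:om-alpha-var} peels off exactly the sub-filtration predicted by Observation~\ref{obs:induced-dec-seq-ori}, and that the \emph{(Duality)} property is compatible minorwise with the dualization of the active filtration. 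The genuinely deep input --- the existence and uniqueness of the fully optimal basis of a bounded oriented matroid --- is Theorem~\ref{thm:bij-10}, which is taken as given.
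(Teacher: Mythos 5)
Your proposal is correct and follows essentially the same route as the paper's own proof: it takes Definition~\ref{def:alpha-seq-decomp} as the primary definition, combines Theorems~\ref{th:dec_base}, \ref{th:dec-ori} and the uniactive bijection of Theorems~\ref{thm:bij-10}/\ref{thm:bij-01} to get Parts 2--5, and recovers the recursive Definitions~\ref{def:om-alpha} and~\ref{def:om-alpha-var} by peeling off the extremal subsets of the active filtration (via Observation~\ref{obs:induced-dec-seq-ori}) together with duality. The only differences are cosmetic: you prove the equivalence of definitions before the bijection statements, and you spell out the surjectivity/injectivity bookkeeping that the paper compresses into one sentence.
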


\vspace{-4mm}

\begin{table}[H]
\begin{center}
\def\interligne{&&\\[-11pt]}
\parindent=-1.5cm
\begin{tabular}{|l|l|c|}
\hline
\interligne
activitiy classes of reorientations & bases & $t(M;1,1)$\\
\interligne
act. classes of  acyclic reorientations & internal bases & $t(M;1,0)$\\
\interligne
act. classes of  totally cyclic reorientations & external bases & $t(M;0,1)$\\
 \interligne
bounded reorientations w.r.t. $\min(E)$& uniactive internal bases & $b_{1,0}=\beta(M)$\\
 \interligne
dual-bounded reorientations w.r.t. $\min(E)$& uniactive external bases & $b_{0,1}=\beta^*(M)$\\
\hline
\end{tabular}
\caption{Canonical active bijection enumeration (the third column indicates the Tutte polynomial evaluation or coefficient that counts the involved objects).}
\label{table:activity-classes}
\end{center}
\end{table}

\vspace{-8mm}
\begin{definition}
Let $M$ be an ordered oriented matroid on $E$.
The bijection between activity classes of reorientations of $M$ and  bases of $M$ provided by  the mapping $M\mapsto \alpha(M)$ applied to reorientations of $M$, is called the \emph{canonical active bijection} of $M$.
\end{definition}


\begin{observation}
\label{obs:canonical}
It is very important to observe that the canonical active bijection of $M$ depends only on the reorientation class of $M$ (in the sense that the canonical active bijection of $-_XM$ for $X\subseteq E$ is isomorphic to that of $M$ up to symmetric difference with $X$).
In other words: $\alpha(-_AM)$ depends only on the resulting oriented matroid $-_AM$, not on $M$ and $A$.
Equivalently, in terms of a pseudosphere arrangement representation of $M$,  the canonical active bijection depends only on the non-signed arrangement. 
More precisely: it is a bijection between signatures and bases of the non-signed arrangement, not depending on the choice of a reference signature. 
In other words: it depends only on the topology of the arrangement, not on an initial signature.
\EMEvder{alleger ce qui precede}%
%
In particular, activity classes of  regions of the non-signed arrangement are in bijection with internal bases, 
and  bounded regions w.r.t. $\min(E)$ of the non-signed arrangement,  on one side of $\min(E)$,  are in bijection with uniactive internal bases,
and these bijections are independent of any signature of the arrangement.
%
\end{observation}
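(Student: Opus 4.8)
The plan is to observe that the entire content reduces to a single conceptual point: Definition~\ref{def:om-alpha} refers to no reference reorientation whatsoever, so every clause defining $\alpha(N)$ is phrased purely in terms of data intrinsic to the oriented matroid $N$ itself. Concretely, I would first check clause by clause that $\alpha$ is a genuine function on oriented matroids over the fixed ordered set $E$. Being bounded with respect to $p=\min(E)$ is intrinsic (acyclicity together with every positive cocircuit containing $p$), and the Full Optimality Criterion of Definition~\ref{def:acyc-alpha} only constrains the signs of fundamental circuits and cocircuits of candidate bases, which are data of $N$; the \itemref{it2-def-alpha} clause uses $N^*$, and the \itemref{it3-def-alpha} clause uses the positive cocircuits of $N$ and the minors $N/F$, $N(F)$ --- all intrinsic. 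Since Theorem~\ref{th:alpha}(1) already guarantees that these clauses determine $\alpha(N)$ unambiguously, it follows that $\alpha(N)$ depends on $N$ alone, and in particular $\alpha(-_AM)$ depends only on the resulting oriented matroid $-_AM$, not on the pair $(M,A)$.

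From here the reorientation-class invariance is pure bookkeeping. If $M'=-_XM$ for some $X\subseteq E$, then for every $A'\subseteq E$ we have $-_{A'}M'=-_{A'}(-_XM)=-_{A'\,\triangle\,X}M$, and $A'\mapsto A'\triangle X$ is a bijection of $2^E$; hence the set of oriented matroids $\{-_{A'}M'\mid A'\subseteq E\}$ coincides with $\{-_AM\mid A\subseteq E\}$, i.e. $M$ and $M'$ share the same reorientation class. By Definition~\ref{def:act-class} and Proposition~\ref{prop:act-classes}, the partition of this common class into activity classes is determined by the active partition of each member, hence is intrinsic and identical whether read off from $M$ or from $M'$. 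Combining this with the first paragraph and Theorem~\ref{th:alpha}(3)--(4), the canonical active bijection of $M'$ sends each activity class $\mathcal C$ to the common value $\alpha(N)$, $N\in\mathcal C$ --- exactly the bijection produced by $M$. Under the two identifications of reorientations with $2^E$ (via $-_AM\leftrightarrow A$ and $-_{A'}M'\leftrightarrow A'$) the same oriented matroid $N=-_AM=-_{A\triangle X}M'$ receives labels $A$ and $A\triangle X$, so the two labelled bijections are isomorphic through symmetric difference with $X$, which is precisely the asserted isomorphism.

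For the geometric reformulation I would use that a topological representation of the reorientation class is a non-signed pseudosphere/hyperplane arrangement, and that fixing a signature of it amounts to fixing one reference oriented matroid $M$, the $2^E$ signatures being then exactly the members $-_AM$ of the reorientation class. Since $\alpha$ and the partition into activity classes were shown above to be independent of this reference, the canonical active bijection is intrinsically a correspondence between the signatures (equivalently, their activity classes) and the bases, depending only on the topology of the arrangement. The ``in particular'' specializations then follow by restriction: acyclicity is intrinsic, so restricting the bijection to the acyclic members (the regions of the arrangement) recovers the correspondence between activity classes of regions and internal bases of Table~\ref{table:activity-classes}; restricting further to the bounded members on one side of $\min(E)$ (the bounded regions) recovers the correspondence with uniactive internal bases. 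As each of these subclasses is intrinsic to the unsigned arrangement and the ambient bijection is reference-free, so are the restricted bijections.

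I expect the only genuine step --- as opposed to formal manipulation --- to be the verification in the first paragraph that Definition~\ref{def:om-alpha} is reference-free clause by clause; once that is combined with the already-proved well-definedness (Theorem~\ref{th:alpha}(1)), everything else is the relabelling identity $-_{A'}(-_XM)=-_{A'\triangle X}M$ and the intrinsic nature of activity classes. The main conceptual care needed is to keep the two distinct bookkeeping conventions separate: reorientations are indexed by subsets of $E$, yet the value of $\alpha$ sees only the resulting oriented matroid, which is exactly why distinct reference signatures yield the same bijection up to relabelling by $\triangle X$.
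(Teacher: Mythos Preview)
Your proposal is correct and matches the paper's treatment: the paper presents this as an observation without formal proof, relying exactly on the point you make explicit---that Definition~\ref{def:om-alpha} (and equivalently Definitions~\ref{def:om-alpha-var}, \ref{def:alpha-seq-decomp}) refers only to intrinsic data of the oriented matroid, so $\alpha(-_AM)$ depends only on the resulting oriented matroid. Your write-up simply unpacks the bookkeeping (the relabelling $A'\mapsto A'\triangle X$ and the intrinsic nature of activity classes) that the paper leaves implicit.
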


\begin{observation}
\label{obs:induced-dec-seq-act-bij}
As a direct consequence of Definition \ref{def:alpha-seq-decomp}, we get the following result, which puts together (in terms of the active mapping) Observation \ref{obs:induced-dec-seq-ori} (in terms of active filtrations of oriented matroids) and Observation \ref{obs:induced-dec-seq-bas} (in terms of active filtrations of bases, coming from
{\cite[Observation \ref{a-lem:dec-seq-bas-observation-suite}]{AB2-a}}).

Let $\emptyset= F'_\ep\subset...\subset F'_0=F_c=F_0\subset...\subset F_\io= E$ be the active filtration of the ordered oriented matroid $M$
 (or equivalently of the basis $\alpha(M)$ by Theorem \ref{th:alpha}).
Let $F$ and $G$ be two subsets in this sequence such that $F\subseteq G$. 
We have:
$$\alpha(M)=\alpha\bigl(M(F)\bigr)\uplus \alpha\bigl(M(G)/F\bigr)\uplus \alpha\bigl(M/G\bigr).$$
\EMEvder{convolution formula?}
\end{observation}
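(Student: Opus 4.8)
The plan is to read the identity off directly from Definition \ref{def:alpha-seq-decomp}, by regrouping the active minors of $M$ according to the two splitting points $F$ and $G$. The starting observation is that, by Theorem \ref{th:alpha} (part 1), the mapping $\alpha$ is well defined and Definition \ref{def:alpha-seq-decomp} applies to every ordered oriented matroid; in particular $\alpha(M)$ is the disjoint union of the bases $\alpha(M(F_k)/F_{k-1})$ for $1\le k\le\io$ and $\alpha(M(F'_{k-1})/F'_k)$ for $1\le k\le\ep$ of the active minors of $M$. So the whole task is to match up the right-hand side of the claimed equality with three disjoint subfamilies of these active minors.

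First I would use Observation \ref{obs:induced-dec-seq-ori} to describe the active filtrations of the three pieces. Since $F$ and $G$ both appear in the active filtration of $M$, that filtration splits into three consecutive chunks, namely from $\emptyset$ to $F$, from $F$ to $G$, and from $G$ to $E$. By Observation \ref{obs:induced-dec-seq-ori}, the chunk ending at $F$ is the active filtration of $M(F)$; the chunk between $F$ and $G$, after subtracting $F$ from each of its subsets, is the active filtration of $M(G)/F$ (with cyclic flat $F_c\setminus F$); and the chunk beginning at $G$, after subtracting $G$, is the active filtration of $M/G$.

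Next I would identify the active minors of each piece with the corresponding active minors of $M$. Writing $F=H_0\subset H_1\subset\dots\subset H_m=G$ for the chunk between $F$ and $G$, the active minors of $M(G)/F$ are the minors $(M(G)/F)(H_i\setminus F)/(H_{i-1}\setminus F)$, and by the usual commutation of restriction and contraction, valid because $F\subseteq H_{i-1}\subseteq H_i\subseteq G$, each of these equals $M(H_i)/H_{i-1}$ on the very same ground set $H_i\setminus H_{i-1}$. Hence it is literally one of the active minors of $M$, and its $\alpha$-image is the same subset. The same identification applies to the pieces $M(F)$ and $M/G$. Applying Definition \ref{def:alpha-seq-decomp} to each of $M(F)$, $M(G)/F$ and $M/G$ then expresses each of $\alpha(M(F))$, $\alpha(M(G)/F)$, $\alpha(M/G)$ as the disjoint union of $\alpha$ over the active minors of $M$ lying in the corresponding chunk (and the base case $\io+\ep=1$ of Definition \ref{def:alpha-seq-decomp}, as well as the degenerate cases $F=\emptyset$, $F=G$ or $G=E$ giving an empty contribution, are covered without extra work).

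Finally, because $F$ and $G$ are subsets of the active filtration, every active minor $M(F_k)/F_{k-1}$ or $M(F'_{k-1})/F'_k$ has both of its bounding subsets inside a single chunk; no minor straddles $F$ or $G$, so the active minors of $M$ are partitioned into the three chunks. Summing the three disjoint unions therefore reproduces exactly the disjoint union over all active minors of $M$, which is $\alpha(M)$ by Definition \ref{def:alpha-seq-decomp}, giving the claimed equality. I expect the only delicate point to be the bookkeeping of these minor identities across the two splits, including the placement of the cyclic flat when $F$ lies below $F_c$ and $G$ above it, but this is entirely taken care of by Observation \ref{obs:induced-dec-seq-ori}, so the statement is indeed a direct consequence of the definition.
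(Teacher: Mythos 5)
Your proof is correct and follows exactly the route the paper intends: the paper treats this as a direct consequence of Definition \ref{def:alpha-seq-decomp} combined with Observation \ref{obs:induced-dec-seq-ori}, which is precisely your regrouping of the active minors into the three chunks delimited by $F$ and $G$, together with the commutation identity $(M(G)/F)(H_i\setminus F)/(H_{i-1}\setminus F)=M(H_i)/H_{i-1}$. Your write-up simply makes explicit the bookkeeping the paper leaves implicit, so there is nothing to add.
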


\begin{proof}[Proof of Theorem \ref{th:alpha}]
Throughout the proof, we may call $\alpha$  the mapping $M\mapsto \alpha(M)$ applied to reorientations of $M$.
Definition \ref{def:alpha-seq-decomp} is properly defined. Let us use notations from this definition.
By Theorem \ref{th:dec-ori}, we have that
for all $1\leq k\leq \io$, $M(F_k)/F_{k-1}$ is bounded, and 
for all $1\leq k\leq \ep$, $M(F'_{k-1})/F_{k}$ is dual-bounded.
Then, by definitions in Section \ref{sec:bij-10} in the case where $\io+\ep=1$, we have that
for all $1\leq k\leq \io$, $\alpha(M(F_k)/F_{k-1})$ is an uniactive internal basis of $M(F_k)/F_{k-1}$, and 
for all $1\leq k\leq \ep$, $\alpha(M(F'_{k-1})/F_{k})$ is an uniactive external basis of $M(F'_{k-1})/F_{k}$.
Then, by Theorem \ref{th:dec_base}, we have that $\alpha(M)$ is a basis of $M$ with the same active filtration as $M$.

Since two opposite bounded, resp. dual-bounded, reorientations are mapped onto the same spanning tree by $\alpha$ (obvious by the definitions), we directly have by Definition \ref{def:act-class} that 
the $2^{\io+\ep}$ reorientations of $M$ in the activity class of $M$ are mapped onto the same  basis $\alpha(M)$.

Since $\alpha$ provides a bijection between all pairs of opposite bounded, resp. dual-bounded, reorientations of $M$ and uniactive internal, resp. external, bases of $M$ by Theorem \ref{thm:bij-10}, then we directly have by Theorem \ref{th:dec_base} and Theorem \ref{th:dec-ori} that $\alpha$ provides a bijection between  all activity classes of reorientations of $M$ and all  bases of $M$.

Now let us prove that Definitions \ref{def:om-alpha} and \ref{def:om-alpha-var} are well-defined and equivalent to 
Definition \ref{def:alpha-seq-decomp}.

First, observe that Properties \itemref{it1-def-alpha} and \itemref{it2-def-alpha} in 
Definition \ref{def:om-alpha} are consistent with Definition \ref{def:cyc-alpha0} of $\alpha(M)$ when $M$ is dual-bounded w.r.t. $p$.
Moreover, the variant of Property \itemref{it1-def-alpha} is consistent, since Properties \itemref{it1-def-alpha} and \itemref{it2-def-alpha} put together define $\alpha$ in the bounded and the dual-bounded case, as well as Definition \ref{def:cyc-alpha0} and Property \itemref{it2-def-alpha} put together.
So both Definitions \ref{def:om-alpha} and \ref{def:om-alpha-var} are well-defined and consistent in the bounded and the dual-bounded cases.

Second, observe that the variant \itemref{it1-alpha-var} of Property \itemref{it3-def-alpha} 
put together with Property \itemref{it2-def-alpha}, is consistent with
Property \itemref{it3-def-alpha} 
put together with Property \itemref{it2-def-alpha}. Indeed, they define the same properties for $M$ and $M^*$. Precisely: if $F=F_{\io-1}$ is the complementary set of the union of all positive cocircuits of $M$ with smallest element $a$, then $E\s F$ is the union of all positive circuits of $M^*$ with smallest element $a$, and we have $M^*/(E\s F)=\bigl(M(F)\bigr)^*$ and $M^*(E\s F)=\bigl(M/F\bigr)^*$. Then, definitions are consistent as we have simultaneously:
\vspace{-2mm}
\begin{eqnarray*}
 \alpha(M)&=&\alpha(M(F))\uplus \alpha(M/F), \\
 E\s \alpha(M)&=&\bigl(F\s \alpha(M(F))\bigr)\uplus \bigl((E\s F)\s\alpha(M/F)\bigr),\\
\alpha(M^*)&=&\alpha\bigl(\bigl(M(F)\bigr)^*\bigr)\uplus\alpha\bigl(\bigl(M/F\bigr)^* \bigr),\\ 
\alpha(M^*)&=&\alpha\bigl(M^*/(E\s F)\bigr)\uplus \alpha\bigl(M^*(E\s F)\bigr).
 \end{eqnarray*}
 
\vspace{-1mm}
Now, let us show briefly why Definition \ref{def:om-alpha}  is well-defined, assuming $M$ is not bounded  w.r.t. $p$ or not dual-bounded w.r.t. $p$.
If  $F\not=\emptyset$, defined in Property \itemref{it3-def-alpha}, then $M/F$ is bounded w.r.t. its smallest element, and $M(F)$ has one dual-active element less than $M$.
If $F=\emptyset$ then we apply Property \itemref{it2-def-alpha} and consider the dual $M^*$ (as above), then Property \itemref{it3-def-alpha} yields
a set $F^*\not=\emptyset$ such that $M(F^*)$ is dual-bounded w.r.t. its smallest element, and $M/F^*$ has one active element less than $M$. Also these two constructions can be used alternatively  in any order.

More precisely, first, let us assume that $\io>0$. 
Observe that the set $F$ considered in Property \itemref{it3-def-alpha}
is the set $F_{\io-1}$ of the active filtration of $M$, by Definition \ref{EG:def:ori-act-part}.
Observe that the active filtration of $M(F_{\io-1})$ is 
$(F'_\ep, \ldots, F'_0, F_c , F_0, \ldots, F_{\io-1})$
(this has been observed in Section \ref{sec:dec-mo}, and it is direct from Theorem \ref{th:dec-ori}). Then, applying Definition \ref{def:alpha-seq-decomp} to $M(F_{\io-1})$, we directly have that $\alpha(M)=\alpha(M(F_{\io-1})\uplus \alpha(M/F_{\io-1})$, which is exactly Property \itemref{it3-def-alpha} in Definition \ref{def:om-alpha}.
Now, assume that $\ep>0$, then we use Property \itemref{it2-def-alpha} in order to apply the same reasoning in the dual $M^*$, as explained above with Property \itemref{it1-alpha-var},
and we get similarly that Definition \ref{def:om-alpha}  is  equivalent to Definition \ref{def:alpha-seq-decomp}. At the same time, we have proved that the definition provided by the variant \itemref{it1-alpha-var} is also well-defined and equivalent to Definition \ref{def:alpha-seq-decomp}.

Finally, let us observe that the variant \itemref{it2-alpha-var} given in Definition \ref{def:om-alpha-var}, which allows to consider various possible sets $F$ instead of one set $F$ in Property \itemref{it3-def-alpha}, is consistent.
Indeed, by definition of such a set $F$, there exists $k$, $0\leq k\leq \io-1$ such that $F=F_k$. Hence, the active filtration of $M(F)$ is
$(F'_\ep, \ldots, F'_0, F_c , F_0, \ldots, F_{k})$ and the active filtration of $M/F$ is $(F_k\s F_k,F_k\s F_k,\ldots,E\s F_k)$
(as above, this has been observed in Section \ref{sec:dec-mo}, and it is direct from Theorem \ref{th:dec-ori}). So we have $\alpha(M)=\alpha(M(F))\uplus \alpha(M/F)$ by Definition \ref{def:alpha-seq-decomp} applied to $M(F)$ and $M/F$.
So, the variant \itemref{it2-alpha-var} yields an equivalent definition of $\alpha$.
Dually, we obtain that the variant \itemref{it3-alpha-var} also yields an equivalent definition of $\alpha$.
\end{proof}

\emenew{dessous ancienne discussion poru jsutifier theorem, gardee dans ABG2}%
\vspace{-1mm}
Let us illustrate the construction by continuing the running example of $K_4$.
Figures \ref{fig:exbasedecomp-136}, \ref{fig:exbasedecomp-126}, \ref{fig:exbasedecomp-146}, and \ref{fig:exbasedecomp-256} give the details of the sign pattern that characterize the active basis, on four different representative situations.
Observe that this sign pattern depends only on the fundamental graph/tableau of the basis.
Figure \ref{fig:exK4-complete-primal-dual} gives the complete canonical active bijection of $K_4$.

\vspace{-1mm}
\begin{figure}[H] 

   \begin{minipage}[c]{.46\linewidth}
   \centering
   	\scalebox{1.3}{
      \includegraphics{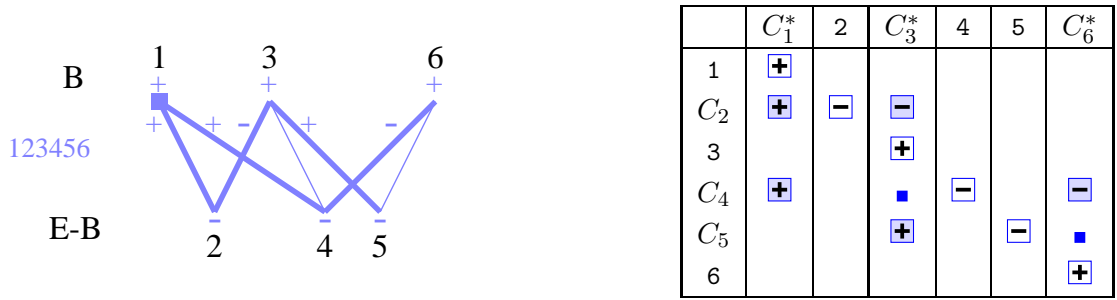}
      }
   \end{minipage}
   \hfill
      \begin{minipage}[c]{.46\linewidth}
      \centering
      \renewcommand{\arraystretch}{\taille}
		\begin{tabular}{|c|c|c|c|c|c|c|}
		\hline
		 & \coltab{$C^*_1$} & \ptt{2} & \coltab{$C^*_3$}&\ptt{4}&\ptt{5}&\coltab{$C^*_6$} \\
		\hline
		\ptt{1}& $\pbvbullet$ && &&&\\
		\coltab{$C_2$}& $\pbbullet$ &$\mbvbullet$ & $\mbbullet$ &&&\\
		\ptt{3}& && $\pbvbullet$ &&&\\
		\coltab{$C_4$}& $\pbbullet$ &&$\bgbullet$ &$\mbvbullet$ &&$\mbbullet$ \\
		\coltab{$C_5$}& &&$\pbbullet$&&$\mbvbullet$& $\bgbullet$ \\
		\ptt{6}&&& &&&$\pbvbullet$  \\
		\hline
		\end{tabular}												
   \end{minipage}
\caption{Signed fundamental graph/tableau of basis $136$ with activities $(1,0)$ and active partition $E=123456$.\break
This figures continues \cite[Figure \ref{a-fig:exbasedecomp-136}]{AB2-a}, and
 repeats the right part of Figure \ref{fig:ex-fob}, using the same sign representation except that useless signs are replaced with the $\protect\bgbullet $  symbol.
A uniactive basis of $M$ 
equals $\alpha(M)$
if and only if its signed fundamental graph/tableau satisfies the full optimality criterion,    that is satisfies the sign pattern given in the figure (Section~\ref{sec:bij-10}).
Let us recall that the cases of $(1,0)$-active and $(0,1)$-active bases can be handled the same way, up to exchanging the role of $B$ and $E\s B$ in the fundamental graph, or up to transposing the fundamental tableau.
Also, let us recall that, 
given a uniactive basis, and given its (non-signed) fundamental graph/tableau, one can build  a reorientation of $M$ for which this basis is fully optimal by reorienting elements one by one so that the signed fundamental graph/tableau w.r.t. the reorientation satisfies the full optimality criterion (Propositions \ref{prop:alpha-10-inverse} and~\ref{prop:alpha-01-inverse}).%
\EMEvder{raccourcir ce qui pr?c?de ?}
\EMEvder{*** layout au lieu de pattern ? bof non je ne crois pas}
}
\label{fig:exbasedecomp-136}
\end{figure}

\vspace{-1mm}

\begin{figure}[H] 
   \begin{minipage}[c]{.46\linewidth}
   \centering
   	\scalebox{1.3}{
      \includegraphics{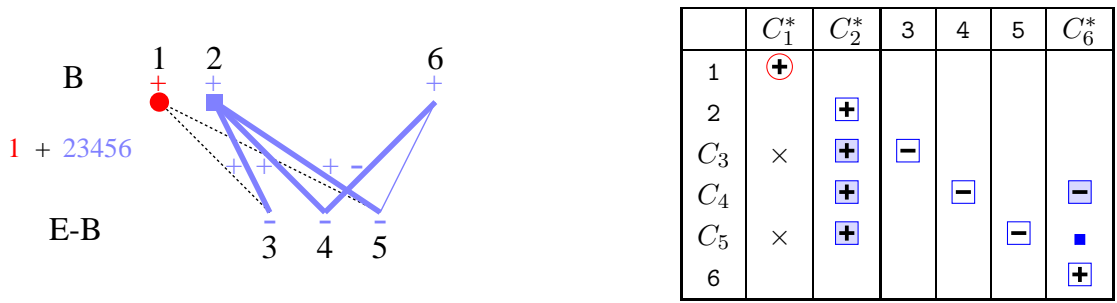}
      }
   \end{minipage}
   \hfill
      \begin{minipage}[c]{.46\linewidth}
      \centering
      \renewcommand{\arraystretch}{\taille}
		\begin{tabular}{|c|c|c|c|c|c|c|}
		\hline
		 & \coltab{$C^*_1$} & \coltab{$C^*_2$} & \ptt{3}&\ptt{4}&\ptt{5}&\coltab{$C^*_6$} \\
		\hline
		\ptt{1}& $\prvbullet$ && &&&\\
		\ptt{2}& & $\pbvbullet$& &&&\\
		\coltab{$C_3$}& $\gbullet$ &$\pbbullet$ & $\mbvbullet$ &&&\\
		\coltab{$C_4$}&  &$\pbbullet$&&$\mbvbullet$&&$\mbbullet$ \\
		\coltab{$C_5$}&$\gbullet$&$\pbbullet$& &&$\mbvbullet$ &$\bgbullet$ \\
		\ptt{6}& &&&&& $\pbvbullet$ \\
		\hline
		\end{tabular}												
   \end{minipage}
\caption{Signed fundamental graph/tableau of basis $126$ with activities $(2,0)$ and active partition $E=1+23456$.\break
This figures continues \cite[Figure \ref{a-fig:exbasedecomp-126}]{AB2-a} by adding the sign pattern that the basis must satisfy to be equal to $\alpha(M)$. Observe that this sign pattern depends only on the fundamental graph/tableau, not on the whole structure. The decomposition of the (fundamental graph/tableau of) the basis from \cite{AB2-a} yields the active partition $E=1+23456$ and the active minors $M(1)$ and $M/1$. Here the first minor consists of a single isthmus.
Accordingly with the construction of this section, the basis 
equals $\alpha(M)$
 when the two bases induced in the two minors $M(1)$ and $M/1$ satisfy the full optimality criterion, meaning that the subgraphs/subtableaux induced on $1$ and on $23456$ satisfy the same sign pattern as illustrated in Figure \ref{fig:exbasedecomp-136}.
\EMEvder{definir subgraphj, subtableau ?}%
In this figure and the next ones, for one part of the tableau, we use circled symbols such as $\protect\prvbullet$, and for the other part we use boxed symbols  such as $\protect\pbvbullet$, with the same meanings as in Figure \ref{fig:ex-fob}.
The elements of fundamental circuits or cocircuits which disappear when restricting to a part (i.e. elements that do not belong to fundamental circuits or cocircuits induced in the two minors), are depicted by dashed edges in the fundamental graph, and by  the symbol $\gbullet$ in the fundamental tableau.
Given a basis, and given its (non-signed) fundamental graph or tableau, one can build  a reorientation of $M$ in the preimage of this basis by $\alpha$ by reorienting elements one by one so that the signed fundamental graph/tableau w.r.t. the reorientation corresponds to the pattern given in the figure (see Theorem \ref{th:basori-refined} for a full statement).
}
\label{fig:exbasedecomp-126}
\end{figure}

\begin{figure}[H] 
   \begin{minipage}[c]{.46\linewidth}
   \centering
   	\scalebox{1.3}{
      \includegraphics{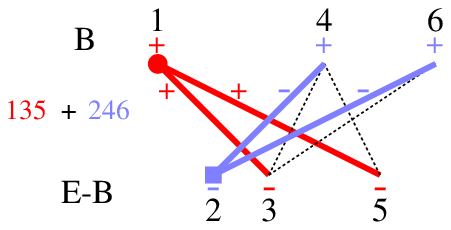}
      }
   \end{minipage}
   \hfill
      \begin{minipage}[c]{.46\linewidth}
      \centering
      \renewcommand{\arraystretch}{\taille}
		\begin{tabular}{|c|c|c|c|c|c|c|}
		\hline
		 & \coltab{$C^*_1$} & \ptt{2} &\ptt{3} &\coltab{$C^*_4$}&\ptt{5}&\coltab{$C^*_6$} \\
		\hline
		\ptt{1}& $\prvbullet$ && &&&\\
		\coltab{$C_2$}& &$\mbvbullet$ & &$\mbbullet$&&$\mbbullet$\\
		\coltab{$C_3$}&$\prbullet$ && $\mrvbullet$  &$\gbullet$ &&$\gbullet$ \\
		\ptt{4}&  && &$\pbvbullet$ && \\
		\coltab{$C_5$}&$\prbullet$ &&  &$\gbullet$&$\mrvbullet$ & \\
		\ptt{6}& &&&&& $\pbvbullet$ \\
		\hline
		\end{tabular}											
   \end{minipage}
\caption{Signed fundamental graph/tableau of basis $146$ with activities $(1,1)$ and active partition $E=135+246$.\break
This figures continues \cite[Figure \ref{a-fig:exbasedecomp-146}]{AB2-a} 
by adding the sign pattern that the basis must satisfy to be equal to $\alpha(M)$. Comments on 
Figure \ref{fig:exbasedecomp-126} (and Figure \ref{fig:exbasedecomp-136})
concern this figure as well, considering that we deal here with a $(1,0)$-active restriction of the fundamental graph/tableau to $135$, yielding $1=\alpha(M/246)$, and a $(0,1)$-active restriction of the fundamental graph/tableau to $246$, yielding $46=\alpha(M(246))$.
}
\label{fig:exbasedecomp-146}
\end{figure}

\begin{figure}[H] 
   \begin{minipage}[c]{.46\linewidth}
   \centering
   	\scalebox{1.3}{
      \includegraphics{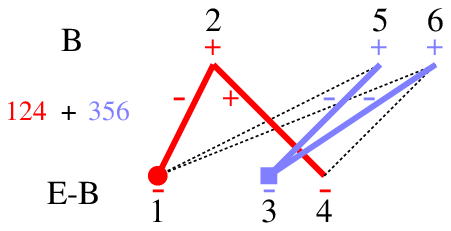}
      }
   \end{minipage}
   \hfill
      \begin{minipage}[c]{.46\linewidth}
      \centering
      \renewcommand{\arraystretch}{\taille}
		\begin{tabular}{|c|c|c|c|c|c|c|}
		\hline
		 & \ptt{1} & \coltab{$C^*_2$} &\ptt{3} &\ptt{4}&\coltab{$C^*_5$}&\coltab{$C^*_6$} \\
		\hline
		\coltab{$C_1$}& $\mrvbullet$ &$\mrbullet$& &&$\gbullet$&$\gbullet$\\
		\ptt{2}& &$\prvbullet$ & &&&\\
		\coltab{$C_3$}& && $\mbvbullet$  &&$\mbbullet$ &$\mbbullet$ \\
		\coltab{$C_4$}&  &$\prbullet$ & &$\mrvbullet$ &&$\gbullet$  \\
		\ptt{5}& &&  &&$\pbvbullet$ & \\
		\ptt{6}& &&&&& $\pbvbullet$ \\
		\hline
		\end{tabular}									
   \end{minipage}
\caption{Signed fundamental graph/tableau of basis $256$ with activities $(0,2)$ and active partition $E=124+356$.\break
This figures continues \cite[Figure \ref{a-fig:exbasedecomp-256}]{AB2-a} 
by adding the sign pattern that the basis must satisfy to be equal to $\alpha(M)$. Comments on 
Figure \ref{fig:exbasedecomp-126}  (and Figure \ref{fig:exbasedecomp-136})
concern this figure as well, considering that we deal here with a $(0,1)$-active restriction of the fundamental graph/tableau to $124$, yielding $2=\alpha(M/356)$, and a $(0,1)$-active restriction of the fundamental graph/tableau to $356$, yielding $56=\alpha(M(356))$.}
\label{fig:exbasedecomp-256}
\end{figure}


\EMEvder{attention figures modifiees and AB2a avec bleu plus clair... a verifier !}




\EMEvder{je n'ai pas mis l'exemple donne pour decompsiton 123+456 ! dommage ?}
\begin{figure} 
   \centering
   	\scalebox{1.3}{
      \includegraphics{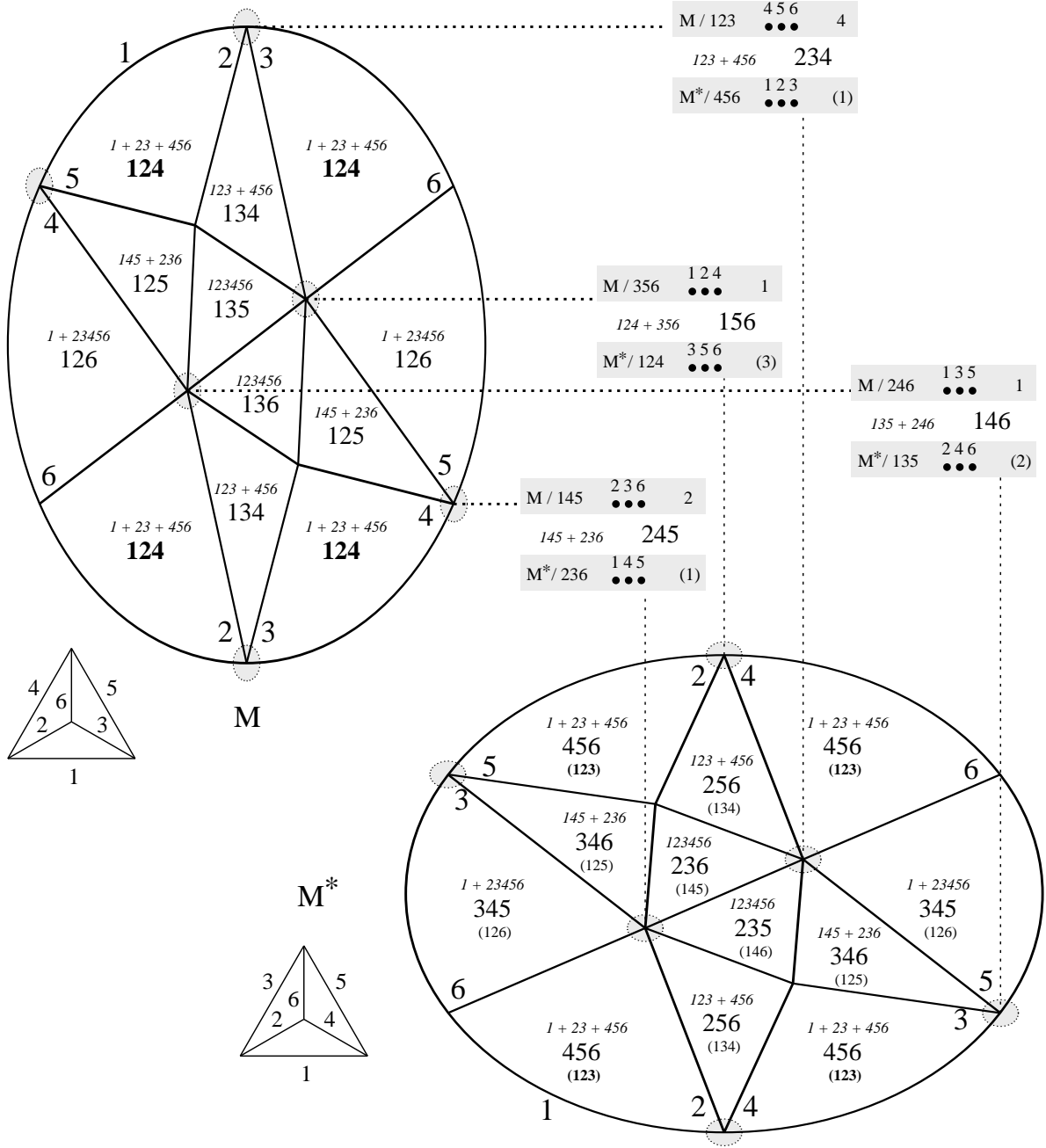}
      }
\caption{Geometrical representation of the entire canonical active bijection for $M=K_4$, 
continuing and completing the running example begun with Figures \ref{fig:K4exbase256} and \ref{fig:K4act}. 
Since the canonical active bijection depends only on the reorientation class, that is on the non-signed arrangement, we do not specify the signature of the arrangement.
Internal, resp. external, bases, associated with acyclic, resp. totally cyclic, reorientations of $M$ are written in regions of the primal, resp. dual, arrangement. Active partitions are written in italic.
The minimal basis $124$ is written in bold.
In the dual arrangement,   cobases are written in brackets (corresponding to acyclic reorientations of $M^*$).
Other bases are associated with combinations of acyclic orientations of $M/F$ and acyclic orientations of $M^*/(E\s F)$,  for cyclic flats $\emptyset \subset F\subset E$, where $F=123$, $145$, $246$ or $356$.
We represent these minors in grey boxes with their associated bases, and we indicate with dashed lines how these cyclic flats situtate geometrically in the primal and dual arrangements.
Details of the construction for bases 136, 126, 146, and 256 are given by Figures \ref{fig:exbasedecomp-136}, \ref{fig:exbasedecomp-126}, \ref{fig:exbasedecomp-146}, and \ref{fig:exbasedecomp-256}, respectively.
This figure will be exhaustively completed in Section \ref{sec:example},  by Figures  \ref{fig:fig:K4bij-primal},  \ref{fig:fig:K4bij-dual}, \ref{fig:fig:K4bij-cyclic-flats}.
\EMEvder{The Tutte polynomial equals...? dit dans section example}%
}
\label{fig:exK4-complete-primal-dual}
\end{figure}



Now, let us give two  constructions of 
the inverse of the canonical active bijection, from bases to reorientation activity classes.
The first  simply consists in rephrasing Definition \ref{def:alpha-seq-decomp} in the inverse way. It can be combined with Proposition \ref{prop:alpha-10-inverse} to compute the preimages of involved uniactive bases.




\EMEvder{attetnion ordre $B,E\s B$ a ete change dans section bases... PAS COMPRIS MA NOTE.. a verifier}

\EMEvder{attention definir ce qu'on apele reorienter ?}


\begin{prop}[inverse of Definition \ref{def:alpha-seq-decomp}] 
\label{prop:preimage-basis}
Let $M$ be an ordered oriented matroid on $E$. Let $B$ be a basis of $M$, with active filtration  $\emptyset= F'_\ep\subset...\subset F'_0=F_c=F_0\subset...\subset F_\io= E$.
Let us denote $\alpha_M^{-1}(B)$ the set of reorientations of $M$ whose active basis is $B$. Then,
$$\alpha_M^{-1}(B)\ =\ 
\bigtimes_{1\leq k\leq\io}
\alpha_{M(F_k)/F_{k-1}}^{-1}(B\cap (F_{k}\s F_{k-1}))\
\times \ 
\bigtimes_{1\leq k\leq\ep}
\alpha_{M(F'_{k-1})/F'_{k}}^{-1}(B\cap (F'_{k-1}\s F_{k}))\
   $$
where $\times$ means that the $2^{\io+\ep}$ resulting reorientations of $M$ are inherited from the reorientations of the involved minors the natural way (each induced basis in the involved active minors of $B$ is uniactive and has 2 preimages).%
\EMEvder{mal dit ? e eplus egalite a verifier, recuperee de ABG2 derniere version}%
\qed
\end{prop}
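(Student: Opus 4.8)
The plan is to read Proposition~\ref{prop:preimage-basis} as the set-theoretic inverse of Definition~\ref{def:alpha-seq-decomp}, with Theorem~\ref{th:dec-ori} supplying the gluing that makes the right-hand product meaningful. First I would take a reorientation $-_AM$ with $\alpha(-_AM)=B$ and show that $A$ lies in the claimed product. By Theorem~\ref{th:alpha}, the active filtration of $-_AM$ coincides with that of its active basis $B$, namely $\emptyset= F'_\ep\subset\dots\subset F_\io= E$; hence Definition~\ref{def:alpha-seq-decomp} applies to $-_AM$ with exactly these subsets and yields
$$\alpha(-_AM)=\biguplus_{1\leq k\leq\io}\alpha\bigl((-_AM)(F_k)/F_{k-1}\bigr)\ \uplus\ \biguplus_{1\leq k\leq\ep}\alpha\bigl((-_AM)(F'_{k-1})/F'_{k}\bigr).$$
Since reorientation commutes with taking minors, $(-_AM)(F_k)/F_{k-1}=-_{A\cap(F_k\setminus F_{k-1})}\bigl(M(F_k)/F_{k-1}\bigr)$, and symmetrically for the external parts. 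Because the decomposition $B=B'_1\uplus\dots\uplus B_\io$ of Theorem~\ref{th:dec_base} is term-by-term disjoint and its parts are exactly the $B\cap(F_k\setminus F_{k-1})$ and $B\cap(F'_{k-1}\setminus F'_{k})$, the equality $\alpha(-_AM)=B$ forces, for each $k$, $\alpha\bigl(-_{A\cap(F_k\setminus F_{k-1})}(M(F_k)/F_{k-1})\bigr)=B\cap(F_k\setminus F_{k-1})$ and likewise on the external side. This places each restriction of $A$ in the corresponding preimage set, so $A$ belongs to the product.

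Conversely, I would start from an arbitrary tuple of reorientations of the active minors, each chosen in the relevant preimage set, and show that their disjoint union $A$ satisfies $\alpha(-_AM)=B$. The key point is that each such minor reorientation is, by the uniactive bijection of Section~\ref{sec:bij-10} (Theorems~\ref{thm:bij-10} and~\ref{thm:bij-01}), a bounded reorientation w.r.t. $\min(F_k\setminus F_{k-1})$ (respectively a dual-bounded reorientation w.r.t. $\min(F'_{k-1}\setminus F'_{k})$) whose fully optimal basis is the prescribed part of $B$. Theorem~\ref{th:dec-ori} then guarantees that the glued reorientation $-_AM$ admits $(F'_\ep,\dots,F_\io)$ as its active filtration, so Definition~\ref{def:alpha-seq-decomp} applies verbatim and reassembles $\alpha(-_AM)$ as the disjoint union of the prescribed fully optimal bases, which is $B$ by Theorem~\ref{th:dec_base}.

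Finally I would settle the factor $\times$ and the count: by Theorems~\ref{thm:bij-10} and~\ref{thm:bij-01} each uniactive active minor has exactly two $\alpha$-preimages, a pair of opposite reorientations, so the product has $2^{\io+\ep}$ elements, matching the size of the activity class of $M$ from Definition~\ref{def:act-class} and Proposition~\ref{prop:act-classes}. I expect the backward direction to be the main obstacle: a tuple of minor reorientations only reassembles a priori into some reorientation of $M$, and one must know that this reassembly falls into the correct connected filtration before the recursive formula of Definition~\ref{def:alpha-seq-decomp} can be invoked. This is precisely what the bijection of Theorem~\ref{th:dec-ori} provides, and it is the crux that keeps the argument from being a purely formal inversion.
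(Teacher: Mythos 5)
Your proof is correct and takes essentially the same approach as the paper: the paper gives no separate argument for this proposition, presenting it as a direct inverse rephrasing of Definition \ref{def:alpha-seq-decomp}, justified by exactly the ingredients you invoke (Theorem \ref{th:alpha} for preservation of the active filtration, Theorems \ref{th:dec_base} and \ref{th:dec-ori} for the two decompositions, and the uniactive bijections of Theorems \ref{thm:bij-10} and \ref{thm:bij-01}). You merely spell out the two inclusions and the count of $2^{\io+\ep}$ preimages that the paper treats as immediate.
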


The second construction of the inverse consists in a simple algorithm, by a single pass over $E$ following the ordering, signing the arrangement element after element the suitable way to fit in the active basis criterion.
Furthemore, it uses only the fundamental circuits and cocircuits of the basis, not the whole oriented matroid structure. 
It combines the computation of the active filtration (in fact the active partition, see Definition \ref{def:act-seq-dec-base}) of the basis from \cite[Proposition \ref{a-prop:basori-partact}]{AB2-a}, along with the computation, in each induced minor, of the bounded (or dual-bounded) reorientation associated with a uniactive internal (or external) basis from Proposition \ref{prop:alpha-10-inverse} (or Proposition  \ref{prop:alpha-01-inverse}), consistently with Definition \ref{def:alpha-seq-decomp}.

\begin{thm}[{combination of Propositions \ref{prop:alpha-10-inverse} and \ref{prop:alpha-01-inverse} with \cite[Proposition \ref{a-prop:basori-partact}]{AB2-a}}]
\label{th:basori}
Let $M$ be a matroid on a linearly ordered set of elements $E=e_1<\ldots<e_n$.
Let $B$ be a basis of $M$. 
%
In the algorithm below, the active partition of $B$ is computed as a mapping, denoted $\ass$, from $E$ to $\Int(B)\cup \Ext(B)$, that maps an element onto the smallest element of its part in the active partition of $B$. An element is called internal, resp. external, if its image is in $\Int(B)$, resp. $\Ext(B)$.
The set of $2^{|\Int(t)|+|\Ext(B)|}$ reorientations formed by the preimages of $B$ under $\alpha$, denoted here $\alpha^{-1}(B)$,  is computed by doing all possible arbitrary choices to reorient or not an element
during the algorithm.
%
%
Equivalently, those preimages under $\alpha$ can also be retrieved from one another since we have
$$\alpha^{-1}(B)=\{\ A\ \triangle \ \ass^{-1}(P\cup Q)\ \mid\ 
P\subseteq \Int(B),\ Q\subseteq \Ext(B),\  A\in \alpha^{-1}(B) \ \}.$$
The algorithm consists in a single pass over $E$ and only relies upon the fundamental graph/tableau of $B$.
 Note that the rules when $e_k\in B$ are dual to the rules when $e_k\not\in B$.
\EMEvder{dans AB2a ceci, mais ici bof pour internal/external: Note that the rules when $e_k\in B$ are dual to the rules when $e_k\not\in B$, and that the rules when $e_k$ is internal are dual to the rules when $e_k$ is external.}

%

%
%

\begin{algorithme} \par

\underbar{Input}: a basis $B$ of $M$\par
\underbar{Output}: a reorientation of $M$ in $\alpha^{-1}(B)$ (along with the active partition of $B$)\par

For $k$ from $1$ to $n$ do\par

\hskip 5mm if $e_k\in B$ then \par
\hskip 5mm\hskip 5mm if $e_k$ is internally active  w.r.t. $B$ then\par
\hskip 5mm\hskip 10mm $e_k$ is internal \par
\hskip 5mm\hskip 10mm let $\ass(e_k):=e_k$\par
\hskip 5mm\hskip 10mm reorient $e_k$ or not, arbitrarily \par
\hskip 5mm\hskip 5mm otherwise\par

\hskip 5mm\hskip 10mm {\it\small (active partition computation)}

\hskip 5mm\hskip 10mm it there exists $c<e_k$ external in $C^*(B;e_k)$ then\par

\hskip 5mm\hskip 15mm $e_k$ is external\par
\hskip 5mm\hskip 15mm let $c$ $\in$ $C^*(B;e_k)$ with $c<e_k$, $c$ external and $\ass(c)$ the greatest possible \par
\hskip 5mm\hskip 15mm let $\ass(e_k):=\ass(c)$ \par

\hskip 5mm\hskip 10mm otherwise\par
\hskip 5mm\hskip 15mm $e_k$ is internal\par
\hskip 5mm\hskip 15mm let $c$ $\in$ $C^*(B;e_k)$ with $c<e_k$ and $\ass(c)$ the smallest possible\par
\hskip 5mm\hskip 15mm let $\ass(e_k):=\ass(c)$ \finsi\par

\hskip 5mm\hskip 10mm {\it\small (reorientation computation)}

\hskip 5mm\hskip 10mm let $a$ be the smallest possible in $C^*(B;e_k)$ with $\ass(a)=\ass(e_k)$\par
\hskip 5mm\hskip 10mm reorient $e_k$ if necessary so that $e_k$ and $a$ have opposite signs in 
$C^*(B;e_k)$

\hskip 5mm if $e_k\not\in B$ then \EMEvder{{\emph{(note: the below rules are dual to the above ones)\EMEvder{deja mis dans enonce}}}}\par
\hskip 5mm\hskip 5mm if $e_k$ is externally active w.r.t. $B$ then \par
\hskip 5mm\hskip 10mm $e_k$ is external\par
\hskip 5mm\hskip 10mm  $\ass(e_k):=e_k$\par
\hskip 5mm\hskip 10mm reorient $e_k$ or not, arbitrarily \par
\hskip 5mm\hskip 10mm \emph{or} with the same sign as in $\M$ if and only if $e_k\not\in Q$ \emph{(to compute $\alpha_\M^{-1}(X)$)}\par
\hskip 5mm\hskip 5mm otherwise\par

\hskip 5mm\hskip 10mm {\it\small (active partition computation)}

\hskip 5mm\hskip 10mm if there exists $c<e_k$ internal in $C(B;e_k)$ then\par

\hskip 5mm\hskip 15mm $e_k$ is internal\par
\hskip 5mm\hskip 15mm let $c$ $\in$ $C(B;e_k)$ with $c<e_k$, $c$ internal and $\ass(c)$ the greatest possible\par
\hskip 5mm\hskip 15mm let $\ass(e_k):=\ass(c)$\par
\hskip 5mm\hskip 10mm otherwise\par

\hskip 5mm\hskip 15mm $e_k$ is external\par
\hskip 5mm\hskip 15mm let $c$ $\in$ $C(B;e_k)$ with $c<e_k$ and $\ass(c)$ the smallest possible\par
\hskip 5mm\hskip 15mm let $\ass(e_k):=\ass(c)$ \finsi\par

\hskip 5mm\hskip 10mm {\it\small (reorientation computation)}

\hskip 5mm\hskip 10mm let $a$ be the smallest possible in $C(B;e_k)$ with $\ass(a)=\ass(e_k)$\par

\hskip 5mm\hskip 10mm reorient $e_k$ if necessary  so that $e_k$ and $a$ have opposite signs in 
$C(B;e_k)$\par

\end{algorithme}%
\end{thm}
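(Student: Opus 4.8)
The plan is to show that the algorithm realises, step by step, the recursive Definition \ref{def:alpha-seq-decomp} together with its inversion in Proposition \ref{prop:preimage-basis}. Recall that by Theorem \ref{th:dec-ori} the active minors $M(F_k)/F_{k-1}$ (for $1\le k\le \io$) are bounded and the active minors $M(F'_{k-1})/F'_{k}$ (for $1\le k\le \ep$) are dual-bounded with respect to their smallest elements, and that $\alpha(M)$ is the disjoint union of the fully optimal bases of these minors. Consequently, by Proposition \ref{prop:preimage-basis}, a reorientation lies in $\alpha^{-1}(B)$ if and only if its restriction to each block $F_k\setminus F_{k-1}$ (resp. $F'_{k-1}\setminus F'_{k}$) of the active partition of $B$ lies in the preimage, under $\alpha$ computed inside that minor, of the induced uniactive basis $B\cap(F_k\setminus F_{k-1})$ (resp. $B\cap(F'_{k-1}\setminus F'_{k})$). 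The algorithm interleaves two tasks — the computation of the active partition (the mapping $\ass$) and the computation of the reorientation — and I would prove correctness of each separately before gluing them.

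First I would argue that the active-partition part of the algorithm is correct. This is precisely the single-pass computation of \cite[Proposition \ref{a-prop:basori-partact}]{AB2-a}: processing $E$ in increasing order, it assigns to each $e_k$ the smallest element $\ass(e_k)$ of its block in the active partition of $B$, an element of $\Int(B)\cup\Ext(B)$, and classifies $e_k$ as internal or external according to whether $\ass(e_k)\in\Int(B)$ or $\ass(e_k)\in\Ext(B)$. By Theorem \ref{th:alpha} this active partition of $B$ coincides with the active partition of every $M'$ with $\alpha(M')=B$, so $\ass$ correctly identifies, for each element, the active minor to which it belongs.

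Next I would treat the reorientation part one block at a time; the dual-active blocks follow symmetrically via Proposition \ref{prop:alpha-01-inverse}, so fix a block $F_k\setminus F_{k-1}$ with smallest (internally active) element $a_k$. The key verification is that, for $e_k$ in this block, the element $a$ selected by the algorithm — the smallest $c\in C^*(B;e_k)$ with $\ass(c)=\ass(e_k)$ — is exactly $\min$ of the fundamental cocircuit of $e_k$ with respect to the induced basis inside $M(F_k)/F_{k-1}$, and that the sign of $a$ relative to $e_k$ in $C^*_M(B;e_k)$ agrees with its sign in that minor. Since $F_{k-1}$ and $F_k$ are flats of $M$ and the condition $\ass(c)=\ass(e_k)$ is equivalent to $c\in F_k\setminus F_{k-1}$, the set $\{\,c\in C^*(B;e_k)\mid \ass(c)=\ass(e_k)\,\}$ is the support of that minor fundamental cocircuit, with signs inherited from $M$ by the minor cocircuit description recalled in Section \ref{sec:prelim} and the orthogonality sign conventions. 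Granting this, the instruction to reorient $e_k$ so that $e_k$ and $a$ have opposite signs in $C^*(B;e_k)$ is verbatim the rule of Proposition \ref{prop:alpha-10-inverse} applied inside $M(F_k)/F_{k-1}$, so on this block the algorithm produces one of the two opposite bounded reorientations whose fully optimal basis is the induced uniactive basis.

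Finally I would glue the blocks together. Since they partition $E$ and the arbitrary choices are made independently, the reorientations produced over all choices are exactly the product, over the $\io+\ep$ active minors, of the two preimages in each minor, which by Proposition \ref{prop:preimage-basis} is $\alpha^{-1}(B)$, of cardinality $2^{|\Int(B)|+|\Ext(B)|}$. For the symmetric-difference description, I would note that the two preimages inside a uniactive minor are opposite reorientations of that minor, i.e. they differ by reorienting its whole ground set, which is the block $\ass^{-1}(s)$ for the block's smallest element $s$; hence flipping the choices on an arbitrary family of blocks indexed by $P\subseteq\Int(B)$ and $Q\subseteq\Ext(B)$ replaces a solution $A$ by $A\,\triangle\,\ass^{-1}(P\cup Q)$, which is precisely the activity-class structure of Definition \ref{def:act-class} and Proposition \ref{prop:act-classes}. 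The main obstacle is the minor-theoretic identification in the third paragraph — confirming that the restriction of $C^*(B;e_k)$ to its block is, with correct signs, the fundamental cocircuit inside the active minor — which is where the flatness of the $F_k$ and the orthogonality conventions must be used carefully.
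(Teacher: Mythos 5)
Your proposal is correct and follows essentially the same route as the paper's proof: correctness of the $\ass$ computation by citation of \cite[Proposition \ref{a-prop:basori-partact}]{AB2-a}, reduction to the active minors via Definition \ref{def:alpha-seq-decomp} and Theorem \ref{th:alpha}, the key identification $C^*_{M'}(B';e_k)=C^*_M(B;e_k)\cap(G\setminus F)$ so that the sign rule becomes Proposition \ref{prop:alpha-10-inverse} (or \ref{prop:alpha-01-inverse}) inside each minor, and gluing the independent choices into the activity class. The only detail the paper makes explicit that you leave implicit is that $a\neq e_k$, i.e.\ that an element non-active for $B$ in $M$ cannot become active for the induced basis in its minor, which follows from the activity-preservation in Theorem \ref{th:dec_base}.
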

\emenew{QUESTION A VOIR AVEC AB2:
dans algo basori, ne peut-on prendre le c (qui sert a definir partie) directement egal au e (ou a) qui sert a comparer 
orietnations dans circuit/cocyle fondamental ?}%
%

\begin{proof}
The computation of the active partition of $B$, that is of the mapping $\ass$, is exactly the algorithm given in
 \cite[Proposition \ref{a-prop:basori-partact}]{AB2-a}.
 By Definition \ref{def:alpha-seq-decomp}, and since $\alpha$ preserves active filtrations by Theorem \ref{th:alpha}, the reorientations associated with $B$ are obtained from the reorientations of the minors induced by the active filtration of $B$.
 
If $e_k$ is internally or externally active, then it can be obviously reoriented or not, arbitrarily, as the reorientation of the associated minor in the active filtration of $B$ will follow this initial reorientation. Each of these minors will get two possible opposite reorientations, yielding  the reorientation activity class of $M$ associated with $B$.

Now, assume that $e_k\in B$ is not internally active, and let $a$ be the smallest possible in $C^*(B;e_k)$ with $\ass(a)=\ass(e_k)$. Let $M'$ be the minor of $M$ associated with the active filtration of $B$ whose ground set is $\ass^{-1}(\ass(a))$. By Definition \ref{def:act-seq-dec-base} of the active filtration of $B$, we have $M'=M(G)/F$ for some $F\subseteq G\subseteq E$,
and  $B'=B\cap (G\s F)$ is a uniactive internal or external basis of $M'$.
Since $B'$ is a basis of $M'$, we have $C^*_{M'}(B';e_k)=C^*_M(B;e_k)\cap (G\s F)$
 (this is an easy matroid result, details are given in \cite[Property \ref{a-pty:fund_graph_flat}]{AB2-a}).
So, by definition of $a$,  we have $a=\min\bigl(C^*_{M'}(B';e_k)\bigr)$.
This implies $a\not=e_k$, otherwise $e_k$ would be internally active in the basis $B'$ of $M'$, and hence internally active in the basis $B$ of $M$ (by properties of the active filtration of $B$), which has been forbidden by assumption.
  By Proposition \ref{prop:alpha-10-inverse} or Proposition \ref{prop:alpha-01-inverse}, the bounded or dual-bounded reorientation of $M'$ associated with $B'$ by $\alpha$ is obtained by reorienting if necessary $e_k$ so that  $e_k$ and $a$ have opposite signs in 
$C^*_{M'}(B';e_k)$, that is so that  $e_k$ and $a$ have opposite signs in 
$C^*_{M}(B;e_k)$, which is exactly the algorithm statement.

The case where $e_k\not\in B$ is dual to the above case. Hence the algorithm is correct.
\end{proof}

Before ending this section, let us mention that there exist a construction of the canonical active bijection by deletion/contraction, that builds the whole bijection in a global way from the bijections in $M\backslash \omega$ and $M/\omega$, where $\omega$ is the greatest element. 
%
Actually, one can build by deletion/contraction various classes of activity preserving correspondences between reorientations and bases, among which the canonical active bijection is uniquely and canonically determined.
These features are developed in \cite{AB4} (see also \cite[Section 6]{ABG2}).
\EMEvder{verifier ref}%
%
The next final remark shows that the decomposition technique used in this section yields another whole class of activity preserving (actually active partition preserving) correspondences between reorientations and bases, among which the canonical active bijection is also uniquely and canonically determined.

\EMEvder{dessous en commentaire remarque sur 'depending only on the roeientation class' enlevee car elle etait sous forme de mappin $\psi:2^E\rightarrow 2^E$...}%

\begin{remark}[General decomposition framework for classes of activity preserving bijections]
\label{rk:preserv-act-bij-class}
%
%
%
\rm
Let us observe that the  construction by decomposition of this section is rather independent of the construction of Section \ref{sec:bij-10}.
In the construction of this section, and in particular in Definition \ref{def:alpha-seq-decomp}, one can replace everywhere the notation $\alpha$ with any mapping $\psi$ from reorientations to bases, assuming it is preliminary defined for ordered bounded/dual-bounded oriented matroids and provides a bijection with uniactive internal/external bases. Then Theorem \ref{th:alpha} can be directly extended, relying upon the two decompositions of Theorems \ref{th:dec_base} and \ref{th:dec-ori}, and this directly yields a whole class of activity preserving (and active partition preserving) bijections between reorientation activity classes and bases, without any other change needed
(and from this one can also derive a whole class of refined
bijections between reorientations and subsets as in the next section, see also Remark~\ref{rk:refined-bij}).
We call this class of mappings $\psi$ the  \emph{active partition preserving mapping class%
} of $M$.
\EMEvder{nom maladroit, bof?}%
See more details in \cite[Section 4.4]{ABG2} or \cite{AB4}.\EMEvder{verifier cette reference- bien parler de ca and AB4}
\end{remark}



\section{Partitions of $2^E$ into basis intervals and reorientation activity classes, and Tutte polynomial in terms of four activity parameters for subsets and for reorientations}
\label{sec:partitions}


%



\EMEvder{faire sous-sections?}%

In this section, we address results involving partitions of the power set of the ground set into boolean lattices, first from basis intervals of an ordered matroid, second from reorientation activity classes of an ordered oriented matroid. We also address related Tutte polynomial expressions in terms of refined activity parameters,  first in terms of basis/subset activities, and second in terms of reorientation  activities.
Those partitions and expressions for bases/subsets and for reorientations are presented in this section independently of each other. 
They are presented in a suitable similar way so that they will be easily related to each other by the refined active bijection in the next Section \ref{sec:refined}:
the canonical active bijection of the previous section can be seen as a bijection between boolean lattices of the two types, and, then, the elements of these isomorphic boolean lattices are in bijection by the refined active bijection, preserving those refined activity parameters, and thus transforming the formulas of Theorems \ref{th:Tutte-4-variables} and \ref{th:expansion-reorientations} below into each other.%
\bigskip



\EMEvder{terminologie, basis intervals ? basis activity intervals ??? ici et dans intro et diagram}%

First, let $M$ be a matroid on a linearly ordered set $E$.
\ss

We 
recall the classical partition of the power set of the ground set into intervals (for inclusion), each associated with one basis with respect to basis activities, as discovered by Crapo in \cite{Cr67}
(let us mention distinct further generalizations of this result beyond matroids: by  Dawson \cite{Da81} to set families, by Gordon and McMahon \cite{GoMM97} to greedoids, by Las Vergnas \cite{LV13} to matroid perspectives).
\EMEvder{phrase faite detaillee sur ces refs, juste petite parenthese dans source avant}%
Then, basis activities can be extended to subsets in such a way that four subset activities indicate the position of a subset inside its interval with respect to the associated basis (among several possible ways of understanding these subset activities, this is the viewpoint we introduce for the sake of further constructions). This directly yields a Tutte polynomial formula in terms of these four refined subset activities, 
 as expressed by Las Vergnas \cite{LV13}, which is essentially a specification of a more general formula in terms of generalized activities originally  discovered by Gordon and Traldi \cite{GoTr90}.
Numerous Tutte polynomial formulas can be directly derived from this general four parameter formula by specifying variables, see \cite{GoTr90, LV13}. 
This formula is generalized to matroid perspectives in \cite{LV13} (completed in \cite{Gi18}). 
A summary about these notions can also be found in \cite{GiChapterPerspectives}. 
Here, we follow the notations used in~\cite{LV13}.
 We also give a very short proof of this formula, which highlights how it is the enumerative counterpart of the partition into intervals (as done in~\cite{Gi18}).

Let $B$ be a basis of $M$. The set of subsets of $E$ containing $B\setminus \Int(B)$ and contained in $B\cup \Ext(B)$ will be called the \emph{interval of} $B$, denoted $[B\setminus \Int(B), B\cup \Ext(B)]$.
%
These sets considered for all bases form a partition of $2^E$:
$$2^E=\biguplus_{B\hbox{ basis of }M}\Bigl[\ B\setminus \Int(B),\ B\cup \Ext(B)\ \Bigr].$$
Observe that the interval of $B$ has a boolean lattice structure and can also be denoted:
$$\bigl[\ B\setminus \Int(B),\ B\cup \Ext(B)\ \bigr]\ =\ \bigl\{\ B\triangle\bigl(P\cup Q\bigr)\ \mid \ P\subseteq \Int_M(B),\ Q\subseteq \Ext_M(B)\ \bigr\}.$$

\begin{definition}[\cite{LV13}]
\label{def:gene-act-base}
Let $M$ be a matroid on a linearly ordered set $E$. Let $B$ be a base of $M$. Let $A$ be in the boolean interval $[B\s \Int_{M}(B), B\cup \Ext_M(B)]$. We denote:
\begin{eqnarray*}
\Ext_M(A) & =& \Ext_M(B)\s A; \\
Q_M(A) & =& \Ext_M(B)\cap A; \\
\Int_{M}(A)& =& \Int_{M}(B)\cap A; \\
P_{M}(A) & =& \Int_{M}(B)\s A.
\end{eqnarray*}
\end{definition}


Let us mention that
these four parameters can be defined directly from $A$ without using $B$.
In particular, $Q_M(A)$, resp. $P_M(A)$, counts smallest elements of circuits, resp. cocircuits, contained in $A$, resp. $E\s A$.
In particular, $Q_M(A)$, resp. $P_M(A)$, counts smallest elements of circuits, resp. cocircuits, contained in $A$, resp. $E\s A$.
This yields
$| P_M(A)|=r(M)-r_M(A)$
and
$| Q_M(A)|=|A|-r_M(A)$
(which do not depend on the associated base).
%

\EMEvder{dessous e commntaire autres formulations de ces parametres sans utiliser $B$}%

%
%
%

\EMEvder{detailler formulation directes des param (A), comme dans LV et comme dans Backman-Traldi ?}


%


\begin{thm}[{\cite{GoTr90, LV13}}]
\label{th:Tutte-4-variables}
Let $M$ be a matroid on a linearly ordered set $E$. We have
\begin{Large}
$$t(M;x+u,y+v)=\sum_{A\subseteq E}\ x^{\mid \Int_M(A)\mid}\ u^{\mid P_M(A)\mid}\ y^{\mid \Ext_M(A)\mid}\ v^{\mid Q_M(A)\mid}$$
\end{Large}
\end{thm}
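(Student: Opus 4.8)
The plan is to reorganize the right-hand side according to the partition of $2^E$ into the boolean intervals associated with the bases of $M$ recalled just above, and to evaluate the contribution of each interval in closed form. First I would write
$$\sum_{A\subseteq E} x^{|\Int_M(A)|}\,u^{|P_M(A)|}\,y^{|\Ext_M(A)|}\,v^{|Q_M(A)|}=\sum_{B\hbox{ basis of }M}\ \sum_{A\in[\,B\s\Int(B),\,B\cup\Ext(B)\,]} x^{|\Int_M(A)|}\,u^{|P_M(A)|}\,y^{|\Ext_M(A)|}\,v^{|Q_M(A)|},$$
which is legitimate precisely because those intervals partition $2^E$, so that it suffices to compute the inner sum for a fixed basis $B$.

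For the inner sum, I would parametrize the interval of $B$ by the boolean description recalled above: every $A$ in the interval is uniquely of the form $A=B\triangle(P\cup Q)=(B\s P)\cup Q$ with $P\subseteq\Int_M(B)$ and $Q\subseteq\Ext_M(B)$, and conversely each such pair $(P,Q)$ gives a distinct element of the interval. Substituting $A=(B\s P)\cup Q$ into Definition \ref{def:gene-act-base} and using $\Int_M(B)\subseteq B$, $\Ext_M(B)\subseteq E\s B$ and $P\cap Q=\emptyset$, I would check the four identities
$$\Int_M(A)=\Int_M(B)\s P,\quad P_M(A)=P,\quad \Ext_M(A)=\Ext_M(B)\s Q,\quad Q_M(A)=Q,$$
whence $|\Int_M(A)|=\io(B)-|P|$ and $|\Ext_M(A)|=\ep(B)-|Q|$, where I set $\io(B)=|\Int_M(B)|$ and $\ep(B)=|\Ext_M(B)|$.

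Consequently the inner sum factors over $P$ and $Q$, each factor being a binomial expansion:
$$\sum_{P\subseteq\Int_M(B)} x^{\io(B)-|P|}u^{|P|}=(x+u)^{\io(B)},\qquad \sum_{Q\subseteq\Ext_M(B)} y^{\ep(B)-|Q|}v^{|Q|}=(y+v)^{\ep(B)},$$
so the contribution of the interval of $B$ equals $(x+u)^{\io(B)}(y+v)^{\ep(B)}$. Summing over all bases of $M$ and grouping them by their activity pair $(\io,\ep)$ yields $\sum_{\io,\ep}b_{\io,\ep}(x+u)^{\io}(y+v)^{\ep}$, which is exactly $t(M;x+u,y+v)$ by the \ref{eq:basis-activities} of the Tutte polynomial. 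This establishes the formula.

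The proof has no genuine obstacle: all the depth is already packaged in the two facts assumed from the earlier text, namely that the intervals $[B\s\Int(B),B\cup\Ext(B)]$ partition $2^E$ (Crapo) and the basis activity expansion \ref{eq:basis-activities}. The only point requiring care is the bookkeeping in the four parameter identities above; I would verify each by a short intersection/complement computation, checking in particular that the decomposition $A=(B\s P)\cup Q$ is disjoint so that the exponents $|P|$ and $|Q|$ can be read off independently and the binomial summations decouple.
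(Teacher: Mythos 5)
Your proof is correct and is essentially the paper's own argument run in the reverse direction: the paper starts from $t(M;x+u,y+v)$, expands it with the binomial formula, and reindexes the result over Crapo's interval partition of $2^E$, whereas you start from the sum over $2^E$, split it along that same partition, verify the four parameter identities from Definition \ref{def:gene-act-base}, and contract each interval's contribution back to $(x+u)^{\io(B)}(y+v)^{\ep(B)}$ via the binomial formula. Since both rest on exactly the same ingredients (the interval partition, the definition of the four subset activities, and the basis-activity expansion of the Tutte polynomial), this is the same proof rather than a different route.
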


\begin{proof}
By the expression \ref{eq:basis-activities}, we have:

$\displaystyle t(M;x+u,y+v)\ =\ \sum_{B \text{ basis}}\ (x+u)^{\mid \Int_M(B)\mid}\ (y+v)^{\mid \Ext_M(B)\mid}.$

\noindent By the binomial formula, this expression equals:

$\displaystyle  \sum_{B \text{ basis}}\ 
\Bigl(\ \sum_{A'\subseteq \Int_M(B)} x^{\mid A'\mid}u^{\mid \Int_M(B)\s A'\mid}\ \Bigr)\ 
\Bigl(\ \sum_{A''\subseteq \Ext_M(B)} y^{\mid \Ext_M(B)\s A''\mid}v^{\mid A''\mid}\ \Bigr).$

\noindent Since $\Int_M(B)\cap \Ext_M(B)=\emptyset$, one has a bijection between couples $(A',A'')$ involved in this expression and subsets $A=A'\uplus A''$ of $\Int_M(B)\uplus \Ext_M(B)$, hence this expression equals: 

$\displaystyle  \sum_{B \text{ basis}}\ 
\Bigl(\ \sum_{A \subseteq \Int_M(B)\uplus \Ext_M(B)} x^{\mid \Int_M(B)\cap A\mid}u^{\mid \Int_M(B)\s A\mid}\  y^{\mid \Ext_M(B)\s A\mid}v^{\mid \Ext_M(B)\cap A\mid}\ \Bigr).$


\noindent Since 
$\bigl(\ B\setminus \Int_{M}(B)\ \bigr)\cap\bigl(\ \Int_M(B)\uplus \Ext_M(B)\ \bigr)=\emptyset$, 
the mapping $A\mapsto A\cup\bigl(\ B\setminus \Int_{M}(B)\ \bigr)$ yields an isomorphism between 
the two boolean intervals $[\emptyset,\ \Int_M(B)\uplus \Ext_M(B)]$ and
$[B\s \Int_{M}(B), B\cup \Ext_M(B)]$, 
which does not change the sets $\Int_M(B)\cap A$, $\Int_M(B)\s A$, $\Ext_M(B)\s A$, and $\Ext_M(B)\cap A$. 
So the above expression can be equivalently written:



$\displaystyle  \sum_{B \text{ basis}}\ 
\Bigl(\ \sum_{A \in [B\s \Int_{M}(B), B\cup \Ext_M(B)]} x^{\mid \Int_M(B)\cap A\mid}u^{\mid \Int_M(B)\s A\mid}\  y^{\mid \Ext_M(B)\s A\mid}v^{\mid \Ext_M(B)\cap A\mid}\ \Bigr).$

\noindent Since $2^E=\cup_{B \text{ basis}} [B\s \Int_{M}(B), B\cup \Ext_M(B)]$, this expression equals:

$\displaystyle  \sum_{A\subseteq E}\ 
x^{\mid \Int_M(B)\cap A\mid}u^{\mid \Int_M(B)\s A\mid}\  y^{\mid \Ext_M(B)\s A\mid}v^{\mid \Ext_M(B)\cap A\mid}.$

\noindent Finally, by Definition \ref{def:gene-act-base}, this expression equals the required one.
%
\end{proof}

\eme{dessous en commenatire formualtion avec cardianux}%




%



\bigskip

Second, let $M$ be an oriented matroid on a linearly ordered set $E$.
\ss

We build on the partition of the power set of the ground set into activity classes of reorientations, introduced in Definition \ref{def:act-class}, and on their boolean lattice structure. 
We can naturally define four reorientation activity parameters that indicate the position of a reorientation inside its activity class. 
We obtain a short  proof of a simple expression of the Tutte polynomial using these four reorientation activity parameters (Theorem \ref{th:expansion-reorientations}).
Let us mention that this result generalizes to oriented matroid perspectives: it was proposed with a rather technical proof  in \cite{LV12}%
\EMEvder{footnote enlevee, dessous en commentaire, la meme que la suviante mais plus detaillee comme dans [Gi18]}%
, and it is shortly proved in terms of activity classes of oriented matroid perspectives in \cite{Gi18} in a similar way as in the present~paper.

Let us fix a reorientation $-_AM$ of $M$. 
The active partition of $-_A\M$ (Definition \ref{EG:def:ori-act-part}) 
can be denoted as:
$$E\ =\ \biguplus_{a\; \in\; O(-_A\M)\; \cup\; O^*(-_AM)}A_a$$
where $a=\min(A_a)$ for all $a\in O(-_A\M)\cup O^*(-_AM)$.
%
Then the  activity class $cl(-_AM)$ of $-_A\M$ (Definition \ref{def:act-class}) can be denoted the following way, highlighting its boolean lattice structure: 
$$cl(-_AM)\ =\ \Biggl\{ \ -_{A'}M\ \ \ \mid \ \ A'=A\ \triangle\ \Bigl(\ \biguplus_{a\in P\cup Q}A_a\ \Bigr)\ \hbox{ for } \ P\subseteq O^*(-_AM),\ \ Q\subseteq O(-_AM)\ \Biggr\}.$$
%
%
%
As addressed in Section \ref{sec:dec-seq},
activity classes of reorientations of $M$ form a partition of the set of reorientations of $M$:
$$2^E\ \sim\ \biguplus_{\text{\scriptsize one $-_AM$ chosen in each activity class}}
cl(-_AM).$$

\eme{dessous en commentaire DEF avec cardinaux en plus}

\begin{definition} 
\label{def:gene-act-ori}
Let $\M$ be an ordered oriented matroid. We define:
%
\begin{eqnarray*}
   \Theta_\M(A)&=&O(-_A\M)\s A,  \\
 \bar\Theta_\M(A)&=&O(-_A\M)\cap A, \\ 
   \Theta^*_\M(A)&=&O^*(-_A\M)\s A, \\
\bar\Theta^*_\M(A)&=&O^*(-_A\M)\cap A. 
\end{eqnarray*}
\eme{dessous avec complementaires en plus}
%
%
%
Hence we have $O(-_A\M)=\Theta_\M(A)\uplus\bar\Theta_\M(A)$ 
and (dually) $O^*(-_A\M)=\Theta^*_\M(A)\uplus\bar\Theta^*_\M(A)$.
\end{definition}


%

In contrast with the definition of the activity and dual activity of a reorientation of $M$, that depends only on the resulting oriented matroid $-_AM$, the definition above depends on $A$ and $M$.
By this way, it refines  reorientation activities into four parameters that 
substantially apply to reorientations of a given \emph{reference oriented matroid $\M$}. 
These parameters actually situate any reorientation in its activity class  (which is independent of the reference oriented matroid, see Observation \ref{obs:canonical}). 

Precisely,
consider an activity class of reorientations of $M$, and the representative $-_AM$ of this class  which is active-fixed and dual-active fixed with respect to the reference oriented matroid $M$ (Corollary \ref{cor:enum-classes}).
By definition,  it satisfies: 
 \begin{center}
   $
   \begin{array}{lclcl}
   \bar\Theta_\M(A)&=&O(-_A\M)\cap A&=&\emptyset,\\[1mm]
   \bar\Theta^*_\M(A)&=&O^*(-_A\M)\cap A&=&\emptyset.
   \end{array}
   $
\end{center}
 Furthermore, the other reorientations $-_{A'}M$ in the same activity class correspond to other possible values of $\bar\Theta_M(A')\subseteq O(-_AM)$ and $\bar\Theta^*_M(A')\subseteq O^*(-_AM)$.
 \EMEvder{enlever $\subseteq$ dans phrase precedente ? (redit dessous)}%
Using the above notation for the activity class $cl(-_AM)$ as a boolean lattice, we have  
\EMEvder{mettre ces P= et Q= en tableau comme ci-dessus pour ler etpreentative ?}%
 \begin{center}
   $
   \begin{array}{lclcl}
   Q&=&\bar\Theta_M(A')&\subseteq& O(-_AM),\\[1mm]
   P&=&\bar\Theta^*_M(A')&\subseteq& O^*(-_AM).
   \end{array}
   $
\end{center}
A way of understanding the role of the reference oriented matroid $M$ is that it breaks the symmetry in each activity class, so that its boolean lattice structure can be expressed relatively to the aforemetioned representative.
This representative is noticeably used in Section \ref{sec:refined}.
Other choices are possible for a representative.

\eme{For instance, completing \cite[Section 6]{GiLV05}, let us choose a reference reorientation $\M$ in which the minimal base is directed towards the root, then unique sink acyclic reorientations are those with activities \red{...??? a completer ? ou a virer ?}}%

%


Finally, we derive the following Tutte polynomial expansion formula in terms of these four parameters.
A (technical) proof for Theorem \ref{th:expansion-reorientations} below is proposed in 
the preprint \cite{LV12}
by deletion/contraction in the more general setting of oriented matroid perspectives.
%
This theorem can also be directly proved by means of the above construction on activity classes (as announced
 in \cite{LV12}\footnote{See footnote \ref{footnote:refined} in Section \ref{sec:refined} for a correction on this announce as written in the preprint \cite{LV12}. 
},  this theorem can also be seen as a direct corollary of the similar formula for subset activities from Theorem  \ref{th:Tutte-4-variables} and the refined active bijection from Theorem \ref{th:ext-act-bij}).
We give this short proof below for completeness of the paper, though it is a translation of the proof given in \cite{Gi18} for oriented matroid perspectives.%
\EMEvder{attention ce paragraphe repete le pragraphe d'intro}

\begin{thm} 
\label{th:expansion-reorientations}
Let $M$ be an oriented matroid on a linearly ordered set $E$. We have
%
%
\eme{dessous en commanriter enonce vec cardianux}%
\begin{Large}
$$t(M;x+u,y+v)=\sum_{A\subseteq E} \ x^{|\Theta^*_\M(A)|}\ u^{|\bar\Theta^*_\M(A)|}\ y^{|\Theta_\M(A)|}\ v^{|\bar\Theta_\M(A)|}.$$
\end{Large}
\end{thm}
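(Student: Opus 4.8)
The plan is to derive this four-parameter reorientation expansion from the partition of $2^E$ into activity classes (Definition \ref{def:act-class}, Proposition \ref{prop:act-classes}), in exact analogy with the proof of Theorem \ref{th:Tutte-4-variables}. The starting point is the \ref{eq:reorientation-activities} expansion, which I would first rewrite by a change of variables. Indeed, setting in that formula the evaluation at $(x+u,y+v)$ we get
$$t(M;x+u,y+v)=\sum_{\io,\ep}o_{\io,\ep}\Bigl(\frac{x+u}{2}\Bigr)^\io\Bigl(\frac{y+v}{2}\Bigr)^\ep,$$
where $o_{\io,\ep}$ counts reorientations of dual-activity $\io$ and activity $\ep$. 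Since each activity class of dual-activity $\io$ and activity $\ep$ contains exactly $2^{\io+\ep}$ reorientations (Proposition \ref{prop:act-classes}, Definition \ref{def:act-class}), the number of such activity classes is $o_{\io,\ep}/2^{\io+\ep}=t_{\io,\ep}$, consistent with Corollary \ref{cor:enum-classes}. The idea is therefore to reindex the sum over reorientations by summing first over activity classes and then over the members of each class.

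First I would fix one representative in each activity class and expand. Summing over classes, and writing $\io=|O^*(-_AM)|$, $\ep=|O(-_AM)|$ for a representative $-_AM$, the expansion becomes
$$t(M;x+u,y+v)=\sum_{\text{\scriptsize classes}}\Bigl(\frac{x+u}{2}\Bigr)^{\io}\Bigl(\frac{y+v}{2}\Bigr)^{\ep}\cdot 2^{\io+\ep}=\sum_{\text{\scriptsize classes}}(x+u)^{\io}(y+v)^{\ep}.$$
Applying the binomial formula to each factor yields a sum over pairs $(P,Q)$ with $P\subseteq O^*(-_AM)$ and $Q\subseteq O(-_AM)$, namely
$$t(M;x+u,y+v)=\sum_{\text{\scriptsize classes}}\ \sum_{\substack{P\subseteq O^*(-_AM)\\ Q\subseteq O(-_AM)}} x^{|O^*(-_AM)|-|P|}\,u^{|P|}\,y^{|O(-_AM)|-|Q|}\,v^{|Q|}.$$

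The key step is then to identify each pair $(P,Q)$ with a genuine reorientation in the class, using the boolean-lattice description
$$cl(-_AM)=\Bigl\{\,-_{A'}M\ \mid\ A'=A\,\triangle\,\bigl(\textstyle\biguplus_{a\in P\cup Q}A_a\bigr),\ P\subseteq O^*(-_AM),\ Q\subseteq O(-_AM)\,\Bigr\}.$$
Choosing the representative $-_AM$ to be the active-fixed and dual-active-fixed one (Corollary \ref{cor:enum-classes}), so that $\bar\Theta_M(A)=\bar\Theta^*_M(A)=\emptyset$, I would verify that reorienting exactly the parts indexed by $P\cup Q$ produces the reorientation $-_{A'}M$ whose refined parameters are precisely $\bar\Theta^*_M(A')=P$ and $\bar\Theta_M(A')=Q$, while $\Theta^*_M(A')=O^*(-_AM)\setminus P$ and $\Theta_M(A')=O(-_AM)\setminus Q$ (using that all these reorientations share the same active/dual-active elements and the same active partition, by Proposition \ref{prop:act-classes}). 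This makes the double sum a sum over all reorientations $-_{A'}M$, each counted once because the classes partition $2^E$, and the exponents match Definition \ref{def:gene-act-ori} exactly, giving the claimed formula.

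The main obstacle will be the bookkeeping in this last identification: one must check that the correspondence $(P,Q)\mapsto -_{A'}M$ is a bijection between pairs and class members (this is the boolean-lattice structure guaranteed by Proposition \ref{prop:act-classes}), and—more delicately—that the four parameters of $-_{A'}M$ come out as stated independently of the chosen representative, which rests on the fact that $O(-_{A'}M)=O(-_AM)$ and $O^*(-_{A'}M)=O^*(-_AM)$ throughout the class. Once these identities are in hand the computation is purely formal, so the proof is genuinely short; the real content is encapsulated in Proposition \ref{prop:act-classes} and Corollary \ref{cor:enum-classes}, exactly as the analogous Theorem \ref{th:Tutte-4-variables} rests on Crapo's interval partition.
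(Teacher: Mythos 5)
Your proposal is correct and is essentially the paper's own proof run in the opposite direction: the paper starts from the four-parameter sum, groups reorientations into activity classes, applies the binomial formula to get $(x+u)^{\io}(y+v)^{\ep}$ per class, and converts to the Las Vergnas expansion via the factor $2^{\io+\ep}$, while you traverse exactly the same chain of identities starting from the Las Vergnas formula at $(x+u,y+v)$. The key ingredients are identical — the partition into activity classes, the boolean-lattice structure via the active-fixed and dual-active-fixed representative, and the identification of $(P,Q)$ with $(\bar\Theta^*_M(A'),\bar\Theta_M(A'))$ — so there is nothing substantively different to compare.
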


\begin{proof}
The proof is obtained by a simple combinatorial transformation.
Let us start with  the right-hand side of the equality, where we denote $\theta^*_M(A)$ instead of $|\Theta^*_M(A)|$, etc., by setting:
\vspace{-1mm}
$$\displaystyle [Exp] = \sum_{A\subseteq E} x^{\theta^*_M(A)}u^{\bar\theta^*_M(A)}y^{\theta_M(A)}v^{\bar\theta_M(A)}.$$

\vspace{-2mm}
Since $2^E$ is isomorphic to the set of reorientations, which is partitioned into activity classes of reorientations of $M$ (Definition \ref{def:act-class}), and by choosing a representative for each activity class which is active-fixed an dual-active-fixed (as discussed above), we get:
\vspace{-1mm}
$$[Exp] =  \sum_{\substack{\text{activity classes $cl(-_AM)$ of reorientations of }M\\ \text{with one $-_AM$ chosen in each class}\\ \text{such that $O(-_AM)\cap A=\emptyset$ and $O^*(-_AM)\cap A=\emptyset$}}}\ \ \sum_{-_{A'}M\ \in\  cl(-_AM)} x^{\theta^*_M(A')}u^{\bar\theta^*_M(A')}y^{\theta_M(A')}v^{\bar\theta_M(A')}$$

\vspace{-2mm}
As discussed above, when $-_{A'}M$ ranges the activity class of $-_AM$, 
$\bar\Theta_M(A')$ and $\bar\Theta^*_M(A')$  range subsets of 
$O(-_AM)$ and  $O^*(-_AM)$,  respectively. So, we get the following expression (where ``idem'' refers to the text below the first above sum), which we then transform using the binomial formula:%
\begin{eqnarray*}
[Exp] & = & \sum_{\text{idem}}\ \ \sum_{\substack{P\subseteq O^*(-_AM)\\ Q\subseteq O(-_AM)}} x^{\mid O^*(-_AM)\s P\mid}u^{\mid P\mid}y^{\mid O(-_AM)\s Q\mid}v^{\mid Q\mid}\\
&=&  \sum_{\text{idem}} \ \
\biggl(\,\sum_{P\subseteq O^*(-_AM)} 
x^{\mid O^*(-_AM)\s P\mid}
u^{\mid P\mid}\biggr)
\biggl(\,\sum_{Q\subseteq O(-_AM)} 
y^{\mid O(-_AM)\s Q\mid}
v^{\mid Q\mid}\biggr)
\\
&=&  \sum_{\text{idem}}\ \ (x+u)^{|O^*(-_AM)|} (y+v)^{|O(-_AM)|}
\end{eqnarray*}
%

\vspace{-2mm}
Since the activity class of $-_AM$ has ${2^{|O(-_AM)|+|O^*(-_AM)|}}$ elements with the same orientation activities,
we have (denoting for short $o(A)=|O(-_AM)|$ and $o^*(A)=|O(-_AM)|$):
\begin{eqnarray*}
%
[Exp] & = & 
%
\sum_{\text{idem}}\ \ {1\over{2^{o(A)+o^*(A)}}}\ \sum_{-_{A'}M\ \in\  cl(-_AM)}\ (x+u)^{o^*(A')}(y+v)^{o(A')}\\
&=& \sum_{\text{idem}}\ \ \sum_{-_{A'}M\ \in\  cl(-_AM)}\ {\Bigl({x+u\over 2}\Bigr)}^{o^*(A')}{\Bigl({y+v\over 2}\Bigr)}^{o(A')}\\
&=&  \sum_{A\subseteq E}\ {\Bigl({x+u\over 2}\Bigr)}^{o^*(A)}{\Bigl({y+v\over 2}\Bigr)}^{o(A)}\\
&=&  t(G;x+u,y+v)
\end{eqnarray*}

\vspace{-2mm}

\noindent using at the end the \ref{eq:reorientation-activities} from \cite{LV84a} recalled 
in Section \ref{sec:prelim}.
\end{proof}

%
%

\EMEvder{dessous en comentaire, premiere preuve de ce thm, mais obsolete}
\rm
Obsere that Theorem \ref{th:expansion-reorientations} provides a proof of the enumerations of activity classes and their representatives from Corollary \ref{cor:enum-classes} and Table \ref{table:enum-classes}.
Finally, let us mention that numerous Tutte polynomial formulas can be directly obtained from Theorem \ref{th:expansion-reorientations}, for instance by replacing variables ($x$, $u$, $y$, $v$) with $(x/2,x/2,y/2,y/2)$, 
or $(x+1,-1,y+1,-1)$, 
or $(2,0,0,0)$, etc., as well as expressions for derivatives of the Tutte polynomial.
%
%
%
These formulas are given in \cite{LV12, Gi18} (see also \cite{GiChapterOriented}, and see \cite{LV12} for a detailed example).
\EMEvder{mettre 'see detailed example'?}


\EMEvder{ai enleve corollaire hors sujet}%

\section{The refined active bijection between reorientations and subsets}

\label{sec:refined}

\EMEvder{texte dessous avant df /thm repris de ABG2, un peu copier/coller mais bien ecrit... a voir si OK de faire ca}%
The present construction is a 
natural development of the canonical active bijection (sketchily introduced in \cite{Gi02,GiLV06,GiLV07}).
\EMEvder{mettre cette parentehse}%
Let us consider an ordered oriented matroid $M$ and its active basis $B=\alpha(M)$.  On one hand, the activity class of $M$ (Definition \ref{def:act-class} and Section \ref{sec:partitions}) obviously has a boolean lattice structure isomorphic to the power set of $O(M)\cup O^*(M)$.
On the other hand, the interval $[B\setminus \Int(B), B\cup \Ext(B)]$ of $B$ (Section \ref{sec:partitions})
also has a boolean lattice structure isomorphic to the power set of $\Int(B)\cup \Ext(B)$.
Since we have $\Int(B)\cup \Ext(B)=O(M)\cup O^*(M)$ by properties of $\alpha(M)$ (Theorem \ref{th:alpha}), those two boolean lattices are isomorphic.
Furthermore, activity classes of reorientations of $M$ form a partition of the set of reorientations of $M$ (Definition \ref{def:act-class}), intervals of bases form a partition of the power set of $E$ (Section \ref{sec:partitions}), and activity classes of orientations are in bijection with bases 
under $M\mapsto \alpha(M)$
(Theorem \ref{th:alpha}).
Hence, selecting a boolean lattice isomorphism for each couple formed by an activity class and its active basis directly yields a bijection between all reorientations and all subsets of $E$, which refines the canonical active bijection of $M$, and transforms activity classes of reorientations into intervals of bases. 
The most natural way   to select such isomorphisms (see also Remark \ref{rk:refined-bij} for variants) is to use the oriented matroid $M$ as a \emph{reference}, whose role is to ``break the symmetry'' in activity classes, just as in Section \ref{sec:partitions}. 
See Figure \ref{fig:K4-iso} for an illustration.
\emevder{la figue illustre aussi le choix par rapport a orientation de reference, a preciser plus loin? dans la caption?}%
By this way, we shall obtain below \emph{the refined active bijection $\alpha_M$ of $M$}, which relates\emevder{preserves ?} the refined activities 
for reorientations  and for subsets from Definitions \ref{def:gene-act-base} and \ref{def:gene-act-ori}
\emevder{renvoyer a (see Section \ref{subsec:act-map-class-decomp}) pour varaintes ?}%
(as announced in \cite{LV12}%
\footnote{\label{footnote:refined}
Beware that the definition for the refined active bijection proposed at the very end of the unpublished preprint \cite{LV12} in terms of the active bijection is not correct: it is not complete, and given with a wrong parameter correspondence. It is different from the present one, which is consistent with the one given in \cite{Gi02,GiLV06,GiLV07}.
}%
),
giving a bijective transformation between the formulas of Theorems \ref{th:Tutte-4-variables} and \ref{th:expansion-reorientations}:
\begin{eqnarray*}
T(M;x+u,y+v)&
=&\sum_{A\subseteq E}\ x^{\mid \Int_M(A)\mid}\ u^{\mid P_M(A)\mid}\ y^{\mid \Ext_M(A)\mid}\ v^{\mid Q_M(A)\mid}\\
&=&\sum_{A\subseteq E} \ x^{|\Theta^*_M(A)|}\ u^{|\bar\Theta^*_M(A)|}\ y^{|\Theta_M(A)|}\ v^{|\bar\Theta_M(A)|}.
\end{eqnarray*}

Let us insist that, in contrast with the canonical active bijection,  which depends only on the reorientation class since the active basis is intrinsically defined for an ordered oriented matroid (see Observation \ref{obs:canonical}), the refined active bijection  depends on (or is induced by the choice of) a given reference  oriented matroid (that is, a reference signature in terms of a topological representation).
\EMEvder{a dire mieux ?}%


\begin{figure}[h]
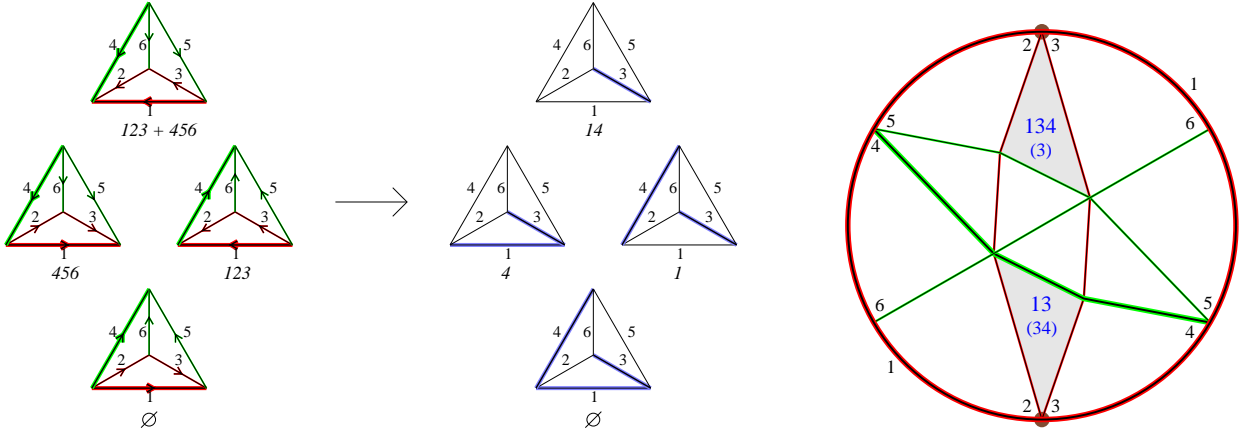

\centering
\includegraphics[scale=0.9]{./figures/Act-class-Dawson-caracteres-agrandis}
\hfill 
\includegraphics[scale=1.3]{./figures/K4-refined-intervals-avec-classes-et-mineurs+couleurs}
\caption[]{Boolean lattice isomorphism between an activity class of reorientations and the interval of the corresponding basis, figured for the activity class from Figure \ref{fig:K4-dec} with active partition $123+456$ and active basis $B=134$ 
with $[B\s \Int(B),B]=[3,134]$. The layout reflects the bijection. Edges written below the graphs in the middle are those removed from $B$, they correspond to reoriented parts in the digraphs on the left and on the regions on the right (brackets refer to the opposite regions in the opposite half of the arrangement).
The reference oriented matroid $M$ is given by any signature/orientation such that the reorientation associated with $B$ is active-fixed and dual-active fixed w.r.t. $M$ (e.g., simply the region/digraph associated with $B$).
\EMEvder{derniere phrase bien dite ?}%
}
\label{fig:K4-iso}
\end{figure}

Technically, let us take up the notations and discussion of Section \ref{sec:partitions}.
Let $M$ be an oriented matroid on a linearly ordered set $E$, thought of as  \emph{the reference oriented matroid}.
Let $A\subseteq E$.
%
The active partition of $-_AM$ can be denoted as:
$$E\ =\ \biguplus_{a\ \in\ O(-_AM)\ \cup\ O^*(-_AM)}A_a$$
where the index of each part is the smallest element of the part.
%
The activity class of $-_AM$ is:
$$cl(-_AM)=\Biggl\{\ -_{A'}M \ \mid\ A'= A\ \triangle\ \Bigl(\ \biguplus_{a\in P\cup Q}A_a\Bigr)\ \text{ for } \ P\subseteq O^*(-_AM),\ Q\subseteq O(-_AM)\ \Biggr\}.$$
Let $B=\alpha(-_AM)$ be the active basis of $-_AM$. 
%
The interval 
of $B$ 
can be also denoted:
$$[B\setminus \Int(B), B\cup \Ext(B)]\ =\ \Biggl\{\ B'\subseteq E\ \mid\  B'=B\triangle\bigl(\biguplus_{a\in P\cup Q}\{a\}\bigr) \text{ for  }P\subseteq \Int(B),\ Q\subseteq \Ext(B)\ \Biggr\}.$$
%
The above notations emphasize the two boolean lattice structures. 
Then, we define an isomorphism between the two by choosing that the representative of the activity class which is active-fixed and dual-active fixed w.r.t. $M$
is associated with the basis~$B$.
Assume $-_{A}M$ is the representative of its class with these properties, then we formally have:
$\bar\Theta_M(A)=O(-_AM)\cap A=\emptyset$ and $\bar\Theta^*_M(A)=O^*(-_AM)\cap A=\emptyset,$ which corresponds to $A'=A$, $P=\emptyset$ and $Q=\emptyset$ in the above setting, and which corresponds to the subset $B'=B$ in the interval of the basis $B$, that is to $P_M(B')=\Int(B)\cap B'=\emptyset$ and $Q_M(B')=\Ext(B)\cap B'=\emptyset$ (Definitions \ref
{def:gene-act-ori} and \ref{def:gene-act-base}).
Finally, all reorientations in the same activity class and all subsets in the same associated basis interval correspond to all possible values of $P$ and $Q$ in the above notations, so that:
\begin{center}
   $
   \begin{array}{lclclclcl}
    P&=&\bar\Theta^*_M(A')&=&P_M(B')&\subseteq& O^*(-_AM)&=&\Int(B), \\
    Q&=&\bar\Theta_M(A')&=&Q_M(B')&\subseteq& O(-_AM)&=&\Ext(B).
   \end{array}
   $
\end{center}
%
By this way, we naturally obtain the following definition and theorem.

\EMEvder{dessous en commentaire: passage dans chapter, inutile}

\EMEvder{dessous en commentaire, texte de la premiere version, a ete modife dans ABG2 et repris depuis ABG2--- INUTILE}

\eme{dessous en commentaire ancienne version decrisption de ABG2}%

\EMEvder{mieux ? is induced by que depends on pour refined bijection}



\begin{definition}
\label{def:act-bij-ext}
Let $\M$ be an oriented matroid on a linearly ordered set $E$.
For  $A\subseteq E$, 
we define
%
 $$\alpha_\M(A)=\alpha(-_A\M)\ \setminus\ \Bigl(A\cap O^*(-_A\M)\Bigr)\ \cup\ \Bigl(A\cap O(-_A\M)\Bigr).$$
That is: $\alpha_\M(A)=B \setminus P \cup Q$ with $B=\alpha(-_A\M),$
$P=A\cap \Int(B)=A\cap O^*(-_A\M),$ and $Q=A\cap \Ext(B)=A\cap O(-_A\M).$

The mapping $A\mapsto \alpha_M(A)$ from $2^E$ to $2^E$ is called \emph{the refined active bijection of $\M$.}
\EMEvder{ou: refined active bijection W.R.T. $M$? a verifier avnt j'avais mis wrt puis chagne en of}%
%
%
\end{definition}



%

%

\eme{DESSOUS EN COMMENTAIRE DEF COMPLETE DE HANDOBOOK}

%

\eme{other possible definition: choose $A$ associated with $B$ containing all active elements, or no active element...}

\begin{thm} 
\label{th:ext-act-bij}
Let $M$ be an oriented matroid on a linearly ordered set $E$.
%
We have the following.
\begin{itemize}
\item The mapping $A\mapsto \alpha_M(A)$ from $2^E$ to $2^E$ is a bijection.
It yields a bijection between reorientations $-_AM$ of $\M$ and subsets of $E$, which maps activity classes of reorientations of $\M$ onto intervals of bases of $M$ (and these restrictions are boolean lattice isomorphisms).
%
\item  For all $A\subseteq E$, 
with $B=\alpha(-_A\M)$ and $\alpha_\M(A)=B\setminus P\cup Q$,
we have:
\begin{align*}
\Int_M(\alpha_\M(A))&&=&&\scriptstyle \Int_M(B)\s P&&\scriptstyle =&&\scriptstyle O^*(-_A\M)\s P&&=&&\Theta^*_\M(A),\\
P_M(\alpha_\M(A))&&=&&&&\scriptstyle P&&&&=&&\bar\Theta^*_\M(A),\\
\Ext_M(\alpha_\M(A))&&=&&\scriptstyle \Ext_M(B)\s Q&&\scriptstyle =&&\scriptstyle O(-_A\M)\s Q&&=&&\Theta_\M(A),\\
Q_M(\alpha_\M(A))&&=&&&&\scriptstyle Q&&&&=&&\bar\Theta_\M(A).\\
\end{align*}
\vspace{-1.5cm}
%
%
%

%
%
%
%
%
%
\item  In particular, $\alpha_M(A)$ equals the active basis $\alpha(-_AM)$ if and only if $-_AM$ is active fixed and dual-active fixed w.r.t. $M$. Similarly, restrictions of the mapping $\alpha_M$ yield the  bijections listed in Table \ref{table:thm-refined}.


%
\end{itemize}
\end{thm}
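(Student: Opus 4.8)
The plan is to prove the theorem class-by-class, matching the partition of $2^E$ into activity classes of reorientations on the source side with the partition into basis intervals on the target side, and showing that $\alpha_M$ restricts to a boolean lattice isomorphism between each paired class and interval. First I would fix an activity class and let $B=\alpha(-_AM)$ be its common image basis under the canonical active bijection; by Theorem \ref{th:alpha} this assignment is a well-defined bijection between activity classes and bases, and it satisfies $\Int_M(B)=O^*(-_AM)$ and $\Ext_M(B)=O(-_AM)$. By Corollary \ref{cor:enum-classes} the class contains a unique representative $-_AM$ that is active-fixed and dual-active-fixed, i.e. $A\cap\bigl(O^*(-_AM)\cup O(-_AM)\bigr)=\emptyset$, and I would use this $A$ as the base point for the boolean coordinates of Section \ref{sec:partitions}.

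The key computation is to track the four parameters as $-_{A'}M$ ranges over the class. Writing $A'=A\ \triangle\ \bigl(\biguplus_{a\in P\cup Q}A_a\bigr)$ with $P\subseteq O^*(-_AM)$ and $Q\subseteq O(-_AM)$ (the lattice parametrization of the class), and using that the active partition and the activities are constant on the class (Proposition \ref{prop:act-classes}), so that the active and dual-active elements are exactly the minima of the disjoint parts $A_a$ and each part contains precisely one of them, I would check that $A'$ meets $O^*(-_AM)\cup O(-_AM)$ exactly in $P\cup Q$, because $A$ meets none of these elements. This yields $A'\cap O^*(-_{A'}M)=P$ and $A'\cap O(-_{A'}M)=Q$, so by Definition \ref{def:act-bij-ext} we get $\alpha_M(A')=B\setminus P\cup Q=B\ \triangle\ (P\cup Q)$, which is exactly the element of the interval $[B\setminus\Int(B),B\cup\Ext(B)]$ indexed by $(P,Q)$. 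Hence, in these coordinates, $\alpha_M$ is the identity, so it restricts to a boolean lattice isomorphism of the class onto the interval of $B$.

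Assembling the classes gives the first item: since the activity classes partition the reorientations and the intervals partition $2^E$ (Section \ref{sec:partitions}), and the class-to-basis correspondence is already a bijection (Theorem \ref{th:alpha}), the union of these isomorphisms is a global bijection $\alpha_M\colon 2^E\to 2^E$ carrying activity classes onto basis intervals. For the second item, with $\alpha_M(A)=B\setminus P\cup Q$, I would read the parameters directly from Definitions \ref{def:gene-act-base} and \ref{def:gene-act-ori}: since $\alpha_M(A)$ lies in the interval of $B$, one has $P_M(\alpha_M(A))=\Int_M(B)\setminus\alpha_M(A)=P=A\cap O^*(-_AM)=\bar\Theta^*_M(A)$ and $\Int_M(\alpha_M(A))=\Int_M(B)\cap\alpha_M(A)=\Int_M(B)\setminus P=O^*(-_AM)\setminus A=\Theta^*_M(A)$, the $\Ext$/$Q$ and $\Theta$/$\bar\Theta$ identities following dually. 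The third item is immediate: $\alpha_M(A)=B=\alpha(-_AM)$ iff $P=Q=\emptyset$ iff $A$ is disjoint from both $O^*(-_AM)$ and $O(-_AM)$, i.e. $-_AM$ is active-fixed and dual-active-fixed; the remaining entries of Table \ref{table:thm-refined} then follow by imposing the corresponding vanishings of the four parameters, mirroring the specializations of Theorems \ref{th:Tutte-4-variables} and \ref{th:expansion-reorientations} on the enumerative side.

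The main obstacle I anticipate is precisely the coordinate-matching step of the second paragraph: one must keep the two boolean lattices indexed consistently — the activity class by subsets of the parts $A_a$ of the active partition, the interval by subsets of the singletons $\{a\}$ — and verify that choosing the active-fixed, dual-active-fixed representative is exactly what makes the identification coordinate-preserving. Once this is set up carefully, global bijectivity, the parameter identities, and the table are all formal consequences of Theorem \ref{th:alpha} together with Definitions \ref{def:gene-act-base}, \ref{def:gene-act-ori} and \ref{def:act-bij-ext}.
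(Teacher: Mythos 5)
Your argument for the first two items follows the paper's own proof essentially verbatim in structure: you use the active-fixed and dual-active-fixed representative of each activity class as a base point (the paper's $A_B$), parametrize the class by pairs $(P,Q)$ via the parts $A_a$ of the active partition, compute $\alpha_M(A')=B\setminus P\cup Q$ from Definition \ref{def:act-bij-ext} using Proposition \ref{prop:act-classes} and Theorem \ref{th:alpha}, and then read off the four parameter identities from Definitions \ref{def:gene-act-base} and \ref{def:gene-act-ori}. That part is correct and is the same route as the paper.

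The gap is in your last item. You claim the remaining rows of Table \ref{table:thm-refined} "follow by imposing the corresponding vanishings of the four parameters", but this is not true for the rows enumerated by $t(M;2,1)$ and $t(M;1,2)$. Setting $Q=\emptyset$ (active-fixed reorientations) only shows that $\alpha_M$ restricts to a bijection onto $\biguplus_{B}\,[B\setminus\Int_M(B),\,B]$, the union of the \emph{lower halves} of the Crapo intervals; the table asserts that this union is exactly the set of independent sets, and that identification is a genuinely separate combinatorial statement, not a formal consequence of the parameter identities. Indeed, it is not true "interval by interval": an independent set $A$ lies in some Crapo interval $[B\setminus\Int_M(B),\,B\cup\Ext_M(B)]$, but nothing in the definitions forces $A\subseteq B$, so one cannot conclude pointwise that independents only occupy lower halves. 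The paper isolates this as Lemma \ref{lem:decomp-intervals} and proves it by a counting argument: the union of lower halves is contained in the independents, and both sets have cardinality $t(M;2,1)$ (via the decomposition of bases through cyclic flats and the convolution formula), hence they coincide; dually for supersets of bases and $t(M;1,2)$. Your proof needs this lemma (or an equivalent argument) to justify those two rows; the other rows are indeed immediate, since they involve either full intervals (acyclic, totally cyclic, all reorientations) or single basis endpoints.
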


\begin{table}[h]
\begin{center}
\def\interligne{&&\\[-11pt]}
\parindent=-1.5cm
\begin{tabular}{|l|l|c|}
\hline
\interligne
reorientations & subsets & $t(M;2,2)$\\
\interligne
acyclic reorientations & subsets of internal bases & $t(M;2,0)$\\
&  (or no-broken-circuit subsets)& \\
\interligne
 totally cyclic reorientations & supersets of external bases & $t(M;0,2)$\\
 \interligne
dual-active-fixed acyclic reorientations & internal bases & $t(M;1,0)$\\
 \interligne
\interligne
active-fixed totally cyclic reorientations & external bases & $t(M;0,1)$\\
\interligne
\interligne
active-fixed reorientations & subsets of bases & $t(M;2,1)$\\
& (or independents)&\\
\interligne
dual-active-fixed reorientations & supersets of bases  & $t(M;1,2)$\\
& (or spanning subsets) \eme{??????????}&\\
\interligne
active-fixed and dual-active-fixed reorientations & bases & $t(M;1,1)$\\
\hline
\end{tabular}
\end{center}
\vspace{-5mm}
\caption{Remarkable restrictions of the refined active bijection of   $M$, between particular types of reorientations (first column) and  particular types of edge subsets (second column) enumerated by Tutte polynomial evaluations (third column). See Theorem \ref{th:ext-act-bij}.
\EMEvder{je n'ai pas mis les bijectins avec min des itnervalles, fait dans ABG2, pas ici, a ajouter?}%
}
\label{table:thm-refined}
\end{table}

\eme{POUR PLUS TARD : voir si \min subsetes of itnernal sp. tree intervals peuvent etre decrits combinatoriement autrement ! probable vu que P ne depend pas de arbre...}%


\begin{proof}
The first point comes directly from Definition \ref{def:act-bij-ext} and
the above discussion.
The second point also easily comes from this discussion. 
Let us precisely check the equalities of parameters in the second point anyway.
In order to simplify notations, we omit subscripts $M$ of activity parameters.
Let $A_B$ be the reorientation of $\M$ whose image under $\alpha_\M$ is the base $B$. 
Let $E=\uplus_{a\in O(-_{A_B}\M)\cup O^*(-_{A_B}\M)} A_a$, with $a=\min(A_a)$, be the active partition associated with $B$ or $-_{A_B}\M$.
Let $A$ be a subset in the associated activity class, we have
$A=A_B\triangle\bigl(\cup_{a\in P\cup Q}A_a\bigr)$ for some $P\subseteq \Int(B)=O^*(-_{A_B}\M)=O^*(-_A\M)$ and $Q\subseteq \Ext(B)=O(-_{A_B}\M)=O(-_A\M)$ with $P\cap A_B=\emptyset$ and $Q\cap A_B= \emptyset$.
By Definition \ref{def:act-bij-ext}, we have $\alpha_\M(A)=B\setminus P\cup Q$ .
%

By Definition \ref{def:gene-act-base}, we have
$\Int(\alpha_\M(A))=\Int(B)\cap \alpha_\M(A)$.
We have $\Int(B)\cap \alpha_\M(A)=\Int(B)\cap(B\s P\cup Q)=\Int(B)\s P$.
By Theorem \ref{th:alpha}, we have $\Int(B)\s P=O^*(-_A\M)\s P$.
By properties of $P$, we have
$O^*(-_A\M)\s P=O^*(-_A\M)\s \bigl(A_B\triangle (\cup_{a\in P\cup Q}A_a)\bigr)=O^*(-_A\M)\s A$.
By Definition \ref{def:gene-act-ori}, we have
$O^*(-_A\M)\s A=\Theta^*(A)$.
So finally $\Int_M(\alpha_\M(A))=\Theta^*(A)$.

On one hand, by Definition \ref{def:gene-act-base}, we have $\Int(\alpha_\M(A))\cup P(\alpha_\M(A))=\Int(B)$.
On the other hand, by Definition \ref{def:gene-act-ori}, we have $\Theta^*(A)\cup \bar\Theta^*(A)=O^*(-_A\M)$. By Theorem \ref{th:alpha}, we have $\Int(B)=O^*(-_A\M)$, so, by the above result, we get $P(\alpha_\M(A))=\bar\Theta^*(A)$.

Similarly, by Definition \ref{def:gene-act-base}, we have
$\Ext(\alpha_\M(A))=\Ext(B)\s \alpha_\M(A)$.
We have $\Ext(B)\s \alpha_\M(A)=\Ext(B)\s(B\s P\cup Q)=\Ext(B)\s Q$.
By Theorem \ref{th:alpha}, we have $\Ext(B)\s Q=O(-_A\M)\s Q$.
As above, 
by properties of $Q$, we have 
$O(-_A\M)\s Q=O(-_A\M)\s \bigl(A_B\triangle (\cup_{a\in P\cup Q}A_a)\bigr)=O(-_A\M)\s A$.
As above, by Definition \ref{def:gene-act-ori}, we have
$O(-_A\M)\s A=\Theta(A)$.
So finally $\Ext(\alpha_\M(A))=\Theta(A)$.
And, as above, we deduce that $Q(\alpha_\M(A))=\bar\Theta(A)$.


Now, let us consider the list of bijections of the third point.
They are all obtained as restrictions of $\alpha_\M$.
Observe that a reorientation is active-fixed, resp. dual-active-fixed, if it is obtained by $Q=\emptyset$, resp. $P=\emptyset$.
Therefore, all these bijections are obvious by the definitions, except the two ones involving $t(M;1,2)$ and $t(M;2,1)$. 
For the first one, resp. second one, of these two, we can use that subsets, resp. supersets, of bases are exactly the subsets of type $B\s P$, resp. $B\cup Q$, for some base $B$ and $P\subseteq \Int(B)$, resp. $Q\subseteq \Ext(B)$.
This result is stated separately in Lemma \ref{lem:decomp-intervals} below.
%
%
%
%
\end{proof}

\begin{lemma}
\label{lem:decomp-intervals}
Let $M$ be an ordered matroid. 
The set of subsets of bases of $M$ (i.e. independents) is the union of intervals $[B\s \Int_M(B),B]$ over all bases $B$ of $M$.
The set of supersets of bases of $M$ (i.e. spanning subsets) is the union of intervals $[B, B\cup \Ext_M(B)]$ over all bases $B$ of $M$.
\end{lemma}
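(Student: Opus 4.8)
The plan is to deduce both assertions from Crapo's partition $2^E=\biguplus_{B}[B\s\Int(B),B\cup\Ext(B)]$ recalled just above, combined with the rank interpretation of the parameters noted right after Definition \ref{def:gene-act-base}, namely $|P_M(A)|=r(M)-r_M(A)$ and $|Q_M(A)|=|A|-r_M(A)$. The point is that independence and spanning are exactly the vanishing of $Q_M(A)$ and of $P_M(A)$, and that these two conditions cut the interval associated with a basis $B$ down to $[B\s\Int(B),B]$ and $[B,B\cup\Ext(B)]$ respectively.

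First I would treat the independents. The inclusion $\supseteq$ is immediate: any $A\in[B\s\Int_M(B),B]$ satisfies $A\subseteq B$, and a subset of a basis is independent. For the converse, let $A$ be independent. By the partition, $A$ lies in a unique interval $[B\s\Int_M(B),B\cup\Ext_M(B)]$, and $B$ is the basis naming it (so that $P_M,Q_M$ are computed relative to this $B$). Since $A$ is independent, $r_M(A)=|A|$, whence $|Q_M(A)|=|A|-r_M(A)=0$, i.e.\ $Q_M(A)=\Ext_M(B)\cap A=\emptyset$. As $A\subseteq B\cup\Ext_M(B)$ and $B\cap\Ext_M(B)=\emptyset$, this forces $A\subseteq B$; together with the defining inclusion $B\s\Int_M(B)\subseteq A$ it gives $A\in[B\s\Int_M(B),B]$.

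The spanning case is dual. Again $\supseteq$ is clear, since $A\in[B,B\cup\Ext_M(B)]$ contains the basis $B$ and is therefore spanning. For $\subseteq$, let $A$ be spanning and let $B$ be the basis whose interval contains $A$. Then $r_M(A)=r(M)$, so $|P_M(A)|=r(M)-r_M(A)=0$, i.e.\ $P_M(A)=\Int_M(B)\s A=\emptyset$, that is $\Int_M(B)\subseteq A$. Combined with the defining inclusion $B\s\Int_M(B)\subseteq A$ this yields $B\subseteq A$, and with $A\subseteq B\cup\Ext_M(B)$ we conclude $A\in[B,B\cup\Ext_M(B)]$.

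There is no genuine difficulty here; the construction is routine once the rank formulas are granted. The only points requiring a little care are to invoke the uniqueness in Crapo's partition, so that the basis $B$ attached to a given $A$ is unambiguous and $P_M(A),Q_M(A)$ are well defined, and to use the elementary facts $\Int_M(B)\subseteq B$ and $B\cap\Ext_M(B)=\emptyset$ when reducing the membership conditions to the two smaller intervals.
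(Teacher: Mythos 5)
Your proof is correct, but it takes a genuinely different route from the paper's. You argue pointwise: an independent (resp.\ spanning) set $A$ lies in a unique Crapo interval $[B\setminus\Int_M(B),B\cup\Ext_M(B)]$, and the rank identities $|Q_M(A)|=|A|-r_M(A)$ and $|P_M(A)|=r(M)-r_M(A)$ recalled after Definition \ref{def:gene-act-base} force $Q_M(A)=\emptyset$ (resp.\ $P_M(A)=\emptyset$), hence $A\subseteq B$ (resp.\ $B\subseteq A$), which pins $A$ into the desired sub-interval. The paper instead argues enumeratively: it decomposes each basis as $B_\iota\uplus B_\epsilon$ over a cyclic flat $F$ (citing \cite{AB2-a}), so that $\biguplus_B[B\setminus\Int_M(B),B]$ has cardinality $\sum_F t(M/F;2,0)\,t(M(F);0,1)$, identifies this with $t(M;2,1)$ by the convolution formula, and concludes from the containment of this union in the set of independents together with equality of finite cardinalities; the spanning case is dual. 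Your approach is more elementary and more informative (it exhibits, for each independent or spanning set, exactly which basis interval it falls into, with no counting), and it avoids both the cyclic-flat decomposition and the convolution formula; what the paper's route buys is that it runs entirely on the machinery central to the series and re-derives the count $t(M;2,1)$ in passing. The one dependency worth flagging in your argument is the pair of rank identities, which the paper states without proof (they are classical facts from the generalized-activities literature \cite{GoTr90,LV13}); if you wanted to bypass them, note that for $e\in\Ext_M(B)$ the fundamental circuit $C(B;e)$ is disjoint from $\Int_M(B)$ (if $b\in C(B;e)\cap\Int_M(B)$, then $e\in C^*(B;b)$ gives $b\le e$ while $b\in C(B;e)$ gives $e\le b$, so $b=e$, absurd), so an independent $A$ in the interval of $B$ containing some $e\in\Ext_M(B)$ would contain the whole circuit $C(B;e)$, a contradiction.
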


\EMEvder{preuve ci-dessous en citan AB2-a au lieu de EtLV98 et KRS99, ok ? ou bien citer encore ces papiers ? (mais pas Lass allemand)}

\begin{proof}
It is known that
bases $B$ of $M$ are exactly subsets of the form $B_\io\uplus B_\ep$
where $B_\io$ is an internal base of $M/F$, 
$B_\ep$ is an external base of $M(F)$, 
and $F$ is a cyclic flat of $M$
(see details and references in \cite[Corollary \ref{a-th:EtLV98}]{AB2-a}). 
Moreover $\Int(B)=\Int_{M/F}(B_\io)$ and 
$\Ext(B)=\Ext_{M(F)}(B_\ep)$ (for short, we omit these subscripts below).

We have $[B\s \Int(B),B\cup \Ext(B)]=[(B_\io\uplus B_\ep)\s \Int(B_\io),(B_\io\uplus B_\ep)\cup \Ext(B_\ep)]$.
Using the classical partition of $2^E$ into basis intervals recalled 
Section \ref{sec:partitions}
we have:
$$2^E=\biguplus_{B\text{ base}}[B\s \Int(B),B\cup \Ext(B)]
=\biguplus_{F,\ B_\io,\ B_\ep\text{ as above}}
[B_\io\s \Int(B_\io),B_\io]\times [B_\ep, B_\ep\cup \Ext(B_\ep)]$$
(where $\times$ yields all unions of a subset of the first set and a subset of the second set).
So we have $$\biguplus_{B\text{ base}}[B\s \Int(B),B]
=\biguplus_{F,\ B_\io,\ B_\ep\text{ as above}}
[B_\io\s \Int(B_\io),B_\io]\times [B_\ep]$$
The size of the second set of the equality equals 
$\sum_{F}t(M/F;2,0)t(M(F);0,1)$ by classical evaluations of the Tutte polynomial.
And this number is known to be equal to $t(M;2,1)$ 
(convolution formula for the Tutte polynomial, see details and references in \cite[Corollary \ref{a-cor:convolution}]{AB2-a}), which equals the number of subsets of bases (as well known).
The first set of the equality is included in the set of subsets of bases, and it has the same size,
hence it equals the set of subsets of bases.
Dually, we get the result involving supersets of bases, whose number equals $t(M;2,1)$.
\eme{a verifier ! pas si evident...}%
%
%
%
%
%
%
%
%
\end{proof}
%
%
%
%

%

\eme{rk suivante inutile ? (verifiable dans preuve)
The reader might be surprised by the symmetry of the correspondence between parameters in Theorem \ref{th:ext-act-bij}, compared with the non-symmetry of the definitions:
$\Int_{M}(A) = \Int_{M}(B)\cap A$ and $\Ext_M(A) = \Ext_M(B)\s A$ on one hand, 
and $\Theta^*_\M(A)=O^*(-_A\M)\s A$ and $\Theta_\M(A)=O(-_A\M)\s A$ on the other hand.
The reason is that we have chosen to associate a base with the active-fixed and dual-active-fixed reorientation in its associated activity class, so in this case: an internally active element belongs to the base and is not reoriented on one hand, and an externally active element does not belong to the base and is also not reoriented on the other hand.}%

%
%


\eme{th dessus provue sur feuilles volantes}%

\eme{peut etr alleger notations dans theoreme en definissant $B'$ et $A'$ comme dans intro de section}%

\eme{denote $\alpha_\M$ or denote $\bar\alpha_\M$}%

\eme{PEUT ETRE A DETAILLER AILLEURS dans AB2 :
For instance,
exchanging the correspondence between $\theta, \bar\theta$ and $nl, \ep$
is obtained by setting $X=\brown{???'}$.}%

\eme{ATTENTION bien verifier tout ca, surtout derniere phrase, a ete vite fait !}%


\eme{DESSOUS EN COMMENTAIRE premiere versio plusn litteraire de refined bijection}%

Now, let us give two results for building the inverse of the refined active bijection, from subsets to reorientations. They are directy obtained from the inverse constructions of the canonical active bijection.
The first specifies Proposition \ref{prop:preimage-basis}.
The second is an immediate adaptation of the single pass algorithm of Theorem \ref{th:basori}.

%

%

\EMEvder{prop ci-dessous a ete ajoutee vite fait dans ABG2 avant soumission, et transposee ici vite fait, A VERIFIER !!! piegeuse en notations...}%

\begin{prop}[refined active bijection from subsets]
\label{prop:preimage-subset}
Let $M$ be an ordered oriented matroid on $E$.
Let 
$A$ be a subset 
in the interval of a basis of $M$ with active filtration  $\emptyset= F'_\ep\subset...\subset F'_0=F_c=F_0\subset...\subset F_\io= E$.
Then,
%
$$
\displaystyle\alpha_M^{-1}(A)\ =\ 
\biguplus_{1\leq k\leq\io}
\alpha_{M(F_k)/F_{k-1}}^{-1}(A\cap (F_{k}\s F_{k-1}))\
\uplus \ 
\biguplus_{1\leq k\leq\ep}
\alpha_{M(F'_{k-1})/F'_{k}}^{-1}(A\cap (F'_{k-1}\s F_{k})).
   $$
\end{prop}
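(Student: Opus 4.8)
The plan is to reduce the statement to the decomposition of the canonical active bijection (Proposition \ref{prop:preimage-basis}) and of the active mapping along the active filtration (Definition \ref{def:alpha-seq-decomp}), and then to check that the reference-dependent part of Definition \ref{def:act-bij-ext} restricts compatibly to each active minor. Since $\alpha_M$ is a bijection $2^E\to 2^E$ (Theorem \ref{th:ext-act-bij}), $\alpha_M^{-1}(A)$ is a single subset $A'$, and the right-hand side, being a disjoint union of single subsets carried by the pairwise disjoint ground sets of the minors, is again a single subset; so the claim is an equality of subsets of $E$.

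First I would fix the basis $B$ whose interval contains $A$ (unique, as the intervals of bases partition $2^E$), with the given active filtration $\emptyset=F'_\ep\subset\dots\subset F_\io=E$. By Theorem \ref{th:ext-act-bij}, $\alpha_M$ maps the activity class of the active-fixed and dual-active-fixed representative $-_{A_0}M$ with $\alpha(-_{A_0}M)=B$ bijectively onto the interval of $B$. Writing $A'=\alpha_M^{-1}(A)$, the reorientation $-_{A'}M$ lies in this activity class, so $\alpha(-_{A'}M)=B$ and, by Proposition \ref{prop:act-classes} together with Theorem \ref{th:alpha}, $-_{A'}M$ has the same active filtration as $B$. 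By Definition \ref{def:alpha-seq-decomp} (see also Theorem \ref{thm:unique-dec-seq} and Theorem \ref{th:dec-ori}), each primal minor $M_k=(-_{A'}M)(F_k)/F_{k-1}$ is bounded with respect to $a_k=\min(F_k\s F_{k-1})$, each dual minor $M'_k=(-_{A'}M)(F'_{k-1})/F'_k$ is dual-bounded with respect to $a'_k$, and $B$ restricts on these to the uniactive bases $B_k=B\cap(F_k\s F_{k-1})$ and $B'_k=B\cap(F'_{k-1}\s F'_k)$. Setting $A'_k=A'\cap(F_k\s F_{k-1})$, one has $-_{A'_k}M_k=(-_{A'}M)(F_k)/F_{k-1}$.

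The heart of the argument is to show $\alpha_M(A')\cap(F_k\s F_{k-1})=\alpha_{M_k}(A'_k)$, and symmetrically for the dual minors. Unraveling Definition \ref{def:act-bij-ext} gives $A=\alpha_M(A')=B\s P\cup Q$ with $P=A'\cap\Int(B)=A'\cap O^*(-_{A'}M)$ and $Q=A'\cap\Ext(B)=A'\cap O(-_{A'}M)$. By Theorem \ref{th:alpha} the set $\Int(B)=\{a_1,\dots,a_\io\}$ contributes exactly $a_k$ to each primal minor $M_k$ and nothing to the dual minors, while $\Ext(B)=\{a'_1,\dots,a'_\ep\}$ contributes $a'_k$ to $M'_k$ and nothing to the primal minors. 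Intersecting $B\s P\cup Q$ with $F_k\s F_{k-1}$ therefore yields $B_k\s(A'_k\cap\{a_k\})$, which is exactly $\alpha_{M_k}(A'_k)$: since $M_k$ is bounded, Definition \ref{def:act-bij-ext} applied in $M_k$ reads $\alpha_{M_k}(A'_k)=\alpha(-_{A'_k}M_k)\s(A'_k\cap O^*(-_{A'_k}M_k))$ with $O^*(-_{A'_k}M_k)=\{a_k\}$, $O(-_{A'_k}M_k)=\emptyset$, and $\alpha(-_{A'_k}M_k)=B_k$. Hence $A'_k=\alpha_{M_k}^{-1}(A\cap(F_k\s F_{k-1}))$, and dually $A'\cap(F'_{k-1}\s F'_k)=\alpha_{M'_k}^{-1}(A\cap(F'_{k-1}\s F'_k))$.

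Finally, since the active partition makes the ground sets of the minors a disjoint partition of $E$, taking the disjoint union of these per-minor preimages recovers $A'=\alpha_M^{-1}(A)$, which is precisely the asserted identity. The only genuine obstacle is the bookkeeping of the previous paragraph, namely confirming that the reference-dependent sign-breaking of $M$ localizes minor by minor; but this follows formally from the decomposition of $\alpha$ along the active filtration (Definition \ref{def:alpha-seq-decomp} and Observation \ref{obs:induced-dec-seq-act-bij}) and from the fact that each active minor carries exactly one active or dual-active element (Theorem \ref{th:alpha}). The argument is the exact refined counterpart of Proposition \ref{prop:preimage-basis}, with the Cartesian combination $\bigtimes$ there replaced here by the disjoint union $\uplus$, reflecting that $\alpha_M$, unlike the canonical mapping, is injective on each activity class.
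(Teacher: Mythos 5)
Your proof is correct and takes essentially the same approach as the paper's: both localize the refined active bijection to the active minors through the decomposition along the active filtration, the paper assembling the per-minor preimages and verifying that their image under $\alpha_M$ is $A$, while you start from $A'=\alpha_M^{-1}(A)$ and decompose it minor by minor, which is the same argument read in the opposite direction and is legitimate since all maps involved are bijections. The only blemish is notational: you first set $M_k=(-_{A'}M)(F_k)/F_{k-1}$ but then treat $M_k$ as the minor $M(F_k)/F_{k-1}$ of the reference oriented matroid (so that $-_{A'_k}M_k$ is the bounded reorientation with $\alpha(-_{A'_k}M_k)=B_k$); the intended meaning is clear and the computation $\alpha_{M_k}(A'_k)=B_k\setminus(A'_k\cap\{a_k\})=A\cap(F_k\setminus F_{k-1})$ is exactly right.
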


\emevder{A BIEN REVERIFIER !!!! (je l'ai ecrit tres vite fait, ainsi que la preuve)}

\emevder{on pourrait aussi donner une formula directe pour refined de ce type du coup je pense !!! en terms d'orietnations et d'ative filtation d'orietnation...}

\begin{proof}
This  is a straightforward reformulation, in terms of Proposition \ref{prop:preimage-basis}, of the construction of the refined active bijection discussed above. Let us give details anyway.
Consider any of the  active minors $H$, and the uniactive basis $B_H$ induced in the minor $H$ by the basis $B$ associated with $A$. 
The inverse image of $B_H$ under $\alpha$ in $H$  consists of two opposite reorientations of $H$.
Now consider the refined active bijection of $H$, and denote $a$ the smallest edge of $H$.
One of the two above reorientations is associated to $B_H$ (the one for which $a$ has not been reoriented, that is, $a$ is active-fixed or dual-active-fixed w.r.t. $M$),  and the other to $B_H\triangle \{a\}$.
Applying this to each minor $H$ and to any subset $A$ in the same interval, we always obtain a reorientation of $M$ whose image under  $\alpha_M$ is $A$.\emevder{reorientation mal dit, c'est un subset}
\end{proof}


\begin{thm}[completing Theorem \ref{th:basori}]
\label{th:basori-refined}
Let $M$ be an oriented matroid on a linearly ordered set of elements $E=e_1<\ldots<e_n$.
Let $X$ be a subset of $E$. 
We denote $Q=Q(X)$ and $P=P(X)$ (Definition \ref{def:gene-act-base}).
We denote $B$ the basis of $M$ defined by
$B=X\setminus Q\cup P$ 
(equivalently: $B$ is the basis such that $X$ belongs to the interval of $B$, that is: $X=B\setminus P\cup Q$ with $B$ a basis, $P\subseteq \Int(B)$ and $Q\subseteq \Ext(B)$).

The preimage of the subset $X$ under $\alpha_M$ is built by 
applying the algorithm of Theorem \ref{th:basori}, as for building the preimage of $B$ under $\alpha$, with the two following changes.



\noindent\begin{tabular}{ll}
In the case where $e_k\in B$ and $e_k\in \Int(B)$, 
& replace
{\algofont
``reorient $e_k$ or not, arbitrarily''
}
\\
&
with:
{\algofont
``reorient $e_k$ if and only if $e_k\in P$.''
}\\
In the case where  $e_k\not\in B$ and $e_k\in \Ext(B)$, 
& replace
{\algofont
``reorient $e_k$ or not, arbitrarily''
}
\\
&
with:
{\algofont
``reorient $e_k$ if and only if $e_k\in Q$.''
}
\\
\end{tabular}
\end{thm}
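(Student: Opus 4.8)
The plan is to build the preimage of $X$ under $\alpha_M$ by combining the inverse construction of the canonical active bijection (Theorem~\ref{th:basori}) with the symmetry-breaking choice that defines the refined active bijection (Definition~\ref{def:act-bij-ext}). First I would recall the setup: given $X$, the parameters $P=P_M(X)\subseteq \Int(B)$ and $Q=Q_M(X)\subseteq \Ext(B)$ are intrinsically defined from $X$ alone (as emphasized after Definition~\ref{def:gene-act-base}), and they identify uniquely the basis $B=X\setminus Q\cup P$ whose interval contains $X$, as well as the position $X=B\setminus P\cup Q$ of $X$ inside that interval.

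The key observation is that the algorithm of Theorem~\ref{th:basori} computes, by a single pass over $E$, the active partition of $B$ together with \emph{one} reorientation in $\alpha^{-1}(B)$, the arbitrariness in the output being concentrated exactly in the steps where $e_k$ is internally active (case $e_k\in B$) or externally active (case $e_k\notin B$); these are precisely the elements of $\Int(B)\cup\Ext(B)$. By Theorem~\ref{th:alpha} these coincide with $O^*(-_A\M)\cup O(-_A\M)$ for any reorientation $-_AM$ in the associated activity class, so each such active element is the smallest element of one part of the active partition, and reorienting it (or not) toggles between the two choices for that part. Thus the free choices in the algorithm index exactly the elements of the activity class, consistently with Proposition~\ref{prop:preimage-basis} and Proposition~\ref{prop:preimage-subset}.

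Next I would invoke Definition~\ref{def:act-bij-ext}: the refined bijection selects, within the activity class of $-_AM$, the particular reorientation determined by $P$ and $Q$. Concretely, writing $\alpha_M(A)=B\setminus P\cup Q$, an internally active element $e_k\in\Int(B)$ must be reoriented precisely when $e_k\in P$ (since $P=A\cap\Int(B)$), and an externally active element $e_k\in\Ext(B)$ must be reoriented precisely when $e_k\in Q$ (since $Q=A\cap\Ext(B)$). This is exactly the content of the two substitutions in the statement: replacing \textsf{``reorient $e_k$ or not, arbitrarily''} by the deterministic rule tied to membership in $P$, respectively $Q$. For all other elements (the internally/externally inactive ones) the algorithm already proceeds deterministically, and the correctness of those steps is unchanged from Theorem~\ref{th:basori}, where it was shown that they reproduce the bounded/dual-bounded reorientation of the relevant active minor associated with the induced uniactive basis (via Propositions~\ref{prop:alpha-10-inverse} and~\ref{prop:alpha-01-inverse}).

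The only real point to verify — and the main (mild) obstacle — is that fixing these active elements according to $P$ and $Q$ indeed singles out the reorientation $-_AM$ with $\alpha_M(A)=X$, rather than some other member of the class. This follows because the refined bijection's normalization pins the active-fixed and dual-active-fixed representative to the basis $B$ itself (the case $P=\emptyset$, $Q=\emptyset$), and each subsequent reorientation of an active element $a=\min(A_a)$ reorients the whole part $A_a$, which corresponds exactly to moving $a$ from $B$ to $X$ or vice versa; this is the boolean lattice isomorphism established in the discussion preceding Definition~\ref{def:act-bij-ext} and in the proof of Theorem~\ref{th:ext-act-bij}. Since the non-active steps of the algorithm are unaffected and the active steps are now made to agree with $P$ and $Q$, the pass produces precisely the reorientation $-_AM$ in $\alpha_M^{-1}(X)$. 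Hence the modified algorithm is correct, which completes the proof.
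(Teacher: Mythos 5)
Your proposal is correct and follows essentially the same route as the paper's own proof: run the single-pass algorithm of Theorem \ref{th:basori} for the basis $B$ (which handles all non-active elements and the activity-class structure), and use the parameter correspondence of Definition \ref{def:act-bij-ext}/Theorem \ref{th:ext-act-bij} (namely $P=A\cap O^*(-_AM)$ and $Q=A\cap O(-_AM)$) to see that the deterministic rule at active and dual-active elements pins down exactly the preimage of $X$ within its activity class. Your extra verification via the boolean lattice isomorphism is a more explicit spelling-out of what the paper leaves implicit, but it is the same argument.
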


\begin{proof}
Let us denote $A=\alpha^{-1}_M(X)$.
The computation of the reorientation class of $A$ is given by the algorithm of Theorem \ref{th:basori} applied to the basis $B$.
Furthermore, by Theorem \ref{th:ext-act-bij}, we have $P(X)=\bar\Theta^*_M(A)=O^*(-_AM)\cap A$ and
$Q(X)=\bar\Theta_M(A)=O(-_AM)\cap A$.
So, for $e_k\in \Int(B)=O^*(-_AM)$, we have $e_k\in A$ if and only if $e_k\in P$,
and for $e_k\in \Ext(B)=O(-_AM)$, we have $e_k\in A$ if and only if $e_k\in Q$.
This is exactly the condition stated in the algorithm.
\end{proof}

Before ending this section, let us mention that a deletion/contraction construction exists for $\alpha_M$, that is also derived directly from that of $\alpha$, see \cite{ABG2,AB4}.
And let us end with a general remark on possible variations in the construction of this section.

\begin{remark}[Variations of the refined active between reorientations and subsets]
\label{rk:refined-bij}
\rm

Let us observe that variants of $\alpha_\M$ can easily be defined, again using a boolean lattice isomorphism at each activity class / basis interval. For instance, in Definition \ref{def:act-bij-ext}, replace $A$ with $X_B\triangle A$ for some $X_B\subseteq E$ that can vary with $B$ (i.e. the boolean lattice isomorphism can change at each considered boolean lattice).
This yields other reorientations-subsets bijections refining the canonical active bijection.
 
By this way, for instance, one can define active-fixed and dual-active-fixed reorientations with respect to two different references reorientations respectively. Also, suitable choices of $X_B$ allow us to exchange the correspondences between the four parameter activities for bases and reorientations (i.e. make $\Int$ correspond to $\bar\Theta^*$ instead of $\Theta^*$,
and/or make $\Ext$ correspond to $\bar\Theta$ instead of $\Theta$). 

Moreover, as reorientation activity classes correspond to basis intervals, one can derive various bijections by composition with a further boolean lattice isomorphism.
For instance, one can define a bijection between dual-active-fixed acyclic reorientations and minimal subsets of internal basis intervals, enumerated by $t(M;1,0)$, or, dually, 
a bijection between active-fixed totally cyclic reorientations and maximal subsets of external basis intervals, enumerated by $t(M;1,0)$, et caetera.

At last, let us recall (see Remark \ref{rk:preserv-act-bij-class}) that a general class of active partition preserving bijections can be obtained by replacing $\alpha$ with any mapping $\psi$ defined for bounded/dual-bounded reorientations and yielding a bijection with uniactive internal/external bases. The same construction as above can be applied to such a mapping $\psi$, yielding a whole class of bijections $\psi_M$ between reorientations and subsets, preserving the four parameter activities for reorientations/subsets.
%
\EMEvder{verifier cette reference pas mise: See more details in \cite[Section 4.4]{ABG2} or \cite{AB4}.}%
\end{remark}

\section{Further examples and illustrations}
\label{sec:example}



%


In this section, we complete the paper with a few more illustrative examples.


%


\EMEvder{dessous en commenatire debut de section "smallest examples" avec isthmus/loops + rank-1 + simple rank-2, mais j'ai arrete car ras le bol et car le rank 2 n'est pas si evident pour non-acy:cliques... par contre j'aurais fait que le cas simple en disant " We leave to the reader th ec ase of non-smple rank-2 orietned matroids as an exercices, it combines the two cases aboves. ou alors mettre que le acyclic rank-2 case mais c'est vraiment trop facile ! bref je laisse tomber... le papier est deja tres long... a confirmer....}

%
%
%
%

\subsection{Example of $K_3$}

\EMEvder{footnote en commentaire ci-dessous sur Tutte et $K_3$, mise dans ABG2, pas ici, a mettre ? bof non}%
%

The canonical and refined bijections are shown in Table \ref{table:tabK3ori} and Figure~\ref{fig:K3}.
The Tutte polynomial of $K_3$ is $$t(K_3;x,y)=x^2+x+y.$$

\def\interligne{&&&\\[-11pt]}
\def\fcyc #1{\fbox{\hbox{#1}}}

\begin{table}[H]
\begin{center}
\begin{tabular}{|c|c|c|c|}
\hline
Active filtrations & Active partitions & Reorientation activity classes & {Bases}   \\
\hline
\interligne
$\fcyc{\O}\subset 1\subset E$ & $1+23$ & $123$, $1\ovl{23}$,  $\ovl{1}23$, $\ovl{123}$ & 12 \\
$\fcyc{\O}\subset E$ & $123$ & $12\ovl{3}$, $\ovl{12}3$ & 13 \\
$\emptyset\subset \fcyc{E}$ & $123$ & $1\ovl{2}3$, $\ovl{1}2\ovl{3}$ & 23 \\
\hline
\end{tabular}
\caption{Table of the canonical active bijection of $K_3$, where reorientations are written with a bar over reoriented edges w.r.t. the reference orientation given in the upper left of Figure \ref{fig:K3}. The cyclic flat of each active filtration  is boxed in the first column.}
\label{table:tabK3ori}
\end{center}
\vspace{-0.5cm}
\end{table}

\begin{figure}[h]
\centering
{
\parindent=-1mm
\scalebox{1}[0.85]
{\includegraphics[width=9cm]{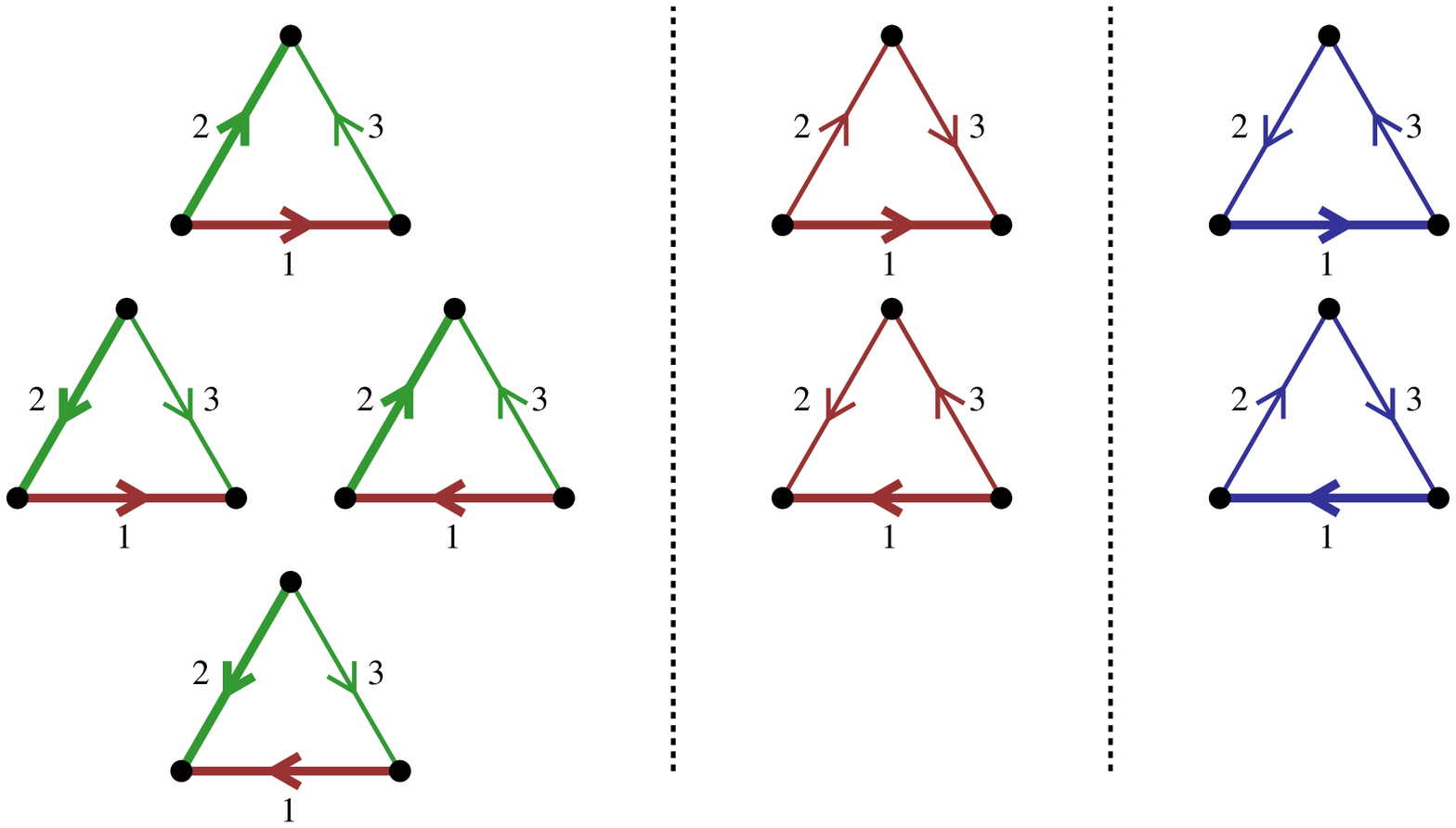}}
}
\hrule
\vspace{-1mm}
\flushleft
\begin{tabular}{l@{\hspace{15mm}}ccc;{1pt/1pt}c;{1pt/1pt}c}
 &  & $+ x^2$ & & $+x$ & $+y$\\
 $T(K_3;x+u,y+v)=$& $+xu$ & & $+ux$ & $ +u$ &$ +v$\\
  & & $+u^2$ & &   & \\
\end{tabular}
\vspace{1mm}
\hrule
\vspace{1mm}
%
\centering
{
\scalebox{1}[0.85]
{\includegraphics[width=9cm]{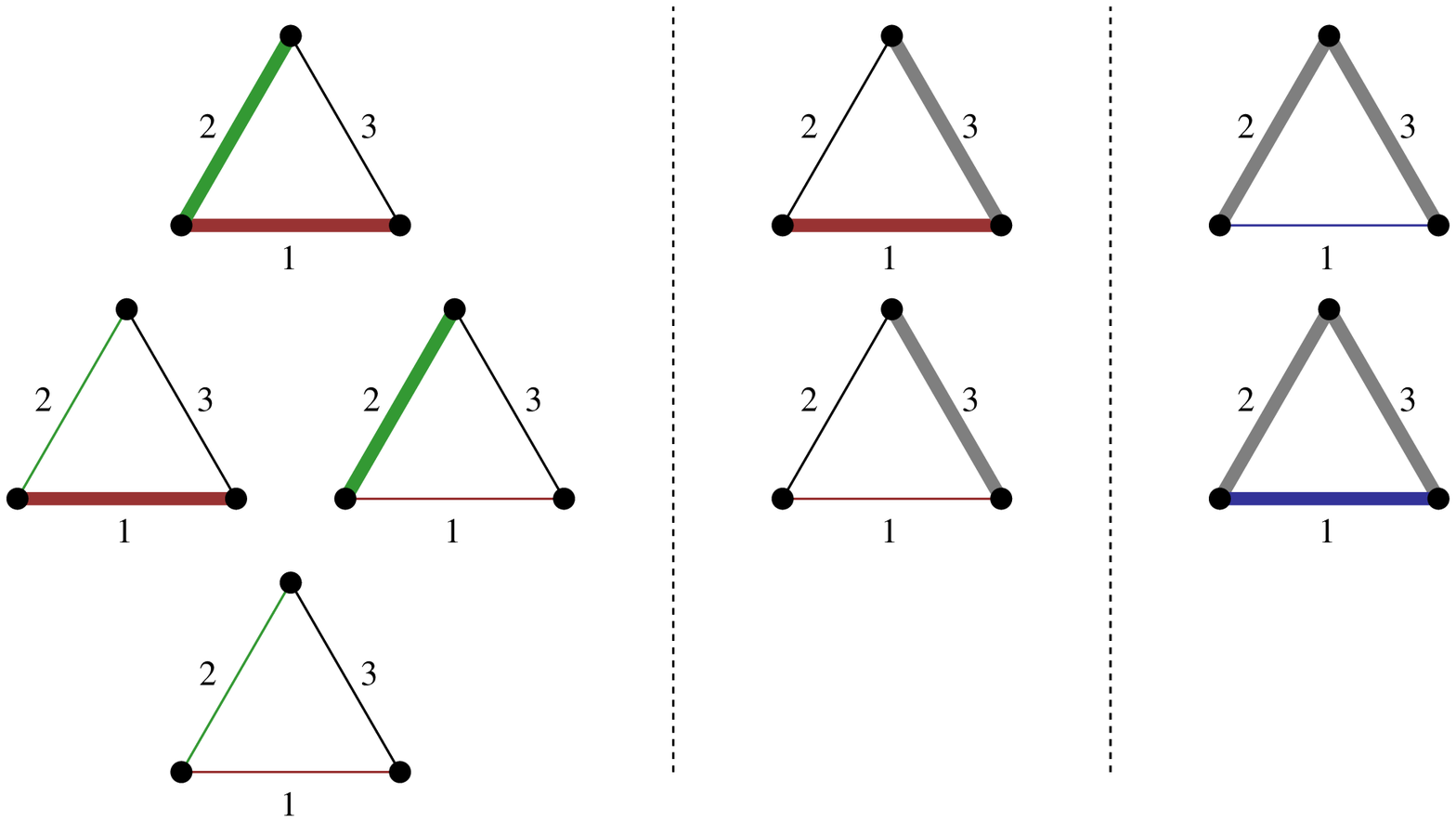}}
}
\caption{The active bijection illustrated on the graph $K_3$. We have $T(K_3;x,y)=x^2+x+y$.
The layout reflects the bijections.
Each monomial corresponds to an activity class of (re)orientations in the top part and to a basis (spanning tree) in the bottom part,
associated by the canonical active bijection.
Each basis yields a boolean lattice of subsets (shown by bold edges). Orientations in the top part and subsets in the bottom part are associated by the refined active bijection (with respect to the 
orientation 
displayed
first
in the top row), consistently with the four variable~formula, in the way shown by the layout.
%
}
\label{fig:K3}
\end{figure}

%
%
%
%


\subsection{Example of $K_4$ exhaustively completed}
\label{subsec:ex-K4}


We consider the graph $K_4$.
Its Tutte polynomial is 
$$t(K_4;x,y)=x^3+3x^2+2x+4xy+2y+3y^2+y^3.$$
We consider it
with the ordering $1<\ldots <6$ as shown in Figure \ref{fig:K4exbase256}, and with the reference orientation shown in Figure \ref{fig:K4act}.
%
Here, we complete this example 
that served as a running example in Figures \ref{fig:K4exbase256},  \ref{fig:K4act}, \ref{fig:K4-dec}, \ref{fig:ex-fob}, 
 \ref{fig:exbasedecomp-136}, \ref{fig:exbasedecomp-126},  \ref{fig:exbasedecomp-146}, \ref{fig:exbasedecomp-256},  \ref{fig:exK4-complete-primal-dual}, \ref{fig:K4-iso},
 and in \cite{AB2-a}.
%
Let us mention that the canonical and refined active bijections on this example are also exhaustively listed in a graph setting in \cite{ABG2}, whereas here we continue to present this list geometrically.

Table \ref{fig:tabK4ori} sums up the canonical active bijection (Theorem \ref{th:alpha}). 
Figures  \ref{fig:fig:K4bij-primal},  \ref{fig:fig:K4bij-dual}, and \ref{fig:fig:K4bij-cyclic-flats} provide details for Figure \ref{fig:exK4-complete-primal-dual}, indicating the graph orientations, and adding the refined active bijection w.r.t. the reference reorientation (extending  Figure \ref{fig:K4-iso} to all reorientation activity classes).
All reorientations are depicted (up to opposite), and all subsets are associated to them (in brackets when they correspond to the opposite reorientation).

\begin{table}[h]
\begin{center}
\begin{tabular}{|c|c|c|c|}
\hline
Active filtration & Active partition & Rerientation activity class & {Basis}   \\
\hline
$\fcyc{\O}\subset 1\subset 123\subset E$ & $1+23+456$ & $123456$, $1\ovl{23}456$, $123\ovl{456}$, $1\ovl{23456}$, ... & 124 \\
$\fcyc{\O}\subset 1\subset E$ & $1+23456$ & $12345\ovl{6}$, $1\ovl{2345}6$, ... & 126 \\
$\fcyc{\O}\subset 145\subset E$ & $145+236$ & $1234\ovl{56}$,$1\ovl{23}4\ovl 56$, ... & 125 \\
$\fcyc{\O}\subset 123\subset E$ & $123+456$ & $12\ovl{3456}$, $12\ovl 3456$, ... & 134 \\
$\fcyc{\O}\subset E$ & $123456$ & $12\ovl 34\ovl{56}$, ... & 135 \\
$\fcyc{\O}\subset E$ & $123456$ & $12\ovl 34\ovl{5}6$,...& 136 \\
$\emptyset\subset \fcyc{123}\subset E$ & $123+456$ & $1\ovl 23456$, $1\ovl 23\ovl{456}$, ...& 234 \\
$\emptyset\subset \fcyc{145}\subset E$ & $145+236$ & $1\ovl{234}56$, $123\ovl 45\ovl 6$, ...& 245 \\
$\emptyset\subset \fcyc{246}\subset E$ & $246+135$ & $12\ovl{34}56$, $1\ovl{23}45\ovl 6$, ...& 146 \\
$\emptyset\subset \fcyc{356}\subset E$ & $356+124$ & $1234\ovl 56$, $12\ovl 345\ovl 6$, ... & 156 \\
$\emptyset\subset \fcyc{E}$ & $123456$ & $1\ovl 23\ovl 456$, ... & 235 \\
$\emptyset\subset \fcyc{E}$ & $123456$ & $1\ovl 23\ovl 45\ovl 6$, ... & 236 \\
$\emptyset\subset 246 \subset \fcyc{E}$ & $135+246$ & $1\ovl 2345\ovl 6$, $123\ovl 456$, ... & 346 \\
$\emptyset\subset 356 \subset \fcyc{E}$ & $124+356$ & $1\ovl{234}5\ovl 6$, $1\ovl 23\ovl{45}6$, ... & 256 \\
$\emptyset\subset 23456\subset \fcyc{E}$ & $1+23456$ & $1\ovl234\ovl{56}$, $12\ovl{34}56$, ... & 345 \\
$\emptyset\subset 356\subset 23456\subset \fcyc{E}$ & $1+24+356$ & $12\ovl{34}5\ovl 6$, $1\ovl{23}45\ovl 6$,
$1\ovl 234\ovl 56$, $123\ovl{45}6$, ...& 456 \\
\hline
\end{tabular}
\caption{Table of the canonical active bijection of $K_4$ (Theorem \ref{th:alpha}), where reorientations are written with a bar over reoriented elements w.r.t. the reference reorientation (the grey region in Figure \ref{fig:fig:K4bij-primal}), and where ``...'' means ``and opposites''. The cyclic-flat of each connected filtration is boxed in the first column. This table can be compared with \cite[Table \ref{a-fig:tabK4}]{AB2-a} for bases.
\EMEvder{comparer avec AB2a et ABG2 --- retrecir ?}%
}
\label{fig:tabK4ori}
\end{center}
\vspace{-0.5cm}
\end{table}

\EMEvder{voir taille des figures, adapter egaliser ou pas ....?}%

\EMEvder{dessous figures enlevees avec petits arrangements pour primal et dual avec uste base et act part}%

\begin{figure}[]
\centering	
\includegraphics[scale=1.4]{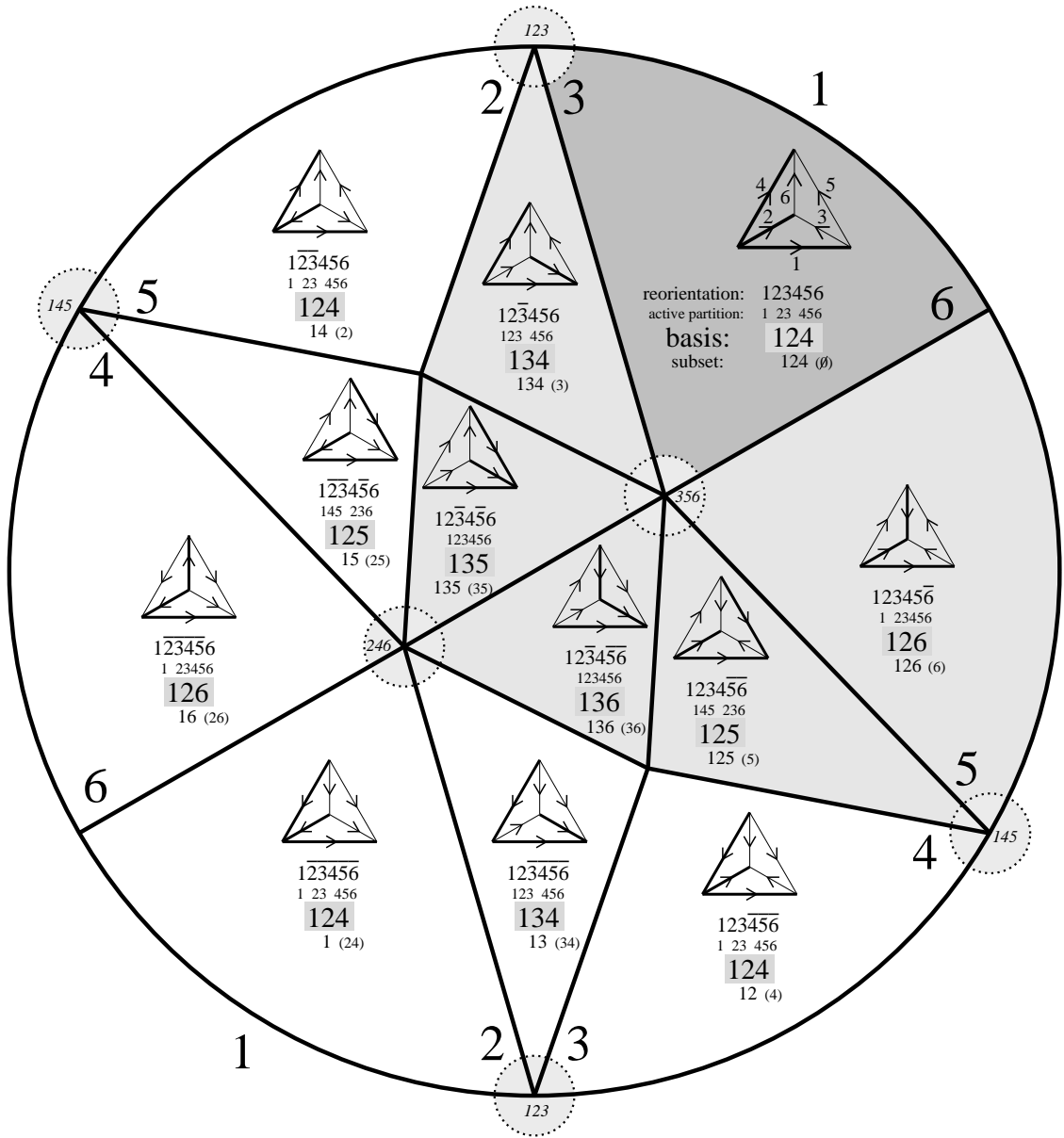}
\caption[]{Details for the primal part of Figure \ref{fig:exK4-complete-primal-dual} (connected filtrations involving cyclic flat $\emptyset$ in Table  \ref{fig:tabK4ori}).  The reference reorientation (signature of the arrangement) is given by the dark grey region. Reorientations corresponding to the regions are written w.r.t. to this reference reorientation, they correspond to the maximal covectors of the oriented matroid.
Corresponding acyclic graph orientations are also drawn in the regions.
The subset associated to the reorientation by the refined active bijection is written below the basis, the subset in brackets is associated to the opposite reorientation  (extending  Figure \ref{fig:K4-iso}). 
The dual-active-fixed representatives of activity classes of regions w.r.t. the reference orientation are shown in light grey.
\EMEvder{est-ce bien l'opposite ???}
}
\label{fig:fig:K4bij-primal}
\end{figure}

\begin{figure}[]
\centering	
\includegraphics[scale=1.2]{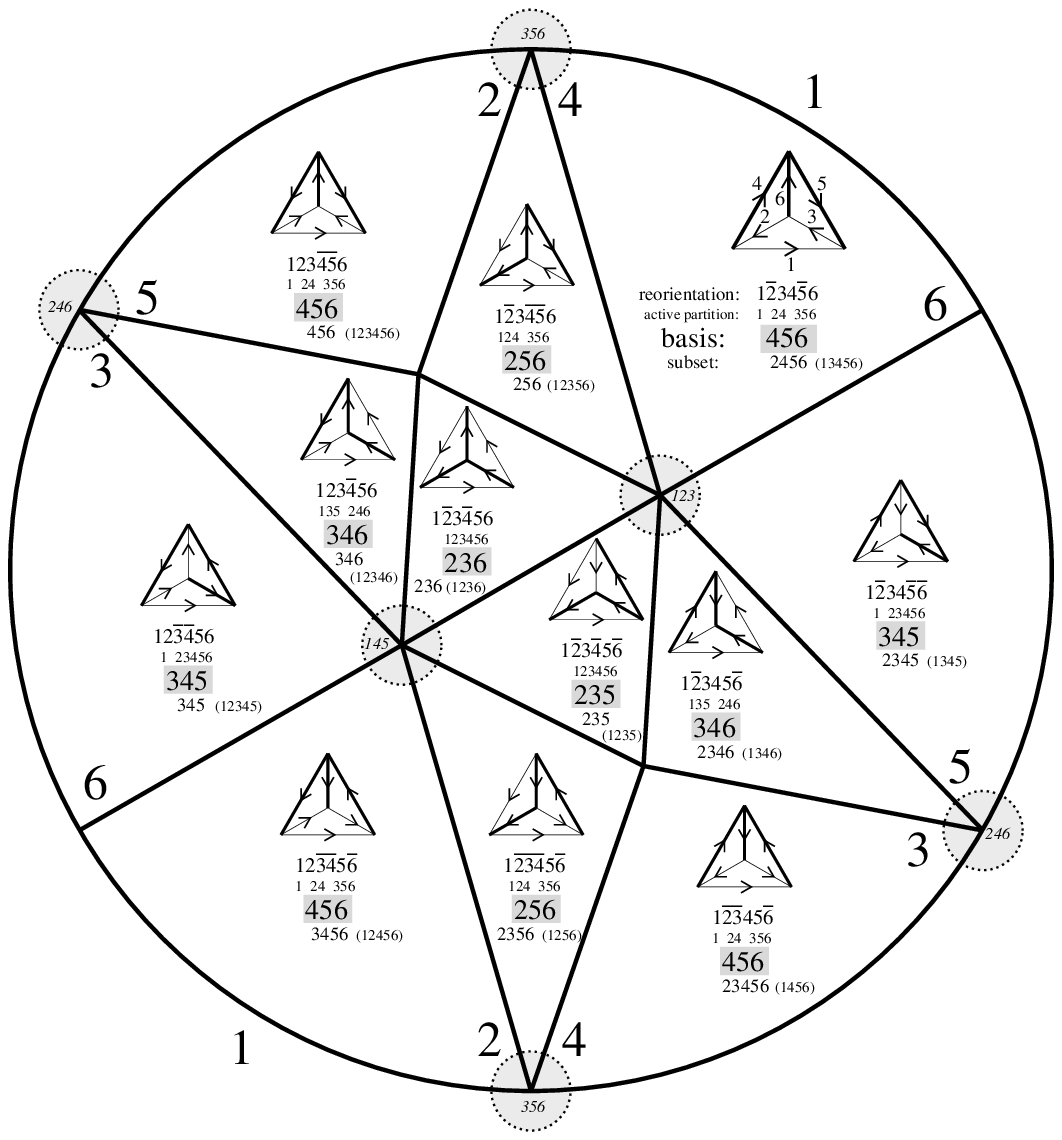}
\caption[]{Details for the dual part of Figure \ref{fig:exK4-complete-primal-dual} (connected filtrations involving cyclic flat $E$ in Table  \ref{fig:tabK4ori}).   The reference reorientation
\EMEvder{attention terme pas employ?, on dit avant reference oriented matroid, pas reference reorientation}%
is given by the dark grey region of Figure \ref{fig:fig:K4bij-primal}. Reorientations corresponding to the regions of the dual arrangement are written w.r.t. to this reference reorientation, they correspond to the maximal vectors of the oriented matroid.
Corresponding totally cyclic (or strongly connected) graph orientations are also drawn in the regions.
Subset associated to reorientations by the refined active bijection are indicated the same way as in
Figure~\ref{fig:fig:K4bij-primal}.
\EMEvder{mettre aussi representatives du dual?}%
}
\label{fig:fig:K4bij-dual}
\end{figure}

\begin{figure}[]
\centering	
\includegraphics[scale=1.1]{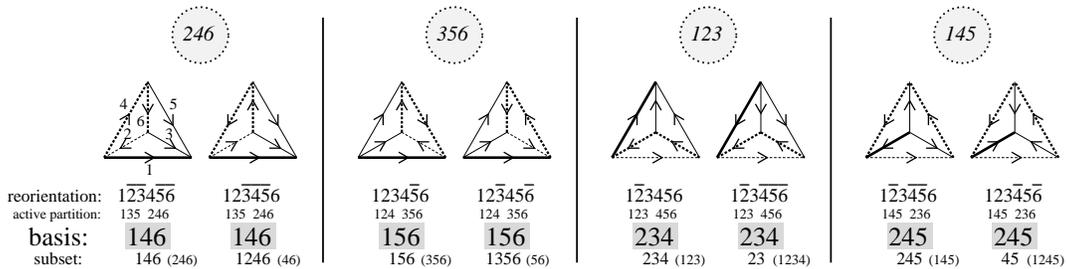}
\caption[]{Details and refined active bijection for the active filtrations involving non-trivial cyclic flats in Figure \ref{fig:exK4-complete-primal-dual} and Table \ref{fig:tabK4ori}, completing Figures \ref{fig:fig:K4bij-primal} and \ref{fig:fig:K4bij-dual}. 
Those cyclic flats are shown in dashed grey circles, and geometrically situated in Figures 
 \ref{fig:fig:K4bij-primal} and \ref{fig:fig:K4bij-dual}, following Figure \ref{fig:exK4-complete-primal-dual}. They appear with dashed edges on the graphs. 
 }
\label{fig:fig:K4bij-cyclic-flats}
\end{figure}

\eme{desous en commaentaire, figure de variante enlevee}%

\subsection{Another similar  example of the canonical active bijection.}
\label{subsec:ex-another-graph}
In Figure \ref{fig:ex-graphe-autre-primal-dual}, we address another example of rank 3 with 6 elements, and we completely show the geometry of the canonical active bijection, involving all bases and all reorientations (up to opposite), in the primal and dual arrangements along with combinations of cyclic flats of the primal and the dual. Its Tutte polynomial is:
$$t(M;x,y)=x^3+2x^2+x+x^2y+3xy+xy^2+y+2y^2+y^3.$$
The ordering of the ground set is the natural ordering.
As for Figure \ref{fig:exK4-complete-primal-dual}, no reference reorientation or signature is specified. We focus on the geometry but this example is graphical and planar again, hence with a graphical dual, as shown on the figure.
The reader interested in the graph setting can easily draw which acyclic and totally cyclic orientations of the graph and its dual correspond to regions of the two arrangements.  The refined active bijection can also be easily deduced, as in Figures \ref{fig:K4-iso},  \ref{fig:fig:K4bij-primal},  \ref{fig:fig:K4bij-dual}, and \ref{fig:fig:K4bij-cyclic-flats}.


\begin{figure}[]
\centering
{\scalebox{2}
{\includegraphics[width=7.5cm]{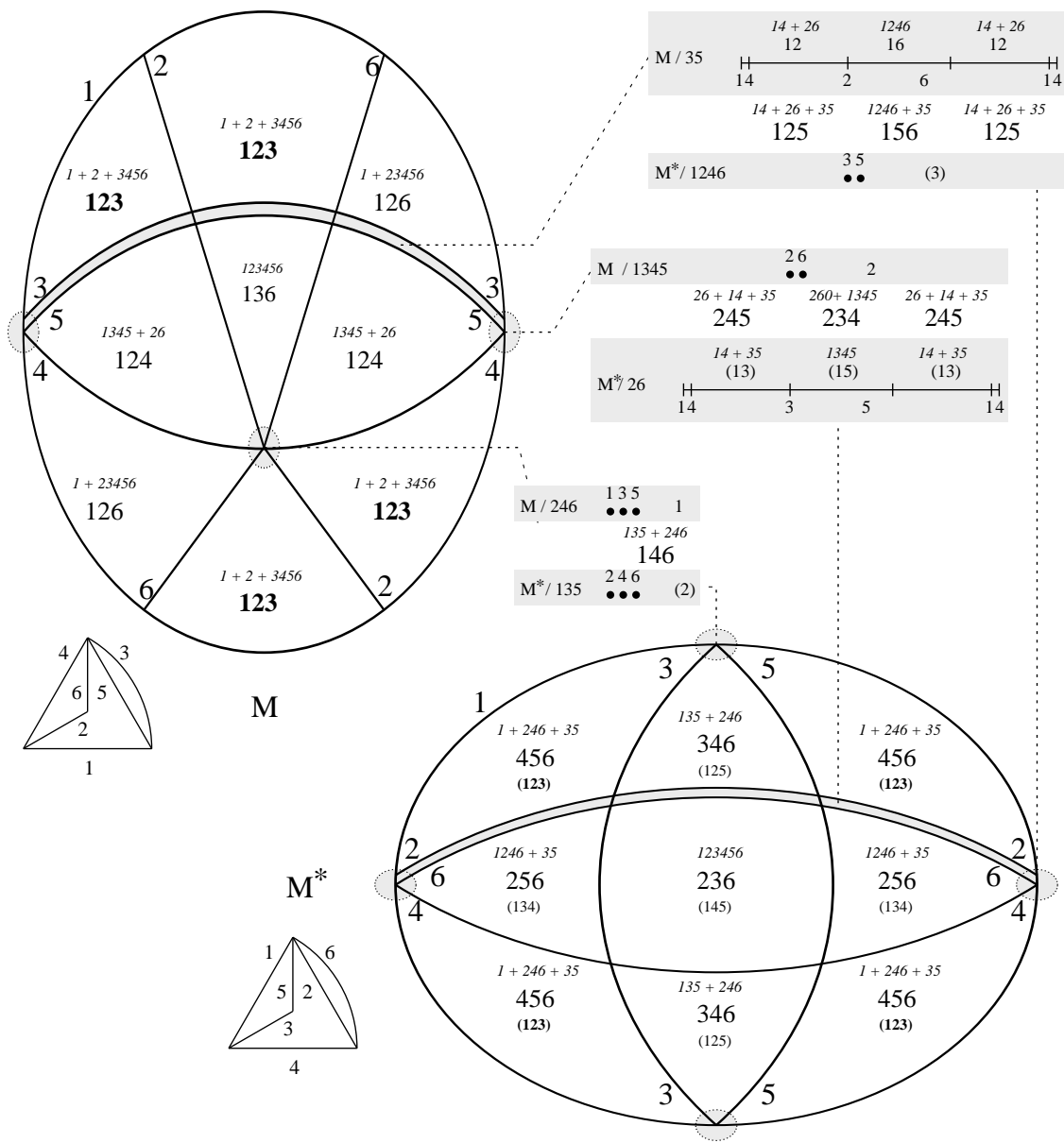}}
}
\caption{Complete primal/dual geometrical representation of the canonical active bijection on another example (see Section \ref{subsec:ex-another-graph}).  We follow the same caption as in Figure \ref{fig:exK4-complete-primal-dual}. 
Minors obtained from non-trivial cyclic flats  are represented and linked to their representations in the primal and dual arrangements, in order to show how regions of these minors are involved in the construction.
Namely: $35$, $1345$, and $246$ are those cyclic flats of $M$, corresponding to the cyclic flats $1246$, $26$ and $135$ of $M^*$.
Observe for instance 
that
the two bases $125$ and $245$ yield to consider the same partition $E=14+26+35$,
the same sequence of subsets $\emptyset\subset 35 \subset 1435 \subset E$, and the same minors $M/1345$, $M(1345)/35$ and $M(35)$, when one omits the associated cyclic flat, whereas on one hand
the active filtration of the basis $125$  is $(\emptyset, 35, 35, 35, 1435, E)$ with cyclic flat $35$, yielding $\Int(125)=12$ and $\Ext(125)=3$ and involving an acyclic reorientation of  $M(1345)/35$,
and on the other hand the active filtration of the basis 245 is $(\emptyset, 35, 1435, 1435, 1435, E)$ with cyclic flat  $1435$, yielding $\Int(245)=2$ and $\Ext(245)=13$ and involving a totally cyclic reorientation of  $M(1345)/35$ (that is an acyclic reorientation of  $M^*(1246)/26$).
%
\EMEvder{phrase precedente hyper longue... a scinder!}%
}
\label{fig:ex-graphe-autre-primal-dual}
\end{figure}

\EMEvder{desous en commentaires, figures avec orientations detaillees de l'exemple de figure precedente, alourdit inutilement, a metter eventeullement dans un article complementarie ?}%
%
%

\subsection{Example of the canonical active bijection on regions of a rank 3 (supersolvable) arrangement.}
\label{subsec:supersolv}

Figure \ref{fig:ex-arrgt} illustrates the canonical active bijection on regions of a rank 3 arrangement with 11 elements, highlighting its geometrical interpretation in terms of flags of faces.
This example is intended to be simple and pedagogic. Explanations are given in the figure caption.
The ordering is $1<2<\ldots<9<A<B$. 
The restriction of the Tutte polynomial in which we are interested is:
$$t(M;x,0)=x^3+8x^2+16x.$$

\begin{figure}[]
\centering
{
\scalebox{1.7}
{\includegraphics[width=7.5cm]{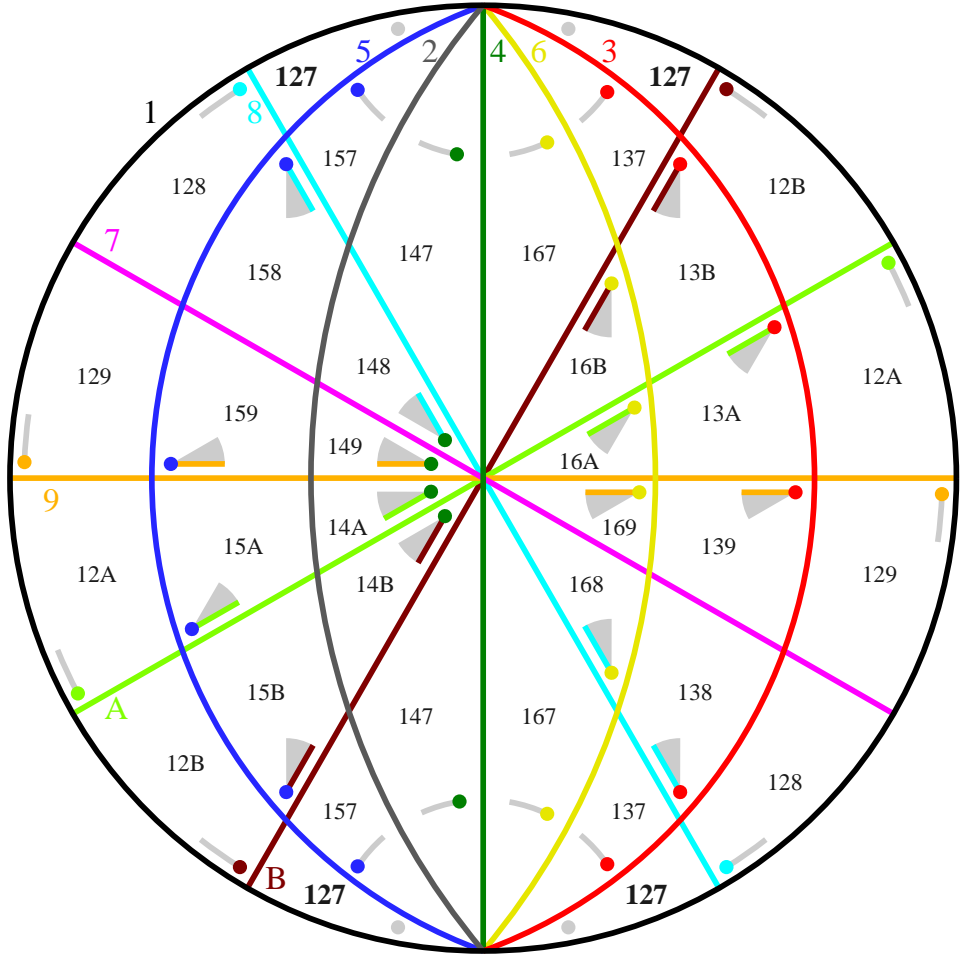}}
}
\caption{The ordering is $1<2<\dots<9<A<B<C<D$. In each region, we write the active basis, associated to the region by the canonical active bijection. We deal only with acyclic reorientations and internal bases 
(thus, we will not specify the cyclic flat of involved active filtrations as it is the empty one). 
The minimal basis (playing a crucial role) is $127$, it is associated with the four regions adjacent to $1$ and $1\cap 2$ (with dual-active elements $\{1,2,7\}$ and active filtration $\emptyset \subset 1\subset 123456\subset E$). In each bounded region (with dual-active element $1$), the fully optimal basis $B=1<a<b$ is  represented by a flag of faces: the region contains a segment of $b$ which contains a point of $a\cap b$. This sequence corresponds to the sequence of useful covectors: $C^*(B;1)\circ  C^*(B;a) \circ  C^*(B;b) \supset C^*(B;1)\circ  C^*(B;a)  \supset C^*(B;1)$.
Those flags of faces illustrate the Adjacency property from Definition \ref{def:acyc-alpha2},
and they are also intended to illustrate the optimality feature of these bases (intuitively, one can imagine mobile flags in the regions, moving  until they reach their fully optimal position: they are first ``pushed'' from $2$ towards $1$, and then from $7$ towards 1 when a face parallel to $2$ is reached, or from $7$ towards $4=\min(4789AB)$ in the case where the central point is reached,  see details in \cite{AB1, AB3}). 
Smaller flags in non-bounded regions are intended to illustrate the same features for minors involved in the active decomposition of regions and bases.
In each region adjacent to $1$ but not $1\cap 2$ (with dual-active elements $\{1,2\}$), the basis is of the type  $B=1<2<b$. Because of the active filtration $\emptyset\subset 1\subset E$, the basis is obtained from the fully optimal basis $2b$ for the bounded region induced on $M/1$ (added to $1$ which is the only internal basis of $M(1)$). This basis is represented by a similar flag as above, but in one dimension less for this minor (those smaller flags are intuitively ``pushed'' from 7 towards $2$ along $1$). 
In each region adjacent to $1\cap 2$ but not $1$ (with dual-active elements $\{1,7\}$), the basis is of the type  $B=1<a<7$. Because of the active filtration $\emptyset\subset 123456\subset E$, the basis is obtained from the fully optimal basis $1a$ for the bounded region induced on $M(123456)$ (added to $7$ which is the only internal basis of $M/123456$). This basis is represented by a similar flag as above, but in one dimension less for this minor  (those smaller flags are intuitively ``pushed'' from 2 towards $1$ around $1\cap 2$).
In addition, this arrangement has the specificity of being supersolvable, yielding specific constructive properties, see details in Subsection \ref{subsec:supersolv}. This figure is best viewed with colors.
\red{}}
\label{fig:ex-arrgt}
\end{figure}

In addition, independently, let us mention that this arrangement is supersolvable, a case studied into the details in \cite{GiLV06}.
Briefly, we have a sequence of three arrangements $M(1)\triangleleft M(123456)\triangleleft M$, such that these arrangement have increasing ranks and the intersection of  two pseudospehres in an arrangement is contained in a pseudoshere of the previous arrangement in the sequence.
Then regions of the arrangement are grouped in \emph{fibers} corresponding to regions in the previous arrangement and linearly ordered in these fibers.
Provided some consistency with the ordering,  the active bijection can be built recursively, in each fiber, using the ordering of regions in this fiber. It is related to the general deletion/contraction of the active bijection developed in \cite{AB4}.

In this particular example, one can see that all regions in the fiber delimited by $1$ and $5$, by $5$ and $2$, by $2$ and $4$, etc., are associated with bases that respectively contain $12$, $15$, $14$, etc.
Now, in each fiber, the basis associated to the two extreme regions  contains $7$ (the smallest of $M\s 123456$). For the other regions, the 
missing element of the associated basis is given by the element of $M\s 123456$ delimiting the region on the opposite side of $7$.

In general supersolvable arrangements, a similar construction holds at least for all bounded regions (coming from a general property of the active bijection that, for bounded regions, the greatest element of the fully optimal basis always borders the region \cite{AB4}).
The fact that, in this example, this construction directly holds for every region, including all non-boudned ones, is a further particulartiy of this example.  Here, in terms of \cite{GiLV06, AB4}, we have that the active mapping equals the \emph{weak active bijection} (which does not preserve active partitions in general).
Let us  mention that a more involved construction in rank 4 is given in \cite[Figure 4]{GiLV06}, which refines Figure \ref{fig:rank4-act-part} of the present paper, and shows how the active bijection is related to active partitions in a fiber of non-bounded regions.
Finally, beyond this, let us mention that the supersolvable structure of Coxeter arrangements can be used to show that the active bijections yields bijections between permutation and increasing trees (braid arrangement), and between signed permutations and signed increasing trees (hyperoctahedral arrangement). See details in \cite{GiLV06}.

\subsection{Example of the canonical and refined active bijections on regions of $D_{13}$}
\label{subsec:D13}

We end in Figure \ref{fig:D13refined} with regions of a more involved rank-3 example, from which the reader might foresee how the construction gets more complicated in higher dimensions.
This arrangement, which we call $D_{13}$, is obtained by adding 3 points $BCD$ to a Desargue configuration on $123456789A$,
see \cite[Example 4.1.1]{GiLV04} for a picture and more details.
This example has been detailed in our first paper \cite{GiLV04}, devoted to the uniform case and the rank-3 case%
. 
Beware that, as a marginal change w.r.t. \cite{GiLV04} (and \cite{GiLV07}), here we exchanged the roles $5$ and $6$ (so that $M(1A7B)$ and $M(156C)$ present different shapes).
The ordering is $1<2<\ldots<9<A<B<C<D$.
The restriction of the Tutte polynomial in which we are interested  is:
$$t(D_{13};x,0)=x^3+10x^2+24x.$$

\begin{figure}[]
\centering
{\scalebox{2}
{\includegraphics[width=7.5cm]{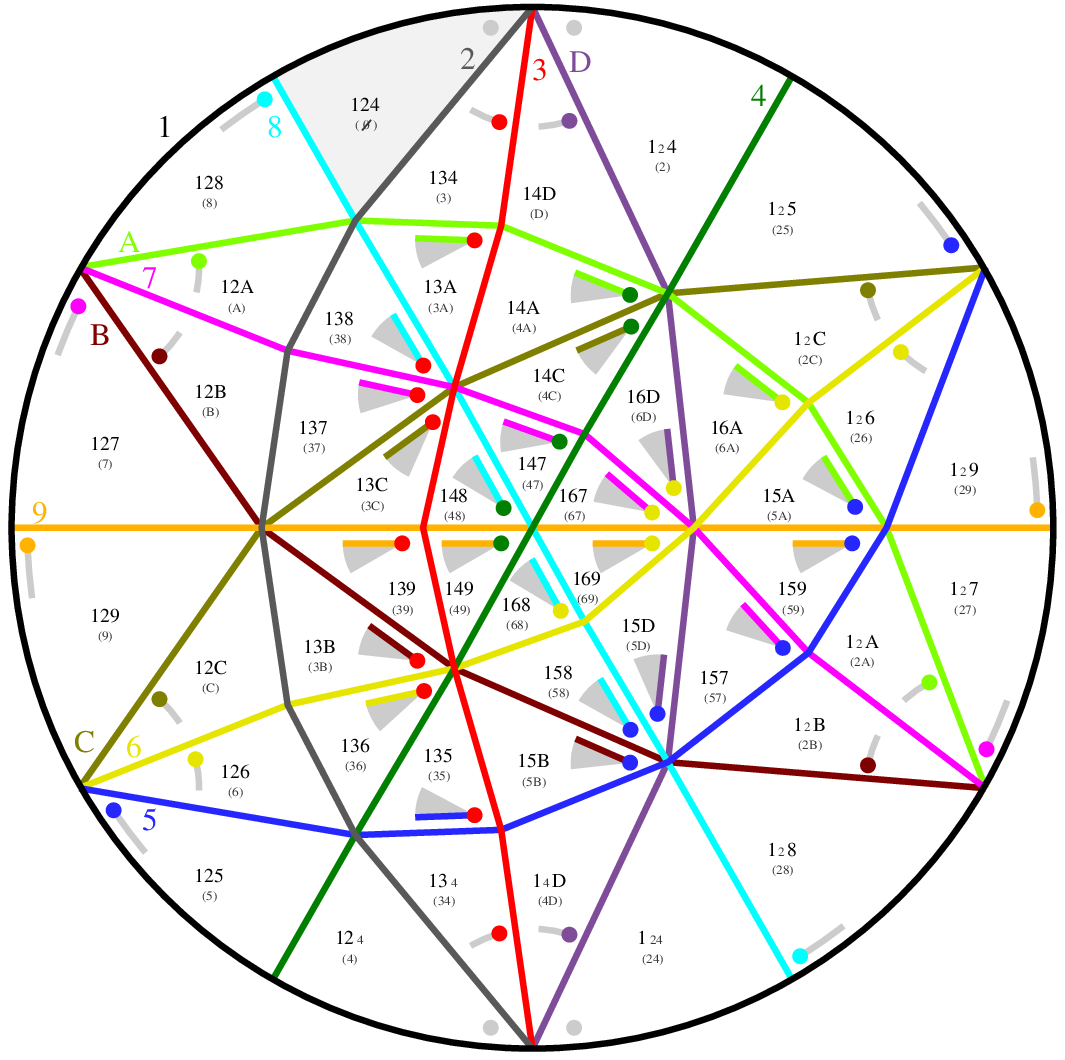}}
}
\caption{The ordering is $1<2<\dots<9<A<B$. Canonical and refined active bijections on regions of a more involved rank-3 example than Figure \ref{fig:ex-arrgt}, following the same caption. 
Here again, in each region, we write the basis associated by the canonical active bijection, but we write in smaller size the elements that should be removed to obtain the subset associated to the region by the refined active bijection w.r.t. the grey region as reference reorientation. Opposite   regions (on the other non-represented half of the sphere) are associated to the subsets in brackets (if a region with dual active elements $O^*$ is associated with the subset $A$, then the opposite region is associated with the subset $A \triangle O^*$, just as in Figure \ref{fig:K4-iso}). This yields the bijection between regions and no-broken-circuit subsets.
The pictures of flags of faces have the same meaning as in Figure \ref{fig:ex-arrgt}, but more types of situations occur. 
%
For bounded regions: the construction can be intuitively understood in a similar way  as in Figure \ref{fig:ex-arrgt}.
Let us detail non-bounded regions: the minimal basis is $124$; bases $125$, $127$, $128$, and $129$ with active elements $\{1,2\}$  are obtained from the active filtration $\emptyset\subset 1\subset E$, implying the minor $M/1$;   bases $12A$ and $12B$ with active elements $\{1,2\}$ are obtained from the active filtration $\emptyset\subset 17AB\subset E$, implying the minor $M(1A7B)$;
bases $126$ and $12C$ with active elements $\{1,2\}$ are obtained from the active filtration $\emptyset\subset 156C\subset E$, implying the minor $M(156C)$; bases $134$ and $14D$ with active elements $\{1,4\}$ are obtained from the active filtration $\emptyset\subset 123D\subset E$, implying the minor $M(123D)$.
See Section \ref{subsec:D13} for more information on this example.
This figure is best viewed with colors.
\EMEvder{attention gris de regiontrop clair ! a tester}
}
\label{fig:D13refined}
\end{figure}

\newpage
Geometrical explanations on the active bijection 
are provided in the caption of Figure  \ref{fig:D13refined}, in the continuation of 
Figure \ref{fig:ex-arrgt}.
Finally, let us mention that
other technical details are given in \cite{GiLV04}, where the possible siuations in rank-3 arrangements are differently but exhaustively listed, and also illustrated on this example%
\footnote{ 
Let us take opportunity of this paper to make two corrections  to \cite{GiLV04}:

\noindent - in \cite{GiLV04} page 231 line 2: instead of ``acting symmetrically on 1457 with three orbits 1457 23689A BCD'' read ``acting symmetrically on 1357 with three orbits 1357 24689A BCD''.

\noindent - in \cite{GiLV04} page 236 Figure 7: in the region corresponding to the basis 136, the dark angle should touch the pseudoline 6 instead of the pseudoline 3.
}%
.
The optimization feature is briefly explained in the caption of Figure~\ref{fig:ex-arrgt}.
This feature was  briefly addressed in \cite{GiLV04}, in order to show that, in rank-3 arrangements, the active bijection is the only bijection between bounded regions and uniactive internal basis satisfiying the Adjacency property of Definition \ref{def:acyc-alpha2} (illustrated here by flags of faces).
As shown in \cite{GiLV04}, this uniqueness property is also true in realizable uniform oriented matroids, but is false in non-euclidean oriented matroids, where the Dual-Adjacency property of Definition \ref{def:acyc-alpha2} cannot be replaced by the property of having a bijection (see also \cite{AB4} for a summary of properties implying the active bijection).
This uniqueness property is generalized to realizable oriented matroids in \cite{AB3}.
Also, the optimization features, roughly introduced in Figures  \ref{fig:ex-arrgt} and  \ref{fig:D13refined} using flags of faces,
are addressed into the details in \cite{AB3} (see also \cite{GiLV09} for a brief formal presentation), where one can also find 
an illustration on a rank-4 bounded region.

\EMEvder{dessous phrase beaucoup pus corute que tout ce qui rpecede pou resumer}
\EMEvder{For bounded regions: the construction can be understood from the Adjacency property of from the optimality viewpoint intuitively described in Figure \ref{fig:ex-arrgt}, we mention that the different rank-3 situations are addressed with a geometrical viewpoint and illustrated on the same example in \cite{GiLV04}. 
}
%









\vspace{-0.2cm}

\bibliographystyle{amsplain}

\begin{thebibliography}{10}
\itemsep=1mm


\bibitem{Be77}
G. Berman, \emph{The dichromate and orientations of a graph}, Canad. J. Math.
29 (1977), 947-956.




\bibitem{OM99}
A. Bj\"orner, M. Las Vergnas, B. Sturmfels, N. White and  G. Ziegler,
\newblock {Oriented matroids} 
\newblock 2nd ed., Encyclopedia of Mathematics and its Applications 46, 
Cambridge University Press, Cambridge, UK 1999.


\bibitem{CaFo69}
P.~Cartier and D.~Foata, \emph{Probl\`emes combinatoires de commutation et
  r\'earrangements}, Springer-Verlag, Berlin, 1969.

\bibitem{Cr67}
H.H. Crapo, \emph{A higher invariant for matroids}, J. Combinatorial Theory
  \textbf{2} (1967), 406--417.

\bibitem{Cr69}
H.H. Crapo, \emph{The Tutte polynomial}, Aequationes Math., \textbf{3} (1969), 211--229.
  
\bibitem{Da81}
J.E. Dawson, \emph{A construction for a family of sets and its application to
  matroids}, Lect Notes in Math. (Springer) \textbf{884} (1981), 136--147,
  (Comb. Math. VIII, Gelong, 1980).



\bibitem{Ge01}
I. M. Gessel, \emph{Acyclic orientations and chromatic generating functions}, Discrete Math., 232 (2001),
119–-130.

\bibitem{Gi02}
E.~Gioan, \emph{Correspondance naturelle entre bases et
  r\'eorientations des matro{\"\i}des orient\'es}, Ph.D. thesis, University of
  Bordeaux 1, 2002. Available at \url{http://www.lirmm.fr/~gioan}.

  
\bibitem{Gi18}
E.~Gioan, \emph{On Tutte polynomial expansion formulas  in perspectives of matroids and oriented matroids}. Submitted, preprint available at arXiv:1807.06559.

\bibitem{GiChapterPerspectives}
E.~Gioan, \emph{The Tutte polynomial of matroid perspectives}, Chapter in: \emph{Handbook of the Tutte Polynomial}, CRC Monographs and Research Notes in Mathematics,  J. Ellis-Monaghan \& I. Moffatt (Eds.), submitted.

\bibitem{GiChapterOriented}
E.~Gioan, \emph{The Tutte polynomial of oriented matroids}, Chapter in: \emph{Handbook of the Tutte Polynomial}, CRC Monographs and Research Notes in Mathematics,  J. Ellis-Monaghan \& I. Moffatt (Eds.), submitted.
  
  

\bibitem{GiLV04}
E.~Gioan and M.~Las Vergnas, \emph{Bases, reorientations and linear programming in uniform and rank
  3 oriented matroids}, Adv. in Appl. Math. \textbf{32} (2004), 212--238,
  (Special issue Workshop on Tutte polynomials, Barcelona 2001).

\bibitem{GiLV05}
E.~Gioan and M.~Las Vergnas, \emph{Activity preserving bijections between spanning trees and
  orientations in graphs}, Discrete Math. \textbf{298} (2005), 169--188,  (Special issue FPSAC 2002).

\bibitem{GiLV06}
E.~Gioan and M.~Las Vergnas, \emph{The active bijection between regions and simplices in
  supersolvable arrangements of hyperplanes}, Electronic Journal of
  Combinatorics \textbf{11 (2)} (2006), \#R30, 39p., (Stanley Festschrift).

\bibitem{GiLV07}
E.~Gioan and M.~Las Vergnas, \emph{Fully optimal bases and the active bijection in graphs,
  hyperplane arrangements, and oriented matroids}, Electronic Notes in Discrete
  Mathematics \textbf{29} (2007), 365--371, (Proceedings EuroComb 2007,
  Sevilla).

\bibitem{GiLV09}
E.~Gioan and M.~Las Vergnas, \emph{A linear programming construction of fully optimal bases in
  graphs and hyperplane arrangements}, Electronic Notes in Discrete Mathematics
  \textbf{34} (2009), 307--311, (Proceedings EuroComb 2009, Bordeaux).

\bibitem{AB1}
E.~Gioan and M.~Las Vergnas, \emph{The active bijection in graphs, hyperplane arrangements, and
  oriented matroids 1. {The} fully optimal basis of a bounded region}, European
  Journal of Combinatorics \textbf{30 (8)} (2009), 1868--1886, (Special issue:
  Combinatorial Geometries and Applications: Oriented Matroids and Matroids).
\eme{Erratum available at ...}


\bibitem{AB2-a}
E.~Gioan and M.~Las Vergnas, \emph{
The active bijection 
2.a - Decomposition of activities for matroid bases, and Tutte polynomial of a matroid in terms of beta invariants of minors.} Companion paper,  simultaneously submitted to the same journal, preprint available at arXiv:1807.06516.
  
\bibitem{AB3}
E.~Gioan and M.~Las Vergnas, \emph{The active bijection 
 3. {Elaborations on linear} programming}, in preparation.

\bibitem{AB4}
E.~Gioan and M.~Las Vergnas, \emph{The active bijection 
4. {Deletion/contraction framework} and
  universality results}, in preparation.

\bibitem{ABG2}
E.~Gioan and M.~Las Vergnas, \emph{The active bijection for graphs}. Submitted, preprint available at arXiv:1807.06545.

\bibitem{ABG2LP}
E.~Gioan and M.~Las Vergnas, \emph{Computing the fully optimal spanning tree of an ordered bipolar directed graph,} Submitted, preprint available at arXiv:1807.06552.


\bibitem{GoMM97}
G.~Gordon and E.~McMahon, \emph{Interval Partitions and Activities for the Greedoid
Tutte Polynomial}, Adv. Appl. Math. \textbf{18} (1997), 33--49.
  
\bibitem{GoTr90}
G.~Gordon and L.~Traldi, \emph{Generalized activities and the tutte
  polynomial}, Disc. Math. \textbf{85} (1990), 167--176.

\bibitem{GrZa83}
C.~Greene and T.~Zaslavsky, \emph{On the interpretation of whitney numbers
  through arrangements of hyperplanes, zonotopes, non-radon partitions and
  orientations of graphs}, Trans. Amer. Math. Soc. \textbf{280} (1983),
  97--126.



\bibitem{La01}
B.~Lass, \emph{Orientations acycliques et le polyn\^ome chromatique}, Europ. J.
  Comb \textbf{22} (2001), 1101--1123.


  
%

\bibitem{LV77}
M. Las Vergnas, 
\newblock Acyclic and totally cyclic orientations of combinatorial geometries,
\newblock Discrete Math. 20 (1977/78), 51-61. 




  
\bibitem{LV84a}
M.~Las Vergnas, \emph{The tutte polynomial of a morphism of matroids {II.}
  {Activities} of orientations}, Progress in Graph Theory (J.A. Bondy \&~U.S.R.
  Murty, ed.), Academic Press, Toronto, Canada, 1984, (Proc. Waterloo Silver
  Jubilee Conf. 1982), pp.~367--380.


\bibitem{LV84b}
M. Las Vergnas, 
\newblock \emph{A correspondence between spanning trees and orientations in graphs}, 
\newblock {Graph Theory and Combinatorics} (Proc. Cambridge Combin. Conf. 1983), 
Academic Press, London, UK 1984, 233-238.

\bibitem{LV12}
M.~Las~Vergnas.
\newblock The {T}utte polynomial of a morphism of matroids {6.} {A} multi-faceted counting formula for hyperplane regions and acyclic orientations.
\newblock Unpublished, preliminary preprint available at arXiv 1205.5424 (2012). \eme{Contains a little mistake, see Section ...}


\bibitem{LV13}
M.~Las~Vergnas.
\newblock The {T}utte polynomial of a morphism of matroids {5.} {D}erivatives
  as generating functions of {T}utte activities.
\newblock {\em Europ. J. Comb.}, 34:1390--1405, 2013.

\bibitem{Ox92}
J.G. Oxley.
\newblock {\em Matroid Theory}.
\newblock Oxford Graduate Texts in Mathematics. Oxford University Press, 2011 (second edition).

\bibitem{St73}
R.P. Stanley, \emph{Acyclic orientations of graphs}, Discrete Math. \textbf{5}
  (1973), 171--178.

\bibitem{Tu54}
W.T. Tutte, \emph{A contribution to the theory of chromatic polynomials},
  Canad. J. Math. \textbf{6} (1954), 80--91.




\bibitem{Vi86}
X.G. Viennot, \emph{Heaps of pieces, {I~:} basic definitions and combinatorial
  lemmas}, Lect. Notes in Math. \textbf{1234} (1986), 321--350, (Combinatoire
  \'enum\'erative, Proc. Colloq., Montr\'eal Can.).

\bibitem{Wi66}
R.O. Winder, \emph{Partitions of n-space by hyperplanes}, SIAM J. Applied Math.
  \textbf{14} (1966), 811--818.

\bibitem{Za75}
T.~Zaslavsky, \emph{Facing up to arrangements: Face-count formulas for
  partitions of space by hyperplanes}, Mem. Amer. Math. Soc. \textbf{1} (1975),
  no.~154, issue 1.

\end{thebibliography}




\providecommand{\bysame}{\leavevmode\hbox to3em{\hrulefill}\thinspace}
\providecommand{\MR}{\relax\ifhmode\unskip\space\fi MR }
\providecommand{\href}[2]{#2}


\end{document}